\documentclass[reqno,12pt]{amsart}

\numberwithin{equation}{section}

\usepackage{amsmath}

\usepackage{esint} 

\usepackage{amsthm}

\usepackage{verbatim}

\usepackage{calligra}

\usepackage{epsfig}

\usepackage{psfrag}

\usepackage{graphicx}

\usepackage{graphpap,latexsym,epsf}

\usepackage{color}

\usepackage{amssymb,mathrsfs,enumerate}

\usepackage{endnotes}

\usepackage{calligra}

\usepackage{mathtools}

\usepackage{pgfplots}

\usepackage{pgfplots}
\pgfplotsset{compat=1.18}

\usepackage[colorinlistoftodos]{todonotes} 

\definecolor{citegreen}{rgb}{0,0.6,0}

\definecolor{refred}{rgb}{0.8,0,0}

\usepackage[colorlinks, citecolor=citegreen, linkcolor=refred]{hyperref}

\usepackage{enumitem}

\usepackage{mathtools}
\mathtoolsset{showonlyrefs}

\newcommand{\R}{\mathbb{R}}

\newcommand{\N}{\mathbb{N}}

\newcommand{\Z}{\mathbb{Z}}

\newcommand{\SSS}{\mathbb{S}}

\def\HHH{{\rm H}}

\def\RRR{{\mathrm R}}

\def\a{\alpha}

\newcommand{\pa}{\partial}

\newcommand{\ep}{\varepsilon}

\newcommand{\mmp}{\mathfrak{m}^{\!(p)}}

\newcommand{\De}{\Delta}

\newcommand{\na}{\nabla}

\newcommand{\hhh}{{\rm h}}

\mathchardef\emptyset="001F

\definecolor{vgreen}{rgb}{0.1,0.5,0.2}

\definecolor{viola}{RGB}{85,26,139}

\newtheorem{theorem}{Theorem}[section]

\newtheorem{corollary}[theorem]{Corollary}

\newtheorem{definition}[theorem]{Definition}

\newtheorem*{conjecture}{Conjecture}

\newtheorem{proposition}[theorem]{Proposition}

\newtheorem{remark}[theorem]{Remark}

\newtheorem{lemma}[theorem]{Lemma}

\newtheorem{ackn}{Acknowledgement
\hspace{-.4cm}}

\definecolor{byzantium}{rgb}{0.44, 0.16, 0.39}

\definecolor{amber}{rgb}{1.0, 0.75, 0.0}

\definecolor{darkmagenta}{rgb}{0.55, 0.0, 0.55}

\definecolor{fuzzywuzzy}{rgb}{0.8, 0.4, 0.4}

\definecolor{brown}{rgb}{0.2, 0.08, 0.08}

\definecolor{arancio}{rgb}{1.0, 0.13, 0.0}

\begin{document}
\title[Mass-type invariants with  a cosmological constant ]{Mass-type invariants\\ in the presence of a cosmological constant}
\author[V.~Agostiniani]{Virginia Agostiniani}
\address{V.~Agostiniani, Universit\`a degli Studi di Trento,
via Sommarive 14, 38123 Povo (TN), Italy}
\email{virginia.agostiniani@unitn.it}
\author[S.~Borghini]{Stefano Borghini}
\address{S.~Borghini, Universit\`a degli Studi di Napoli Federico II,
Via Cintia, Monte S. Angelo, 80126 Napoli (NA), Italy}
\email{stefano.borghini@unina.it}
\author[L.~Mazzieri]{Lorenzo Mazzieri}
\address{L.~Mazzieri, Universit\`a degli Studi di Trento,
via Sommarive 14, 38123 Povo (TN), Italy}
\email{lorenzo.mazzieri@unitn.it}


\begin{abstract}
In this paper, we introduce a new family of mass-type invariants for time-symmetric initial data in space-times satisfying the Dominant Energy Condition. For positive cosmological constant, these invariants,
unlike the total Hawking mass, turn out to be genuinely effective in providing new characterizations of the de Sitter solution. From a theoretical standpoint, this opens a new perspective on how one might refine the rigidity statement originally proposed by Min-Oo in his well known conjecture, later refuted by the counterexamples of Brendle, Marques, and Neves. Via a formal limiting procedure, we also define another invariant, the $1$-harmonic Mass, for which we independently prove a positive mass theorem and a Penrose-type inequality, thereby extending tools for probing space-time geometries in the presence of a positive cosmological constant.
\end{abstract}

\maketitle

\bigskip

\noindent\textsc{MSC (2020): 
53C21, 31C12, 53C24, 53Z05. 
}


\smallskip
\noindent{\underline{Keywords}:  
Monotonicity formulas, 
$p$-Green's function, Hawking mass,
de Sitter solution, Min-Oo conjecture, Positive Mass Theorem, Penrose Inequality.}


\section{Introduction and statement of the main results.}\label{effmon}


\subsection{The state of the art.}

To introduce the foundational setting of our investigation and to establish the notation and conventions adopted throughout, we recall that, in the Cauchy formulation of Einstein’s equations, an initial data set for the Einstein Field Equations is given by a triple \((M,g,k)\), where \((M,g)\) is a three-dimensional Riemannian manifold and \(k\) is a symmetric \((2,0)\)-tensor field satisfying the Einstein constraint equations
\begin{equation}
\begin{cases}
\RRR - 2 \Lambda - |k|^2 + {\rm K}^2   = \, 16 \pi \rho \, ,\\[0.4em]
\qquad \qquad {\rm div} k - d {\rm K}  = \,  8 \pi J \, .
\end{cases}
\end{equation}
Here \(\Lambda \in \mathbb{R}\) denotes the cosmological constant, \(\RRR\) is the scalar curvature of \(g\), and \({\rm K}\) is the trace of \(k\) with respect to \(g\). Interpreting \(k\) as the second fundamental form of the initial hypersurface \(M\) embedded in its maximal Cauchy development, the quantity \({\rm K}\) is then the corresponding mean curvature. Furthermore, if \(T\) denotes the \((4,0)\) stress–energy tensor appearing on the right-hand side of the Einstein Field Equations, the energy and momentum densities measured by an observer normal to \(M\) are given by
\[
\rho = T_{\nu\nu}, 
\qquad 
J_i = T_{\nu i}, \quad i \in \{1,2,3\},
\]
where \(\nu\) is the future-directed unit normal vector field to \(M\) within its Cauchy development. The quantities \(\rho\) and \(J\) represent, respectively, the local energy density and the local momentum density. In this formulation, the Dominant Energy Condition is equivalent to the requirement
\begin{equation*}
\rho \,\geq\, |J| \, .
\end{equation*}
In what follows, we restrict attention to the class of {\em time-symmetric initial data}, which constitute the fundamental model for the analysis of mass-type invariants. For such data, the second fundamental form \(k\) vanishes identically, and hence the constraint equations, coupled with the Dominant Energy Condition, reduce to
\begin{equation}
\label{eq:DEC}
\RRR - 2 \Lambda \,=\,  16 \pi \rho \,\geq \,0 \, ,
\end{equation}
This, in turn, justifies our standard geometric assumption that the scalar curvature is bounded below by the constant \(2 \Lambda\).

Despite the extensive body of work developed by leading physicists and mathematicians on the notion of mass in space-times with vanishing or negative cosmological constant, comparatively little is known about the corresponding concept in space-times with positive cosmological constant -- precisely the regime that, according to astronomical observations and measurements~\cite{Carrol,Carroll_Press_Turner,Riess1998,Perl99,Wein01}, is expected to describe the actual large-scale structure of our universe.
In this latter setting, not only are general theorems lacking, but even the formulation of an appropriate and robust definition of mass remains a {\em vexata quaestio}. The present work is devoted to the analysis of this challenging and  fascinating problem.

To put our results in a perspective, let us recall that the cornerstone result and guiding principle in this area of investigation is the renowned Positive Mass Theorem, originally proven by Schoen and Yau for the ADM mass in the case of time-symmetric initial data in asymptotically flat spacetimes with a zero cosmological constant.

\begin{theorem}[Positive Mass Theorem]
\label{pmt}
$(M,g)$ 
be a complete, asymptotically flat, three-dimensional Riemannian manifold, with nonnegative scalar curvature $\RRR_g \geq 0$. Then the total ADM-mass of the manifold is nonnegative
\[
m_{ADM}(M,g)
\geq 0 \, . 
\]
Moreover, it vanishes if and only if $(M,g)$ is isometric to the flat Euclidean three-space $(\mathbb{R}^3, g_{\mathbb{R}^3})$.
\end{theorem}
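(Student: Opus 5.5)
We would prove the Positive Mass Theorem in the three-dimensional time-symmetric setting by following the harmonic-function approach of Bray, Kazaras, Khuri and Stern, which fits the potential-theoretic spirit of this paper, in place of the original minimal-surface argument of Schoen and Yau. First we reduce to a simple topology. By a standard density and conformal argument, we may assume that $(M,g)$ is harmonically flat at infinity with $\RRR_g\ge 0$; this changes the mass by an arbitrarily small amount. We may also assume that $M$ has a single end and that $H_2(M,\partial M;\Z)=0$, possibly after passing to a suitable exterior region bounded by outermost minimal spheres, whose contribution has a favorable sign.

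Next we take an asymptotically linear harmonic function $u$, with $\Delta u=0$ and $u-x^3=O(|x|^{1-\tau})$ in the asymptotic chart. Its existence follows from solving on exhausting domains with barriers. For regular values $t$, the level sets $\Sigma_t=\{u=t\}$ are noncompact surfaces asymptotic to coordinate planes. Using the Bochner formula, the Gauss equation and the traced Gauss–Codazzi identity on $\Sigma_t$, one derives the key integral identity on a large coordinate cylinder $C_L$:
\begin{equation}
\int_{\partial C_L}\frac{\partial |\nabla u|}{\partial \nu}\,d\sigma \;\ge\; \frac12\int_{-L}^{L}\int_{\Sigma_t}\Big(\frac{|\nabla^2u|^2}{|\nabla u|^2}+\RRR_g\Big)d\sigma\,dt-2\pi\int_{-L}^{L}\chi(\Sigma_t)\,dt .
\end{equation}
The topological assumption ensures that each component of $\Sigma_t$ is noncompact with $\chi(\Sigma_t\cap C_L)\le 1$. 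Computing the boundary term with the asymptotics of $u$ and $g$ and letting $L\to\infty$ then gives
\begin{equation}
m_{ADM}\ \ge\ \frac{1}{16\pi}\int_M\Big(\frac{|\nabla^2u|^2}{|\nabla u|}+\RRR_g|\nabla u|\Big)dV\ \ge 0 .
\end{equation}
The main obstacle is justifying this identity across critical values of $u$. We would handle it by applying the formula to $\sqrt{|\nabla u|^2+\varepsilon}$ in place of $|\nabla u|$, using the coarea formula together with Sard's theorem, and letting $\varepsilon\to0$. Controlling $\chi(\Sigma_t)$ for almost every $t$ is the delicate point. The second delicate step is extracting exactly the ADM mass from the flux term at infinity.

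For rigidity, suppose $m_{ADM}=0$. Then $\nabla^2u\equiv0$ for each of the three asymptotically linear functions $u^1,u^2,u^3$. Hence the gradients $\nabla u^i$ are parallel, and they are linearly independent because they are asymptotically orthonormal. It follows that $g$ is flat and that $(u^1,u^2,u^3)$ is a global isometry onto $\R^3$. When mass zero is reached through the preliminary approximation, we would instead use the standard argument that $g$ must be Ricci-flat: otherwise a conformal deformation would produce negative mass. In three dimensions Ricci-flat means flat, which again gives $\R^3$.
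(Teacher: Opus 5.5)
The paper does not prove this theorem. It quotes it as background and cites several proofs, among them the harmonic-function argument of Bray--Kazaras--Khuri--Stern, which is the route you follow. Your overall plan matches that argument: pass to the exterior region, take an asymptotically linear harmonic $u$, integrate the Bochner-type identity over a coordinate cylinder, and get rigidity from $\na^2u^i\equiv0$. However, the key inequality as you display it has a genuine gap, and with it the limit $L\to\infty$ gives nothing.

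On the regular set, integrate $\Delta|\na u|=\frac{1}{2|\na u|}\big(|\na^2u|^2+|\na u|^2(\RRR_g-2K)\big)$ over $C_L$ using the coarea formula. This produces the term $-\int\!\int_{\Sigma_t\cap C_L}K\,d\sigma\,dt$. Since $\Sigma_t\cap C_L$ is a compact surface \emph{with boundary}, Gauss--Bonnet reads $\int_{\Sigma_t\cap C_L}K\,d\sigma=2\pi\chi(\Sigma_t\cap C_L)-\int_{\pa(\Sigma_t\cap C_L)}\kappa\,ds$, where $\kappa$ is the geodesic curvature of the boundary curves. You dropped the $\kappa$-term, and it is not small. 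A slice that is a disk bounded by a curve on the lateral side has $\int\kappa\,ds\approx2\pi$, so the omitted term is of order $4\pi L$. Without it, $\chi\le1$ leaves roughly $-4\pi L$ on the right-hand side. The only boundary term you keep, $\int_{\pa C_L}\pa_\nu|\na u|\,d\sigma$, stays bounded in the harmonically flat setting you reduced to. So your inequality carries no information as $L\to\infty$, and no asymptotic computation of that flux alone can produce $m_{ADM}$.

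What has to be kept and analyzed is the combination
\[
\int_{\pa C_L}\pa_\nu|\na u|\,d\sigma-\int\!\!\int_{\pa\Sigma_t\cap\pa C_L}\kappa\,ds\,dt+2\pi\int\chi(\Sigma_t\cap C_L)\,dt .
\]
By the coarea formula on $\pa C_L$, the middle integral equals $\int_{\pa C_L}\big(|\na u|\,\hhh_{\pa C_L}(\tau,\tau)-\langle\nu,\na u\rangle|\na u|^{-1}\na^2u(\tau,\tau)\big)d\sigma$. Here $\hhh_{\pa C_L}$ is the second fundamental form of $\pa C_L$ with respect to its outer normal $\nu$, and $\tau$ is the unit tangent to the level curves of $u$ on $\pa C_L$. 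On the lateral side this is $4\pi L+O(1)$. The $4\pi L$ cancels the leading part of $2\pi\int\chi\,dt$, and the $O(1)$ remainder, together with the flux, is where $8\pi m_{ADM}$ comes from. This is the actual content of your "delicate step".

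Two further points need attention.
\begin{itemize}
\item The bound $\chi(\Sigma_t\cap C_L)\le1$ needs the slice to be connected. That follows from the slice having a single boundary circle on the lateral side, which is only clear for slices that do not reach the top and bottom faces. Those other slices must be treated separately.
\item You should impose the Neumann condition $\pa_\nu u=0$ on the outermost minimal spheres. Then the level sets meet $\pa M$ orthogonally, and minimality makes the flux of $|\na u|$ through $\pa M$ cancel \emph{exactly} the geodesic curvature of $\pa\Sigma_t\cap\pa M$, again through the $\kappa$-term you omitted. A vague "favorable sign" is not enough here. The same Neumann condition is what rules out a nonempty boundary in your rigidity step: three parallel, linearly independent gradients cannot all be tangent to $\pa M$.
\end{itemize}
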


Over the years, several different proofs of this theorem have been provided, starting from the original proof by Schoen and Yau \cite{SchoenYau1,SchoenYau2}, based on minimal surface techniques. A few years later, a conceptually different argument was discovered by Witten \cite{Witten}, using spin geometry and harmonic spinors. Subsequent geometric approaches deepened the understanding of the ADM mass.  
Huisken and Ilmanen \cite{HI} established monotonicity of the Hawking mass along the weak inverse mean curvature flow, yielding both the Positive Mass Theorem and the Riemannian Penrose Inequality for a single black hole, while Bray \cite{bray1} proved the general case via a conformal flow of metrics.
More recent alternative proofs of the Positive Mass Theorem exploit monotonicity of the ADM mass along the Ricci flow \cite{LiRicci}, Bochner-type identities for harmonic and spacetime harmonic functions \cite{Bra_Kaz_Khu_Ste,Hir_Kaz_Khu},
and monotonicity formulas based on linear and nonlinear potential theory~\cite{AMO,AMMO}.
This potential-theoretic approach, together with the one developed by Huisken and Ilmanen through the weak IMCF, will be the most relevant for the subsequent discussion.

In stating the Positive Mass Theorem, we have intentionally been somewhat imprecise regarding the assumption of asymptotic flatness of the manifold. The optimal conditions under which the theorem can be proven are the same as those that allow for a well-defined ADM-mass~\cite{BAR86, CHR86}, namely
\[
g_{ij} (x) = \delta_{ij} + \mathcal{O}(|x|^{-\tau}) \qquad \hbox{and} \qquad \pa_kg_{ij} (x) =  \mathcal{O}(|x|^{-\tau -1}) \, , \qquad \hbox{with $\tau > 1/2$} \, ,
\]
in a given coordinate chart at infinity $(x^1,x^2, x^3)$. Under this assumption, the ADM-mass is defined as 
\[
m_{ADM}(M,g) 
\,= \, \lim_{r\to +\infty}\, \frac{1}{16\pi} \int\limits_{\{\vert x\vert=r\}} 
 \sum_{i,j=1}^3 \left( 
\frac{\partial g_{ij}}{\partial x^j}  
- \frac{\partial g_{jj}}{\partial x^i} \!
\right) \!
\frac{x^i}{|x|} \,\, d\sigma \,.
\]
It follows from this definition that, if the decay rate of the metric coefficients is large enough, then the ADM-mass is vanishing and one can invoke the Positive Mass Theorem to conclude that the manifold is isometric to the Euclidean three-space. This fact can be phrased, saying that 

\medskip

\noindent {\em A  strongly asymptotically flat Riemannian three-manifold with nonnegative scalar curvature is necessarily flat.}

\medskip

This fundamental result was subsequently extended to the case of negative cosmological constants through successive efforts by various authors, increasingly clarifying the appropriate concept of mass for this setting~\cite{WangAH,Chr_Nag_2002, Chr_Her_2003,Kro_Oro_Pin_2025}. It is noteworthy that the first significant result in this context was Min-Oo's proof of the rigidity statement, made before any proper concept of mass had been identified or defined. Specifically, the theorem proven by Min-Oo in~\cite{MINOO_1} states that 

\begin{theorem}[Scalar Curvature Rigidity for AH Manifolds]
\label{thm:AH_Min-Oo}
Let $(M, g)$ be a complete three-dimensional Riemannian manifold, with $\RRR_g \geq - 6$, as it follows by setting $\Lambda = - 3$ in~\eqref{eq:DEC}. If $(M, g)$ is strongly asymptotically hyperbolic in the sense of~\cite{MINOO_1}, then it is isometric to hyperbolic space.
\end{theorem}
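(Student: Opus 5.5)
The natural route is Min-Oo's spinorial argument, which adapts Witten's proof of Theorem~\ref{pmt} to the hyperbolic setting through imaginary Killing spinors. On $(M,g)$, which is spin since it is an orientable three-manifold, I will use the modified connection
\[
\widehat\nabla_X \psi \,=\, \nabla_X\psi + \tfrac{i}{2}\, X\cdot\psi ,
\]
together with the associated Dirac-type operator $\widehat D = D - \tfrac{3i}{2}$. Hyperbolic space carries a four-dimensional space of $\widehat\nabla$-parallel spinors, the imaginary Killing spinors. Strong asymptotic hyperbolicity in the sense of~\cite{MINOO_1} identifies the end of $M$ with that of $\mathbb{H}^3$, with the metric converging to $g_{\mathbb{H}^3}$ at a fast enough exponential rate, and this lets me transplant these spinors to the end as asymptotic data $\psi_0$.

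The first step is the Lichnerowicz--Weitzenb\"ock formula for $\widehat\nabla$. Computing $\widehat D^*\widehat D$ against $\widehat\nabla^*\widehat\nabla$, the cross terms involving $X\cdot$ combine so that
\[
\int_{\Omega} \Bigl( |\widehat\nabla\psi|^2 + \tfrac14(\RRR+6)|\psi|^2 - |\widehat D\psi|^2 \Bigr)
\,=\, \int_{\partial\Omega} \langle \widehat\nabla_\nu\psi + \nu\cdot\widehat D\psi,\psi\rangle ,
\]
up to the correct constants. The hypothesis $\RRR\ge -6$ makes the zeroth-order term nonnegative.

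The second step is analytic. I solve $\widehat D\psi=0$ with $\psi-\psi_0$ in a weighted Sobolev space $H^1$ on $M$. This needs coercivity of $\widehat D$ on compactly supported spinors, which follows from the Weitzenb\"ock identity itself, via Lax--Milgram in the style of Witten and Parker--Taubes, provided $\widehat D\psi_0\in L^2$. The latter is where the decay rate of $g-g_{\mathbb{H}^3}$ enters, since $\psi_0$ is exactly Killing for the background metric. For the resulting $\psi$ I then evaluate the boundary integral on large coordinate spheres $S_r$. Here the strong decay assumption of~\cite{MINOO_1} makes the limit zero; this is the analogue of vanishing ADM mass, with no mass aspect surviving. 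It follows that $\widehat\nabla\psi\equiv 0$ and $(\RRR+6)|\psi|^2\equiv0$.

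The final step is rigidity. Because the construction is linear in $\psi_0$, I obtain four linearly independent imaginary Killing spinors. Differentiating $\widehat\nabla\psi=0$ gives the integrability condition
\[
\Bigl(\RRR(X,Y) + \tfrac14\,\bigl(X\cdot Y - Y\cdot X\bigr)\Bigr)\cdot\psi \,=\, 0 .
\]
With a full frame of such spinors this forces $\mathrm{Rm}=-(g\wedge g)/2$, so the sectional curvature is identically $-1$. Since $M$ is complete, simply connected near infinity and asymptotic to $\mathbb{H}^3$, it is isometric to $\mathbb{H}^3$. I will handle this by passing to the universal cover and using that the end is a single hyperbolic end to rule out nontrivial quotients; alternatively, the Killing spinors themselves define an isometric immersion onto $\mathbb{H}^3$.

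I expect the main obstacle to be the second step. The imaginary Killing spinors grow like $e^{r/2}$, so showing both that $\widehat D\psi_0\in L^2$ and that the boundary term vanishes requires the decay of $g-g_{\mathbb{H}^3}$ (with derivatives) to beat $e^{-r}$ times the area growth $e^{2r}$. I would first check this precisely against Min-Oo's definition, and if needed use the weighted spaces of Andersson--Dahl.
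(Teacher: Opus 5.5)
Your outline is essentially Min-Oo's imaginary-Killing-spinor argument, which is what the paper relies on for this statement: the paper quotes it as background from \cite{MINOO_1} and does not reprove it. The step you single out as delicate, namely coercivity of $\widehat D$ and the vanishing of the boundary term in weighted spaces, is indeed where Andersson--Dahl later supplied the missing analytic details. Two small corrections (and note that orientability, hence spin, is a hypothesis inherited from \cite{MINOO_1} rather than something you get for free). First, in dimension three the spinor bundle has complex rank $2$, so you get a complex two-dimensional space of imaginary Killing spinors rather than a four-dimensional one; this is still a full frame, which is all the curvature argument needs. Second, the decay required for the boundary term is $|g-g_{\mathbb{H}^3}|+|\nabla(g-g_{\mathbb{H}^3})|=o(e^{-3r})$, because it must beat $|\psi_0|^2\sim e^{r}$ times the area growth $e^{2r}$.
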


The geometric intuition behind this statement is quite natural: if one postulates the validity of a positive mass theorem and assumes a strong asymptotic decay of the metric toward the model one  -- so that any reasonable asymptotic geometric invariant, including the yet undefined notion of mass, is forced to vanish -- then it would be natural to conclude that the given manifold is isometric to the model space. 
In this sense, Theorem~\ref{thm:AH_Min-Oo} should be regarded as a pre-Positive Mass Theorem, as it represents a preliminary condition for the validity of a full Positive Mass statement. Following a similar rationale, Min-Oo proposed in~\cite{MINOO2} his famous conjecture for initial data with a positive cosmological constant, aiming to characterize the de Sitter solution as the only one with zero mass. The precise statement of the conjecture is as follows:

\begin{conjecture}[Min--Oo conjecture]
Suppose that $g$ is a smooth Riemannian metric on the closed hemisphere $\mathbb{S}^3_+$ with the following properties:
\begin{enumerate}
  \item[(i)] The scalar curvature of $g$ satisfies $\RRR_g\geq 6$, , as it follows by setting $\Lambda =  3$ in~\eqref{eq:DEC};
  \item[(ii)] The induced metric on the boundary agrees with the standard one, i.e., $g|_{\partial \mathbb{S}^3_+}=g_{\mathbb{S}^2}$;
  \item[(iii)] The boundary $\partial \mathbb{S}^3_+$ is totally geodesic with respect to $g$.
\end{enumerate}
Then $(\mathbb{S}^3_+,g)$ is isometric to the standard round hemisphere $(\mathbb{S}^3_+,g_{\mathbb{S}^3})$.
\end{conjecture}

As in the previous case, the conditions imposed on the boundary of the manifold -- sometimes referred to as the cosmological horizon, in the framework of General Relativity -- are intended to ensure that the total mass vanishes, even in the absence of an explicit definition of mass. Over the past two decades, investigations surrounding Min-Oo conjecture have produced a bifurcated picture: a corpus of positive rigidity results under strengthened hypotheses, and a decisive set of counterexamples showing the conjecture fails in full generality. 

On the positive side, several authors proved rigidity statements when the scalar-curvature condition is supplemented by additional geometric hypotheses. Notably, Hang and Wang established rigidity theorems in settings where one replaces the scalar-curvature lower bound by suitable Ricci lower bounds~\cite{Han_Wan_2009} or restricts attention to 
the class of metrics which are conformal to the round metric ~\cite{Han_Wan_2006}.
Later,
Spiegel~\cite{Spi}
extended Hang-Wang's argument to the locally conformally flat setting
(see also~\cite{BCE18} 
for a further extension where the bound on the scalar curvature is replaced by a corresponding bound for a more general function of the eigenvalues of the Schouten tensor). 
In~\cite{Bre_Mar_2011}, a version of the Min-Oo conjecture is proved
for spherical caps with radius sufficienly smaller than $\pi/2$, under the assumption that the metric is sufficiently close to the round metric in the $\mathscr C^2$-norm.
Another related result is the one obtained by Eichmair~\cite{EIC09}, where he proves a sharp isoperimetric characterization of the round three-sphere and derives as a corollary a Min-Oo-type rigidity statement for the upper hemisphere. More precisely, he shows that a closed, Ricci-positive 3-manifold with scalar curvature 
$\RRR_g \geq 6$ whose isoperimetric surfaces attain the maximal possible area $4\pi$ must be isometric to the standard $\mathbb{S}^3$. 
We return to this result with Theorem~\ref{thm_eich} below.

The general conjecture, however, does not hold under the mere natural assumptions under which it was originally formulated: Brendle, Marques and Neves~\cite{BMN11} constructed smooth metrics on the upper hemisphere that coincide with the standard round metric in a tubular neighborhood of the equatorial boundary, having scalar curvature $\RRR_g \geq 2 \Lambda >0$ everywhere and strictly grater than $2\Lambda$ on an open domain (supported far away from the round tubular neighborhood of the equator). 
The particular features of these  counterexamples not only disprove the conjecture but also invalidate the heuristic behind it: no pre-positive mass theorem in the style of Theorem~\ref{thm:AH_Min-Oo} can be obtained by imposing any order of contact with the model solution -- here, the hemisphere or space-like de Sitter metric -- in a neighborhood of the cosmological horizon.
In physical terms, one may assert that whatever the mass associated with these initial data may be, it cannot be measured by an observer on the cosmological horizon, even under the idealized assumption that this observer possesses complete knowledge of the surroundings.

\subsection{Hawking Mass and Inverse Mean Curvature Flow: failure of a natural strategy.}

In view of the rather unsatisfactory status of the art that we have outlined so far, the need to develop new heuristics and novel perspectives for addressing the Positive Mass problem in the presence of a positive cosmological constant $\Lambda > 0$ becomes apparent.  
Besides the interesting, though conceptually rather distant approach recently proposed in~\cite{AvaLinPiu}, a particularly natural strategy is to base the analysis on an appropriate notion of quasi-local mass and to investigate its large-scale behavior, in analogy with the Geroch monotonicity of the Hawking mass along the inverse mean curvature flow (IMCF, for short). Initiating the flow from a point in the manifold and making 
Geroch’s ~\cite{Ger} and Jang-Wald’s~\cite{JW77}
smooth computations
mathematically rigorous through the introduction of the weak IMCF, Huisken and Ilmanen, in their celebrated work~\cite{HI}, obtained an alternative proof of Theorem~\ref{pmt} in the classical case $\Lambda = 0$. 
Guided by these considerations, one would like to use the same pattern to address the positive mass problem in the presence of a positive cosmological constant.
In fact, one readily sees that the concept of Hawking mass can be straightforwardly extended to the setting of a nonvanishing cosmological constant (see, for example,~\cite[equation~(15)]{Gib_1999}) by defining
\begin{equation}
\label{mahw}
\mathfrak{m}_{\rm Haw} (\Sigma) \, = \, \sqrt{\frac{|\Sigma|}{16 \pi}}  \, \left\{ 1 - \frac{\Lambda}{12 \pi} |\Sigma| - \frac{1}{16 \pi} \int_\Sigma \HHH^2 \, d \sigma \right\},
\end{equation}
where $\Sigma \hookrightarrow (M,g)$ is a closed, smooth surface embedded in the initial data set $(M,g)$ with $\RRR \geq 2 \Lambda$ and $\HHH$ is its mean curvature. It is also easy to verify that if $\{\Sigma_t\}_{t\in I}$ is a family of closed, connected hypersurfaces evolving by inverse mean curvature flow, then, wherever all quantities are well defined, the map
\[
t \longmapsto \mathfrak{m}_{\rm Haw}(\Sigma_t)
\]
is non-decreasing. In fact, in this setting Geroch’s smooth computation yields
\begin{equation}
\label{eq:geroch}
\frac{d}{dt}
\mathfrak{m}_{\rm Haw}(\Sigma_t)
= \frac{1}{8 \pi}
\sqrt{\frac{|\Sigma_t|}{16 \pi}}\!
\left[\!
\left(4\pi - \int_{\Sigma_t} \frac{\RRR^{\Sigma}}{2} \, d\sigma \right)
+ \int_{\Sigma_t}
\left|\frac{\nabla^{\Sigma}{\rm H}}{{\rm H}}\right|^2
+ \frac{|\mathring{\rm h}|^2}{2
}
+ \frac{\RRR - 2\Lambda}{2}
\, d\sigma
\right],
\end{equation}
where $\RRR^\Sigma$ denotes the scalar curvature of the metric induced on the evolving hypersurfaces, $\mathring{\rm h}$ is their trace-free second fundamental form, and $\nabla^{\Sigma}$ is the Levi-Civita connection associated with the induced metric.

We now ask whether this strategy can be implemented on a compact initial data set \((M,g)\) with connected minimal boundary and scalar curvature \(\RRR \ge 2\Lambda > 0\), in line with condition~\eqref{eq:DEC}. Assume the above monotonicity formula extend rigorously to a weak setting that allows flow singularities, in the spirit of Huisken-Ilmanen~\cite{HI}. Further assume there exists a weak IMCF starting from a point of the initial data set -- more precisely, an ancient solution whose level sets collapse to a chosen pole as \(t \to -\infty\) -- and that this flow expires by landing on the cosmological horizon at a finite time \(T^*\). In this thought experiment, the Hawking mass at time \(T^*\), 
can be viewed as the total mass of the manifold. Monotonicity, together with \(\mathfrak{m}_{\rm Haw}(\Sigma_t) \to 0\) as \(t \to -\infty\), then strongly suggests a Positive Mass–type statement for this total mass. This appealing line of reasoning is nevertheless ruled out by the counterexamples of Brendle, Marques, and Neves. Indeed, once the flow enters a region where \(\RRR > 2\Lambda\), the last term in~\eqref{eq:geroch} forces the derivative of the Hawking mass to be strictly positive. Thus, the Hawking mass, which vanishes at the pole and is nondecreasing along the flow, becomes strictly positive and remains so up to time \(T^*\), when the flow expires on the cosmological horizon. In particular,
\[
0 \, <  \, \mathfrak{m}_{\rm Haw} (\Sigma_{T^*}) \, = \, \mathfrak{m}_{\rm Haw}(\pa M) \, .
\]
However, in these counterexample-manifolds a whole neighborhood of the boundary is isometric to a tubular neighborhood of the equator in the standard hemisphere of radius \(R_\Lambda = \sqrt{3/\Lambda}\) (equivalently, with constant sectional curvature \(\Lambda/3\)). A direct computation shows that the Hawking mass of this standard equator vanishes, since it is a minimal surface of area \(4\pi R_\Lambda^2 = 12\pi/\Lambda\). Consequently, any attempt to obtain a Positive Mass-type result via such an inverse mean curvature flow faces fundamental obstructions.

Upon closer examination, it becomes clear that the underlying cause of this speculative contradiction lies in the initial assumption that a flow of the specified type actually exists. Recent progress, including the important work of Xu~\cite{Xu}, shows that the Dirichlet problem is not the most appropriate setting in which to study the weak IMCF on a compact manifold with boundary. Instead, the natural condition appears to be that of the so‑called outer obstacle condition, in which the level sets of the arrival time function of the IMCF, namely, the weak solution of
\begin{equation}
{\rm div} \left( \frac{\nabla w}{|\nabla w|}\right) = |\nabla w| \, ,
\end{equation}
adheres tangentially to the boundary upon reaching it. Informally, this condition may be interpreted as the alignment of the unit normal to the level sets with the outward unit normal to the boundary, as the point approaches the boundary:
\[
\frac{\nabla w}{|\nabla w|}(y) \,\longrightarrow\, \nu(y_0) \qquad \text{as } y \to y_0 \in \partial M \,,
\]
where $\nu$ denotes the outward unit normal to $\partial M$ (we refeer the reader to~\cite[Summary of Definitions/Theorem 1.5]{Xu} for the precise definition). 
In general, however, the Hawking mass is not guaranteed to be nondecreasing along such a flow. A concrete counterexample is provided by the region inside a Clifford torus in the round sphere of radius $R_\Lambda$. This manifold has constant scalar curvature $2\Lambda>0$, so it satisfies~\eqref{eq:DEC}, and its boundary is the minimal Clifford torus $\mathbb{T}_{\Lambda}$ 
with area $6\pi^2/\Lambda >12\pi/\Lambda$. 
A straightforward calculation then yields that the Hawking mass of this boundary is given by
\[
\mathfrak m_{\rm Haw}(\mathbb{T}_{\Lambda})
= \sqrt{\frac{3\pi}{8 \Lambda}}\left(1-\frac\pi2\right) \,< 0.
\]
Now initiate the IMCF with outer obstacle $\mathbb{T}_\Lambda$ from a point in this manifold. As already mentioned, we have $\mathfrak m_{\rm Haw}(\pa \Omega_t) \to 0$ as $t\to -\infty$, where $\Omega_t =\{w <t\}$ denotes the sublevel set.
In the context of IMCF with an outer obstacle, it is more convenient to think in terms of the perimeter of the sublevels rather than the area of the level sets $\Sigma_t = \{ w=t\}$, so as to be able to extend our considerations even beyond the first time the flow comes into contact with the boundary. As time increases, the Hawking mass of $\pa \Omega_t$ initially remains nonnegative by monotonicity, up to reaching a certain time $T_0$, smaller than the first contact time. Now, following~\cite{Xu}, one can check that the IMCF with outer obstacle sweeps out the whole domain $M\setminus \Omega_{T_0}$ in finite time, say $T^*-T_0$. Moreover, for large times, the perimeter of the sublevel sets $\Omega_t =\{w <t\}$ approaches the full area of the boundary. 
In our case, this implies that for times $t$ sufficiently close to $T^*$, the perimeter of $\Omega_t$ becomes strictly greater than $12\pi/\Lambda$. As the contribution of the mean curvature term is always nonpositive, it follows that for such times the Hawking mass becomes strictly negative
\[
\mathfrak m_{\rm Haw} (\partial \Omega_t) \leq \sqrt{\frac{{\rm Per}(\Omega_t)}{16 \pi}}  \, \left\{ 1 - \frac{\Lambda}{12 \pi} {\rm Per}(\Omega_t) \right\} 
\,< 0.
\]
Thus, in this example $t \mapsto \mathfrak m_{\rm Haw}(\pa \Omega_t)$ starts near $0$ as $t\to -\infty$ and becomes stricltly negative as $t\to T^*$, so it cannot be nondecreasing for all times.

\subsection{A potential-theoretic approach toward the Positive Mass Theorem.}

In many recent contributions 
it has been demonstrated that the level-set flows associated with harmonic and $p$‑harmonic functions exhibit a highly nontrivial geometric structure, with far-reaching implications (see, e.g., ~\cite{Colding_Acta,Col_Min_3,Moser2007,Ago_Fog_Maz_1,Ago_Fog_Maz_2,Bra_Kaz_Khu_Ste, Mun_Wan_IMRN,Mun_Wan_AJM, AMO,AMMO}). 
In particular, these flows, when combined with appropriately designed monotonicity formulas, can serve as effective substitutes for geometric evolution equations that are equally natural but technically more delicate. This substitution can be exploited to establish comparison principles and derive geometric inequalities. Among the latter, the Positive Mass Theorem and the Riemannian Penrose Inequality are of primary relevance for the present study.

In the setting of vanishing cosmological constant and time-symmetric, asymptotically flat initial data, the potential-theoretic approach has, so far, been able to reproduce essentially all geometric results obtainable from the combined use of Huisken-Ilmanen’s weak IMCF and Geroch’s monotonicity formulas. Compared with this more geometric approach, the potential-theoretic method demands substantially more work in identifying and constructing suitable monotonicity formulas, since these necessarily involve terms 
that are not purely geometric (see for example equations~\eqref{mp_gen} and~\eqref{dermp} in Section~\ref{sec:MF}). Yet this extra analytical effort is offset by a decisive advantage on the side of the existence theory, which reduces to the classical question of constructing appropriate harmonic or $p$‑harmonic functions on exterior or punctured domains, with natural Dirichlet conditions imposed on any boundary components. 
The examples discussed in the previous subsection -- where the intrinsic difficulties of coupling IMCF with the Hawking mass came to light -- indicate, through the analysis of their pathologies, that shifting from geometric-flow to potential-theoretic existence theory could prove decisive in the framework of a positive cosmological constant and compact initial data with minimal boundary. 
The results that we are going to describe will confirm this expectation.

Starting from Green’s functions for the $p$-Laplacian, for exponents in the range $1<p<3$, subject to homogeneous Dirichlet boundary conditions, 
we will construct and analyze suitable monotone quantities whose constancy identifies the zero-mass model configuration, which, in the present setting, coincides with the hemisphere of radius $R_\Lambda = \sqrt{3/\Lambda}$, or, in physical terms, with the spatial component of the de Sitter spacetime with cosmological constant $\Lambda>0$, in its static slicing representation.

By evaluating these monotone quantities on the minimal boundary of the manifold -- which coincides with the level set of the \(p\)-Green’s function farthest from the pole -- we will identify, in Definition~\ref{def:polmass} the following mass-type invariant, that we name the Polarized \(p\)-harmonic Mass of $(M, g)$ with pole at $x$:
\begin{equation}
\label{eq:polarized_total_mass_Intro}
\mmp_\Lambda(M,g,x)\,= \int_{-\infty}^{+\infty} \!\!\!\!\!e^{\lambda_p(\tau)}  \, \big(4\pi - \Lambda \, {\rm Per} (\Omega_{\tau})\big)  \, d\tau 
\,+ \, \frac{R_\Lambda^{\frac{5-p}{p-1}}}{16\pi}\, 
K_p
\int_{\pa M} \!\!\!|\na u|^2 d\sigma \, ,
\end{equation}
where $u=u_x^{(p)}$ is the $p$-Green's function with pole at $x$ and null Dirichlet boundary conditions, 
\begin{equation*}
    \begin{dcases}
    \Delta_p u \,=\, - 4\pi \delta_x & \text{in } M\,,
    \\
    \quad u \,=\, 0 & \text{on } \partial M,
    \end{dcases}
    \end{equation*}
the set $\Omega_\tau = \Omega_{x, \tau}^{(p)}$ is given by 
\[
\Omega_\tau = \left\{ u > e^{-\tau/(p-1)} \right\} \, ,
\]
the function $\tau \mapsto \lambda_p(\tau)$ is a structural coefficient selected in Theorem~\ref{thm:global} and the constant $K_p>0$ is given by  
\begin{equation}
\label{eq:keypi}
K_p \, = \, \frac{\Gamma(\frac12) \, \Gamma(c_p)}{\Gamma(a_p+\frac{3}{2})\Gamma(b_p+\frac{3}{2})} \, ,
\end{equation}
the coefficients $a_p, b_p$ and $c_p$ being the ones defined in~\eqref{p-parameters} and $\Gamma(\cdot)$ being the Euler's Gamma function. For this invariant, we establish 
the following positive mass statement:
\begin{theorem}[Positive Mass Theorem for the Polarized $p$-harmonic Mass]
\label{thm:pos_pass_pol_Intro}
Let $(M,g)$ be a compact three-dimensional Riemannian manifold with smooth connected boundary $\pa M$,
whose scalar curvature $\RRR$ satisfies 
\[
\RRR \, \geq \, 2 \Lambda \, , 
\]
for some $\Lambda>0$. Assume that $H_2(M, \pa M; \Z) = \{0\}$. 
Then, for every $1<p<3$ and every $x\in M\setminus\pa M$, the Polarized $p$-harmonic Mass with pole at $x$ satisfies
\[
\mmp_{\Lambda}(M,g,x)\geq 0\,.
\]
Furthermore, if $\mmp_{\Lambda}(M,g,x)=0$, then $(M,g)$ is isometric to a round hemisphere with constant sectional curvatures equal to $\Lambda/3$ and $x$ coincides with the north pole. 
\end{theorem}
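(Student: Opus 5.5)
The plan is to build a monotone quantity along the level sets of $u=u^{(p)}_x$ and show that $\mmp_\Lambda(M,g,x)$ is exactly its value on $\pa M$, the level set farthest from the pole, while its limit at the pole is $0$. I take the monotonicity formula of Section~\ref{sec:MF} (equations~\eqref{mp_gen} and~\eqref{dermp}) and the structural coefficient $\lambda_p$ of Theorem~\ref{thm:global} as given. The quantity has the schematic form
\[
\tau\mapsto \mathfrak m_p(\tau)=\int_{-\infty}^{\tau} e^{\lambda_p(s)}\big(4\pi-\Lambda\,{\rm Per}(\Omega_s)\big)\,ds+\mathcal F_p(\tau),
\]
where $\mathcal F_p$ is a boundary-type term: a weighted integral of $|\na u|^{\alpha}$ times mean-curvature factors over $\pa\Omega_\tau$. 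It is designed so that, along the flow of level sets, the derivative is a sum of nonnegative terms. These come from Bochner's identity for the $p$-Laplacian and the traced Gauss equation $\RRR = \RRR^\Sigma + 2{\rm Ric}(\nu,\nu)+|\hhh|^2-\HHH^2$, by analogy with Geroch's computation~\eqref{eq:geroch}.

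The key steps are as follows.

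First, derive the Geroch-type identity $\mathfrak m_p'(\tau)\ge 0$ on regular values. The sum of the three terms below is nonnegative.
\begin{itemize}
\item $\RRR-2\Lambda\ge0$.
\item The Kato-type square $|\na|\na u||^2-|\na^T|\na u||^2$ together with $|\mathring{\hhh}|^2$.
\item A Gauss–Bonnet term $4\pi-\int_{\Sigma}\RRR^\Sigma/2 \ge 0$. This needs every level set to be connected. Connectedness of the level sets $\{u=t\}$ follows from $H_2(M,\pa M;\Z)=\{0\}$ by the standard argument: each sublevel component must touch the pole or the connected boundary $\pa M$, by the maximum principle. Hence the Euler characteristic of each level set is at most $2$.
\end{itemize}
The weight $e^{\lambda_p}$ is chosen, as in Theorem~\ref{thm:global}, so that the ODE produced by the term $-\Lambda\,{\rm Per}(\Omega_\tau)$ closes up, with the model hemisphere saturating it. Critical levels are handled as in~\cite{AMMO}: integrate the derivative over regular values and use absolute continuity of $\mathfrak m_p$ via coarea and the $\mathscr C^{1,\beta}$ regularity of $u$.

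Second, compute the asymptotics at the pole. As $\tau\to-\infty$, $u$ behaves like the Euclidean $p$-Green function $|y-x|^{-(3-p)/(p-1)}$ with gradient expansion (Kichenassamy–Véron). The boundary term then tends to the Euclidean value, which is normalized away, so $\mathfrak m_p(\tau)\to0$.

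Third, compute the value at $\pa M$. As $\tau\to+\infty$, $\Omega_\tau\uparrow M$ and the level sets approach $\pa M=\{u=0\}$. Since $\pa M$ is smooth, $|\na u|$ extends continuously and positively there by Hopf's lemma. The first term becomes the full improper integral in~\eqref{eq:polarized_total_mass_Intro}. The boundary term converges to $\frac{R_\Lambda^{(5-p)/(p-1)}}{16\pi}K_p\int_{\pa M}|\na u|^2$.

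I expect this third step to be the main obstacle. The weight $e^{\lambda_p(\tau)}$ presumably grows like a hypergeometric function as $u\to0$, whose connection coefficients produce $K_p$ through Gauss's summation formula $\Gamma(c)\Gamma(c-a-b)/(\Gamma(c-a)\Gamma(c-b))$. One must also check that the mean-curvature parts of $\mathcal F_p$ drop out in the limit, leaving only $|\na u|^2$. My first attempt would be to expand $\lambda_p$ near $\tau=+\infty$ using the explicit parameters~\eqref{p-parameters}, and to compare with the explicit hemisphere Green function.

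The inequality then follows: $\mmp_\Lambda=\lim_{+\infty}\mathfrak m_p\ge\lim_{-\infty}\mathfrak m_p=0$. For rigidity, equality forces $\mathfrak m_p'\equiv0$, and hence the following.
\begin{itemize}
\item $\RRR\equiv2\Lambda$.
\item Level sets are totally umbilic spheres with $|\na u|$ constant on each.
\item $\na^2 u$ has the rotationally symmetric form.
\end{itemize}
It follows that $g=d\rho^2+\phi(\rho)^2g_{\SSS^2}$ in the distance from $x$. The ODE obtained from equality, together with $\RRR=2\Lambda$ and the smooth closing-up at the pole, gives $\phi=R_\Lambda\sin(\rho/R_\Lambda)$. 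Minimality of the boundary would make it the equator, but minimality is not assumed. Instead, the fact that $\pa M$ is a level set on which the equality case holds with finite mass forces it to be the equator: zero mass corresponds in the model ODE to the Dirichlet boundary at $\rho=\pi R_\Lambda/2$. Therefore $x$ is the north pole.
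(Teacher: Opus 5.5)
Your architecture matches the paper's: the monotone quantity~\eqref{mp_gen} with the coefficients of Theorem~\ref{thm:global}, a vanishing limit at the pole, evaluation as $T\to+\infty$, and rigidity from the vanishing of~\eqref{dermp}. However, your third step contains a real error: the mean-curvature part of the boundary term does not drop out.

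On $\Sigma_T$ one has $|\na w|=(p-1)\,e^{T/(p-1)}|\na u|$. By Theorem~\ref{thm:expansions_mu_lambda}, both $e^{\lambda(T)}$ and $\mu(T)$ decay (they do not grow) like $e^{-T/(p-1)}$, i.e.\ like $u$. Hence $e^{\lambda(T)}|\na w|\,\HHH$ and $e^{\lambda(T)}\mu(T)|\na w|^2$ both have finite, generally nonzero, limits, and
\[
\lim_{T\to+\infty}\mmp_\Lambda(x,T)=\int_{-\infty}^{+\infty}\!\! e^{\lambda(\tau)}\big(4\pi-\Lambda\,{\rm Per}(\Omega_\tau)\big)\,d\tau-\frac{R_\Lambda^{\frac{2}{p-1}}}{8\pi}\,\frac{\Gamma(\frac12)\Gamma(c_p)}{\Gamma(a_p+1)\Gamma(b_p+1)}\int_{\pa M}\!|\na u|\,\HHH\,d\sigma+\frac{R_\Lambda^{\frac{5-p}{p-1}}}{16\pi}\,K_p\int_{\pa M}\!|\na u|^2\,d\sigma .
\]
The middle term vanishes only when $\pa M$ is minimal. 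The formula in the introduction is that special case. Definition~\ref{def:polmass}, which is what the paper's proof actually bounds, keeps the $\HHH$-term for exactly this reason. Since you stress that minimality is not assumed, there is no cancellation to find. You must either assume $\pa M$ minimal or take the full limit above as the mass.

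In the rigidity step, your list of equality consequences omits the one that does the work. Vanishing of the last term in~\eqref{dermp} gives $\HHH/2=\alpha|\na w|$, where $\alpha=\alpha^{(p)}_\Lambda$ is the model coefficient. Without it you cannot place $\pa M$. Consider a geodesic cap of radius smaller than $\pi R_\Lambda/2$ in the round sphere of radius $R_\Lambda$, with pole at its center. It satisfies $\RRR=2\Lambda$ and $H_2(M,\pa M;\Z)=\{0\}$, has umbilic spherical level sets with $|\na u|$ constant on them, and closes smoothly at the pole. Yet it is not a hemisphere; what excludes it is precisely that $\HHH/2\neq\alpha|\na w|$ near its boundary.

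The paper integrates $\pa_w g_{ij}=2\alpha g_{ij}$ together with the ODE for $|\na w|$ coming from $\Delta_p w=|\na w|^p$. This gives $g_{ij}\propto r(w)^2$ and $|\na w|=(p-1)e^{w/(p-1)}r(w)^{-2/(p-1)}$ with $r=r^{(p)}_\Lambda$. Since $w\to+\infty$ at $\pa M$ and $r^{(p)}_\Lambda(t)\to R_\Lambda$ as $t\to+\infty$, the boundary is forced onto the equator. Your sentence about zero mass corresponding to the Dirichlet boundary at $\rho=\pi R_\Lambda/2$ states this conclusion rather than proving it.

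Note also that equality only yields a vanishing derivative at regular values. The paper first works below the smallest critical value $T_0$, then uses the explicit formula for $|\na w|$ to show $T_0=+\infty$. Your argument needs this step as well.
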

The dependence of our newly introduced invariants on the choice of pole marks a clear difference from the classical scenarios with vanishing or negative cosmological constant. In those better studied regimes, the reference geometries used to define mass are noncompact, and this feature underlies their natural pole-independence. By contrast, for time-symmetric initial data with positive cosmological constant, the natural reference space is the standard round hemisphere. Here, compactness and the finite geodesic distance to the cosmological horizon make the issue of polarization unavoidable at first glance. This fact leads us to introduce the $p$-harmonic Total Mass, defined in Definition~\ref{def:pmass} as
\begin{equation*}
\mmp_\Lambda(M,g)\,=\,\inf_{x\in{M \setminus \pa M}} \mmp_\Lambda(M,g,x)\,.
\end{equation*}
By Theorem~\ref{thm:pos_pass_pol_Intro}, this mass functional is automatically nonnegative, but we will prove that it also satisfies an associated rigidity statement. In fact, we will show that the infimum in the above definition is actually a minimum. 

\begin{theorem}[Positive Mass Theorem for the $p$-harmonic Total  Mass]
\label{thm:pos_pass_total_Intro}
Let $(M,g)$ be a compact three-dimensional Riemannian manifold with smooth connected minimal boundary $\pa M$,
whose scalar curvature $\RRR$ satisfies 
\[
\RRR \, \geq \, 2 \Lambda \, , 
\]
for some $\Lambda>0$. Assume that $H_2(M, \pa M; \Z) = \{0\}$. 
Then, for every $1<p<3$, the $p$-harmonic Total Mass of $(M,g)$ satisfies
\[
\mmp_{\Lambda} (M,g) \, \geq  \, 0\,.
\]
Furthermore, if $\mmp_{\Lambda}(M,g)=0$, then $(M,g)$ is isometric to a round hemisphere with constant sectional curvatures equal to $\Lambda/3$.
\end{theorem}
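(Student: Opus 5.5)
Nonnegativity of $\mmp_\Lambda(M,g)$ follows at once from Theorem~\ref{thm:pos_pass_pol_Intro}: every polarized mass $\mmp_\Lambda(M,g,x)$ with $x\in M\setminus\pa M$ is nonnegative, and so is their infimum. The real content is rigidity. The difficulty is that $\mmp_\Lambda(M,g)=0$ only says that the infimum vanishes; it does not give a pole $x$ with $\mmp_\Lambda(M,g,x)=0$. The plan is to show the infimum is attained, as announced before the statement, and then apply the rigidity part of Theorem~\ref{thm:pos_pass_pol_Intro} at a minimizing pole.

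Take a minimizing sequence of poles $x_k\in M\setminus\pa M$ with $\mmp_\Lambda(M,g,x_k)\to 0$. By compactness of $M$, up to a subsequence $x_k\to \bar x\in M$. There are two cases.

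\emph{Case $\bar x$ interior.} I would prove that $x\mapsto \mmp_\Lambda(M,g,x)$ is continuous, or at least lower semicontinuous, on $M\setminus\pa M$. The steps are as follows.
\begin{enumerate}
\item Show that $p$-Green's functions depend continuously on the pole, in $\mathscr C^{1,\beta}_{loc}$ away from the pole and up to $\pa M$. This uses uniqueness of $u^{(p)}_x$, uniform asymptotics $u\sim |y-x|^{-(3-p)/(p-1)}$ near the pole (Kichenassamy--Véron), and boundary regularity.
\item Deduce convergence of the boundary term $\int_{\pa M}|\na u|^2\,d\sigma$.
\item For the integral term, use that $\tau\mapsto \mathrm{Per}(\Omega_\tau)$ converges for a.e.\ $\tau$, since critical levels are negligible. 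The factor $e^{\lambda_p(\tau)}$ is structural and independent of $x$.
\item Control the tails: as $\tau\to-\infty$ the sets $\Omega_\tau$ exhaust $M$, and as $\tau\to+\infty$ they shrink to the pole. Uniform estimates near the pole, and near $\{u=0\}=\pa M$, then give dominated convergence. Alternatively, Fatou's lemma gives lower semicontinuity, which is enough.
\end{enumerate}
Then $\mmp_\Lambda(M,g,\bar x)\le\liminf_k \mmp_\Lambda(M,g,x_k)=0$, so $\mmp_\Lambda(M,g,\bar x)=0$. The rigidity in Theorem~\ref{thm:pos_pass_pol_Intro} now gives the hemisphere.

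\emph{Case $\bar x\in\pa M$.} Here I would show that the polarized mass stays bounded away from zero, or even blows up, so this case cannot produce a minimizing sequence. The natural route is the monotonicity formula behind Theorem~\ref{thm:global}. Since $\mmp_\Lambda(M,g,x)$ is the limiting value of a nondecreasing function $F_x(\tau)$, it is bounded below by $F_x(\tau_0)$ for any fixed $\tau_0$. As $x\to\pa M$, the boundary forces $|\na u|$ on $\pa M$ to concentrate: near a minimal boundary the Green's function behaves like a half-space Green's function with Dirichlet condition (a dipole). I expect the weighted boundary term $K_p R_\Lambda^{(5-p)/(p-1)}\int_{\pa M}|\na u|^2\,d\sigma/16\pi$ to blow up, or at least to dominate any possible negative part of the integral term. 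Minimality of $\pa M$ should enter here, keeping the area term $4\pi-\Lambda\,\mathrm{Per}(\Omega_\tau)$ from compensating, although I am not sure this is the precise mechanism. A cleaner fallback is a blow-up comparison with the model. For the hemisphere, the explicit mass as a function of the distance of the pole to the equator is strictly positive away from the north pole and should tend to a positive limit or to $+\infty$ at the equator. A rescaling argument near $\bar x$ would transfer this lower bound to $(M,g)$.

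This boundary case is the main obstacle. It needs uniform-in-$x$ asymptotics for $p$-Green's functions whose pole degenerates onto the Dirichlet boundary, which is a nonlinear problem with a collapsing singularity. I would attack it first via barrier comparisons for $u^{(p)}_x$ in a collar of $\pa M$, together with $\mathscr C^{1,\beta}$ estimates after rescaling by $\mathrm{dist}(x,\pa M)$.
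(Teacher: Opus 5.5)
Your overall architecture matches the paper's: nonnegativity from the polarized theorem, then attainment of the infimum by continuity in the pole plus exclusion of poles accumulating at $\pa M$, then polarized rigidity at a minimizing pole. Your interior step, continuity of $u_x$ in $\mathscr C^{1,\beta}$ away from the pole and up to $\pa M$, is also the paper's. The genuine gap is the boundary case, which you leave as a conjecture even though it carries the whole rigidity statement.

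\textbf{First missing ingredient: a pole-independent bound on the bulk term.} Minimality does not supply it. In the paper minimality is used only to discard the term $-\int_{\pa M}|\na u|\,\HHH\,d\sigma$. Bounding the mass below by the monotone quantity at a fixed level $\tau_0$ gives nothing beyond nonnegativity. The right tool is the coarea identity $\int e^{\lambda(\tau)}\mathrm{Per}(\Omega_\tau)\,d\tau=\int_M|\na w_x|\,e^{\lambda(w_x)}\,d\mu$, combined with H\"older's inequality and the conserved flux $e^{-t}\int_{\Sigma_t}|\na w|^{p-1}d\sigma=4\pi(p-1)^{p-1}$. Together they give
\[
\int_M|\na w_x|\,e^{\lambda(w_x)}\,d\mu\,\le\,\Big[4\pi(p-1)^{p-1}|M|^{p-1}\int_{-\infty}^{+\infty}e^{p\lambda(\tau)+\tau}\,d\tau\Big]^{1/p},
\]
which does not depend on $x$. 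Hence $\mmp_\Lambda(M,g,x)\ge -C+c_p\int_{\pa M}|\na u_x|^2\,d\sigma$, with $C,c_p>0$ independent of $x$. Properness therefore reduces to showing $\int_{\pa M}|\na u_x|^2\,d\sigma\to+\infty$ as $x\to\pa M$.

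\textbf{Second missing ingredient: a proof of that divergence.} Barrier heuristics or a comparison with the hemisphere will not do it. A rescaling at $\bar x$ sees only a half-space, so no lower bound computed on the global model can be transferred. The paper argues by contradiction. Suppose $\int_{\pa M}|\na u_{x_\ep}|^2\,d\sigma\le K$ with $\ep=d(x_\ep,\pa M)$. Set $g_\ep=\ep^{-2}g$ and $v_\ep=\ep^{\frac{3-p}{p-1}}u_{x_\ep}$. Then $v_\ep$ is again a $p$-Green's function, with pole at $(1,0,0)$ in Fermi coordinates rescaled by $\ep$, and
\[
\int_{\pa M}|\na v_\ep|^2_{g_\ep}\,d\sigma_{g_\ep}\,\le\, K\ep^{2\frac{3-p}{p-1}}\,\longrightarrow\,0 .
\]
The argument then runs as follows.
\begin{enumerate}
\item Uniform pole asymptotics plus comparison give $L^\infty$ bounds away from the pole, hence $\mathscr C^{1,\beta}$ bounds up to the flat boundary on a half-ball.
\item A Harnack chain from the uniform lower bound near the pole keeps $v_\ep(1/4,0,0)$ bounded away from $0$.
\item A $\mathscr C^1$ limit is then a nonnegative, nontrivial $p$-harmonic function on a Euclidean half-ball with $v=|\na v|=0$ on the flat part. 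This contradicts the strong maximum principle together with the Hopf lemma.
\end{enumerate}

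\textbf{Two smaller corrections to your interior case.}
\begin{itemize}
\item You have the tails reversed: $\Omega_\tau=\{w<\tau\}$ shrinks to the pole as $\tau\to-\infty$ and exhausts $M$ as $\tau\to+\infty$.
\item Fatou's lemma does not give lower semicontinuity here. The perimeter enters the mass with a minus sign, so Fatou yields only upper semicontinuity, which is useless for attaining the infimum. You need genuine convergence of the bulk term. The easiest route is the coarea rewriting above, the $\mathscr C^{1,\beta}$ convergence of $w_{x_k}$ away from the pole, and the fact that $|\na w_x|e^{\lambda(w_x)}\to 1/(8\pi)$ at the pole. That last fact makes small balls around the poles contribute uniformly little and avoids any claim about a.e.\ convergence of perimeters.
\end{itemize}
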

Theorem~\ref{thm:pos_pass_pol_Intro} and Theorem~\ref{thm:pos_pass_total_Intro} represent our main contributions to the study of the concept of mass in the presence of a positive cosmological constant. They are restated as Theorem~\ref{thm:pos_pass_pol} and Theorem~\ref{thm:pos_pass_total} later on for the reader's convenience.

\subsection{Formal limits as $p \to 1^+$: conjectures and further perspectives.}
\label{sub:pto1}

We find it both appropriate and interesting to include some remarks on the connection between our monotone quantities and the more classical, albeit non-effective, Hawking mass. Our aim is to clarify this relationship here in a preliminary way and to pursue a more systematic treatment in a subsequent work.
To this end, recall that $p$-harmonic functions $u$ can be employed, according to the scheme introduced by Moser~\cite{Moser2007}, to approximate the weak IMCF, in the limit as $p \to 1^+$. This is achieved via the change of variable
\[
w = - (p-1) \log u\,,
\]
under which $w$ solves the so-called $p$-IMCF equation
\[
\operatorname{div} (|\nabla w|^{p-2} \nabla w) = |\nabla w|^p \, .
\]
In particular, as noted in Appendix~\ref{app:pharmonic}, a $p$-Green’s function with pole at $x \in M\setminus \partial M$ gives rise to a solution of the $p$-IMCF emanating from $x$:
\begin{equation*}
\begin{dcases}
\mathrm{div}\big(e^{-w}|\nabla w|^{p-2}\nabla w\big) \,=\, 4\pi (p-1)^{p-1} \delta_x & \text{in } M\,,
\\[0.3em]
\qquad \qquad \qquad \quad w(y)\longrightarrow +\infty & \text{as } y\to\partial M \, .
\end{dcases}
\end{equation*}
Applying the recent approximation theorem~\cite[Theorem 5.1]{Ben_Mar_Rig_Set_Xu} to an exhaustion of $M\setminus\{x\}$ by domains of the form $M \setminus B_{\rho}(x)$, with $\rho \to 0$, one can show that the 
solutions to the corresponding problem
subconverge uniformly on compact subsets of $M\setminus\{x\}$ to the IMCF with outer obstacle $w_1=w_x^{(1)}$ emanating from $x$. More precisely they converge to the weak solution of
\begin{equation}
    \label{eq:obs-imcf}
    \begin{dcases}
    \mathrm{div}\left(e^{-w} \frac{\nabla w}{|\nabla w|}\right) \,=\, 4\pi  \delta_x & \text{in } M\,,
    \\
    \quad \frac{\nabla w}{|\nabla w|}(y)\longrightarrow \nu(y_0) & \text{as } y\to y_0 \in \partial M\, ,
    \end{dcases}
\end{equation}
in the sense described by Xu in~\cite[Summary of Definitions/Theorem 1.5]{Xu}, $\nu(y_0)$ being the outer unit normal at $y_0\in \partial M$. The normalization constant in front of the Dirac delta 
is reflected in the fact that, 
for every $t \in \R$,
\begin{equation*}
\int_{\Sigma_{t,x}^{(p)}} |\na w_p|^{p-1}\, d \sigma \, = \,
4\pi(p-1)^{p-1} e^t , \qquad \Sigma_{x,\tau}^{(p)} = \{ w_x^{(p)} = \tau \} \, ,
\end{equation*}
(see the proof of Theorem~\ref{thm:ssl}).
Indeed, in the limit as $p\to1^+$, this identity 
induces the following normalization for the area growth law of the IMCF
\begin{equation*}
|\Sigma_{x,\tau}^{(1)}| \, = \, 4 \pi e^t, \qquad \Sigma_{x,\tau}^{(1)} = \{ w_x^{(1)} = \tau \} ,
\end{equation*}
for all $t$ smaller than the first contact time~\eqref{eq:fcc}. The $p$-IMCF $w_p = w_x^{(p)}$ emanating from $x$ is also very useful for expressing the monotone quantities that arise in the proof of Theorem~\ref{thm:pos_pass_pol_Intro} as
\begin{equation*}
\mmp_\Lambda (x,t) \, = \int_{-\infty}^t \!\!\!\! e^{\lambda_p(\tau)} \bigl(4\pi - \Lambda|\Sigma_{x,\tau}^{(p)}|\bigr)  \, d\tau
\, - \,\,  e^{\lambda_p(t)} \!\int_{\Sigma_{x,t}^{(p)}} \!\! |\nabla w_p| \,  \bigl(  \HHH - \mu_p(t)|\nabla w_p|  \bigr) \, d\sigma \,, \qquad t \in \mathbb{R}\, , 
\end{equation*}
where $\Sigma_{x,\tau}^{(p)} = \{ w_x^{(p)}=\tau \}$, $\HHH$ denotes its mean curvature, and $\lambda_p$ and $\mu_p$ are the structural coefficients selected in Theorem~\ref{thm:global}.
Although these structural coefficients are not as explicit as in the $\Lambda = 0$ case (see formula~\eqref{eq:lomo}), a finer analysis, carried out in Section~\ref{sec:further}, reveals very effective insights about our monotone quantities. Specifically, their asymptotic behavior as $t \to \pm \infty$, summarized in Corollary~\ref{cor:behaviour_with_t}, delivers two key outcomes. On the one hand, as $t \to -\infty$, it gives a Small Sphere Limit result (Theorem~\ref{thm:ssl}),
\[
\lim_{t\to-\infty}\frac{\mmp_\Lambda(x,t)}{|\Omega_{x,t}^{(p)}|}\,=\,
\frac{\RRR(x)-2\Lambda}{16 \pi}\,, \qquad \hbox{where} \,\,\, \Omega_{x,t}^{(p)} = \{ w_x^{(p)} \!< \, t \} \, ,
\]
which substantiates their interpretation as quasi-local mass functionals. On the other hand, as $t \to +\infty$, it leads to a clean expression for the Polarized $p$-harmonic Mass (see formula~\eqref{eq:polarized_total_mass_Intro}),
which underpins Definition~\ref{def:polmass} and provides a concrete global invariant distilled from the underlying monotonicity properties.

It is also interesting to study the behavior of the structural coefficients $\lambda_p$ and $\mu_p$ in the limit as $p \to 1^+$. Indeed, in  Lemmas~\ref{le:eallalambda} and~\ref{mupi}, we will prove that
\[
e^{\lambda_p (t)}\,\longrightarrow \,
\begin{dcases}
\frac{e^{t/2}}{16 \pi} & \hbox{if $t< T_\Lambda$}
\\
\\
0 & \hbox{if $t > T_\Lambda$}
\end{dcases}
\qquad \hbox{and}\qquad 
\mu_p(t) \, e^{\lambda_p(t)}\,\longrightarrow \,
\begin{dcases}
\frac{e^{t/2}}{32\pi} & \hbox{if $t<  T_\Lambda$}
\\
\\
0 & \hbox{if $t >  T_\Lambda$}
\end{dcases}
\]
as $p\to 1^+$, where we set 
\[
T_\Lambda = 2 \log (R_\Lambda) = \log (3/\Lambda) \, .
\]
Note that $T_\Lambda$ coincides both with the maximal existence time and with the first contact time of the model IMCF, namely the IMCF starting from the north pole $N$ of the standard sphere of radius $R_\Lambda$ and having the equator as an outer obstacle. Indeed, the model solution is explicitly given by $w_N= 2 \log r$ in this framework. The behavior of the structural coefficients, as $p \to 1^+$, makes it possible to identify a formal limit of the monotone quantities $\mmp_\Lambda (x,t)$ toward the Hawking mass.
In fact, assuming that the $p$‑IMCF $w_x^{(p)}$ converge to an IMCF with obstacle $w_x^{(1)}$ in a way that is strong enough to justify the convergence of the areas of the level sets and of suitable integral norms of the gradients, we would obtain the following limit for small enough $t < T_\Lambda$:
\begin{align*}
&\int_{-\infty}^t
\left(\frac{e^{\tau/2}}{16\pi}\right)
\left(4\pi-\Lambda|\Sigma_{x,\tau}^{(1)}|\right)
d\tau
-\bigg(\frac{e^{t/2}}{16\pi}\bigg)
\int_{\Sigma_{x,t}^{(1)}} \! |\nabla w_1| \!\left(  \HHH - \frac{1}{2}|\nabla w_1|  \right) \, d\sigma  \\
=\,&\int_{-\infty}^t
\left(\frac{e^{\tau/2}}{16\pi}\right)
\left(4\pi-\Lambda \,  4\pi e^t\right)
d\tau
-\bigg(\frac{e^{t/2}}{16\pi}\bigg)
\int_{\Sigma_{x,t}^{(1)}} \!  \frac{\HHH^2}{2}  \, d\sigma\\
=\,& \,\mathfrak{m}_{\rm Haw}\big(\Sigma_{x,t}^{(1)}\big)\, ,
\end{align*}
as $|\na w_1| =\HHH$ for the IMCF and, according to our normalization, $|\Sigma_{x,t}^{(1)}| = 4 \pi e^t$, if $t$ is not too large. We conjecture that this formal limit can be rigorously justified for almost every time in the range $(-\infty, T_\Lambda)$ smaller than the first contact time 
\begin{equation}
\label{eq:fcc}
T_*(x) \, = \, \min_{\partial M} w_x^{(1)}
=\inf\, \Bigl\{t\in\mathbb R:\, \bigl|\partial\Omega^{(1)}_{x,t}\cap\partial M\bigr|\neq 0\Bigr\}
\end{equation}
of the weak IMCF with the obstacle $\pa M$ emanating from $x$. In this range, in fact, the standard area growth $|\Sigma_{x,t}^{(1)}| = 4 \pi e^t$ is satisfied (see~\cite[Remark 3.5-(iii)]{Xu}) and it is very plausible that one can adapt the theory of improved convergence, recently developed by Benatti, Pluda, and Pozzetta in~\cite[Theorem 1.2]{Ben_Plu_Poz}.

On one side, the formal convergence to the Hawking mass for fixed times clarifies the  geometric nature of our monotone quantities. On the other side, it is already clear that the limit of the Hawking mass as $t \to T^*(x)$, where
\begin{equation}
\label{eq:tistar}
T^*(x) \, =\, \max_{\pa M } \, w_x^{(1)}
\end{equation}
denotes the maximal existence time of the IMCF with obstacle emanating from $x$, does not yield truly satisfactory information. By contrast, a far more intriguing approach is to reverse the order of limits: first considering $\lim_{t\to +\infty}\mmp_\Lambda(x,t)$, and only then pass to the limit as $p \to 1^+$ in the Polarized $p$-harmonic Mass. According to formula~\eqref{eq:polarized_total_mass_Intro}, in the setting of manifolds with minimal boundary, this limit is given by
\[
\lim_{p \to 1^+} \mmp_\Lambda(M,g,x) \, = \, \lim_{p \to 1^+} \left(\int_{-\infty}^{+\infty} \!\!\!\!\!e^{\lambda_p(\tau)}  \, \big(4\pi - \Lambda \, {\rm Per} (\Omega_{x,\tau}^{(p)})\big)  \, d\tau 
\,
+ \, \frac{R_\Lambda^{\frac{5-p}{p-1}}}{16\pi}\, 
K_p
\, \int_{\pa M} \!\!\! |\na u_p|^2 d\sigma  \right)\,. 
\]
As already noted, a full analysis of this quantity will be deferred to future work; at present we limit ourselves to a few preliminary remarks. First, using expansion~\eqref{eq:behaviour_rp_to_one_phi} in Proposition~\ref{pro:behaviour_near_one}, we obtain
\[
K_p 
\, = \, (p-1) \, (1 + o(1)) \, , \qquad \hbox{as} \,\, p \to 1^+ \, ,
\]
where $K_p$ is the constant defined in~\eqref{eq:keypi}.
If we assume an upper bound on the boundary energy of the form
\begin{equation*}
\int_{\pa M} \!\!\! |\na u_p|^2 d\sigma \, \leq \,  B_p \int_{\SSS^2_\Lambda} \!\! \big|\na u_\Lambda^{(p)}\big|^2 d\sigma \, ,
\end{equation*}
where $u_\Lambda^{(p)}$ is the model solution with pole at the north pole and the constant $B_p>0$ satisfies $B_p=o(1/(p-1))$ as $p \to 1^+$, then, thanks to~\eqref{u_L_dot}, we deduce that the second integral can be estimated as
\[
\frac{R_\Lambda^{\frac{5-p}{p-1}}}{16\pi}\, 
K_p
\, \int_{\pa M} \!\!\! |\na u_p|^2 d\sigma \,  \leq \, K_p   B_p \,\frac{R_\Lambda^{\frac{5-p}{p-1}}}{16\pi}\, 
\, 4\pi R_\Lambda^2 \, R_{\Lambda}^{-\frac{4}{p-1}} \, = \, \frac{R_\Lambda}{4} \, K_p  B_p \,= \, o(1)\,, \qquad \hbox{as} \,\, p \to 1^+ \, .
\]
It follows that, within this framework, only the first integral -- the so-called bulk term -- contributes in the limit as $p \to 1^+$. Assuming the $L^1$-convergence of the perimeters of the sublevel sets of the $p$-IMCF to the perimeters of the sublevel sets of the IMCF with obstacle with the same pole, 
the bulk term should converge, as $p \to 1^+$, to the quantity 
\begin{equation}
\label{eq:1harmass}
\mathfrak{m}_\Lambda^{(1)} (M, g, x) \, = \, \int_{-\infty}^{T^+_x}
\!\!\!\left(\frac{e^{\tau/2}}{16\pi}\right)  \, \big(4\pi - \Lambda \, {\rm Per} (\Omega_{x,\tau}^{(1)})\big)  \, d\tau  \, ,
\end{equation}
where $T^+_x = \min \{ T_\Lambda ,T^*(x)\}$. 
This quantity, that we name Polarized $1$-harmonic Mass as it has been obtained in the limit as $p\to1^+$ of the Polarized \(p\)-harmonic Masses under a set of reasonable simplifying assumptions, is interesting in its own right and allows us to introduce a second group of main results related to it. Notably, these  
include a Positive Mass Theorem and a Riemannian Penrose Inequality for the $1$-harmonic mass.

\subsection{The $1$-harmonic Mass}
\label{sub:1harm}

We first observe that, on a compact manifold with smooth boundary, the quantity~\eqref{eq:1harmass} is well defined for every $x \in M \setminus \partial M$. Its definition indeed only requires the existence of an IMCF with outer obstacle $\partial M$ emanating from the pole $x$, subject to a suitable normalization, that is, a solution of~\eqref{eq:obs-imcf}. Once such a flow is available, one can indeed compute the areas of its sublevel sets and the last contact time~\eqref{eq:tistar} of the flow with the obstacle, denoted by $T^*(x)$. This time, when compared with the standard existence time $T_\Lambda$, uniquely determines the upper limit of integration $T^+_x$.
We also note that the polarized $1$-harmonic mass~\eqref{eq:1harmass} vanishes in the correspondence of the de Sitter metric
\[
g_\Lambda\,=\,\frac{dr\otimes dr}{1-\frac{\Lambda}{3}r^2}+r^2g_{\mathbb{S}^2} \, , \qquad 0<r \leq R_\Lambda = \sqrt{3/\Lambda} \, ,
\]
when computed with respect to the north pole $N$. Indeed, in this case we have that
$T_\Lambda = T^*(N)$, the canonical IMCF is given by $w_N^{(1)} = 2 \log r$, and the area of its level sets is $4 \pi e^\tau$ for every $-\infty<\tau \leq T_\Lambda = 2 \log R_\Lambda$. A direct evaluation of the defining integral then yields
\[
\mathfrak{m}_\Lambda^{(1)} (\SSS^3_+, g_\Lambda
, N) = \frac{R_\Lambda}{2} \left( 1 - \frac{\Lambda}{3} (R_\Lambda)^2 \right) = 0 \, .
\]

In the next theorem, we prove that, for a suitable class of time-symmetric initial data, the polarized \(1\)-harmonic Mass is always nonnegative, and that its vanishing characterizes the de Sitter solution.
\begin{theorem}[Positive Mass Theorem for the Polarized $1$-harmonic Mass]
\label{thm_eich}
Let $(M,g)$ be a compact three-dimensional Riemannian manifold with smooth minimal boundary $\pa M$, whose scalar curvature satisfies
\[
\RRR \geq 2 \Lambda,
\]
for some $\Lambda>0$. Assume that $\pa M$ is simply connected and unstable and that there are no closed minimal surfaces in $M\setminus \pa M$. Then, for every $x \in M\setminus \pa M$, we have
\[
\mathfrak{m}_\Lambda^{(1)} (M, g, x) \, \geq \, 0,
\]
with equality if and only if $(M,g)$ is isometric to a round hemisphere of constant sectional curvatures equal to $\Lambda/3$.
\end{theorem}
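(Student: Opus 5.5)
The plan is to compare the perimeters of the sublevel sets of the IMCF with outer obstacle $w=w_x^{(1)}$ with the model area law $4\pi e^\tau$. Nonnegativity will then follow from an explicit one-variable computation. Rigidity will follow by running Geroch's monotonicity~\eqref{eq:geroch} along the flow in the equality case.

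\emph{Step 1 (area bound).} The first ingredient is the sub-exponential perimeter growth for the IMCF with outer obstacle. Following~\cite{Xu}, the map $\tau\mapsto e^{-\tau}{\rm Per}(\Omega^{(1)}_{x,\tau})$ is nonincreasing. With our normalization, which is exact for $\tau<T_*(x)$, this gives ${\rm Per}(\Omega^{(1)}_{x,\tau})\le 4\pi e^\tau$ for every $\tau$. Inserting this bound in~\eqref{eq:1harmass} yields
\[
\mathfrak m^{(1)}_\Lambda(M,g,x)\;\ge\;\frac{1}{4}\int_{-\infty}^{T^+_x} e^{\tau/2}\bigl(1-\Lambda e^{\tau}\bigr)\,d\tau\;=\;\frac14\, f(T^+_x),\qquad f(T)=2e^{T/2}-\tfrac{2\Lambda}{3}e^{3T/2}.
\]
The function $f$ increases up to $e^T=1/\Lambda$ and then decreases. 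It vanishes at $T=-\infty$ and at $T=T_\Lambda$, since $e^{T_\Lambda}=3/\Lambda$. Because $T^+_x\le T_\Lambda$, we get $f(T^+_x)\ge 0$, and the inequality follows. Moreover $f(T^+_x)=0$ forces $T^+_x=T_\Lambda$, that is $T^*(x)\ge T_\Lambda$.

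\emph{Step 2 (equality case: no early contact).} Suppose $\mathfrak m^{(1)}_\Lambda(M,g,x)=0$. Then $T^+_x=T_\Lambda$, and ${\rm Per}(\Omega^{(1)}_{x,\tau})=4\pi e^\tau$ for a.e. $\tau<T_\Lambda$. I would argue that after the first contact $T_*(x)$ the obstacle strictly reduces the perimeter below the free growth. To see this, use the instability of $\partial M$: it prevents the flow from sliding tangentially along $\partial M$ while keeping full area growth, because a portion of $\partial\Omega_\tau$ lying on an unstable minimal surface can be pushed inward with a strict gain. Combined with the monotonicity in Step 1, this yields $T_*(x)\ge T_\Lambda$. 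Hence on $(-\infty,T_\Lambda)$ the flow is the unconstrained weak IMCF of Huisken--Ilmanen~\cite{HI} in $M\setminus\partial M$.

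The hypotheses that $\partial M$ is simply connected and that $M\setminus\partial M$ contains no closed minimal surfaces are used here. They ensure that the level sets remain connected (no jumps onto minimal hulls interior to $M$) and of genus zero, so the Gauss--Bonnet term in~\eqref{eq:geroch} is nonnegative. Therefore $\mathfrak m_{\rm Haw}(\Sigma_\tau)$ is nondecreasing, starts from $0$, and so satisfies $\mathfrak m_{\rm Haw}(\Sigma_\tau)\ge0$ up to $T_\Lambda$.

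\emph{Step 3 (rigidity).} At $\tau\to T_\Lambda$ the area is $12\pi/\Lambda$, so $\mathfrak m_{\rm Haw}\ge0$ forces $\int_{\Sigma}\HHH^2\,d\sigma\le 16\pi-\frac{4\Lambda}{3}|\Sigma|=0$. Thus the limiting surface is minimal with vanishing Hawking mass, and $\mathfrak m_{\rm Haw}\equiv0$ on $(-\infty,T_\Lambda]$. Every term in~\eqref{eq:geroch} therefore vanishes: the level sets are umbilic spheres with constant $\HHH$, $\RRR=2\Lambda$, and $|\nabla w|=\HHH$ is constant on them. By the standard Huisken--Ilmanen rigidity argument, $g$ is a warped product $dr^2/(1-\frac\Lambda3 r^2)+r^2g_{\SSS^2}$ on $\{w<T_\Lambda\}$. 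Finally, the closed minimal surface $\{w=T_\Lambda\}$ cannot lie in $M\setminus\partial M$, so it coincides with $\partial M$. Hence $M$ is the round hemisphere and $x$ is its pole. The converse is the computation already done above for $g_\Lambda$.

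The main obstacle is Step 2: proving rigorously that contact with the obstacle before $T_\Lambda$ strictly spoils the exponential area law, and tracking the weak (possibly jumping) flow up to time $T_\Lambda$. If the instability argument there proves too delicate, I would fall back on Eichmair's isoperimetric characterization~\cite{EIC09}. The idea is to view $\Omega_\tau$ for $\tau$ close to $T_\Lambda$ as sets of area $4\pi R_\Lambda^2$, which is the maximal possible one, and then conclude rigidity directly from that characterization.
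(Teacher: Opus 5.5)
Your Steps 1 and 3 follow the paper's proof. The paper adds and subtracts $\frac{\Lambda}{4}e^{3\tau/2}$, integrates to $\frac{e^{T^+_x/2}}{2}\bigl(1-\frac{\Lambda}{3}e^{T^+_x}\bigr)\ge 0$, and in the equality case runs Geroch's monotonicity on $(-\infty,T_\Lambda]$ to get $\int\HHH^2=0$ on $\pa\Omega_{T_\Lambda}$, which is then identified with $\pa M$. The genuine gap is in Step 2, the deduction $T_*(x)\ge T_\Lambda$.

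The mechanism you propose does not work, for two reasons. First, instability of $\pa M$ is a global property of the closed surface, whereas every sufficiently small piece of a minimal surface is strictly stable. So a small contact patch $\pa\Omega_\tau\cap\pa M$ admits no inward area-decreasing deformation. Second, pushing inward produces a smaller set, which is not even a competitor for the outward-minimizing property of $\Omega_\tau$.

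What you actually need is the strict deficit ${\rm Per}(\Omega_\tau)<4\pi e^\tau$ for $\tau>T_*(x)$. This is a structural feature of the obstacle flow, independent of any stability of $\pa M$, and the paper simply invokes it from Xu's Remark~3.5-(iii). Heuristically, integrate ${\rm div}(\nabla w/|\nabla w|)=|\nabla w|$ over $\{s<w<t\}$ and use $\nabla w/|\nabla w|\to\nu$ on $\pa M$. This gives $\frac{d}{d\tau}{\rm Per}(\Omega_\tau)={\rm Per}(\Omega_\tau)-|\pa M\cap\{w<\tau\}|$: the part of $\pa\Omega_\tau$ resting on the obstacle is frozen and does not contribute to growth. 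With this in hand, ${\rm Per}(\Omega_\tau)=4\pi e^\tau$ on $(-\infty,T_\Lambda)$ forces $T_\Lambda\le T_*(x)$ at once.

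Your Eichmair fallback does not apply. That theorem concerns closed manifolds with positive Ricci curvature and genuinely isoperimetric regions, none of which is available here.

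Relatedly, you have placed the instability hypothesis in the wrong step, and your justification of the connectedness of the leaves is incomplete. Connectedness is what makes $\int_\Sigma \RRR^\Sigma/2\le 4\pi$ in~\eqref{eq:geroch}; genus zero is not needed. It does not come from ruling out ``jumps onto minimal hulls''; it comes from the topological condition $H_2(M,\pa M;\Z)=0$. The paper obtains this condition via Lemma~\ref{lem:topol}, and that is exactly where instability enters. In the universal cover, one minimizes area among surfaces separating two lifts of $\pa M$. Because those lifts are unstable, the minimizer lies in the interior and projects to a closed interior minimal surface, which is excluded; hence $\pi_1(M)=0$. The half-lives-half-dies principle and the Poincar\'e conjecture then give $M\cong\mathbb{B}^3$. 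Without this argument, the simple connectivity of $\pa M$ and the absence of interior minimal surfaces alone do not yield the connectedness you assert.
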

\begin{remark}
As it will be clear from Lemma~\ref{lem:topol} below, 
the absence of closed minimal surfaces in $M \setminus \pa M$, in the presence of a 
simply connected and unstable minimal boundary, 
implies that the manifold is topologically simple and in fact diffeomorphic to $\mathbb{B}^3$. 
This property is important for the rigidity statement, but it is irrelevant for the mere inequality, whose proof does not rely on any topological assumption.
\end{remark}

\begin{remark}
We conjecture that following the ideas developed in Section~\ref{sec:blowup} a similar positive mass statement can be proven for the unpolarized version of the $1$-harmonic Mass, namely
\[
\mathfrak{m}_\Lambda^{(1)} (M, g) \, = \, \inf_{x\in M} \, \mathfrak{m}_\Lambda^{(1)} (M, g, x) \, .
\]
In fact, a closer inspection of formula~\eqref{eq:1harmass} suggests 
that when the pole approaches the boundary the value of the Polarized $1$-harmonic Mass would increase. 
\end{remark}

To introduce our last main result, we first note that the $1$-harmonic mass defined in~\eqref{def:1harm} is compatible with the real parameter $m$ appearing in the two-parameter family of Schwarzschild-de Sitter metrics, whose spatial component we now describe.
These Riemannian metrics are given by
\begin{equation}
\label{eq:SDS}
g_{\Lambda, m}\,=\,\frac{dr\otimes dr}{1-\frac{\Lambda}{3}r^2 - \frac{2m}{r}}+r^2g_{\mathbb{S}^2} \, , \qquad R_\Lambda^- (m)\leq r \leq R_\Lambda^+(m) \, ,
\end{equation}
with $\Lambda > 0$ and $0 \leq m < 1/(3\sqrt{\Lambda})$.
Here $R_\Lambda^{\pm}(m)$ denote the two positive roots of the equation $p_\Lambda(r) = m$, where $r \mapsto p_\Lambda(r)$ is the cubic polynomial
\[
p_\Lambda(r) \, = \, \frac{r}{2}\left(1-\frac{\Lambda}{3}r^2 \right)  .
\]
For future convenience, we also define $T_\Lambda^\pm(m) = 2 \log (R_\Lambda^\pm(m))$ and observe that these two numbers represent the endpoints of the standard existence interval of the model IMCF $w = 2 \log r$. In other words, this flow
originates from the black hole horizon $\{ r = R_\Lambda^-(m)\}$ at time $T_\Lambda^-(m)$ and reaches the cosmological horizon $\{ r = R_\Lambda^+(m)\}$ at time $T_\Lambda^+(m)$. Furthermore, note that for $\Lambda > 0$ the polynomial $r \mapsto p_\Lambda (r)$ is increasing on the interval $R_\Lambda^-(0) = 0 \leq r \leq  \sqrt{1/\Lambda}$ and decreasing on the interval $\sqrt{1/\Lambda} \leq r \leq \sqrt{3/\Lambda} = R_\Lambda^+(0)$. In particular, for every $\Lambda >0$ and every $0\leq m < 1/(3 \sqrt{\Lambda})$, we have
\[
0 \leq R_\Lambda^-(m) < \sqrt{{1}/{\Lambda}} < R_\Lambda^+(m) \leq R_\Lambda \, . 
\]
\def\Lam{1.0}     
\def\mm{0.22}     
%
%

\begin{figure}[t]
    \centering
%

\begingroup
\providecommand{\Lam}{1.0}
\providecommand{\mm}{0.22}

\pgfmathsetmacro{\Rmax}{1/sqrt(\Lam)}
\pgfmathsetmacro{\Ymax}{1/(3*sqrt(\Lam))}
\pgfmathsetmacro{\Rzero}{sqrt(3/\Lam)}
\pgfmathsetmacro{\mu}{\mm*sqrt(\Lam)}
\pgfmathsetmacro{\theta}{acos(-3*\mu)}
\pgfmathsetmacro{\Rplus}{2*cos(\theta/3)/sqrt(\Lam)}
\pgfmathsetmacro{\Rminus}{2*cos(\theta/3 - 120)/sqrt(\Lam)}
\pgfmathsetmacro{\xmin}{-0.12*\Rzero}
\pgfmathsetmacro{\xmax}{1.08*\Rzero}
\pgfmathsetmacro{\ymin}{-0.18*\Ymax}
\pgfmathsetmacro{\ymax}{1.17*\Ymax}

\begin{tikzpicture}
    \begin{axis}[
        width=0.88\linewidth,
        height=0.50\linewidth,
        axis lines=middle,
        xmin=\xmin, xmax=\xmax,
        ymin=\ymin, ymax=\ymax,
        samples=250,
        domain=\xmin:\xmax,
        clip=false,
        xlabel={$r$},
        ylabel={$p_{\Lambda}(r)$},
        xtick=\empty,
        ytick=\empty,
        axis line style={thick},
        xlabel style={at={(ticklabel* cs:1)},anchor=west},
        ylabel style={at={(ticklabel* cs:1)},anchor=south},
    ]
        \addplot[very thick] {x/2*(1-\Lam*x^2/3)};

        \addplot[dashed, thick] coordinates {(\Rmax,0) (\Rmax,\Ymax)};
        \addplot[dashed, thick] coordinates {(0,\Ymax) (\Rmax,\Ymax)};
        \addplot[only marks, mark=*, mark size=1.9pt] coordinates {(\Rmax,\Ymax) (\Rmax,0) (0,\Ymax)};

        \addplot[blue!70!black, dashed, thick] coordinates {(\Rminus,0) (\Rminus,\mm)};
        \addplot[blue!70!black, dashed, thick] coordinates {(\Rplus,0) (\Rplus,\mm)};
        \addplot[blue!70!black, dashed, thick] coordinates {(0,\mm) (\Rplus,\mm)};
        \addplot[blue!70!black, only marks, mark=*, mark size=1.8pt]
            coordinates {(\Rminus,0) (\Rplus,0) (\Rminus,\mm) (\Rplus,\mm) (0,\mm)};

        \addplot[only marks, mark=*, mark size=1.8pt] coordinates {(\Rzero,0)};

        \node[below left=1pt, fill=white, inner sep=1.2pt] at (axis cs:0,0) {$0$};
        \node[below=2pt, fill=white, inner sep=1.2pt] at (axis cs:\Rmax,0) {$\sqrt{1/\Lambda}$};
        \node[below=2pt, fill=white, inner sep=1.2pt] at (axis cs:\Rzero,0) {$\sqrt{3/\Lambda}$};
        \node[left=2pt, fill=white, inner sep=1.2pt] at (axis cs:0,\Ymax) {$\frac{1}{3\sqrt{\Lambda}}$};
        \node[blue!70!black, left=3pt, fill=white, inner sep=1.2pt] at (axis cs:0,\mm) {$m$};
        \node[blue!70!black, below=4pt, fill=white, inner sep=1.2pt] at (axis cs:\Rminus,0) {$R_{\Lambda}^{-}(m)$};
        \node[blue!70!black, below=4pt, fill=white, inner sep=1.2pt] at (axis cs:\Rplus,0) {$R_{\Lambda}^{+}(m)$};
    \end{axis}
\end{tikzpicture}
\endgroup
    \caption{Graph of the function $p_{\Lambda}(r)=\frac{r}{2}\left(1-\frac{\Lambda}{3}r^2\right)$ over the  interval $[0,\sqrt{3/\Lambda}]$. The black dashed lines highlight the maximum point and the maximal value of the function; the blue dashed lines represent the level $m$ and its two positive preimages $R_{\Lambda}^{-}(m)$ and $R_{\Lambda}^{+}(m)$.}
    \label{fig:p-lambda}
\end{figure}

\noindent Having introduced the appropriate notation, we can now confirm the claimed consistency check for the $1$‑harmonic mass of the Schwarzschild--de Sitter solutions. For this purpose, it is convenient to attach a spherical cap (hemisphere) of radius $R_\Lambda^{-}(m)$ to the black‑hole horizon of $g_{\Lambda,m}$ -- that is, to the boundary component corresponding to $r = R_\Lambda^{-}(m)$ -- thereby producing a manifold with connected boundary and a distinguished north pole. In this configuration, one verifies that the IMCF starting from the north pole $N$ of the spherical cap and terminating at the cosmological horizon $\{r = R_\Lambda^{+}(m)\}$ is still described by 
\[
w_N^{(1)} = 2 \log r,
\]
as in the case $m = 0$. The only modification concerns the maximal existence time of the flow, which is now given by
\[
T^*(N) = T_\Lambda^+(m) = 2 \log R_\Lambda^+(m).
\]
Moreover, for $-\infty < \tau \leq T_\Lambda^+(m)$, the areas of the corresponding level sets remain equal to $4\pi e^\tau$. Consequently, performing in the present setting the very same integration that we performed in the massless case yields that the value of the $1$-harmonic Mass is now given by
\[
\frac{R^+_\Lambda(m)}{2} \left( 1 - \frac{\Lambda}{3} \bigl(R^+_\Lambda(m)\bigr)^2 \right) = m \, ,
\]
as wished. Beyond this consistency verification, we observe that the notion of $1$‑harmonic mass admits a coherent and natural extension to the setting of three‑dimensional Riemannian bands $(M,g)$ with scalar curvature satisfying $\mathrm{R}_g \geq 2\Lambda > 0$ and boundary
\[
\partial M = \partial M^- \sqcup \partial M^+ \, .
\]
In this more general framework, we will demonstrate that the $1$‑harmonic mass satisfies a Riemannian Penrose‑type inequality, in agreement with the conjectural inequalities proposed by Bray and Chruściel in~\cite{Bra_Chr_2004} (see in particular their Penrose conjecture, formula (4.7)).
The primary formal distinction between
the definition given in~\eqref{eq:1harmass},
for the case of connected boundary, 
and the one we are about to introduce lies in the specification of the standard existence interval and the consequent normalization of the initial mass content. To clarify these concepts, we assume that both boundary components are smooth closed minimal surfaces 
and we suppose without loss of generality that
\[
|\partial M^-| \leq |\partial M^+| \, ,
\]
thus interpreting the boundary component $\partial M^-$ 
as a black-hole–type horizon and the boundary component $\partial M^+$ as a cosmological horizon. Under this hypothesis, we define the real numbers $R^-\!= R^-(M,g)$ and $T^-\!=T^-(M, g)$ as
\begin{equation*}
R^- \! =  \sqrt{\frac{|\pa M^-|}{4 \pi}}    \qquad \hbox{and} \qquad T^- \! = \, 2 \log R^- \! = \, \log  \frac{|\pa M^-|}{4 \pi} \, .
\end{equation*}
These quantities will be instrumental in conveniently renormalizing the initial time of the IMCF that originates from $\partial M^-$ and for which $\partial M^+$ acts as an outer obstacle. Let now $w$ denote the unique solution of the problem
\begin{equation}
    \label{eq:ivp-obs-imcf}
    \begin{dcases}
    \mathrm{div}\left(e^{-w} \frac{\nabla w}{|\nabla w|}\right) \,=\, 0 & \text{in } M\,,
    \\
    \,\,\, \qquad \qquad \qquad w \,= \, T^- & \text{on } \pa M^-\,,
    \\
    \quad \,\,\,\, \frac{\nabla w}{|\nabla w|}(y)\longrightarrow \nu(y_0) & \text{as } y\to y_0 \in \partial M^+\, ,
    \end{dcases}
\end{equation}
in the 
weak
sense 
described by Xu in~\cite[Summary of Definitions/Theorem 1.5]{Xu}, where $\nu(y_0)$ is the outward unit normal at $y_0\in \partial M^+$. 
As in the polar case, we set
\begin{equation}
T^* \, =\, \max_{\pa M^+ } \, w \qquad \hbox{and} \qquad T_* \, =\, \min_{\pa M^+} \,  w \, ,
\end{equation}
and, correspondingly, $R^* = e^{T^*/2}$ and $R_* = e^{T_*/2}$. 
At this stage, mimiking the definition of $T^+_x$ in the connected boundary case, we define the time $T^+ = T^+(M, g)$ and the associated radius $R^+ = R^+(M,g)$ by
\begin{equation*}
T^+ = \min \, \{ T^+_\Lambda(p_\Lambda(R^-)) , T^* \} \qquad \hbox{and} \qquad R^+ = \min \, \{R^+_\Lambda(p_\Lambda(R^-)), R^*\} \, ,
\end{equation*}
and we note that they satisfy the relation $T^+ = 2 \log R^+$. 
\begin{remark}
\label{rmk:stabledef}
We first observe that, in order for $T^+$ and $R^+$ to be well defined, the argument of $p_\Lambda(\cdot)$ must lie in the interval $[0,\sqrt{3/\Lambda}]$. Furthermore, for the interpretation of $\pa M^-$ as a black-hole horizon to be genuinely compelling from a physical and geometric standpoint, it is in fact preferable that 
this argument belongs
to the more restrictive interval $[0,\sqrt{1/\Lambda}]$. 
We note that this would be the case, under the hypotheses that will appear in Theorem~\ref{thm:RPI-1harm}. Indeed, strict stability of $\pa M^-$ implies $|\pa M^-| < 4\pi/\Lambda$, or equivalently 
\begin{equation*}
0 < R^- < \sqrt{{1}/{\Lambda}} 
\qquad \text{and} \qquad 
-\infty < T^- < - \log \Lambda \, .
\end{equation*}
Therefore $p_\Lambda(R^-) < 1/(3\sqrt{\Lambda})$ and the outer root $R_\Lambda^+(p_\Lambda(R^-))$ is well defined and strictly 
larger
than $\sqrt{1/\Lambda}$.
\end{remark}

Having introduced all the notation needed, we can proceed in analogy with~\eqref{eq:1harmass} 
and  define the $1$-harmonic mass of the Riemannian band $(M,g)$ as
\begin{equation}
\label{def:1harm}
\mathfrak{m}_\Lambda^{(1)} (M,g) \, := \,  \mathfrak{m}_{\rm Haw} (\partial M^-) \, + \, \int_{T^-}^{T^+} \left(\frac{e^{\tau/2}}{16\pi}\right)  \big(4\pi - \Lambda \, {\rm Per} (\Omega_{\tau})\big)  \, d\tau ,
\end{equation}
where $\Omega_\tau = \{ w< \tau \}$. We observe that this invariant can be realized as the limit, as $p\to1$, of the $p$-harmonic masses associated with the solutions of the Dirichlet problems
\begin{equation*}
    \begin{dcases}
    \Delta_p u \,=\, 0 & \text{in } M,
    \\
    u \,=\, e^{-T^-/(p-1)} & \text{on } \partial M^-,
    \\
    u \,=\, 0 & \text{on } \partial M^+,
    \end{dcases}
\end{equation*}
for $1<p<3$, via a formal limiting procedure analogous to that described in Subsection~\ref{sub:1harm}. Furthermore, one readily verifies that the 
$1$-harmonic Mass 
of the model solutions coincides with the parameter $m$ appearing in formula~\eqref{eq:SDS}. Finally, when the boundary portion $\pa M^-$ collapses to a point $x$, both its area and its Hawking mass converge to zero, whereas $T^-$ and $T_\Lambda^+(p_\Lambda(R^-))$ converge to $-\infty$ and $T_\Lambda = \log (3/\Lambda)$, respectively. This shows that Definition~\eqref{def:1harm} is consistent with the one 
of the Polarized $1$-harmonic Mass
introduced in~\eqref{eq:1harmass}. With this new notion at hand, we are now in a position 
to state
the following theorem.

\begin{theorem}[Riemannian Penrose Inequality for the $1$-harmonic Mass]
\label{thm:RPI-1harm}
Let $(M,g)$ be a compact, three-dimensional Riemannian band, with scalar curvature satisfying
\[
\RRR\geq 2 \Lambda, 
\]
for some $\Lambda >0$, and with compact minimal boundary $\partial M$ decomposing into two connected components
\[
\partial M \, = \, \partial M^- \sqcup \partial M^+,
\]
such that $|\partial M^-| < |\partial M^+|$. 
Assume that $\pa M^-$ is strictly stable, that $\pa M^+$ is simply connected and unstable and that there are no closed minimal surfaces in $M\setminus \pa M$. Then, the $1$-harmonic Mass of $(M,g)$ defined in~\eqref{def:1harm} satisfies
\begin{equation}
\label{eq:RPI_ABM}
\mathfrak{m}_\Lambda^{(1)} (M,g) \, \geq \, \sqrt{\frac{|\partial M^-|}{16 \pi}} \left( 1 - \frac{\Lambda}{12 \pi} |\partial M^-|\right)  ,
\end{equation}
with equality if and only if $(M, g)$ is isometric to the Schwarzschild–de Sitter solution with mass parameter $m = \mathfrak{m}_\Lambda^{(1)} (M,g)$.
\end{theorem}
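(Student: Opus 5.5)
The plan is to reduce the inequality to a pure perimeter comparison along the weak IMCF with outer obstacle, and then run the rigidity analysis through Geroch's formula~\eqref{eq:geroch}. First, since $\partial M^-$ is minimal, its Hawking mass is
\[
\mathfrak{m}_{\rm Haw}(\partial M^-) = \frac{R^-}{2}\Bigl(1-\frac{\Lambda}{3}(R^-)^2\Bigr) = p_\Lambda(R^-) .
\]
This is exactly the right-hand side of~\eqref{eq:RPI_ABM}, so by~\eqref{def:1harm} the inequality is equivalent to
\[
\int_{T^-}^{T^+} \frac{e^{\tau/2}}{16\pi}\bigl(4\pi-\Lambda\,{\rm Per}(\Omega_\tau)\bigr)\,d\tau \,\geq\, 0 .
\]
The key observation is that the model integrand is an exact derivative: with $R=e^{\tau/2}$ one checks
\[
\frac{d}{d\tau}\,p_\Lambda(e^{\tau/2}) = \frac{e^{\tau/2}}{16\pi}\bigl(4\pi-\Lambda\,4\pi e^{\tau}\bigr).
\]

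Second, I would invoke the area growth property of Xu's weak IMCF with outer obstacle solving~\eqref{eq:ivp-obs-imcf}. The relevant facts are that $\tau\mapsto e^{-\tau}{\rm Per}(\Omega_\tau)$ is nonincreasing (exponential growth before contact, sub-exponential after contact with the obstacle $\partial M^+$), and that ${\rm Per}(\Omega_{T^-})=|\partial M^-|=4\pi e^{T^-}$ by our normalization. Together these give
\[
{\rm Per}(\Omega_\tau)\leq 4\pi e^\tau \qquad \text{for all } \tau\in[T^-,T^+].
\]
Hence the integrand dominates the model one, and the integral is at least $p_\Lambda(R^+)-p_\Lambda(R^-)$. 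By construction $R^-\le R^+\le R^+_\Lambda(p_\Lambda(R^-))$; note $T^*\ge T^-$ trivially, and $R^-<\sqrt{1/\Lambda}$ by Remark~\ref{rmk:stabledef}. Since $p_\Lambda$ is unimodal with maximum at $\sqrt{1/\Lambda}$, it satisfies $p_\Lambda\geq p_\Lambda(R^-)$ on the whole interval $[R^-,R^+_\Lambda(p_\Lambda(R^-))]$, which proves~\eqref{eq:RPI_ABM}. This part uses neither stability of $\partial M^+$ nor the topological hypotheses, consistently with the remark after Theorem~\ref{thm_eich}.

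Third, for the equality case, both inequalities above must be equalities. First, ${\rm Per}(\Omega_\tau)=4\pi e^\tau$ for a.e. $\tau\in[T^-,T^+]$, which forces the flow not to touch $\partial M^+$ before $T^+$. Second, $p_\Lambda(R^+)=p_\Lambda(R^-)$, which forces $R^+=R^+_\Lambda(m)$ with $m=p_\Lambda(R^-)$, since $R^+>R^-$ because the band has positive volume. I would then use Lemma~\ref{lem:topol} together with the absence of interior closed minimal surfaces and the instability and simple connectedness of $\partial M^+$. The aim is to show that the level sets $\Sigma_\tau$ are connected spheres and that the flow has no jumps, and then to show that the flow sweeps out $M$ exactly at time $T^+=T^*=T_*$, with $\Sigma_{T^+}=\partial M^+$.

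Along such a flow, equality in the area comparison combined with the Hawking-mass bookkeeping should force $\mathfrak m_{\rm Haw}(\Sigma_\tau)\equiv m$. Then every nonnegative term in~\eqref{eq:geroch} vanishes: $\mathring{\rm h}=0$, $\nabla^\Sigma\HHH=0$, $\RRR=2\Lambda$, and Gauss--Bonnet is sharp. This makes $g=d s^2+ r(s)^2 g_{\SSS^2}$ a warped product with $r=e^{\tau/2}$. The constant Hawking mass yields the ODE $|\nabla r|^2=1-\frac{\Lambda}{3}r^2-\frac{2m}{r}$, i.e.\ exactly the metric~\eqref{eq:SDS}.

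The main obstacle I expect is this rigidity step, specifically justifying regularity of the weak flow in the equality case. Equality only yields information almost everywhere in $\tau$, and one must exclude jumps and early contact with the obstacle before Geroch's smooth computation can be applied (in the Huisken--Ilmanen weak form, using the connectedness of level sets from the topological lemma). I would first try to argue via the weak Geroch monotonicity: equality of perimeters at a.e. time rules out jumps, since a jump strictly decreases $e^{-\tau}{\rm Per}$. Smoothness would then follow from the vanishing of $\mathring{\rm h}$ and $\nabla^\Sigma \HHH$, as in Huisken--Ilmanen.
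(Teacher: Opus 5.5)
Your proof of the inequality is the paper's argument. The Hawking mass of the minimal surface $\pa M^-$ equals $p_\Lambda(R^-)$, the model integrand is $\frac{d}{d\tau}p_\Lambda(e^{\tau/2})$, and Xu's bound ${\rm Per}(\Omega_\tau)\le 4\pi e^\tau$ gives the lower bound $p_\Lambda(R^+)-p_\Lambda(R^-)$. This is nonnegative by the shape of $p_\Lambda$.

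The gap is in the rigidity step. After obtaining ${\rm Per}(\Omega_\tau)=4\pi e^\tau$ on $[T^-,T^+]$ (so $T^+\le T_*$) and $R^+=R^+_\Lambda(m)$, you want to derive that the flow sweeps out $M$ exactly at $T^+$ with $\Sigma_{T^+}=\pa M^+$. You propose to get this from Lemma~\ref{lem:topol}, the instability of $\pa M^+$ and the absence of interior minimal surfaces, and only then to deduce constancy of the Hawking mass. None of these hypotheses tells you where the flow is at time $T^+$. Lemma~\ref{lem:topol} only yields connectedness of the level sets, i.e.\ the validity of Geroch's monotonicity. The missing idea is to argue in the opposite order. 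Since $T^+\le T_*$ and the leaves are connected, the Hawking mass is nondecreasing on $[T^-,T^+]$. Because $|\pa\Omega_{T^+}|=4\pi (R^+)^2$ with $p_\Lambda(R^+)=p_\Lambda(R^-)=m$, you get
\[
m=\mathfrak m_{\rm Haw}(\pa M^-)\le \mathfrak m_{\rm Haw}(\pa\Omega_{T^+}) = m-\frac{R^+}{32\pi}\int_{\pa\Omega_{T^+}}\HHH^2\,d\sigma .
\]
Hence $\pa\Omega_{T^+}$ is minimal, and by the no-interior-minimal-surfaces hypothesis it coincides with $\pa M^+$. This gives $T_*=T^+=T^*$ and, at the same time, constancy of the Hawking mass along the whole flow. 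Only then does your Geroch-type rigidity step apply.

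There are two smaller issues.
\begin{itemize}
\item You need $R^+>R^-$ to exclude a trivial equality case: if $T^*=T^-$, then $T^+=T^-$ and equality in~\eqref{eq:RPI_ABM} holds for free. This does not follow from the band having positive volume. What rules it out is $|\pa M^-|<|\pa M^+|$, which prevents $M$ from being the strictly outward minimizing hull of $\pa M^-$, so the obstacle flow cannot jump onto $\pa M^+$ at time $T^-$.
\item Your proposed way of excluding jumps does not work. In the Huisken--Ilmanen theory jumps preserve area ($|\pa E_t^+|=|\pa E_t|$), so equality of perimeters says nothing about them. Regularity in the equality case comes instead from the constancy of the weak Hawking mass, as in Huisken--Ilmanen's rigidity argument.
\end{itemize}
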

The inequality~\eqref{eq:RPI_ABM} is formally compatible with the classical Riemannian Penrose inequality for time-symmetric, asymptotically flat initial data with vanishing cosmological constant $\Lambda = 0$. It is likewise consistent with the Penrose-type inequalities formulated by Bray and Chruściel in~\cite{Bra_Chr_2004} for time-symmetric, asymptotically hyperbolic initial data with negative cosmological constant. In the case where the outermost minimal boundary is connected, these inequalities take the form
\begin{equation*}
\mathfrak{m}_{\rm Haw}(M,g) \,\geq\, \sqrt{\frac{|\partial M|}{16\pi}} \left( \frac{1}{2}\,\chi(\partial M) \;-\; \frac{\Lambda}{12\pi}\,|\partial M| \right) \, ,
\end{equation*}
where $\chi(\partial M)$ denotes the Euler characteristic of the boundary and $\mathfrak{m}_{\rm Haw}(M,g)$ denotes the total Hawking mass of the manifold. As already discussed, this concept is unfit to provide a reliable concept of mass when $\Lambda>0$. The above theorem suggest that the $1$-harmonic mass could provide a valid substitute.

\subsection{Organization of the paper.}

In Section~\ref{sec:MF},we establish monotonicity formulas on Riemannian bands under the assumption of a lower bound on the scalar curvature. These formulas are substantially more general than those required for the arguments developed in the rest of the paper. Nonetheless, they serve a dual purpose: on the one hand, they situate our contribution within a broader research perspective; on the other hand, they are expected to play a central role in forthcoming extensions of the theory in several natural directions, including a comprehensive analysis of Penrose-type inequalities in the presence of a nonzero cosmological constant. In Section~\ref{sec:selection}, we specialize to the case of globally defined $p$-Green’s functions with $1<p<3$. Starting from the study of rotationally symmetric solutions, we then identify a suitable choice of the structural coefficients that appear in the general formulation of the monotonicity formulas, in such a way as to characterize static zero-mass metrics (space forms). These metrics constitute the natural reference configurations for any proposed notion of mass and are therefore required to arise as the rigidity case in inequalities intended to yield positive mass theorems.
In Section~\ref{sec:further}, we begin our detailed analysis of the case of a positive cosmological constant, which is the main focus of the present work, and we refine the description of the structural coefficients. These will be crucial for evaluating the limits of the monotone quantities at both the pole of the Green’s function and the boundary of the manifold.
This, in turn, prepares the ground for a first group of main results, namely Theorems~\ref{thm:pos_pass_pol_Intro} and~\ref{thm:pos_pass_total_Intro}, corresponding, respectively, to Theorem~\ref{thm:pos_pass_pol} in Section~\ref{sec:polar_PMT} and Theorem~\ref{thm:pos_pass_total} in Section~\ref{sec:blowup}. 
In Section~\ref{sec:further}, we also provide additional remarks on the behavior of the structural coefficients in the limit as $p\to 1^+$, which support some of the more speculative considerations presented in Subsection~\ref{sub:pto1}. 
These considerations underlie the notion of $1$-harmonic mass, introduced in formula~\eqref{eq:1harmass} and discussed in Subsection~\ref{sub:1harm}. In this subsection we state a positive mass theorem and a Penrose inequality, both including a rigidity statement. These results are then proved in Section~\ref{sec:harm_1} and constitute the second group of main results of the present work.
Appendix~\ref{app:ODE} contains auxiliary material needed for the treatment of the case $\Lambda>0$ in Theorem~\ref{thm:global}, as well as for the analysis carried out in Section~\ref{sec:further}. The material in Appendix~\ref{app:pharmonic} underpins the arguments developed in Sections~\ref{sec:polar_PMT} and~\ref{sec:blowup}.


\section{Monotonicity formulas for $p$-harmonic functions on Riemannian bands}
\label{sec:MF}


In this section, we are going to establish monotonicity formulas 
holding along the level set flow of $p$-harmonic functions defined on Riemannian $3$-manifolds, whose scalar curvature admits a uniform lower bound. In the case of manifolds with nonnegative scalar curvature, these formulas are closely related -- and in some cases coincide -- with the ones previously obtained in~\cite{AMO,AMMO}. We note that a substantial number of novel monotonicity formulas and integral identities, derived through the use of harmonic functions or their generalizations have recently emerged in the literature. These developments have significantly advanced the study of the geometry of scalar curvature, with notable implications for geometric inequalities as well as for the mathematical aspects of general relativity. Without attempting to provide an exhaustive list, we mention here the following contributions~\cite{Ago_Maz_CMP,Ago_Maz_CV,Hir_Kaz_Khu,hirsch,Stern,Mun_Wan_JFA,ChaChuLeeTsa,Miao_PMJ,Bor_Ced_Cog,Ced_Cog_Feh}.

\smallskip

Let $(M,g)$ be a smooth $3$-dimensional Riemannian manifold with scalar curvature larger than $2 \Lambda \in \mathbb{R}$ and let  
$\Omega \subseteq M\setminus\pa M$ 
be a {\em Riemannian band}, that is a smooth domain, whose boundary has precisely two connected components, $\pa \Omega = \pa\Omega^+ \sqcup \pa\Omega^-$.
For $1<p$ and for $S<T$, let $u$ be a $p$-harmonic function satisfying
\begin{equation}
\label{eq:up_band}
\begin{dcases}
\Delta_p u\,=\,0 & \hbox{in } \Omega\,,
\\
\quad \, u \,  = \, e^{-S/(p-1)} & \hbox{on } \pa\Omega^-,
\\
\quad \, u \,  = \, e^{-T/(p-1)} & \hbox{on } \pa\Omega^+,
\end{dcases}
\end{equation}
where the $p$-Laplacian is defined as $\Delta_p \phi = {\rm div} (|\nabla \phi|^{p-2} \nabla \phi)$. The fact that the above problem admits a unique solution $u \in \mathcal{W}^{1,p}(\Omega) \cap \mathscr{C}^0(\overline \Omega)$ is standard, see for example by~\cite[Theorem 2.16]{LIN06} for a proof based on the direct method in the calculus of variations. Following~\cite{Moser2007}, we set $w=-(p-1)\log u$, so that $w$ satisfies
\begin{equation}
\label{eq:wp_band}
\begin{dcases}
\Delta_p w =\, |\na w|^p & \hbox{in }\Omega\\
\quad \,\, w = S & \hbox{on } \pa\Omega^-,\\
\quad \,\, w = T & \hbox{on } \pa\Omega^+.
\end{dcases}
\end{equation}
To fix the notation, we set 
\[
\Omega_t \, = \,  \{x \in \Omega  :  w(x) < t\}    \quad \hbox{and} \quad \Sigma_t \,= \, \{x \in \Omega  :  w(x) = t\}\,,
\]
and, for every regular value
$t \in [S,T]$, we denote by $\HHH$ the mean curvature of $\Sigma_t$, computed with respect to the unit normal $\na w/|\na w|$. For $1<p$ and $\Lambda \in \R$, we introduce the function 
$\mmp_\Lambda : [S,T] \rightarrow \R$, setting
\begin{equation}
\label{mp_gen}
\mmp_\Lambda (t) \, = \int_{S}^t e^{\lambda(\tau)} \big(4\pi - \Lambda|\Sigma_\tau|\big)  \, d\tau
\, - \,\,  e^{\lambda(t)}  \int_{\Sigma_t} |\nabla w| \,  \big(  \HHH - \mu(t)|\nabla w|  \big) \, d\sigma \, ,
\end{equation}
where $\lambda, \mu \in \mathscr{C}^1([S,T])$ are functions satisfying the following semi-decoupled 
ODE system
of {\em structural relationships}:
\begin{equation}
\label{structural}
\begin{cases}
\dot{\mu} \,  = \, \left(\frac{5-p}{p-1}\right)\alpha^2 - \left[\left(\frac{5-p}{p-1}\right)\alpha+\left(\frac{1}{p-1}\right)\right]\mu + \left(\frac{3-p}{p-1}\right)\mu^2 \\[2ex]
\dot{\lambda} \,  =  \, \left[\left(\frac{5-p}{p-1}\right)\alpha-\left(\frac{1}{p-1}\right)\right] - \left(\frac{3-p}{p-1}\right)\mu 
\end{cases}
\end{equation}
for some given $\alpha \in \mathscr{C}^0([S,T])$. In order to deduce meaningful geometric applications, we are going to discuss in the next section how the functions $\alpha, \lambda$ and $\mu$ should be 
further specified. Before 
that, we find it convenient to state and prove a general monotonicity formula for the functions $t \mapsto \mmp_\Lambda (t)$.

\begin{theorem}[General Monotonicity Formula] 
\label{GMF}
Let $(M,g)$ be a smooth $3$-dimensional Riemannian manifold, whose scalar curvature $\RRR$ satisfies 
\[
\RRR \, \geq \, 2 \Lambda \, , 
\]
for some $\Lambda \in \R$. Let 
$\Omega \subseteq M\setminus\pa M$
be a smooth bounded domain with two connected boundary components, so that 
\[
\pa \Omega = \pa\Omega^+ \sqcup \pa\Omega^- \, ,
\]
and assume that either $H_2(\Omega, \pa \Omega^-; \Z) = \{0\}$ or $H_2(\Omega, \pa \Omega^+; \Z) = \{0\}$. For
$1<p\leq5$ and $S<T$, let 
$w$ be the unique solution to problem~\eqref{eq:wp_band}. Then, the function $t \mapsto \mmp_\Lambda (t)$ defined in~\eqref{mp_gen}, whose coefficients
$\mu$ and $\lambda$
obey the structural relationships~\eqref{structural}
for some $\alpha\in\mathscr C^0([S,T])$,
satisfies 
\begin{equation}
\label{monogen}
s \leq t \qquad \Longrightarrow \qquad \mmp_\Lambda (s) \leq \mmp_\Lambda (t) \, ,
\end{equation}
for every $s,t \in [S,T ]$ regular values of $w$.
\end{theorem}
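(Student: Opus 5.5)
The plan is to follow the scheme of~\cite{AMO,AMMO}. First we compute $\frac{d}{dt}\mmp_\Lambda(t)$ at regular values by a smooth computation. Then we show the derivative is nonnegative, using the Bochner identity, the Gauss equation on $\Sigma_t$, Gauss--Bonnet, and the structural relations~\eqref{structural}. Finally we extend the conclusion to arbitrary regular values $s\le t$ by an integrated version that handles critical points.

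\emph{Step 1: the flux term as a bulk integral.} Write the boundary term as
\[
F(t)=\int_{\Sigma_t}|\nabla w|\,(\HHH-\mu(t)|\nabla w|)\,d\sigma .
\]
On $\Sigma_t$ we have $\HHH=-\langle\nabla|\nabla w|,\nabla w\rangle/|\nabla w|^2+\Delta w/|\nabla w|$. The equation $\Delta_p w=|\nabla w|^p$ gives
\[
\Delta w=|\nabla w|^2-(p-2)\,\langle\nabla|\nabla w|,\nabla w\rangle/|\nabla w| .
\]
Hence $|\nabla w|\HHH=|\nabla w|^2-(p-1)\langle\nabla|\nabla w|,\nu\rangle$, so $F$ is the flux of a vector field $X$ combining $|\nabla w|\nabla w$ and $\nabla|\nabla w|$ with $t$-dependent coefficients. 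For regular values $s<t$, the divergence theorem on $\{s<w<t\}$ together with the coarea formula gives
\[
\frac{d}{dt}\bigl(e^{\lambda}F\bigr)=\dot\lambda e^{\lambda}F-\dot\mu e^{\lambda}\!\int_{\Sigma_t}\!|\nabla w|^2+e^{\lambda}\!\int_{\Sigma_t}\!\frac{\operatorname{div}X}{|\nabla w|}\,d\sigma .
\]

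\emph{Step 2: the Bochner identity and the Gauss equation.} We expand $\operatorname{div}X$ through Bochner's formula,
\[
\Delta\tfrac{|\nabla w|^2}{2}=|\nabla^2w|^2+\langle\nabla\Delta w,\nabla w\rangle+\mathrm{Ric}(\nabla w,\nabla w).
\]
Next we use the traced Gauss equation on level sets,
\[
2\,\mathrm{Ric}(\nu,\nu)=\RRR-\RRR^{\Sigma}+\HHH^2-|\hhh|^2 ,
\]
and the refined Kato-type decomposition of $|\nabla^2 w|^2$ into $|\mathring{\hhh}|^2$, $|\nabla^\Sigma|\nabla w||^2$ and normal terms. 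The integrand then becomes
\[
\frac{\RRR-2\Lambda}{2}+\Lambda-\frac{\RRR^\Sigma}{2}+\text{(a quadratic form in } \HHH,\ |\nabla w|,\ \langle\nabla|\nabla w|,\nu\rangle)+\text{(nonnegative terms)} .
\]
We integrate $\RRR^\Sigma$ with Gauss--Bonnet. The homological hypothesis $H_2(\Omega,\partial\Omega^\pm;\Z)=\{0\}$ forces every regular $\Sigma_t$ to be connected, as in~\cite{AMO}, so $\int_{\Sigma_t}\RRR^\Sigma/2\le 4\pi$. The constant $\Lambda$ paired with $|\Sigma_t|$ matches the bulk term $-\Lambda|\Sigma_t|$ in~\eqref{mp_gen}, and $4\pi$ matches $4\pi$. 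This leaves the requirement that the quadratic form, weighted by $e^\lambda$, plus the contributions of $\dot\lambda,\dot\mu$ times $\int|\nabla w|^2$ and $F$, is nonnegative.

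\emph{Step 3: completing the square.} This is where~\eqref{structural} enters and is the main obstacle. We rewrite the remainder as
\[
c_p\int_{\Sigma_t}\Bigl|\frac{\nabla^\Sigma|\nabla w|}{|\nabla w|}\Bigr|^2+c'_p\int_{\Sigma_t}|\nabla w|\,\bigl(\HHH-\alpha(t)\,|\nabla w|\,\kappa_p\bigr)^2
\]
plus terms proportional to $\int|\nabla w|^2$ and $F$. The coefficient of $F$ must vanish, which gives the $\dot\lambda$ relation. The coefficient of $\int|\nabla w|^2$ must vanish, which gives the Riccati relation for $\dot\mu$, with $\alpha$ the free parameter of the square completion. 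The condition $p\le5$ should be exactly what makes the leading coefficient $(5-p)/(p-1)$ nonnegative, so the square carries the right sign. We will verify this algebra carefully by matching coefficients against~\eqref{structural}.

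\emph{Step 4: critical points.} The formula above holds only at regular values, while critical values of $w$ may be dense in measure. To handle this we follow~\cite{AMMO}: we integrate $\operatorname{div}$ of a regularized field
\[
X_\varepsilon \quad\text{built with}\quad (|\nabla w|^2+\varepsilon)^{\cdot}
\]
over $\{s<w<t\}$, and use the $\mathscr C^{1,\beta}$ regularity and the $W^{2,2}_{\rm loc}$ estimates for $p$-harmonic functions off the critical set. Letting $\varepsilon\to0$, the nonnegative integrands pass to the limit by Fatou, and the boundary terms converge at the regular values $s,t$. This yields $\mmp_\Lambda(t)-\mmp_\Lambda(s)\ge0$ directly, without needing differentiability at every $t$.
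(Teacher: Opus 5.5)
Your Steps 1--3 reproduce the paper's smooth computation. You write it in Bochner/divergence form, whereas the paper uses the evolution identities along $w$ for $\partial_w|\na w|$, $|\na w|\,\partial_w\HHH$ and $\partial_w\, d\sigma$. Modulo bookkeeping this part is fine. The two leftover coefficients, those of $\int_{\Sigma_t}|\na w|\HHH\,d\sigma$ and $\int_{\Sigma_t}|\na w|^2\,d\sigma$, are killed by~\eqref{structural}. The residual square is $\frac{5-p}{p-1}\int_{\Sigma_t}(\HHH/2-\alpha|\na w|)^2\,d\sigma$, with no extra weight $|\na w|$.

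The gap is Step~4. This is the genuinely delicate part of the theorem, and it would not work as you describe it.

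The decisive point is that the integrand you want to pass to the limit is not pointwise nonnegative. After the traced Gauss equation it contains $-\RRR^{\Sigma}/2$, or equivalently an unsigned $\mathrm{Ric}(\na w,\na w)$, since only $\RRR$ is bounded below. Its sign emerges only after integrating over a closed, connected, regular level set and applying Gauss--Bonnet against the $4\pi$ coming from the bulk term. So Fatou's lemma has nothing nonnegative to act on. Moreover, since $w$ is merely $\mathscr C^{1,\beta}$, Sard's theorem is unavailable. You cannot exclude that the critical values fill a set of positive measure in $(s,t)$, and on those levels no Gauss--Bonnet argument is possible.

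There is a second problem. $X$ contains $\na|\na w|$ and $\Delta w$, so $\operatorname{div}X_\varepsilon$ involves third derivatives of $w$, which have no meaning near $\{\na w=0\}$. And because $w$ still solves the degenerate equation, inserting $(|\na w|^2+\varepsilon)^{1/2}$ into the field destroys the exact cancellations that produced the squares. This leaves error terms of no definite sign near the critical set. It is also not what~\cite{AMMO} does: there the equation, not the field, is regularized.

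What is needed is the two-tier argument indicated in the paper.

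\emph{Negligible critical values.} If the set $\mathcal T$ of critical values has measure zero, integrate $\operatorname{div}(\chi X)$ with cut-offs of the form $\chi(f(w)|\na w|)$. These vanish near the critical points and are designed so that $\langle \na\chi, X\rangle$ has the favorable sign. This yields $\mmp_\Lambda(t)-\mmp_\Lambda(s)\geq\int_{(s,t)\setminus\mathcal T}d\tau\int_{\Sigma_\tau}\operatorname{div}X/|\na w|\,d\sigma$, and Gauss--Bonnet can then be applied on almost every level.

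\emph{General case.} If $\mathcal T$ may have positive measure, replace $w$ by DiBenedetto's smooth solutions of the non-degenerate regularized equation~\cite{DiBenedetto1}, for which almost every level is regular. Derive for these approximants monotonicity formulas whose error terms vanish as the regularization parameter goes to zero. Then pass to the limit at the regular values $s,t$, where the approximants converge smoothly near $\Sigma_s,\Sigma_t$ (uniform ellipticity there), so that $\HHH$ is carried along.
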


Some remarks are in order regarding this general statement, to motivate
some of the assumptions and clarify the range of applicability of the result.

\begin{remark}[Geroch-type Formula]
If the critical values of $w$ are negligible, 
one can prove that 
the function $t \mapsto \mmp_\Lambda(t)$ is absolutely continuous and monotone on its domain. Its weak derivative is given almost everywhere by
\begin{align}
\label{dermp}
\frac{d \mmp_\Lambda}{d t} \, = \, 
e^{\lambda(t)}
& \left\{  
  \left(4\pi - \int_{\Sigma_t} \frac{\RRR^{\Sigma}}{2} d\sigma \right) + \int_{\Sigma_t} \left|\frac{\nabla^{\Sigma}|\nabla w|}{|\nabla w|}\right|^2 + \frac{|\mathring{\rm h}|^2}{2} d\sigma \right. \nonumber \\
& \left. + \int_{\Sigma_t} \left(\frac{\RRR-2\Lambda}{2}\right) d\sigma + \left(\frac{5-p}{p-1}\right) \int_{\Sigma_t} \left(\frac{\HHH}{2} - 
\alpha(t) 
|\nabla w|\right)^{\!2} d\sigma \right\} \geq 0 \, ,
\end{align}
where, for any regular value $t \in [S,T]$, $\Sigma_t = \{x \in \Omega : w(x) = t\}$, and $\na^\Sigma$, $\RRR^{\Sigma}$, $\HHH$, and $\mathring{\rm h}$ denote respectively its covariant derivative, scalar curvature, mean curvature, and traceless second fundamental form, computed with respect to the unit normal $\nabla w / |\nabla w|$. 
This formula is analogous to Geroch’s expression~\cite{Ger} for the derivative of the Hawking mass along an inverse mean curvature flow, except for the last term, which, on one hand dictates the restriction $p\leq5$ and on the other can be employed to determine the choice of the structural coefficient $\alpha$, tuning the formula to the chosen comparison model.
\end{remark}

\begin{remark}[Preserving Connectedness]
\label{connessione}
The topological assumption about the vanishing of the second relative homology group (i.e., $H_2(\Omega, \pa \Omega^-; \Z) = \{0\}$ or $H_2(\Omega, \pa \Omega^+; \Z) = \{0\}$) guarantees that all the regular level sets of $w$ are connected, so that 
$$
\int_{\Sigma_t} \frac{\RRR^{\Sigma}}{2} d\sigma  \, = \, 2\pi \, \chi (\Sigma_t) \, \leq \, 4\pi \, ,
$$
by the Gauss-Bonnet Theorem and the first summand on the right hand side of~\eqref{dermp} is nonnegative. 
\end{remark}

The monotonicity depends only on a few fundamental ingredients: the existence of a 
$p$-harmonic function -- indeed, no boundary conditions are required, but solely the validity of the equation itself --, the connectedness of its level sets (see Remark~\ref{connessione}), and, for a fixed choice of the structural coefficient $\alpha$, the existence of a global solution 
$(\mu,\lambda)$
to the ODE system~\eqref{structural}. Note that the first equation in~\eqref{structural} is of Riccati type, and hence some of its solutions may exist only up to a finite blow-up time.
In Section~\ref{sec:selection}, we will make a suitable choice of $\alpha$ for $1<p<3$, which guarantees the global existence of solutions to~\eqref{structural}. In addition to global existence, further selection criteria will be introduced in order to single out, among all global solutions, those that are effective in deriving geometrically meaningful statements. Such a selection must be tuned to the reference model case: for this reason, and since the qualitative behaviour of the model $p$-Green's function changes across the regimes $1<p<3$, $p=3$, and $p>3$, from Section~\ref{sec:selection} onward we will restrict our attention to the case $1<p<3$.
Regarding the range $3\leq p\leq 5$ in the statement of Theorem~\ref{GMF}, we limit ourselves to the following considerations. For $p=3$ the system \eqref{structural} becomes linear and, consequently, for any given $\alpha \in \mathscr{C}^0([S,T])$, the corresponding solutions $\mu$ and $\lambda$ are globally defined.
Finally, for $p=5$, which represents the borderline case for the validity of the monotonicity, both the system of structural relations~\eqref{structural} and the first derivative formula~\eqref{dermp} no longer depend on $\alpha$. Moreover, since in this case the system has constant coefficients, one can explicitly compute closed-form expressions for the globally defined solutions without difficulty.

\begin{proof}[Proof of Theorem~\ref{GMF}] Let us justify the smooth computation leading to~\eqref{dermp}, starting by the case where the $p$-harmonic function $w$ has no critical point in $\Omega$. In this case, we can use the function $w$ itself as a coordinate. By standard computations, using~\eqref{eq:wp_band} and
the traced Gauss Equation, one can easily deduce the identities 
\begin{align*}
\frac{\pa |\na w|}{\pa w} = & \,\, \frac{|\na w| - \HHH}{p-1} \, , \\
|\na w| \, \frac{\pa \HHH}{\pa w}  = & \,\, {\rm div}^\Sigma \! \left( \frac{\na^\Sigma |\na w|}{|\na w|}\right) - 
\left|\frac{\nabla^{\Sigma}|\nabla w|}{|\nabla w|}\right|^2 - \frac{|\mathring{\rm h}|^2}{2} - \left(\frac{\RRR-2\Lambda}{2}\right)  
+ \left(\frac{\RRR^\Sigma-2\Lambda}{2}\right)  
- \, \frac34 \, \HHH^2  \,, \\
\frac{\pa}{\pa w} d\sigma = & \,\, 
\frac{\HHH}{|\na w|} 
\,  d \sigma \, ,
\end{align*}
recalling that ${\rm h}$ and $\HHH$ are computed with respect to $\na w/|\na w|$. Differentiating~\eqref{mp_gen} with the help of the previous expressions, one gets
\begin{align*}
\frac{d \mmp_\Lambda}{d t} \, = \, e^{\lambda}& \left\{  
  \left(4\pi - \int_{\Sigma_t} \frac{\RRR^{\Sigma}}{2} d\sigma \right) + \int_{\Sigma_t} \left|\frac{\nabla^{\Sigma}|\nabla w|}{|\nabla w|}\right|^2 + \frac{|\mathring{\rm h}|^2}{2} d\sigma \right. \nonumber \\
& \left. + \int_{\Sigma_t} \left(\frac{\RRR-2\Lambda}{2}\right) d\sigma + \left(\frac{5-p}{p-1}\right) \int_{\Sigma_t} \left(\frac{\HHH}{2} - \alpha |\nabla w|\right)^{\!2} d\sigma \right. \\
& \left. -  \left[ \, \dot\lambda  - \left(\frac{5-p}{p-1}\right) \alpha + \left(\frac{1}{p-1}\right) + \left(\frac{3-p}{p-1}\right) \mu \, \right]    \, \int_{\Sigma_t} |\na w| \HHH\,  d\sigma  \right.      \\
&  +  \left[ \, \dot\mu  - \left(\frac{5-p}{p-1}\right) \alpha^2 + \dot\lambda \, \mu + \left(\frac{2}{p-1}\right) \mu \, \right]    \, \int_{\Sigma_t} |\na w|^2 \,  d\sigma 
\Bigg\} \, .
\end{align*}
Imposing the {\em structural relationships}~\eqref{structural}, one can easily check that the last two terms on right hand side 
vanish, so that~\eqref{dermp} holds true. In particular, it is evident that, as long as $1<p\leq 5$, the right hand side is nonnegative.

To complete the proof, one needs to discuss the case where critical points of $w$ are present. However, the desired conclusion follows 
by a straightforward adaptation of the methods and techniques developed in~\cite[Sections 1.2--1.5]{AMMO} to treat the $\Lambda = 0$ case, and we leave the details to the interested reader (notice that for $p\in(1,2]$ the same computation can also be justified exploiting~\cite[Theorem~B.1]{Ben_Plu_Poz}). Hereafter, we only indicate the key observations and recall the main features of the proof's strategy. First of all, we observe that the monotone quantity $t \mapsto\mmp_\Lambda (t)$ can be re-written as
\[
\mmp_\Lambda (t) \,\, = \,\, \int_{\Sigma_t} \left\langle \! X  \,  \bigg|  \, \frac{\nabla w}{|\nabla w|}  \right\rangle \, d \sigma \, , 
\]
where the vector field $X$ is given by
\begin{align*}
X & =  e^{\lambda(w)}  
\left\{  
\left(\int_{S}^w \!\! \frac{4 \pi - \Lambda |\Sigma_\tau|}{e^{\lambda(w)- \lambda(\tau)}} \, d\tau \right)\cdot
\frac{|\na w|^{p-2} \na w}{e^{w} \, {K_p}} + \na |\na w| - \frac{\Delta w}{|\na w|} \na w  \, + \,\mu(w) \cdot  |\na w| \na w
\right\} \, .
\end{align*}
The constant $K_p$ appearing in the above expression is the one matching the constant value of the function 
\begin{equation}
\label{int_costante}
t \, \longmapsto \, e^{-t}\int_{\Sigma_t} |\na w|^{p-1}\, d \sigma.
\end{equation}
A direct computation, using the structural relationships~\eqref{structural}, shows that, far away from the critical points of $w$, it holds
for $s < t$
\begin{align}
\notag
\mmp_\Lambda (t) - \mmp_\Lambda (s)\,\,& = \,\, \int_{\Sigma_t} \left\langle \! X  \,  \bigg|  \, \frac{\nabla w}{|\nabla w|}  \right\rangle \, d \sigma \, - \int_{\Sigma_s} \left\langle \! X  \,  \bigg|  \, \frac{\nabla w}{|\nabla w|}  \right\rangle \, d \sigma \, \\
\notag
& = \,\, \int_{\{s<w<t\}} \!\!\!\!\!\!\!\!{\rm div} X \,\, d \mu \\
\notag
& = \int_s^t \!d\tau \cdot
e^{\lambda(\tau)}
\left\{  
  \left(4\pi - \int_{\Sigma_\tau} \!\!\frac{\RRR^{\Sigma}}{2} \,d\sigma \right) + \int_{\Sigma_\tau} \left|\frac{\nabla^{\Sigma}|\nabla w|}{|\nabla w|}\right|^2 + \frac{|\mathring{\rm h}|^2}{2} \, d\sigma \right. \nonumber \\
\label{eq:m_Lambda_monotonicity}
&  \qquad\qquad\  + \!\int_{\Sigma_\tau} \!\!\left(\frac{\RRR-2\Lambda}{2}\right) d\sigma + \left(\frac{5-p}{p-1}\right) \int_{\Sigma_\tau} \!\!\left(\frac{\HHH}{2} - \alpha |\nabla w|\right)^{\!\!2} \!d\sigma  
\Bigg\} \, , 
\end{align}
where 
we have also used
the Divergence Theorem and the Coarea Formula. According to~\cite{AMMO}, we now divide the discussion into two subcases, depending on whether the set of critical values is negligible or nonnegligible. If the set of critical values $\mathcal T$ is negligible, the monotonicity property of $\mmp_\Lambda$ can be established, following 
step-by-step the smoothing procedure adopted in~\cite[Section 1.3]{AMMO}.
More precisely, with the help of  suitably chosen cut-off functions, one can show that for $s < t$
\[
\mmp_\Lambda (t) - \mmp_\Lambda (s)
\,\geq\,
\int_{(s,t)\setminus\mathcal T}
d\tau\int_{\Sigma_\tau}\frac{{\rm div}X}{|\na w|}\,d\sigma
\,\geq\,0,
\]
where the last inequality follows from \eqref{eq:m_Lambda_monotonicity}. For the sake of clarity, and without compromising the heuristic nature of the argument, we shall say that the cut-off functions are of the form $\chi(f(w)|\nabla w|)$. They are chosen so as to satisfy the dual requirement of vanishing in a neighborhood of the critical points and having a gradient of the form $\dot\chi(f(w)|\nabla w|)\, \big[ f(w)\nabla|\nabla w| + \dot{f}(w) |\na w| \, \na w \big]$, whose scalar product with the vector field $X$ has the desired sign. Under these conditions, the divergence of the smoothed-off vector field $\chi(f(w)|\nabla w|)\,X$ retains the structural properties that are needed for our purposes.

Finally, to accomodate for the case where the set of critical values is possibly nonnegligible, one has to consider the standard smooth approximations of the $p$-harmonic 
functions, introduced by DiBenedetto in~\cite{DiBenedetto1}, 
and derive
a family of approximate monotonicity formulas
associated with them, 
eventually leading to the desired monotonicity~\eqref{monogen}. Again, this follows with minor changes from exactly the same arguments employed in~\cite[Section 1.4 and Section 1.5]{AMMO}.
\end{proof}

\subsection{Riemannian bands vs punctured domains.}

Beginning with the next section and continuing throughout the remainder of this work, our analysis will concentrate on applying the formulas obtained in Theorem~\ref{GMF} to geometrically meaningful configurations. In particular, by means of Theorems~\ref{thm:pos_pass_pol} and~\ref{thm:pos_pass_total}, we aim to formulate results in the spirit of the Positive Mass Theorem for time-symmetric initial data with positive cosmological constant, thereby addressing several of the questions that have emerged from the study of Min-Oo’s conjecture about the concept of mass. As indicated by the preliminary investigations in~\cite{AMO}, the most appropriate analytic tools to use for these purposes are the Green’s functions associated with the $p$-Laplacian -- or $p$-Green's functions, for short. These may be defined either on a compact manifold with smooth boundary under homogeneous Dirichlet boundary conditions -- an assumption that is particularly natural in the presence of a positive cosmological constant -- or on a complete, non-compact manifold. In the latter setting, it is often appropriate to impose that the manifold admits either an asymptotically flat end ($\Lambda=0$) or an asymptotically hyperbolic end ($\Lambda<0$), according to the sign of the cosmological constant. In all cases under consideration, we will be dealing with punctured domains rather than Riemannian bands; consequently, we must justify the seemingly less natural decision to formulate the monotonicity formulas in this second framework rather than in the first.
To begin with, we observe that, from a technical standpoint, the monotonicity formulas established for Riemannian bands automatically yield the corresponding formulas on punctured domains of the form $\Omega \setminus \{x_0\}$. Indeed, given a $p$‑Green's function $u_p$ with zero boundary data and pole at $x_0$, one can construct a natural exhaustion $\{\Omega^{(\varepsilon)}\}_{\varepsilon>0}$ of the punctured domain by setting
\[
\Omega^{(\varepsilon)} \, = \, \{ x \in \Omega \, : \, \varepsilon < u_p(x) < 1/\varepsilon\} \, .
\]
At this stage, it is enough to apply Theorem~\ref{GMF} repeatedly, with $S=(p-1)\log \varepsilon$ and $T=-(p-1)\log \varepsilon$, in order to derive the desired monotonicity properties.

An additional advantage of working on Riemannian Bands is that the existence of a solution to problem~\eqref{eq:up_band} is always guaranteed, a fact that is not generally obvious in more general settings. For instance, in the case of noncompact domains, one must analyze on a case‑by‑case basis the conditions ensuring the existence of a minimal positive $p$‑Green's function, which may require restricting the admissible range of the exponent $p$ or imposing volume growth assumptions so as to verify the requirements of the so‑called $p$‑nonparabolicity.

Finally, Riemannian bands provide the natural geometric setting for a variety of problems that constitute natural continuations of the present work and that we intend to investigate in future works. A primary example is the study of inequalities of Riemannian Penrose type (see~\cite{HI, bray1, bray3, AMMO}), with particular emphasis on the situations in which the cosmological constant is nonzero (see~\cite{Ambrozio,LeeNev} for the case $\Lambda < 0$ and~\cite{Bor_Maz_2} for the case of static metrics with $\Lambda > 0$).

\section{A selection principle for the structural coefficients}
\label{sec:selection}

In this section, we show how to select in a geometrically meaningful way the structural coefficients $\alpha$, $\mu$, and $\lambda$ appearing in the monotonicity formulas~\eqref{mp_gen}-\eqref{dermp} and obeying the ODE system
\begin{equation*}
\begin{cases}
\dot{\mu} \,  = \, \left(\frac{5-p}{p-1}\right)\alpha^2 - \left[\left(\frac{5-p}{p-1}\right)\alpha+\left(\frac{1}{p-1}\right)\right]\mu + \left(\frac{3-p}{p-1}\right)\mu^2  , \\[2ex]
\dot{\lambda} \,  =  \, \left[\left(\frac{5-p}{p-1}\right)\alpha-\left(\frac{1}{p-1}\right)\right] - \left(\frac{3-p}{p-1}\right)\mu \,  .
\end{cases}
\end{equation*}  
More precisely, we want to design the monotone quantities $\mmp_\Lambda$ is such a way that their constancy characterizes the underlying manifold as a space form. This perspective is motivated by the observation that, within the framework of positive mass theorems, it is natural to adopt space forms as reference geometries, in which the mass identically vanishes and rigidity holds. It is, however, important to emphasize that there exist other mathematically and physically significant settings in which the reference geometries are given by warped product manifolds with non-constant sectional curvature, as exemplified by the Schwarzschild–Kottler metrics, which realize equality in the Riemannian Penrose inequalities. The monotonicity formulas obtained in Theorem~\ref{GMF} are sufficiently robust and versatile to be extended to this setting as well, providing, through their constancy, a characterization of the rigidity of the underlying manifold and thereby implying that it is isometric to the corresponding model space. For the sake of clarity and coherence of the present exposition, we defer a detailed analysis of this generalization to future work, and, for now, restrict our attention to the problem of identifying monotone quantities specifically designed for the geometric characterization of space forms.

To this aim, we consider 
the situation where the $p$-harmonic function, that gives rise to the monotonic quantities $\mmp_\Lambda$ described in~\eqref{mp_gen}, is actually a $p$-Green's function solving
\begin{equation*}
\Delta_p u \, = \, - 4 \pi \delta_x,
\end{equation*}
where $x$ is a point in the interior of $M$. Here, as before, we let $(M,g)$ be a $3$-dimensional Riemannian manifold  with scalar curvature bounded from below (i.e., $\RRR \geq 2 \Lambda$), and from now on we let $p$ ranging between $1$ and $3$:
\[
1<p<3.
\]
Depending on the sign of the cosmological constant $\Lambda \in \R$, it will be natural to consider compact manifolds with boundary ($\Lambda>0$), asymptotically flat manifolds ($\Lambda=0$), or asymptotically hyperbolic manifolds ($\Lambda<0$).
Correspondingly, we will impose the natural Dirichlet boundary conditions, namely 
$u=0$ on $\pa M$
or $u\to0$ at $\infty$.
To see how this setting fits into the framework of the previous section, we 
let $w$ to be the function fulfilling the problem
\begin{equation}
\label{wdistr}
{\rm div} \big( e^{-w} |\nabla w|^{(p-2)} \nabla w \big) = 4 \pi (p-1)^{(p-1)} \delta_x,
\end{equation}
and recall that it is related to $u$ via the formula $w=-(p-1)\log u$. 
Ideally, this would correspond in problem \eqref{eq:wp_band} to consider boundary components $\pa \Omega^-$ and $\pa \Omega^+$ of $\Omega$ that are arbitrarily small ($S \to - \infty$) and an arbitrarily large ($T \to + \infty$) level sets of $w$.

\subsection{The guideline case $\Lambda=0$.}

Note that the case $\Lambda=0$ has been fully analyzed in~\cite{AMO,AMMO}. Nevertheless, we find it convenient to illustrate the selection process of the structural coefficients through this illuminating example. 
First of all, let $(M,g)$ be the flat Euclidean $3$-space endowed with a rotationally symmetric $p$-Green's function (and thus a rotationaly symmetric $w$) whose pole is located at the origin. As we are going to tune the function $t \mapsto \mmp_0 (t)$ so that it is constantly vanishing in this special framewok, the last summand on the right-hand side of \eqref{dermp} has to vanish as well.
This dictates the relationship
\begin{equation}
\label{defalfa}
\frac{\HHH}{2} \, = \, \alpha |\na w|, 
\end{equation}
where $\HHH$ is the mean curvature of the level sets $\Sigma_t= \{ w= t \}$, $t \in\R$, computed with respect to the unit normal $\na w/|\na w|$. Since, in this favorable context, everything depends on one single variable, we compute 
\[
\frac\HHH2\,=\,
\frac{e^{-\frac t{3-p}}}
{\Big(\frac{p-1}{3-p}\Big)^\frac{p-1}{3-p}},
\qquad
\hbox{and} \qquad
|\na w|\,=\,
(3-p)\frac{e^{-\frac t{3-p}}}
{\Big(\frac{p-1}{3-p}\Big)^\frac{p-1}{3-p}},
\]
on each $\Sigma_t= \{ w= t \}$, $t \in \R$.
In turn, equation~\eqref{defalfa} leads us  to 
\begin{equation}
\label{astd}
\alpha(t) \, \equiv \,\frac1{3-p} \, , \, \qquad t \in \R
\end{equation}
Given this specific choice of the function $\alpha$, one can then focus on the ODE system~\eqref{structural}. First, observe that the system is semi-decoupled in the sense that, once $\mu$ is prescribed, the function $\lambda$ is determined up to an additive constant $\kappa$. Regarding the first equation in the aforementioned ODE system, we note that the globally defined solutions are precisely those trajectories that remain confined between the two equilibrium points
\[
\frac{5-p}{(3-p)^2}
\qquad\text{and}\qquad
\frac{1}{3-p}.
\]
Furthermore, there exists precisely one solution that converges to \(1/(3-p)\) as \(t \to -\infty\), namely the constant equilibrium solution \(\mu(t) \equiv 1/(3-p)\). This uniqueness of the asymptotic behaviour as $t\to -\infty$ -- a phenomenon that in fact occurs in all other configurations as well, see Theorem~\ref{thm:selection_mu} -- naturally justifies the selection of the solution \(\mu\) that exhibits this distinguished property. In particular, this choice is the physically relevant one, provided that the integration constant \(\kappa\) appearing in the consequent explicit representation of \(\lambda\) 
\[
\lambda \, = \, \frac{t}{3-p} \, + \, \kappa
\]
is fixed so that
\[
\lim_{t\to -\infty}\frac{\mmp_0(x,t)}{|\Omega_t|}
\,=\,
\frac{\RRR(x)}{16\pi},
\]
where $\Omega_t := \{ w < t \}$. All in all, the right choice for $\lambda$ and $\mu$ in the case $\Lambda=0$ is
\begin{equation}
\label{eq:lomo}
e^{\lambda(t)} 
\,=\, 
\Big(\frac{p-1}{3-p}\Big)^\frac{p-1}{3-p}
\frac{e^{\frac t{3-p}}}{8\pi (3-p)}, 
\qquad\qquad\qquad
\mu(t)= \frac1{3-p}.  
\end{equation}
In turn, the monotonicity formulas
specialize
to
\begin{align*}
\mmp_0 (x,t) &
\, = \,
\Big(\frac{p-1}{3-p}\Big)^{\!\!\frac{p-1}{3-p}}\frac{e^{\frac{t}{3-p}}}{8\pi(3-p)}  
\left\{ 4 \pi(3-p) 
\, -     \int_{\Sigma_t} |\nabla w|\left(  \HHH - \frac{|\nabla w|}{3-p}  \right)  d\sigma   \right\} \, , \\
\frac{d \mmp_0}{d t} (x,t)
&
\, = \,
\Big(\frac{p-1}{3-p}\Big)^{\!\!\frac{p-1}{3-p}}\frac{e^{\frac{t}{3-p}}}{8\pi (3-p)} \left\{  
  \left(4\pi - \int_{\Sigma_t} \!\frac{\RRR^{\Sigma}}{2} d\sigma \right) + \int_{\Sigma_t} \left|\frac{\nabla^{\Sigma}|\nabla w|}{|\nabla w|}\right|^2 + \frac{|\mathring{\rm h}|^2}{2} d\sigma \right. \nonumber \\
& \qquad\qquad\qquad\qquad
\qquad\qquad\left. + \int_{\Sigma_t} \!\frac{\RRR}{2} \,d\sigma + \left(\frac{5-p}{p-1}\right) \int_{\Sigma_t} \left(\frac{\HHH}{2} - \frac{|\nabla w|}{3-p}\right)^{\!2} \!d\sigma \right\},
\end{align*}
which are exactly those derived in~\cite{AMO,AMMO}, up to a multiplicative constant.

\medskip

The remainder of this section is structured as follows. In Subsection~\ref{sub:selection_alpha}, we first establish the notation for the model solutions and subsequently introduce the structural coefficient $\alpha$, analyzing its asymptotic behavior when $t \to -\infty$.
In the next Subsection~\ref{subsec:selection}, we show how to select two special ancient solutions $\mu$ and $\lambda$ of system~\ref{structural} and verify that the choice made is compatible with the validity of a small sphere limit for the corresponding monotone quantities. Finally, in Subsection~\ref{sub:global}, we show that the solutions selected up to this point actually exist for all times. In the case $\Lambda > 0$, we will also study the asymptotic behavior of these solutions as $t \to \infty$, which will allow us, in the subsequent Section~\ref{sec:polar_PMT}, to provide a more detailed and explicit description of our mass-type invariants.

\subsection{Green's functions for the $p$-Laplacian on 3-dimensional spaceforms and the selection of $\alpha$}
\label{sub:selection_alpha}

We begin by expressing the metric of the space forms in so‑called static coordinates, after normalizing the scalar curvature to be constantly equal to \(2\Lambda\). Although these coordinates are less commonly employed, they facilitate, in many instances, a uniform notation and, consequently, a more streamlined and coherent treatment. Furthermore, in the case of a positive cosmological constant, they exhibit -- as is well known -- an artificial coordinate singularity at the equator. This feature aligns well with our principal objective, which, from a purely geometric perspective, is the characterization of the hemisphere.
For $\Lambda \in \R$, we denote by $(M_\Lambda, g_\Lambda)$ the $3$-dimensional Riemannian manifold defined by
\[
M_\Lambda\,=\,
\begin{dcases}
\SSS_+^3 \, , & \hbox{if } \Lambda >0
\\
\R^3 \, ,  & \hbox{if }    \Lambda = 0 \qquad \qquad \hbox{and} \qquad \qquad  g_\Lambda\,=\,\frac{dr\otimes dr}{1-\frac{\Lambda}{3}r^2}+r^2g_{\mathbb{S}^2}\,,
\\
\mathbb{H}^3\, ,  & \hbox{if }    \Lambda < 0
\end{dcases}
\]
where $\SSS^3_+$ is the upper half sphere (together with its equatorial boundary), whereas $g_{\SSS^2}$ is the standard round metric on $\SSS^2$, with Gaussian curvature equal to $1$. The above expression for $g_\Lambda$ is naturally well defined for $0<r<R_\Lambda$, with 
\[
R_\Lambda\,=\,
\begin{dcases}
\sqrt{3/\Lambda} \, , & \hbox{if } \Lambda >0
\\
+ \infty \, ,  & \hbox{if }    \Lambda \leq 0 \, .
\end{dcases}
\]
It can be readily verified that all these manifolds have constant sectional curvature equal to $\Lambda/3$ and constant scalar curvature equal to $2\Lambda$.  
Furthermore, one easily observes that, for $1<p<3$, the function 
\begin{equation}
\label{eq:up_dS}
u_\Lambda^{\!(p)} : (0,R_\Lambda) \rightarrow (0,+\infty) \, , \qquad r \mapsto u_\Lambda^{\!(p)}(r)\,=\,\int_{r}^{R_\Lambda} \!\!\!\! \frac{1}{\rho^{\frac{2}{p-1}}\sqrt{1-\frac{\Lambda}{3}\rho^2}} \,\, d\rho\, ,
\end{equation}
solves, in the sense of distributions, the equation
\[
\Delta_p u \, = \, -4\pi \delta_o \, ,
\]
where $o$ denotes the point corresponding to $r=0$; moreover, notice that $u_\Lambda^{(p)}(r) \to 0$ as $r \to R_\Lambda$. In other words, $u_\Lambda^{(p)}$ is the Green's function of the $p$-Laplacian associated with the metric $g_\Lambda$, with homogeneous Dirichlet boundary conditions. In this context, we remark that for $\Lambda > 0$ the hypersurface $\{r = R_\Lambda\}$ represents the genuine smooth boundary of $M_\Lambda$, whereas for $\Lambda \leq 0$ the Dirichlet condition must be interpreted as a decay condition, requiring that the function tend to zero along the asymptotically flat end (when $\Lambda = 0$) or along the asymptotically hyperbolic end (when $\Lambda < 0$) of the manifold. For future convenience, observe that 
\begin{equation}
\label{u_L_dot}
\frac{d u_\Lambda^{(p)}}{dr} \, = \, - \,  \frac{r^{-\frac{2}{p-1}}}{\sqrt{1-\frac{\Lambda}{3} r^2}} \qquad \hbox{and} \qquad
|\na u_\Lambda^{(p)}| \, = \, r^{-\frac{2}{p-1}}
\end{equation}
Next we introduce the function 
\begin{equation}
\label{eq:wp_dS}
w_\Lambda^{\!(p)} : (0,R_\Lambda) \rightarrow (-\infty,+\infty) \, , \qquad r \mapsto w_\Lambda^{\!(p)} (r) = - (p-1) \log(u_\Lambda^{\!(p)}(r))\, ,
\end{equation}
and we observe that 
\begin{equation}
\label{cose_modello}
\frac{d w_\Lambda^{(p)}}{dr} \, = \, - \frac{p-1}{u_\Lambda^{(p)}} \, \frac{d u_\Lambda^{(p)} }{ dr } \qquad \hbox{and} \qquad
|\na w_\Lambda^{(p)}| \, = \, (p-1) \frac{r^{-\frac{2}{p-1}}}{u_\Lambda^{(p)}(r)} \, .
\end{equation}
Note in particular that the functions $u_\Lambda^{\!(p)}$ and $w_\Lambda^{\!(p)}$ are monotone and hence invertible. This feature is especially useful when it is convenient to change from the radial coordinate $r$, in terms of which the model is most naturally specified, to the coordinate $t$, which instead constitutes the natural variable for expressing the monotone quantities $\mmp_\Lambda$ introduced in the previous section. 
To this end, we define the inverse function of $w_\Lambda^{\!(p)}$ by
\begin{equation}
\label{eq:rp_dS}
r_\Lambda^{\!(p)} : (-\infty,+\infty) \rightarrow (0,R_\Lambda) \, , \qquad t \longmapsto r_\Lambda^{\!(p)}(t) = (w_\Lambda^{\!(p)})^{-1}(t) \, ,
\end{equation}
and note that it satisfies the relations
\[
w_\Lambda^{\!(p)} (r_\Lambda^{\!(p)} (t)) = t \,  \qquad \hbox{and} \qquad u_\Lambda^{\!(p)}(r_\Lambda^{\!(p)}(t)) = e^{-\frac{t}{p-1}} \, , \quad t \in \R \, .
\]
Back to the function $w_\Lambda^{\!(p)}$, as already observed, this function satisfies a relation of the form~\eqref{wdistr} in the sense of distributions and, in particular, away from the pole $o$, it satisfies the equation
\[
\Delta_p w = |\nabla w|^p \, ,\qquad \hbox{in } M_\Lambda \setminus\{o\}\, .
\]
It has been shown by various authors, starting from R. Moser~\cite{Moser2007}, then in~\cite{Kot_Ni} and~\cite{MRS}, up to the recent~\cite{Ben_Mar_Rig_Set_Xu} that this equation can be effectively used to approximate -- in the sense of uniform convergence on compact sets -- the weak inverse mean curvature flow of Huisken and Ilmanen~\cite{HI}, or, in the case of compact manifolds with boundary, the weak inverse mean curvature flow with outer obstacle of Xu~\cite{Xu}. Indeed, by letting $p \to 1$, one formally obtains the limiting PDE
\[
{\rm div} \left( \frac{\nabla w}{|\nabla w|} \right) \, = \, |\nabla w|
\]
which corresponds to the level-set formulation of the inverse mean curvature flow, in the sense that the level sets of the solution $w$, as the value of the solution changes, move in the normal direction with speed equal to the inverse of the mean curvature. Later on we will also benefit from these considerations when, in Subsection~\ref{sub:pto1}, we will discuss the formal limits of our monotone quantities as $p\to 1$. However, for the moment, the exponent $1<p<3$ can be thought of as a fixed parameter and, where it is not necessary, it will be systematically omitted from the notation, allowing us to write
\[
u_\Lambda = u_\Lambda^{\!(p)} \qquad \hbox{and} \qquad w_\Lambda = w_\Lambda^{\!(p)} 
\]
Recalling the relation~\eqref{defalfa} and noting that $|\nabla w_\Lambda|$ depends solely on the radial coordinate $r$, we compute the mean curvature of the level hypersurfaces determined by this coordinate, obtaining
\[
\HHH_\Lambda(r)\,=\,\frac{2}{r}\sqrt{1-\frac{\Lambda}{3}r^2} \,.
\]
We now have all the elements at hand to introduce the function $A^{\!(p)}_\Lambda : (0, R_\Lambda) \rightarrow \R$, together with the structural coefficient $\alpha^{\!(p)}_\Lambda : (-\infty, + \infty) \rightarrow \R$. We therefore set
\begin{align}
\label{eq:alpha_r}
A^{\!(p)}_\Lambda(r) \,&= \, \frac{\HHH_\Lambda}{2 |\nabla w_\Lambda|} (r) \, = \,
\frac{u_\Lambda(r)}{p-1} \,\, \sqrt{1-\frac{\Lambda}{3}r^2}\,\,\, r^{\frac{3-p}{p-1}} \, , \quad r \in (0,R_\Lambda)
\\
\label{eq:alpha_t}
\alpha^{\!(p)}_\Lambda (t)\,&= \, A^{\!(p)}_\Lambda (r^{\!(p)}_\Lambda (t)) \, = \,
\frac{e^{-\frac{t}{p-1}}}{p-1} \,\, \sqrt{1-\frac{\Lambda}{3}r_\Lambda^2(t)}\,\,\, r_\Lambda^{\frac{3-p}{p-1}}(t) \, , \quad t \in (-\infty,+ \infty) \, .
\end{align}
In order to get a little more acquainted with the quantities introduced above, it is instructive to observe that, in the particular case $p = 2$, they admit a more explicit representation. Indeed, by exploiting the fact that
\[
u_\Lambda^{\!(2)} (r)\,=\, 
\begin{dcases}
\frac{1}{r} \sqrt{ 1- \frac{\Lambda}{3}r^2} \, , & \hbox{if } \Lambda \geq0
\\
\frac{1}{r} \sqrt{ 1- \frac{\Lambda}{3}r^2} - \sqrt{\frac{|\Lambda|}{3}} \, ,  & \hbox{if }    \Lambda < 0 \, ,
\end{dcases}
\]
one immediately gets
\[
A_\Lambda^{\!(2)} (r)\,=\, 
\begin{dcases}
 1- \frac{\Lambda}{3}r^2 \, , & \hbox{if } \Lambda \geq0
\\
  1- \frac{\Lambda}{3}r^2 - \sqrt{ 1- \frac{\Lambda}{3}r^2}  \,\sqrt{\frac{|\Lambda|}{3}}  \,  r  \, ,  & \hbox{if }    \Lambda < 0 \, .
\end{dcases}
\]
For future convenience, we observe that, differentiating the identity $u_\Lambda^{\!(p)}(r_\Lambda^{\!(p)}(t)) = e^{-\frac{t}{p-1}}$ with the help of~\eqref{u_L_dot} and using the definition of $\alpha_\Lambda^{\!(p)}$, one obtains the differential relationship
\begin{equation}
\label{eq:dotr}
\frac{d r_\Lambda^{(p)}}{dt}\, = \, \alpha_\Lambda^{\!(p)} \cdot r_\Lambda^{\!(p)} \, .
\end{equation}
At the same time, combining~\eqref{cose_modello} with~\eqref{u_L_dot}, one gets the inverse relationship
\begin{equation}
\label{eq:dotrinv}
\frac{d w_\Lambda^{(p)}}{dr}\, = \, \frac{1}{ A_\Lambda^{\!(p)} \cdot r } \qquad \hbox{and} \qquad \frac{d w_\Lambda^{(p)}}{dr} (r_\Lambda^{(p)} (t)) \, = \, \frac{1}{  \alpha_\Lambda^{\!(p)} \!(t)\cdot r_\Lambda^{(p)} \!(t) }
\end{equation}
As $1<p<3$ will be most of the times fixed throughout this section, we are going to drop it whenever it is possible and use the short hand notation 
\[
A = A_\Lambda^{\!(p)} \qquad \hbox{and} \qquad \alpha = \alpha_\Lambda^{\!(p)} \, ,
\]
so that the above differential relationship will be simply phrased as $\dot{r}=\alpha r$.
To establish in Theorem~\ref{thm:selection_mu} the existence of a unique ancient solution $\mu$ to the Riccati equation
\[
\dot{\mu} \,  = \, \left(\frac{5-p}{p-1}\right)\alpha^2 - \left[\left(\frac{5-p}{p-1}\right)\alpha+\left(\frac{1}{p-1}\right)\right]\mu + \left(\frac{3-p}{p-1}\right)\mu^2 
\]
that converges to $1/(3-p)$ as $t \to -\infty$, it is essential to characterize the behavior of $\alpha$ as $t \to -\infty$, or, equivalently, the behavior of 
$A_\Lambda$ as $r \to 0$. To this end, 
note that from expression~\eqref{eq:up_dS}  one gets, via a simple computation,
the asymptotic expansion
\begin{equation}
\label{uexp}
u_\Lambda (r) \, = \, \left( \frac{p-1}{3-p}\right) \, r^{-\frac{3-p}{p-1}} \, (1 + {o}(1)) \, , \, \quad \text{as } r \to 0 \, .
\end{equation}
Combining this asymptotic profile with formula~\eqref{eq:alpha_r}, one immediately obtains
\[
\lim_{r \to 0} A(r) = \frac{1}{3-p} \,,
\]
and, correspondingly,
\begin{equation}
\label{lim_alpha_meno}
\lim_{t\to-\infty}\alpha(t) = \frac{1}{3-p} \,.
\end{equation}
In order to perform the precise selection of the last structural coefficient $\lambda$, it is then necessary to integrate the differential equation
\[
\dot{\lambda} \,  =  \, \left[\left(\frac{5-p}{p-1}\right)\alpha-\left(\frac{1}{p-1}\right)\right] - \left(\frac{3-p}{p-1}\right)\mu \, ,
\]
determining the constant of integration with full accuracy. This constant must be chosen so that a small-sphere-limit condition is in force for the associated monotone quantity -- see, in this connection, Theorem~\ref{thm:ssl} below. To prove that theorem, one needs a more refined description of the behaviour of $A(r)$ as $r \to 0$. This refinement is provided by the following proposition, together with its subsequent corollary where the refined asymptotics are expressed in terms of $\alpha(t)$ as $t \to -\infty$.

\begin{proposition}
\label{prop:Ar}
For $1<p\leq2$ and $\Lambda \in \R$, let $A_\Lambda^{\!(p)}$ be the function defined in~\eqref{eq:alpha_r}. Then, the following expansions hold, as $r\to 0$:
\[
A_\Lambda^{\!(p)}(r)-\frac{1}{3-p}\,=\,
\begin{dcases}
\frac{1}{5-3p} \left(\frac{p-1}{3-p}\right)\, \, \frac{\Lambda}{3} \,\, r^2 \, + \, \mathcal{O}(r^4) & \hbox{if }1<p<5/3
\\
\frac{\Lambda}{4} \,\, r^2 \, \log (1/r) \, +\, \mathcal{O}(r^2) &\hbox{if }p=5/3
\\
\frac{2(2-p)}{p-1}
\left(\frac{p-1}{3-p}\right)\int_0^{R_\Lambda} \!\!\!\! \frac{\rho^{-\frac{2(2-p)}{p-1}} }{\sqrt{1-\frac{\Lambda}{3}\rho^2}} \,d\rho\, \,\, \frac{\Lambda}{3} \,\, r^{\frac{3-p}{p-1}} \, + \, \mathcal{O}(r^2) & \hbox{if }5/3<p< 2 \, .
\end{dcases}
\]
In the case $p=2$, as $r\to 0$, it holds
\[
A_\Lambda^{\!(2)} (r) - 1\,=\, 
\begin{dcases}
 - \frac{\Lambda}{3}r^2 \, , & \hbox{if } \Lambda \geq 0
\\
- \sqrt{\frac{|\Lambda|}{3}}  \,  r  \, + \, \mathcal{O}(r^2) \, ,& \hbox{if } \Lambda <0 \, .
\end{dcases}
\]
\end{proposition}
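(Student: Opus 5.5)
The plan is to avoid expanding $u_\Lambda$ directly and instead derive an \emph{exact} representation of $A_\Lambda^{(p)}(r)-\frac1{3-p}$, from which all cases follow by elementary asymptotics. Set $\beta=\frac{2}{p-1}$, so that $\beta-1=\frac{3-p}{p-1}$ and $\beta-2=\frac{2(2-p)}{p-1}\ge 0$ for $p\le 2$, and write $s(\rho)=\sqrt{1-\frac\Lambda3\rho^2}$. The key step is the integration-by-parts identity
\[
\frac{d}{d\rho}\Big(\rho^{1-\beta}s(\rho)\Big)\,=\,(1-\beta)\,\frac{\rho^{-\beta}}{s(\rho)}\,-\,(2-\beta)\,\frac{\Lambda}{3}\,\frac{\rho^{2-\beta}}{s(\rho)},
\]
which follows by writing $s=s^2/s$.

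Integrating this identity from $r$ to $R_\Lambda$ in \eqref{eq:up_dS} gives a closed expression for $u_\Lambda$. The boundary term at $R_\Lambda$ vanishes: for $\Lambda>0$ because $s(R_\Lambda)=0$, and for $\Lambda\le0$ because $\rho^{1-\beta}s(\rho)\sim c\,\rho^{2-\beta}\to0$ when $p<2$. The resulting formula is
\[
u_\Lambda(r)\,=\,\frac{r^{1-\beta}s(r)}{\beta-1}\,+\,\frac{\beta-2}{\beta-1}\,\frac\Lambda3\int_r^{R_\Lambda}\frac{\rho^{2-\beta}}{s(\rho)}\,d\rho .
\]
We then insert this into \eqref{eq:alpha_r} and use $(p-1)(\beta-1)=3-p$ and $s^2=1-\frac\Lambda3 r^2$. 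This should yield the exact identity
\[
A_\Lambda^{(p)}(r)-\frac1{3-p}\,=\,-\frac{\Lambda}{3(3-p)}\,r^2\,+\,\frac{2(2-p)}{p-1}\,\frac{1}{3-p}\,\frac\Lambda3\;r^{\beta-1}s(r)\int_r^{R_\Lambda}\frac{\rho^{2-\beta}}{s(\rho)}\,d\rho .
\]

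Everything then reduces to the behaviour of $I(r)=\int_r^{R_\Lambda}\rho^{2-\beta}s^{-1}\,d\rho$ as $r\to0$. The exponent threshold is $2-\beta+1=\frac{3p-5}{p-1}$, so we split into three cases.
\begin{itemize}
\item If $5/3<p<2$, the integral converges at $0$. Then $I(r)=\int_0^{R_\Lambda}+O(r^{3-\beta})$ and $s(r)=1+O(r^2)$, so the second term equals $\frac{2(2-p)}{p-1}\frac{p-1}{3-p}\cdot\frac1{p-1}\ldots$ times $\int_0^{R_\Lambda}\,\frac\Lambda3 r^{\frac{3-p}{p-1}}$, plus $O(r^2)$. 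Both $r^{\beta-1}\cdot r^{3-\beta}=r^2$ and the first term are $O(r^2)$. Matching constants should reproduce the stated coefficient; we will recheck the normalising factor $\frac{p-1}{3-p}$ against $\frac1{3-p}$ at this point.
\item If $p=5/3$, then $\beta=3$ and $I(r)=\log(1/r)+O(1)$. The second term becomes $\frac{1}{3-p}\cdot\frac{2(2-p)}{p-1}\cdot\frac\Lambda3\,r^2\log(1/r)=\frac\Lambda4 r^2\log(1/r)$, and the remainder is $O(r^2)$.
\item If $1<p<5/3$, we expand $s^{-1}=1+O(\rho^2)$ and obtain $I(r)=\frac{r^{3-\beta}}{\beta-3}+C+O(r^{5-\beta})$, with a finite constant $C$ when $5-\beta\neq 0$. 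The leading part contributes $\frac{\beta-2}{\beta-3}\frac{\Lambda}{3(3-p)}r^2$. Combined with $-\frac{\Lambda}{3(3-p)}r^2$, this gives $\frac{1}{\beta-3}\frac{\Lambda}{3(3-p)}r^2=\frac{p-1}{(5-3p)(3-p)}\frac\Lambda3 r^2$, which is the claimed coefficient. The error terms are $r^{\beta-1}\cdot r^{5-\beta}=r^4$, plus $C\,r^{\beta-1}$ with $\beta-1=\frac{3-p}{p-1}>2$.
\end{itemize}

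The main obstacle is the remainder in the case $1<p<5/3$. The constant $C\,r^{(3-p)/(p-1)}$ is only $O(r^4)$ when $p\le 7/5$; for $7/5<p<5/3$ it is merely $o(r^2)$. We would therefore first check whether $C$ vanishes. Barring such a cancellation, we would state the error as $O(r^4)+O(r^{(3-p)/(p-1)})$, which is still $o(r^2)$ and suffices for the small-sphere limit. Resonant exponents such as $5-\beta=0$, where logarithms appear, need the same care.

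For $p=2$ we use the explicit $A^{(2)}_\Lambda$ computed earlier. For $\Lambda\ge0$ it gives $-\frac\Lambda3 r^2$ exactly. For $\Lambda<0$ it gives $-\frac{\Lambda}{3}r^2-s(r)\sqrt{|\Lambda|/3}\,r=-\sqrt{|\Lambda|/3}\,r+O(r^2)$. This is consistent with the exact identity, whose second term vanishes at $p=2$ only when the boundary term at $R_\Lambda=\infty$ is genuinely zero, which fails for $\Lambda<0$.
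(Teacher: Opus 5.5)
Your argument is correct and is, up to notation, the paper's own proof. With $\beta=2q$, your integration-by-parts identity is the recursion \eqref{eq:phi_recursive} applied once. Since $\phi_{q-1}(r)\,r^2=r^{\beta-1}I(r)/s(r)$, your exact formula for $A^{(p)}_\Lambda-\frac{1}{3-p}$ is just \eqref{eq:Ar_expansion} regrouped. The case split by integrability of $\rho^{2-\beta}$ at $0$ and the use of the explicit $A^{(2)}_\Lambda$ at $p=2$ also coincide with the paper. The only difference is that the paper does $p=5/3$ via an explicit primitive and $1<p<5/3$ via l'H\^opital on $\phi_{q-1}$, whereas you read every case off $I(r)$, which exposes the remainders.

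Both points you left open should be settled in favour of your computation, not the statement.

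First, for $5/3<p<2$ the correct coefficient is the one you found, $\frac{2(2-p)}{(p-1)(3-p)}\,\frac{\Lambda}{3}\int_0^{R_\Lambda}\rho^{2-\beta}s(\rho)^{-1}\,d\rho$. This is exactly what \eqref{eq:Ar_expansion} gives. The factor $\frac{p-1}{3-p}$ in the statement should read $\frac{1}{3-p}$: the last line of the paper's proof drops the $\frac{1}{p-1}$ when passing from \eqref{eq:Ar_expansion} to $\frac{2(2-p)}{3-p}\frac{\Lambda}{3}\phi_{q-1}r^2$. So do not try to reproduce the stated constant.

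Second, your constant $C$ does not vanish in general. It is the analytic continuation in $\beta$ of $\int_0^{R_\Lambda}\rho^{2-\beta}s^{-1}\,d\rho$. For $\Lambda>0$ this equals $\frac{R_\Lambda^{3-\beta}}{2}\,\Gamma(\tfrac{3-\beta}{2})\,\Gamma(\tfrac12)/\Gamma(2-\tfrac{\beta}{2})$, which for $3<\beta<5$ vanishes only at $\beta=4$, i.e.\ $p=3/2$. For $\Lambda<0$ the analogous Beta-function value does not vanish anywhere in that range. Consequently the remainder $\mathcal{O}(r^4)$ asserted for $1<p<5/3$ is valid only for $p<7/5$:
\begin{itemize}
\item at $p=7/5$ it becomes $\mathcal{O}(r^4\log(1/r))$;
\item for $7/5<p<5/3$ it is in general only $\mathcal{O}(r^{(3-p)/(p-1)})$.
\end{itemize}
The paper's l'H\^opital step gives nothing better than $o(r^2)$ there anyway, so the weakened remainders you propose are the correct statement.

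None of this affects Corollary~\ref{cor:at}, Lemma~\ref{lem:mudecay} or Theorem~\ref{thm:ssl}. They use only the exponents, not these constants or the $\mathcal{O}(r^4)$ bound.
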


To obtain similar asymptotic expansions for the structural coefficient $\alpha$, it is sufficient to observe that 
\[
r_\Lambda^{\!(p)} (t) \, = \, \left( \frac{p-1}{3-p}\right)^{\!\frac{p-1}{3-p}} e^{\frac{t}{3-p}} (1 + {o}(1)) \, , \quad \hbox{as} \,\,\,  t \to - \infty \, ,
\]
as it easily follows from~\eqref{uexp}, coupled with the very definition of $r_\Lambda^{\!(p)}$.
\begin{corollary} 
\label{cor:at}
For $1<p\leq2$ and $\Lambda \in \R$, let $\alpha_\Lambda^{\!(p)}$ be the function defined in~\eqref{eq:alpha_t}. Then, the following expansions hold, as $t \to - \infty$:
\[
\alpha_\Lambda^{\!(p)}(t)-\frac{1}{3-p}=
\begin{dcases}
\mathcal{O}(e^{\frac{2t}{3-p}}) & \hbox{if }1<p<5/3
\\
\mathcal{O}(t e^{\frac{3t}{2}})&\hbox{if }p=5/3
\\
\mathcal{O}(e^{\frac{t}{p-1}}) & \hbox{if }5/3<p< 2 \,.
\end{dcases}
\]
In the case $p=2$, as $t\to -\infty$, it holds
\[
\alpha_\Lambda^{\!(2)} (t) - 1\,=\, 
\begin{dcases}
 \mathcal{O} (e^{2t}) \, , & \hbox{if } \Lambda > 0
\\
0\, , & \hbox{if } \Lambda = 0
\\
\mathcal{O} (e^{t}) \, ,& \hbox{if } \Lambda <0 \, .
\end{dcases}
\]
\end{corollary}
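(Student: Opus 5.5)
The plan is to deduce Corollary~\ref{cor:at} from Proposition~\ref{prop:Ar} by composing with the asymptotics of $r_\Lambda^{(p)}(t)$ as $t\to-\infty$. By definition~\eqref{eq:alpha_t}, $\alpha(t)=A(r(t))$ with $r=r_\Lambda^{(p)}$. We already know that
\[
r(t)=\Big(\tfrac{p-1}{3-p}\Big)^{\frac{p-1}{3-p}}e^{\frac{t}{3-p}}(1+o(1)) \quad\text{as } t\to-\infty .
\]
Hence there are constants $0<c_1\le c_2$ such that $c_1e^{t/(3-p)}\le r(t)\le c_2 e^{t/(3-p)}$ for $t$ sufficiently negative. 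Since every error term in Proposition~\ref{prop:Ar} is a power of $r$, possibly multiplied by $\log(1/r)$, two-sided comparability of $r(t)$ with $e^{t/(3-p)}$ is enough to transfer the bounds. No finer expansion of $r(t)$ is needed, because only $\mathcal O$-statements are claimed.

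We treat the cases one at a time.

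\textbf{Case $1<p<5/3$.} Here $A(r)-\frac1{3-p}=\mathcal O(r^2)$, which gives $\mathcal O(e^{2t/(3-p)})$.

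\textbf{Case $p=5/3$.} We have $3-p=4/3$, so $r\sim e^{3t/4}$ and $r^2\sim e^{3t/2}$. Moreover $\log(1/r)=-\tfrac34 t+\mathcal O(1)=\mathcal O(|t|)$. The leading term $\frac\Lambda4 r^2\log(1/r)$ is therefore $\mathcal O(|t|e^{3t/2})$. The remainder $\mathcal O(r^2)=\mathcal O(e^{3t/2})$ is dominated by it, which gives $\mathcal O(te^{3t/2})$, understood in absolute value.

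\textbf{Case $5/3<p<2$.} The leading term is a constant times $r^{\frac{3-p}{p-1}}$. Substituting $r\sim e^{t/(3-p)}$ gives $e^{t/(p-1)}$. The remainder $\mathcal O(r^2)=\mathcal O(e^{2t/(3-p)})$ is smaller, because $\frac{3-p}{p-1}<2$ exactly when $p>5/3$. Note that the integral $\int_0^{R_\Lambda}\rho^{-2(2-p)/(p-1)}(1-\frac\Lambda3\rho^2)^{-1/2}d\rho$ in the coefficient converges at $0$ precisely because $2(2-p)/(p-1)<1$ when $p>5/3$. For $\Lambda<0$, where $R_\Lambda=\infty$, it converges at infinity by the extra decay of $(1-\frac\Lambda3\rho^2)^{-1/2}$. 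In any case the coefficient is just a constant, so convergence does not affect the order.

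\textbf{Case $p=2$.} Here $r\sim e^{t}$.
\begin{itemize}
\item For $\Lambda>0$: $-\frac\Lambda3r^2=\mathcal O(e^{2t})$.
\item For $\Lambda=0$: the formula gives exactly $A\equiv1$, so $\alpha\equiv1$.
\item For $\Lambda<0$: $-\sqrt{|\Lambda|/3}\,r+\mathcal O(r^2)=\mathcal O(e^t)$.
\end{itemize}

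The only step needing any care is the comparability of $r(t)$ with $e^{t/(3-p)}$, which is already recorded before the corollary and follows from~\eqref{uexp}. The corollary is therefore a direct substitution, with no genuine obstacle.
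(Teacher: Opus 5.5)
Your proposal is correct and follows the same route as the paper: the corollary is obtained by substituting the asymptotics $r_\Lambda^{(p)}(t)=\big(\frac{p-1}{3-p}\big)^{\frac{p-1}{3-p}}e^{\frac{t}{3-p}}(1+o(1))$ as $t\to-\infty$, which follow from~\eqref{uexp}, into the expansions of Proposition~\ref{prop:Ar}. Your case-by-case bookkeeping matches what that substitution requires: the comparison of $r^{\frac{3-p}{p-1}}$ with $r^2$ when $5/3<p<2$, and the bound $\log(1/r)=\mathcal{O}(|t|)$ when $p=5/3$.
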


\begin{remark}
\label{rem:extra_p}
It can be verified that analogous estimates may be derived for the range $2 < p < 3$.
In particular, one can prove that the expansion
\begin{equation*}
\alpha_\Lambda^{\!(p)}(t) \, = \, \frac{1}{3-p} \, + \mathcal{O} (e^{\frac{t}{p-1}}) \, , \qquad  \hbox{as} \,\, t \to -\infty \, 
\end{equation*}
holds true also in the range $2<p<3$.
Nevertheless, these asymptotic expansions are ultimately of limited relevance for the purposes of the present study, since the validity of the small-sphere limit for the quantities $\mmp_\Lambda$ can be established only for $p \leq 2$, as will be demonstrated in the proof of Theorem~\ref{thm:ssl}.
\end{remark}

\begin{proof}[Proof of Proposition~\ref{prop:Ar}]
To begin with, note that when $\Lambda = 0$ there is nothing to prove, since in this case
\[
u_0^{\!(p)}(r) \, = \, \left(\frac{p-1}{3-p}\right) r^{-\frac{3-p}{p-1}},
\]
and, consequently, $A_0^{\!(p)}(r) \equiv {1}/({3-p})$, so that all the required asymptotic expansions follow immediately. We therefore restrict our attention to the case $\Lambda > 0$, and defer to the reader the (straightforward) modifications needed to derive the corresponding estimates when $\Lambda < 0$. Fix $\Lambda > 0$. We first examine the distinguished cases $p=2$ and $p = 5/3$, for which the solution can be written in a more explicit form. For $p = 2$ we have already established that
\[
A_\Lambda^{\!(2)}(r) \, = \, 1 \, - \, \frac{\Lambda}{3} \, r^{2}.
\]
For $p = 5/3$, by means of a slightly more involved computation, one obtains
\begin{align*}
A_\Lambda^{\!(5/3)}(r)& \, = \, \frac{3}{2}r^2 \, \sqrt{1-\frac{\Lambda}{3}r^2} \, \left[\, - \, \frac{\sqrt{1-\frac{\Lambda}{3}\rho^2}}{2\rho^2}+\frac{\Lambda}{12} \, \log\left|\frac{1-\sqrt{1-\frac{\Lambda}{3}\rho^2}}{1+\sqrt{1-\frac{\Lambda}{3}\rho^2}}\right| \, \,\right]_{\rho=r}^{\rho=R_\Lambda}
\\
&=\,
\frac{3}{4}\left(1-\frac{\Lambda}{3}r^2\right)-\frac{\Lambda}{8}\, r^2 \, \sqrt{1-\frac{\Lambda}{3}r^2} \,\, \log\left|\frac{1-\sqrt{1-\frac{\Lambda}{3}r^2}}{1+\sqrt{1-\frac{\Lambda}{3}r^2}}\right|
\\
&=\,\frac{3}{4} \, + \, \frac{\Lambda}{4} \, r^2 \, \log (1/r) \, +\, \mathcal{O}(r^2).
\end{align*}
To discuss the remaining cases, it is convenient to set $q=1/(p-1)$, so that 
\[
A_\Lambda^{\!(p)}(r)\,=\,q \,  r^{2q-1}\sqrt{1-\frac{\Lambda}{3}r^2}\, \int_r^{R_\Lambda} \! \!\!\!\! \frac{ d\rho}{\rho^{2q}\sqrt{1-\frac{\Lambda}{3}\rho^2}} \,=\,\left(1-\frac{\Lambda}{3}r^2\right)\,q \,  \phi_q(r)\,,
\]
where we have defined
\[
\phi_q(r)=\frac{r^{2q-1}}{\sqrt{1-\frac{\Lambda}{3}r^2}}\,\,\int_r^{R_\Lambda} \!\!\!\!\! \frac{d\rho}{\rho^{2q}\sqrt{1-\frac{\Lambda}{3}\rho^2}}\,.
\]
Notice that, integrating by parts, we have
\begin{multline*}
\int_r^{R_\Lambda} \!\!\!\!\frac{d\rho}{\rho^{2q}\sqrt{1-\frac{\Lambda}{3}\rho^2}}\,=\,
-\frac{3}{\Lambda}\left[\frac{\sqrt{1-\frac{\Lambda}{3}\rho^2}}{\rho^{2q+1}}\right]_{\rho=r}^{\rho=R_\Lambda} \!\!\!\!\!\!\!\!
-\, (2q+1)\, \frac{3}{\Lambda} \, \int_r^{R_\Lambda} \frac{\sqrt{1-\frac{\Lambda}{3}\rho^2}}{\rho^{2q+2}} \, d\rho \,= 
\\
=\,
\frac{3}{\Lambda}\frac{\sqrt{1-\frac{\Lambda}{3}r^2}}{r^{2q+1}}
\, - \, (2q+1)\, \frac{3}{\Lambda}\int_r^{R_\Lambda}\!\!\!\!\frac{d\rho}{\rho^{2q+2}\sqrt{1-\frac{\Lambda}{3}\rho^2}}
+(2q+1)\int_r^{R_\Lambda}\!\!\!\!\frac{d\rho}{\rho^{2q}\sqrt{1-\frac{\Lambda}{3}\rho^2}}\,.
\end{multline*}
As a consequence, we conclude that $\phi_q(r)$ satisfies the following recursive relation
\begin{equation}
\label{eq:phi_recursive}
(2q+1) \phi_{q+1}(r) \, = \, 
2q  \,\phi_q(r) \, \frac{\Lambda}{3} \, r^2 \, +\, 1\,.
\end{equation}
Thus, we compute
\begin{align}
\notag
A_\Lambda^{\!(p)}(r)\,&=\,\left(1-\frac{\Lambda}{3}r^2\right) \, q \, \phi_q(r)
\\
\notag
&=  \left(\frac{q}{2q-1}\right) \left(1-\frac{\Lambda}{3}r^2\right)\left[ \, {(2q-2)} \phi_{q-1}(r) \, \frac{\Lambda}{3}r^2 + {1} \, \right]
\\
\label{eq:Ar_expansion}
&=\,\frac{1}{3-p} \left\{ \, 1 \,+ \,\left[\frac{2(2-p)}{p-1}\phi_{q-1}(r)-1\right]  \left(\frac{\Lambda}{3}r^2 \right) -\frac{2(2-p)}{p-1}\phi_{q-1}(r)\left(\frac{\Lambda}{3}r^2\right)^{\!\!2}\!\right\}.
\end{align}
Recall that $\phi_{q-1}$ is defined as
\[
\phi_{q-1}(r)
=\frac{r^{2q-3}}{\sqrt{1-\frac{\Lambda}{3} r^2}}
\, \int_r^{R_\Lambda} \!\!\!\!\frac{d\rho}{\rho^{2q-2}\sqrt{1-\frac{\Lambda}{3} \rho^2}}.
\]
For $1<p<5/3$, we have that $3/2 <q<+\infty$, and we set
\[
a_{q-1}(r):=\int_r^{R_\Lambda}\!\!\!\!\frac{d\rho}{\rho^{2q-2}\sqrt{1-\frac{\Lambda}{3} \rho^2}},
\qquad
b_{q-1}(r):=\frac{\sqrt{1-\frac{\Lambda}{3} r^2}}{r^{2q-3}} \, ,
\]
so that $\phi_{q-1}(r) = {a_{q-1}(r)} /{b_{q-1}(r)}$. As \(r\to 0^+\),  l'H\^{o}pital's rule yields
\[
\lim_{r\to0^+}\phi_{q-1}(r)=\lim_{r\to0^+}\frac{a_{q-1}(r)}{_{q-1}(r)}
=\lim_{r\to0^+}\frac{a_{q-1}'(r)}{b_{q-1}'(r)}.
\]
By the Fundamental Theorem of Calculus,
\[
a_{q-1}'(r)=-\frac{1}{r^{2q-2}\sqrt{1-\frac{\Lambda}{3} r^2}}.
\]
Moreover, by a direct computation we get
\[
\begin{aligned}
b_{q-1}'(r)
&=-\frac{\frac{\Lambda}{3} r}{\sqrt{1-\frac{\Lambda}{3} r^2}}\,r^{-(2q-3)}
-(2q-3)\sqrt{1-\frac{\Lambda}{3} r^2}\,r^{-(2q-2)}.
\end{aligned}
\]
Hence, after some obvious algebraic simplifications, we get
\[
\lim_{r\to0^+}\phi_{q-1}(r)
=\lim_{r\to0^+}\frac{1}{(2q-3)+2(2-q)\frac{\Lambda}{3} r^2}
=\frac{1}{2q-3} = \frac{p-1}{5-3p}.
\]
As a consequence, for $1<p<5/3$ we find
\[
A_\Lambda^{\!(p)}(r)\,=\,\frac{1}{3-p}+
\frac{1}{5-3p} \left(\frac{p-1}{3-p}\right)
\,\frac{\Lambda}{3}r^2+O(r^4)
\]
as $r\to 0$. 
If instead $5/3<p<2$, then $1<q<3/2$ and we have that the integral appearing in the definition of $\phi_{q-1}(r)$ is finite, whereas $r^{2q-3}$ goes to infinity as $r\to 0$. Hence, from~\eqref{eq:Ar_expansion} we deduce
\[
A_\Lambda^{\!(p)}(r) \, = \, \frac{1}{3-p}+\frac{2(2-p)}{3-p} \, \frac{\Lambda}{3} \, \phi_{q-1}(r) \, r^2+O(r^2)
\]
This concludes the proof.    
\end{proof}

\subsection{
A selection principle for the structural coefficients $\mu$ and $\lambda$: ancient solutions and the small sphere limit.}
\label{subsec:selection}

In this subsection, building upon the prescribed choice of the structural coefficient $\alpha$ and upon the analysis of its behavior as $t \to -\infty$, we investigate the ancient solutions of system~\eqref{structural}, with the aim of selecting the two solutions $\mu$ and $\lambda$ that exhibit the highest degree of compatibility with the infinitesimally Euclidean structure of the underlying manifold. Recall that, as $t \to -\infty$, we are, in a heuristic sense, approaching the pole of the $p$-Green's function. Consequently, it is natural to expect that all the relevant analytic and geometric quantities display an asymptotic behavior analogous to that of the corresponding Euclidean model. In accordance with formulas~\eqref{eq:lomo}, we therefore single out the unique pair of solutions $\mu$ and $\lambda$ characterized by the asymptotic conditions
\begin{equation}
\label{eq:euclidean_as}
 \mu(t) = \frac{1}{3-p} \, + \, \mathcal{O}(e^{t/2}) \quad \text{and} \quad  
 \lambda(t) = \frac{t}{3-p}
 \, + \,  \log \, \left[ \frac{\left(\frac{p-1}{3-p}\right)^{\!\!\frac{p-1}{3-p}} }{8\pi(3-p)} \right]
 \, + \,  \mathcal{O}(e^{t/2})\,,
\end{equation}
as $t \to -\infty$. In particular, the precise choice of the constant term in the asymptotic expansion of $\lambda$ will be instrumental in establishing the small sphere limit Theorem~\ref{thm:ssl} for the quantities $\mmp_\Lambda$, a property that is typically regarded as a fundamental requirement for mass-type invariants. As the structural system~\eqref{structural} is semi-decoupled, we start with the analysis of the independent equation for $\mu$.
\begin{theorem}
\label{thm:selection_mu}
For every $1<p<3$ and  $\Lambda\in\R$, let $\alpha = \alpha_{\Lambda}^{\!(p)}$ be the function defined in~\eqref{eq:alpha_t}.  Then, there exists a unique solution $\mu$ to the equation 
\begin{equation}
\label{eq:ODE_mu}
\dot{\mu} \,  = \, \left(\frac{5-p}{p-1}\right)\alpha^2 - \left[\left(\frac{5-p}{p-1}\right)\alpha+\left(\frac{1}{p-1}\right)\right]\mu + \left(\frac{3-p}{p-1}\right)\mu^2
\end{equation}
such that $\lim_{t\to -\infty} \mu(t) = 1/(3-p)$.
\end{theorem}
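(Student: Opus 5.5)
The plan is to linearize \eqref{eq:ODE_mu} around the constant $\mu \equiv 1/(3-p)$, which is the relevant equilibrium of the frozen equation obtained by putting $\alpha \equiv 1/(3-p)$, exactly as in the guideline case $\Lambda = 0$. I then treat the perturbation as a fixed point of an integral equation on a half-line $(-\infty, T]$ with $T$ very negative.

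Set $a_0 = 1/(3-p)$, $c = (5-p)/(p-1)$, $q = 1/(p-1)$ and $d = (3-p)/(p-1)$, and write $\alpha = a_0 + \beta$ and $\mu = a_0 + v$. By \eqref{lim_alpha_meno} we know that $\beta(t) \to 0$ as $t \to -\infty$, and $\beta$ is continuous on $\R$. The right-hand side of \eqref{eq:ODE_mu} equals $c\,\alpha(\alpha - \mu) - q\mu + d\mu^2$. Substituting, and using that $a_0$ is a root of $-q\mu + d\mu^2$ when $\beta = 0$, a direct computation gives
\[
\dot v \, = \, -k\, v \, + \, c\,(a_0+\beta)\,\beta \, - \, c\,\beta\, v \, + \, d\, v^2, \qquad k \, = \, c\,a_0 + q - 2 d a_0 \, = \, \frac{2}{(p-1)(3-p)} \, > \, 0 .
\]
Since $-k < 0$, the equilibrium is attracting forward in time and repelling backward in time. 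This is exactly the situation in which exactly one trajectory emanates from it at $t = -\infty$.

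For existence, I look for $v$ solving
\[
v(t) \, = \, \int_{-\infty}^t e^{-k(t-s)} \Big[ c\,(a_0+\beta(s))\,\beta(s) \, - \, c\,\beta(s)\, v(s) \, + \, d\, v(s)^2 \Big]\, ds .
\]
I would work in the closed ball $\{ v \in \mathscr C^0((-\infty,T]) : \sup |v| \leq \varepsilon \}$. Since $\int_{-\infty}^t e^{-k(t-s)}\,ds = 1/k$, the map sends this ball into itself and is a contraction, provided $\varepsilon$ is small and then $T$ is chosen so negative that $\sup_{(-\infty,T]} |\beta|$ is much smaller than $\varepsilon$. The contraction constant is of order $(c\sup|\beta| + 2d\varepsilon)/k$. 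Banach's fixed point theorem then gives a $\mathscr C^1$ solution on $(-\infty,T]$.

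To see that $v(t) \to 0$, I apply the same argument on $(-\infty, T']$ for each $T' < T$. The resulting bound $\sup_{(-\infty,T']}|v| \lesssim \sup_{(-\infty,T']}|\beta|$ holds because the fixed point lies in every smaller ball, and the right-hand side tends to zero as $T' \to -\infty$. Finally, I extend $\mu = a_0 + v$ forward by standard ODE theory to its maximal interval of existence. The statement does not ask for global existence; that is handled separately in Subsection~\ref{sub:global}.

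For uniqueness, let $\mu_1$ and $\mu_2$ be two solutions tending to $a_0$ as $t \to -\infty$, and set $z = \mu_1 - \mu_2$. Subtracting the equations gives
\[
\dot z \, = \, \big(-k - c\,\beta + d\,(v_1 + v_2)\big)\, z .
\]
For $t \leq T$ with $T$ negative enough, the coefficient is at most $-k/2$. Integrating on $[t, t_0]$ with $t_0 \leq T$ gives $|z(t)| \geq |z(t_0)|\, e^{k(t_0 - t)/2}$. If $z(t_0) \neq 0$, this blows up as $t \to -\infty$, which contradicts the boundedness of $z$. Hence $z \equiv 0$ on $(-\infty,T]$, and by uniqueness for the Cauchy problem, $z \equiv 0$ everywhere.

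The main (mild) obstacle is the bookkeeping in the contraction estimate: $\varepsilon$ must be fixed before $T$, and the only input on $\alpha$ is the qualitative limit \eqref{lim_alpha_meno}. No rate is needed here; the rates in Corollary~\ref{cor:at} matter only for the refined expansion \eqref{eq:euclidean_as}.
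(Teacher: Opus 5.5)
Your proof is correct, but your existence argument differs from the paper's. The paper uses barriers. It locates the curves $\mu_\pm(t)$ on which $\dot\mu=0$ and asserts the existence of a decreasing $\sigma^{(-)}$ and an increasing $\sigma^{(+)}$, both tending to $1/(3-p)$, with $\sigma^{(-)}<\mu_-<\sigma^{(+)}<\mu_+$ near $-\infty$. It then shoots solutions from $\sigma^{(\pm)}(t_n)$ with $t_n\to-\infty$. Non-crossing of trajectories gives monotone sequences of values at a fixed time, and their common limit defines a solution trapped between the barriers.

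You instead linearize at the equilibrium and obtain the ancient solution as a Banach fixed point of the Perron-type integral equation on $(-\infty,T]$. This constructs directly the one-dimensional unstable manifold of the asymptotically autonomous equation. Your rate $k=2/((p-1)(3-p))$ is right. The paper's uniqueness step says the coefficient tends to $-2$, which holds only for $p=2$, though only its sign matters. Your uniqueness step is the same backward exponential-divergence argument as the paper's.

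Your route has several advantages:
\begin{itemize}
\item It needs only continuity of $\alpha$ and \eqref{lim_alpha_meno}.
\item It avoids the barrier functions, whose existence the paper only sketches.
\item It gives the quantitative bound $\sup_{(-\infty,T']}|\mu-\tfrac{1}{3-p}|\lesssim\sup_{(-\infty,T']}|\alpha-\tfrac{1}{3-p}|$.
\item It gives uniqueness for free: a solution tending to $1/(3-p)$ is bounded near $-\infty$, hence satisfies the integral equation and lies in the contraction ball.
\item Run in the weighted norm $\sup_{t\le T}e^{-t/2}|v(t)|$, it would also give Lemma~\ref{lem:mudecay} at once from $\alpha-\tfrac{1}{3-p}=\mathcal{O}(e^{t/2})$. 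The paper's proof of that lemma is essentially your variation-of-constants formula.
\end{itemize}

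What the barrier route buys is order information: the selected solution is confined between explicit sub- and supersolutions. The paper reuses this in the case $\Lambda<0$ of Theorem~\ref{thm:global}, with $\sigma^{(+)}\equiv 1/(3-p)$, to get $\mu<1/(3-p)$ for all times. Your fixed point does not give you that directly.
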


\begin{remark}
Observe that, {\em a priori}, the ancient solution $\mu$ obtained in the above theorem is only defined up to some maximal time $T\in\R$. Later in this section, we will show that $\mu$ is in fact defined for all times.
\end{remark}

\begin{proof}
The right hand side of the differential equation \eqref{eq:ODE_mu} induces a second-order equation for $\mu$, whose zeros determine the regions where the solutions of the differential equation either increase or decrease. More precisely, $\dot{\mu}(t)=0$ occurs only at times $t$ when $\mu(t)=\mu_{\pm}(t)$, where
\begin{equation}
\label{eq:mu_plus_minus}
\mu_{\pm}(t)\,=\,\frac{ 1+  (5-p)\alpha \,\pm\sqrt{1+2(5-p)\alpha    -(7-3p)(5-p)\alpha^2}}{2(3-p)}.
\end{equation}
Note that, from \eqref{lim_alpha_meno}, the function under the square root approaches the positive constant $4/(3-p)^2$ as $t\to -\infty$. In particular, for all sufficiently small $t$, the values $\mu_\pm(t)$ are real. Hence,
any solution $\mu$ to the ODE is decreasing when $\mu_-\leq\mu\leq\mu_+$, and increasing otherwise. Furthermore, again from 
\eqref{lim_alpha_meno}
we conclude that
\[
\lim_{t\to-\infty}\mu_-(t)\,=\,\frac{1}{3-p}.
\]
It is easily seen that there exists $\tau\in\R$ such that we can find an increasing function $\sigma^{(+)}(t)$ and a decreasing function $\sigma^{(-)}(t)$ such that $\lim_{t\to-\infty}\sigma^{(\pm)}(t)=1/(3-p)$, $\lim_{t\to-\infty}\dot\sigma^{(\pm)}(t)=0$ and $\sigma^{(-)}(t)<\mu_-(t)<\sigma^{(+)}(t)<\mu_+(t)$ for all $t\in(-\infty,\tau)$. 
In particular, for all $t<\tau$, we will have $\dot \mu(t)>0$ whenever $\mu(t)=\sigma^{(-)}(t)$ and $\dot \mu(t)< 0$ whenever $\mu(t)=\sigma^{(+)}(t)$. As a consequence of this, we easily conclude that, if $\sigma^{(-)}(\bar t)\leq \mu(\bar t)\leq \sigma^{(+)}(\bar t)$ for some $\bar t<\tau$, then $\sigma^{(-)}(t)\leq \mu(t)\leq \sigma^{(+)}(t)$ for all $t\in[\bar t,\tau]$.

To establish the existence, we now consideer a decreasing sequence of values $\{t_n\}_{n\in\N}$ with $t_1<\tau$ and $t_n\to-\infty$ as $n\to+\infty$, and consider functions $\mu^{(\pm)}_n(t)$ satisfying~\eqref{eq:ODE_mu} and $\mu_n^{(\pm)}(t_n)=\sigma^{(\pm)}(t_n)$ (clearly these solutions exist because of Cauchy's Theorem). From the above discussion we conclude that $\mu_n^{(\pm)}(\tau)\in[\sigma^{(-)}(\tau),\sigma^{(+)}(\tau)]$. 
Using the fact that different solutions to~\eqref{eq:ODE_mu} never intersect (this is again an easy consequence of Cauchy's Theorem), it is also easy to conclude that $\mu_n^{(+)}(\tau)$ is a decreasing sequence, whereas $\mu_n^{(-)}(\tau)$ is an increasing sequence, with $\mu_n^{(+)}(\tau)>\mu_n^{(-)}(\tau)$. As a consequence, we conclude that $\mu_n^{(+)}(\tau)\to \ell\in(\sigma^{(-)}(\tau),\sigma^{(+)}(\tau))$ and $\mu_n^{(-)}(\tau)<\ell<\mu_n^{(+)}(\tau)$ for all $n$.
We now claim that the solution $\bar \mu$ to~\eqref{eq:ODE_mu} with $\bar \mu(\tau)=\ell$ is such that $\lim_{t\to-\infty}\bar \mu(t)=1/(3-p)$.
In fact, this is immediately seen by observing that the solution $\bar \mu$ is bounded from above by the $\mu_n^{(+)}$ and from below by the $\mu_n^{(-)}$. In particular, this implies that $\sigma^{(-)}(t)<\bar\mu(t)<\sigma^{(+)}(t)$ for all $t\in(-\infty,\tau]$, hence $\bar \mu$ must have the same limit at $-\infty$ as $\sigma^{(\pm)}(t)$.

It remains to prove that such a solution is indeed unique. To this end, suppose by contradiction that there exist $\mu_1,\mu_2$ solutions to~\eqref{eq:ODE_mu} with $\lim_{t\to-\infty}\mu_1(t)=\lim_{t\to-\infty}\mu_2(t)=1/(3-p)$, defined in some interval $(-\infty,\tau)$. Since different solutions do not intersect, without loss of generality let us assume that $\mu_2>\mu_1$. Then the difference $\mu_2-\mu_1$ satisfies
\[
\frac{d}{dt}(\mu_2-\mu_1)\,=\,\left[-\frac{5-p}{p-1}\alpha-\frac{1}{p-1}+\frac{3-p}{p-1}(\mu_2+\mu_1)\right](\mu_2-\mu_1)\,.
\]
Now, since, from the limits of $\mu_1$ and $\mu_2$ as $t\to -\infty$ and from \eqref{lim_alpha_meno}, the term in square brackets approaches $-2$ at $-\infty$, it follows that $\mu_2-\mu_1$ satisfies the differential inequality
\[
\frac{d}{dt}(\mu_2-\mu_1)
\,\leq\,
-(\mu_2-\mu_1),
\]
for all sufficiently small $t$. Hence, by comparison, we deduce that
\[
\mu_2(t)-\mu_1(t)\,\geq\,\bar qe^{-(t-\bar t\,)},
\]
for some sufficiently small $\bar t$, some positive $\bar q$, and for all $t\leq \bar t$.
This, together with the fact that $\lim_{t\to -\infty} (\mu_2(t)-\mu_1(t)) = 0$, yields the desired contradiction.
\end{proof}

With the next lemma, we identify the next-order decay rate of \(\mu\) at \(-\infty\).

\begin{lemma}
\label{lem:mudecay}
For $1<p<3$ and $\Lambda \in \R$, let $\alpha=\alpha_\Lambda^{\!(p)}$ be the function defined in~\eqref{eq:alpha_t} and let $\mu=\mu_\Lambda^{\!(p)}$ be the unique solution of~\eqref{eq:ODE_mu} that satisfies $\lim_{t \to - \infty} \mu (t) = 1/(3-p)$. Then, $\mu$ satisfies the following asymptotic expansion
\begin{equation}
    \mu = \frac{1}{3-p} \, + \, \mathcal{O}(e^{t/2}) \, , \qquad \hbox{as} \,\,\, t \to - \infty
\end{equation}
\end{lemma}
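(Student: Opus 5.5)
Set $\nu = \mu - \frac{1}{3-p}$ and $\beta = \alpha - \frac{1}{3-p}$, and rewrite \eqref{eq:ODE_mu} in these variables. Substituting $\mu = \frac{1}{3-p}+\nu$ and $\alpha = \frac{1}{3-p}+\beta$, the constant terms cancel, since $\mu\equiv\alpha\equiv 1/(3-p)$ is the Euclidean equilibrium. The linear coefficient of $\nu$ is
\[
-\tfrac{5-p}{(p-1)(3-p)} - \tfrac{1}{p-1} + \tfrac{2}{p-1} = -2 ,
\]
which is consistent with the computation in the proof of Theorem~\ref{thm:selection_mu}. Hence
\[
\dot\nu \, = \, -2\nu \, + \, \tfrac{3-p}{p-1}\,\nu^2 \, - \, \tfrac{5-p}{p-1}\,\beta\,\nu \, + \, F(\beta),
\]
where $F(\beta) = \tfrac{5-p}{p-1}\big(\tfrac{2\beta}{3-p} + \beta^2 - \tfrac{\beta}{3-p}\big) = \tfrac{5-p}{p-1}\big(\tfrac{\beta}{3-p}+\beta^2\big)$. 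The forcing is therefore $\mathcal{O}(|\beta|)$.

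The first key input is a decay rate for $\beta$. Corollary~\ref{cor:at} and Remark~\ref{rem:extra_p} give $\beta = \mathcal{O}(e^{\gamma t})$ with $\gamma = \min\{\tfrac{2}{3-p}, \tfrac{1}{p-1}\}$, up to a factor $t$ at $p=5/3$. In every case $\gamma > 1/2$: for $p \le 2$ we have $\tfrac{2}{3-p}\ge 1$ or $\tfrac1{p-1}\ge1$, with $\tfrac32$ at $p=5/3$, and for $2<p<3$ we have $\tfrac1{p-1}>\tfrac12$. So $|\beta(t)| \le C e^{t/2}$ for $t\le t_0$. For $\Lambda<0$ and $p\neq 2$, the proof of Proposition~\ref{prop:Ar} leaves some modifications to the reader; I would use these same expansions, which give at least this rate.

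The core step is a Duhamel/Gronwall argument on $(-\infty,t_0]$. Since $\nu\to0$ and $\beta\to0$ by Theorem~\ref{thm:selection_mu} and \eqref{lim_alpha_meno}, I can choose $t_0$ so that the nonlinear terms are small: $|\tfrac{3-p}{p-1}\nu - \tfrac{5-p}{p-1}\beta| \le \tfrac12$ there. Then $\dot\nu = -c(t)\,\nu + F(\beta)$ with $c(t)\in[3/2,5/2]$. Variation of constants between $s<t$ gives
\[
\nu(t) = e^{-\int_s^t c}\,\nu(s) + \int_s^t e^{-\int_\tau^t c}\,F(\beta(\tau))\,d\tau .
\]
The homogeneous part now works in the wrong direction, and this is the subtle point. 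Letting $s\to-\infty$, the factor $e^{-\int_s^t c}\ge e^{-\frac52 (t-s)}$ is not small, because $c>0$ makes backward evolution expand. I would instead integrate from $t$ to $t_0$ backwards, in the form $\nu(t) = e^{\int_t^{t_0} c}\,\nu(t_0) - \dots$. That is also bad, because any nonzero component grows like $e^{-2t}$ as $t\to-\infty$.

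This growth is exactly what pins down the solution. The requirement $\nu\to0$ at $-\infty$ forces the ancient solution to be given by the forward Duhamel formula with $s=-\infty$. So I would prove that $e^{\int_s^t c}\,\nu(s)\to 0$ as $s\to-\infty$, i.e. $\nu(s)\,e^{-\int_s^t c}\to 0$. This holds because $\nu(s)\to 0$ and the exponential factor is at most $e^{-\frac32(t-s)}\le 1$. It follows that
\[
|\nu(t)| \le \int_{-\infty}^t e^{-\frac32(t-\tau)}\,C e^{\tau/2}\,d\tau = C' e^{t/2},
\]
and the integral converges since $\tfrac32+\tfrac12>0$. This gives the claim.

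The main obstacle is making this limit rigorous without circularity. The coefficient $c$ itself depends on $\nu$, but only through the bound $c\in[3/2,5/2]$, which follows from $\nu\to0$ alone, so the argument closes. Beyond that, the only real work is confirming the uniform $e^{t/2}$ bound on $\beta$ for all $1<p<3$ and both signs of $\Lambda$. The rate $e^{t/2}$ is chosen as the common worst case, which is why the statement uses it uniformly.
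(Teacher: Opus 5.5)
Your argument is correct and is essentially the paper's proof. You write the equation for $\nu=\mu-\frac{1}{3-p}$ as $\dot\nu=-c(t)\,\nu+\frac{5-p}{p-1}\,\alpha\bigl(\alpha-\frac{1}{3-p}\bigr)$, with forcing $\mathcal{O}(e^{t/2})$ and $c$ bounded below by a positive constant near $-\infty$, integrate from $s$ to $t$, and let $s\to-\infty$; the homogeneous term is at most $|\nu(s)|\to0$, so your detour about it working ``in the wrong direction'' is unnecessary.

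One arithmetic correction: the linear coefficient is $-\frac{2}{(p-1)(3-p)}$, which equals $-2$ only at $p=2$, so your bound $c\le\frac52$ fails in general. Since $(p-1)(3-p)\le1$, this coefficient is $\le-2$, and you only ever use $c\ge\frac32$, so the conclusion stands.
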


\begin{proof}
First of all, we recall that, by Theorem~\ref{thm:selection_mu} and Corollary~\ref{cor:at} we have
\[
\alpha(t)=\frac{1}{3-p}+\mathcal{O}(e^{t/2})
\qquad\text{as }t\to-\infty,
\qquad\text{and}\qquad
\mu(t)\longrightarrow \frac{1}{3-p}
\qquad\text{as }t\to-\infty.
\]
More precisely, the decay estimate for $\alpha$ has so far been proved only up to to $p=2$, in a sharper form. Nonetheless, as already pointed out in Remark~\ref{rem:extra_p} (with the details left to the interested reader), this result can be extended to the whole interval $1<p<3$, provided one is content with a slightly weaker estimate.
Equation~\eqref{eq:ODE_mu} implies the differential identity
\begin{equation}\label{eq:mu_minus_equation}
\frac{d}{dt}\left(\mu-\frac{1}{3-p}\right)
=
\left(\frac{5-p}{p-1}\right)\alpha\left(\alpha-\frac{1}{3-p}\right)
+
\left(\mu-\frac{1}{3-p}\right)\,G(t),
\end{equation}
where
\[
G(t):=
-\left[\left(\frac{5-p}{p-1}\right)\alpha+\left(\frac{1}{p-1}\right)\right]
+\left(\frac{3-p}{p-1}\right)\left(\mu+\frac{1}{3-p}\right).
\]
Observe that 
\begin{align*}
\lim_{t\to-\infty}G(t)
&=
-\frac{2}{(p-1)(3-p)}<0.
\end{align*}
Hence, there exists \(\tau\in\mathbb{R}\) such that, for all \(t\le\tau\),
\begin{equation}\label{eq:G_negative}
G(t)\le -\frac{1}{(p-1)(3-p)}.
\end{equation}
Also, observe that, for all $t\le\tau$,
\begin{equation}\label{eq:alpha_source}
\left|\alpha(t)\left(\alpha(t)-\frac{1}{3-p}\right)\right|
\le C e^{t/2} \, ,
\end{equation}
for some constant \(C>0\).

\bigskip
\bigskip

Now, let $t_0<t \leq \tau$. Multiplying \eqref{eq:mu_minus_equation} by the integrating factor
\(\exp \bigl(-\int_{t_0}^t G(\xi)\,d\xi\bigr)\) and integrating from \(t_0\) to \(t\), we obtain
\begin{align*}
\mu(t)-\frac{1}{3-p}
&=
\left(\mu(t_0)-\frac{1}{3-p}\right)\exp\!\left(\int_{t_0}^t G(\xi)\,d\xi\right)\\
&\quad
+\left(\frac{5-p}{p-1}\right)\int_{t_0}^t
\exp\!\left(\int_{s}^t G(\xi)\,d\xi\right)\alpha(s)\left(\alpha(s)-\frac{1}{3-p}\right)\,ds.
\end{align*}
Taking absolute values and using \eqref{eq:G_negative}, for all \(t_0<s\le t\le\tau\),
\[
\exp\!\left(\int_{s}^t G(\xi)\,d\xi\right) \, \le \,
\exp\!\left(-\frac{t-s}{(p-1)(3-p)}\right).
\]
Hence, by \eqref{eq:alpha_source},
\begin{align*}
\left|\mu(t)-\frac{1}{3-p} \right|
& \, \le \, \left|\mu(t_0)-\frac{1}{3-p}\right| \, \exp\!\left(-\frac{t-t_0}{(p-1)(3-p)}\right)\\
&\quad
+\left(\frac{5-p}{p-1}\right) \, C
\int_{t_0}^t
\exp\!\left(-\frac{t-s}{(p-1)(3-p)} + \frac{s}{2}\right) \,ds.
\end{align*}
Now let \(t_0\to-\infty\) and observe that the first term on the right hand side tends to \(0\), leading to
\begin{align*}
\left|\mu(t)-\frac{1}{3-p} \right|
& \, \le \, \left(\frac{5-p}{p-1}\right) \, C
\int_{-\infty}^t
\exp\!\left(-\frac{t-s}{(p-1)(3-p)} + \frac{s}{2}\right)\,ds\\
& \, =\,  C\left(\frac{2(5-p)(3-p)}{2+ (p-1)(3-p)}\right)\,e^{t/2}.
\end{align*}
Therefore,
\[
\mu(t)=\frac{1}{3-p}+\mathcal{O}(e^{t/2})
\qquad\text{as }t\to-\infty,
\]
and the proof is coompleted.
\end{proof}

Once $\alpha$ and $\mu$ are selected according to Subsection~\ref{sub:selection_alpha} and Theorem~\ref{thm:selection_mu}, we can integrate the second equation in~\eqref{structural} and obtain the last structural coefficient $\lambda$, up to an additive constant. This is the content of the following theorem. 

\begin{theorem}
\label{thm:selection_lambda}
For $1<p<3$ and $\Lambda \in \R$, let $\alpha=\alpha_\Lambda^{\!(p)}$ be the function defined in~\eqref{eq:alpha_t} and let $\mu=\mu_\Lambda^{\!(p)}$ be the unique solution of~\eqref{eq:ODE_mu} that satisfies $\lim_{t \to - \infty} \mu (t) = 1/(3-p)$, as selected by Theorem~\ref{thm:selection_mu}. Then, there exists a solution $\lambda$ to the equation 
\begin{equation}
\label{eq:lam}
\dot{\lambda} \,  =  \, \left[\left(\frac{5-p}{p-1}\right)\alpha-\left(\frac{1}{p-1}\right)\right] - \left(\frac{3-p}{p-1}\right)\mu \
\end{equation}
which is defined on the same interval where $\mu$ is defined and it is determined up to an additive constant. Moreover, $t \mapsto \lambda(t)$ satisfies
\[
\lambda(t) \, = \, \frac{t}{3-p} + \kappa  + o(1)\, , \qquad {\hbox{as $t \to - \infty$}}
\]
for some constant $\kappa$. For the choice of $\lambda$ to be consistent with the validity of a small sphere limit, it is useful to set
\begin{equation}
\label{eq:kappa}
\kappa = \left(\frac{p-1}{3-p}\right)\log\left(\frac{p-1}{3-p}\right)-\log\left[8\pi(3-p)\right]\,.
\end{equation}
\end{theorem}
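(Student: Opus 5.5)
The plan is to integrate equation~\eqref{eq:lam} directly, since its right-hand side depends only on the already selected coefficients $\alpha$ and $\mu$. On the maximal interval $(-\infty,T)$ where $\mu$ is defined, the function
\[
F(t)\,=\,\left(\frac{5-p}{p-1}\right)\alpha(t)-\frac{1}{p-1}-\left(\frac{3-p}{p-1}\right)\mu(t)
\]
is continuous: $\alpha$ is continuous on all of $\R$ by~\eqref{eq:alpha_t}, and $\mu$ is $\mathscr C^1$ by Theorem~\ref{thm:selection_mu}. Hence every solution of~\eqref{eq:lam} has the form $\lambda(t)=\lambda(t_1)+\int_{t_1}^t F(s)\,ds$ for a fixed reference time $t_1<T$. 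It is defined exactly where $\mu$ is and is determined up to an additive constant. This proves the first part of the statement.

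For the asymptotics, I first compute the limit of $F$. Plugging $\alpha=\mu=1/(3-p)$ into $F$ gives
\[
\frac{5-p}{(p-1)(3-p)}-\frac{1}{p-1}-\frac{1}{p-1}=\frac{5-p-2(3-p)}{(p-1)(3-p)}=\frac{1}{3-p}.
\]
So $F(t)-\frac{1}{3-p}=\frac{5-p}{p-1}\big(\alpha-\frac1{3-p}\big)-\frac{3-p}{p-1}\big(\mu-\frac1{3-p}\big)$. By Corollary~\ref{cor:at}, together with Remark~\ref{rem:extra_p} for $2<p<3$, and by Lemma~\ref{lem:mudecay}, both brackets are $\mathcal O(e^{t/2})$ as $t\to-\infty$. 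Indeed, each rate in Corollary~\ref{cor:at}, namely $e^{2t/(3-p)}$, $te^{3t/2}$, $e^{t/(p-1)}$, $e^{2t}$ and $e^{t}$, is dominated by $e^{t/2}$ in the relevant range of $p$. Therefore $F-\frac1{3-p}$ is integrable on $(-\infty,t_1]$, with $\int_{-\infty}^t |F-\tfrac1{3-p}|\,ds=\mathcal O(e^{t/2})$.

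Writing $\lambda(t)-\frac{t}{3-p}=\lambda(t_1)-\frac{t_1}{3-p}-\int_t^{t_1}\big(F(s)-\frac1{3-p}\big)\,ds$, the integrability just established shows that the limit
\[
\kappa\,=\,\lambda(t_1)-\frac{t_1}{3-p}-\int_{-\infty}^{t_1}\Big(F(s)-\frac1{3-p}\Big)\,ds
\]
exists. Moreover $\lambda(t)=\frac{t}{3-p}+\kappa+\mathcal O(e^{t/2})$, which is in particular $o(1)$ and matches~\eqref{eq:euclidean_as}. Since the additive constant is free, choosing $\lambda(t_1)$ appropriately realizes any prescribed $\kappa$, in particular the value~\eqref{eq:kappa}. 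That value equals $\log$ of the constant in~\eqref{eq:lomo}, consistently with the Euclidean model. Its full justification, namely that it produces the small sphere limit $\RRR(x)-2\Lambda$ over $16\pi$, belongs to Theorem~\ref{thm:ssl} and is not needed here.

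The only delicate point is the decay of $\alpha-\frac1{3-p}$ in the range $2<p<3$, which relies on Remark~\ref{rem:extra_p}. That remark asserts the rate $e^{t/(p-1)}$, and since $p-1<2$ this is still faster than $e^{t/2}$. This is the step I would double-check. For the mere existence of $\kappa$, however, any integrable rate suffices.
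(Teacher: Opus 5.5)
Your proposal is correct and follows the paper's own argument. The paper likewise integrates \eqref{eq:lam} directly and uses the decay $\alpha-\frac1{3-p}=\mathcal{O}(e^{t/2})$ and $\mu-\frac1{3-p}=\mathcal{O}(e^{t/2})$ (Corollary~\ref{cor:at}, Remark~\ref{rem:extra_p}, Lemma~\ref{lem:mudecay}) to obtain integrability of $\dot\lambda-\frac1{3-p}$ near $-\infty$, hence the existence of $\kappa$, which the free additive constant then fixes to \eqref{eq:kappa}. You simply make explicit what the paper leaves implicit, namely the limit computation and the rate comparisons, and your reliance on Remark~\ref{rem:extra_p} for $2<p<3$ is the same dependence the paper's proof has.
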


\begin{proof} A simple integration provides the existence of a solution $\lambda$ to equation~\eqref{eq:lam}. Such a solution is clearly unique up to an additive constant. 
The asymptotic behavior at $-\infty$, can easily be deduced from the decay estimates 
\[
\alpha(t)\,= \, \frac{1}{3-p}+\mathcal{O}(e^{t/2})
 \, = \,  \mu(t) \, ,\qquad\text{as }t\to-\infty,
\]
that have been established in Corollary~\ref{cor:at} and Lemma~\ref{lem:mudecay}.
\end{proof}
At this point, we have fixed the structural coefficients $\alpha$, $\mu$, and $\lambda$ as the solution of system~\eqref{structural}, ensuring that the conditions~\eqref{eq:euclidean_as} -- whose geometric meaning has already been discussed -- are satisfied. We now show that, after a suitable choice of the integration constant (see~\eqref{eq:kappa}) in the definition of $\lambda$, the monotone functionals $t \mapsto \mmp_\Lambda(x,t)$ introduced in~\eqref{mp_gen} satisfy the {\em small sphere limit condition}. (Here, for obvious reasons, it is convenient to adopt the full notation, where the dependence on the pole $x$ is also made explicit). 
Referring to the foundational work~\cite{Bartnik_quasilocal} and the recent survey~\cite{McC_2024} for a detailed presentation of Bartnik’s quasi-local mass and the axiomatic criteria one expects from a quasi-local mass, we emphasize here only that the small-sphere limit condition constitutes a fundamental requirement for any proposed quasi-local mass functional.
Interpreting $\mmp_\Lambda(x,t)$ as the mass contained in the region $\Omega_t = \{w < t\}$, where $w$ solves~\eqref{wdistr}, the limit
\[
\lim_{t \to -\infty} \frac{\mmp_\Lambda(x,t)}{|\Omega_t|}
\]
must recover the mass density at the pole $x \in M$, the distinguished reference point for our constructions. For time-symmetric initial data, the Einstein Constraint Equations identify this mass density at $x$ as
\[
\frac{\RRR (x) - 2 \Lambda}{16 \pi} \ .
\]
The terminology {\em small sphere} is justified in this context by the fact that, as we approach the pole, the level sets of $w$ converge to small geodesic spheres, so that the regions $\Omega_t$ truly behave like shrinking geodesic balls around $x$.

\begin{theorem}[Small Sphere Limit for  $\mmp_\Lambda$]
\label{thm:ssl}
Let $\Lambda \in \R$ and let $(M,g)$ be a smooth $3$-dimensional Riemannian manifold, whose scalar curvature satisfies 
\[
\RRR \, \geq \, 2 \Lambda \, . 
\]
Let $1<p<2$, and, for a fixed pole $x\in M\setminus\pa M$, let $u$ be a solution to
\[
\Delta_p u\,=\, -4\pi \delta_x \,
\]
in the sense of distributions. Set, as usual, $w=-(p-1)\log u$ and let $t \mapsto \mmp_\Lambda (x,t)$ be the function defined in~\eqref{mp_gen}, where the structural coefficients $\alpha$, $\mu$ and $\lambda$ are determined by formula~\eqref{eq:alpha_t}, Theorem~\ref{thm:selection_mu} and Theorem~\ref{thm:selection_lambda}, respectively. Then,
\[
\lim_{t\to-\infty}\frac{\mmp_\Lambda(x,t)}{|\Omega_t|}\,=\,
\frac{\RRR(x)-2\Lambda}{16 \pi}\,,
\]
where $\Omega_t=\{w < t\}$  are the sub-level sets of $w$ containing the pole $x$.
\end{theorem}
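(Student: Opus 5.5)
The plan is to derive the limit from the monotonicity formula combined with the expansions near the pole of the $p$-Green's function, in the spirit of~\cite{AMO,AMMO}. Since $\mmp_\Lambda(x,t)\to 0$ as $t\to-\infty$ (to be checked from the expansion below), I would first try to use the derivative formula~\eqref{dermp} together with a de l'H\^opital-type argument. However, this only controls the pointwise derivative and requires the leading term of each summand. The more robust route is a direct expansion of $\mmp_\Lambda(x,t)$ itself, which I will follow.

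\emph{Step 1: geometry of the level sets.} Near $x$, use the known asymptotics of $p$-Green's functions (Kichenassamy--V\'eron type expansions, refined as in~\cite{AMMO}): in normal coordinates $u = \tfrac{p-1}{3-p}\, r^{-\frac{3-p}{p-1}}\bigl(1 + c(x)\, r^2 + o(r^2)\bigr)$, with corresponding expansions for $|\nabla u|$ and $\nabla^2 u$. Here $c(x)$ is a constant depending on the scalar curvature at $x$, possibly with a harmonic correction term of order $r^{\frac{3-p}{p-1}}$. The latter is subleading with respect to $r^2$ only when $\frac{3-p}{p-1} > 2$, i.e.\ $p<5/3$; in the range $5/3\le p<2$ it must be handled through the fact that its angular average cancels in the surface integrals. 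This is exactly why Proposition~\ref{prop:Ar} splits into these ranges, and why $p<2$ is needed: one needs $e^{t/(p-1)} = o(e^{2t/(3-p)})$ in the relevant terms. Then $\Sigma_t$ is a perturbed geodesic sphere of radius $r(t)\sim \bigl(\tfrac{p-1}{3-p}\bigr)^{\frac{p-1}{3-p}} e^{t/(3-p)}$, with $|\Sigma_\tau| = 4\pi r^2(1+O(r^2))$ and $|\Omega_t| = \tfrac{4\pi}{3} r^3(1+o(1))$. One also has $\HHH = \tfrac{2}{r} - \tfrac{\RRR(x)}{18}\, r+\dots$ averaged over the sphere, together with $|\nabla w| = (p-1)|\nabla u|/u$ computed from the expansion.

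\emph{Step 2: plug in.} With the expansions~\eqref{eq:euclidean_as} of $\mu,\lambda$ (Lemma~\ref{lem:mudecay}, Theorem~\ref{thm:selection_lambda} with $\kappa$ as in~\eqref{eq:kappa}), the leading $O(e^{\lambda}\cdot 1)=O(r)$ contributions of the bulk term $\int_{-\infty}^t e^{\lambda}4\pi\,d\tau = (3-p)4\pi e^{\lambda(t)}(1+\dots)$ and of the boundary term $-e^{\lambda}\int|\nabla w|(\HHH-\mu|\nabla w|)$ cancel. This cancellation is exactly the Euclidean identity behind~\eqref{eq:lomo}. The surviving terms are of order $r^3$ and come from three sources: the $r^2$-corrections to $\HHH$, to $|\nabla w|$ and to area, all of which involve $\RRR(x)$; the $-\Lambda\int e^\lambda|\Sigma_\tau|\,d\tau$ term, which gives $-\Lambda\cdot$const$\cdot r^3$; and the corrections to $\mu,\lambda,\alpha$ coming from $\Lambda$ via Corollary~\ref{cor:at}. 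The choice of $\kappa$ normalizes $e^{\lambda}\approx r/(8\pi(3-p))$, so that dividing by $\tfrac{4\pi}{3}r^3$ gives $(\RRR(x)-2\Lambda)/16\pi$. A convenient sanity check is the model $(M_\Lambda,g_\Lambda)$, where $\mmp_\Lambda\equiv0$ by design (equality in~\eqref{dermp}). Writing the general computation as model plus a deviation proportional to $\RRR(x)-2\Lambda$ then identifies the coefficient by comparison with the $\Lambda=0$ result of~\cite{AMO,AMMO}, so the $\Lambda$-dependent pieces need not be tracked by hand.

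\emph{Main obstacle.} The hard step is Step 1: obtaining second-order expansions of $u$ and $\nabla u$ at the pole, uniform enough to integrate $|\nabla w|\HHH$ and $|\nabla w|^2$ over $\Sigma_t$ with $o(r^3)$ error. This is delicate for the $p$-Laplacian, since the linearization at the radial profile is an anisotropic operator. I would first try the quoted $\mathscr C^{2}$ asymptotics from~\cite{AMMO} in normal coordinates and bound the error terms of order $r^{\frac{3-p}{p-1}}$ against $r^2$ using $p<2$.
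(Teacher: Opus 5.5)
\textbf{There is a genuine gap.} Your direct-expansion route needs second-order information that you neither have nor can extract from the paper.

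Since the $O(r)$ parts of $\mmp_\Lambda(x,t)$ cancel, reading off the $r^3$ coefficient requires relative accuracy $o(r^2)$ in two places:
\begin{itemize}
\item in $|\nabla w|$, $\HHH$ and $|\Sigma_\tau|$, uniformly on $\Sigma_t$. This means a $\mathscr C^2$ expansion of $u$ at the pole that includes the curvature term of relative order $r^2$, which you postpone to asymptotics you do not state or justify;
\item in $\mu$ and $\lambda$. Here \eqref{eq:euclidean_as} only gives errors $\mathcal O(e^{t/2})=\mathcal O(r^{(3-p)/2})$, hence errors $\mathcal O(r^{(5-p)/2})\gg r^3$ in $\mmp_\Lambda$.
\end{itemize}
Your treatment of the term of relative order $r^{k}$, $k=\frac{3-p}{p-1}$, is also wrong. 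It comes from the constant $C$ in \eqref{eq:u_expansion}, which is radial, so nothing cancels by angular averaging. Its contribution, of order $r^{1+k}$, disappears only because $\mmp_\Lambda$ is quadratic in the deviation from the model. In flat space with $u=\frac{p-1}{3-p}r^{-k}+C$ one gets exactly $\mmp_0=4\pi(3-p)\,e^{\lambda}(1-s)^2$ with $1-s\sim\frac{3-p}{p-1}C\,r^{k}$. The first-order corrections of $\alpha,\mu,\lambda$ cancel in the same way.

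This quadratic structure is the real reason for $p<2$: one needs $r^{1+2k}=o(r^3)$, i.e.\ $k>1$. Your condition $e^{t/(p-1)}=o(e^{2t/(3-p)})$ is instead equivalent to $p<5/3$. The model-comparison shortcut cannot fix the coefficient without exactly these missing expansions.

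\textbf{The missing idea is the route you discarded, and your objection to it is unfounded.} The paper first shows $|\mmp_\Lambda(x,t)|\le K e^{t/(3-p)}\to 0$. It then applies de l'H\^opital with \eqref{dermp} and $\frac{d}{dt}|\Omega_t|=\int_{\Sigma_t}|\nabla w|^{-1}d\sigma$.

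No leading coefficients are needed, because $e^{\lambda}|\Sigma_t|$ and $\int_{\Sigma_t}|\nabla w|^{-1}d\sigma$ are both of order $r^3$. The Gauss--Bonnet defect vanishes on the spherical level sets near the pole. The terms $|\mathring{\hhh}|^2$, $\bigl|\nabla^\Sigma|\nabla w|\bigr|^2/|\nabla w|^2$ and $(\HHH/2-\alpha|\nabla w|)^2$ only have to be $o(1)$ pointwise. That follows from first-order asymptotics, namely $|\nabla w|=(3-p)/r+o(1)$ and $\HHH=2/r+o(1)$, with relative errors $o(r)$; this holds because $k>1$ when $p<2$.

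The choice \eqref{eq:kappa} gives $e^{\lambda}|\Sigma_t|/\int_{\Sigma_t}|\nabla w|^{-1}d\sigma\to 1/(8\pi)$. So only the $\RRR-2\Lambda$ term survives, yielding $(\RRR(x)-2\Lambda)/(16\pi)$. The sum-of-squares structure of \eqref{dermp} performs the cancellation that your direct expansion would have to reproduce by hand.
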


\begin{remark}
\label{rem:ssl2}
By explicitly tracking, throughout the proof, all contributions of the terms involving the constant \(C\) arising from the expansion~\eqref{eq:u_expansion} below, one can straightforwardly extend the argument and verify that the statement remains valid for \(p = 2\), provided either \(\Lambda \geq 0\) and \(C = 0\), or \(\Lambda < 0\) and \(C = -\sqrt{|\Lambda|/3}\). In particular, this recovers the result in~\cite[Theorem 3.1]{AMO}.
\end{remark}
\begin{proof}
Let $y=(y_1,y_2,y_3)$ be normal coordinates centered at $x$ and set $r=|y|$.
From the classical results in 
\cite{KV} (see also \cite{Alb_Esp} for finer results on the asymptotic expansion of the $p$-Green's function at the pole), it is well known 
that a solution to $\Delta_p u\,=\, -4\pi \delta_x$ has the following expansion near the pole
\begin{equation}
\label{eq:u_expansion}
u \, = \, \left(\frac{p-1}{3-p}\right) \,  r^{-\frac{3-p}{p-1}}+C+o_2(1)\,, \qquad \hbox{as} \,\, r \to 0 \, ,
\end{equation}
for some real constant $C\in\R$. This expansion can be conveniently rephrased in terms of the function $w=-(p-1)\log u$,  leading to 
\begin{align*}
w\,&=\,\log\left[\left(\frac{3-p}{p-1}\right)^{\!p-1} \!\! r^{3-p}\right]-(3-p)\, C \, r^{\frac{3-p}{p-1}} \, + \, o_2(r^{\frac{3-p}{p-1}})\,,
\\
|\na w|\,&=\,\frac{3-p}{r} \, \left(1-\frac{3-p}{p-1} \, C \, r^{\frac{3-p}{p-1}}+o_1(r^{\frac{3-p}{p-1}})\right)\,,
\end{align*}
where we used the identity $|\na w|=(p-1)|\na u|/u$.
In particular, inverting the above relationships, we have that on the level set $\Sigma_t=\{w=t\}$ it holds
\begin{equation}
\label{eq:r_near_pole}
r \,= \, \left(\frac{p-1}{3-p}\right)^{\!\!\frac{p-1}{3-p}} \!\!e^{\frac{t}{3-p}}+ \left(\frac{p-1}{3-p}\right) C \, e^{\frac{2t}{(3-p)(p-1)}}+o_2(e^{\frac{2t}{(3-p)(p-1)}})\,,
\end{equation}
as $t\to-\infty$. This implies that 
\begin{equation}
\label{eq:naw_near_pole}
|\na w|\,=\,(3-p)\left(\frac{3-p}{p-1}\right)^{\!\!\frac{p-1}{3-p}} \!\! e^{-\frac{t}{3-p}}-2\left(\frac{3-p}{p-1}\right)^{\!\!\frac{p-1}{3-p}} \!C \,  e^{\frac{2(2-p)t}{(3-p)(p-1)}}+o_1(e^{\frac{2(2-p)t}{(3-p)(p-1)}})\,.
\end{equation}
Note that from
\eqref{eq:r_near_pole}-\eqref{eq:naw_near_pole} one can compute the constant value
\eqref{int_costante}, namely:
\begin{equation}
\label{int_costante_3}
e^{-t}\int_{\Sigma_t} |\na w|^{p-1}\, d \sigma\,=\,
4\pi(p-1)^{p-1},
\qquad\mbox{for all\ }t\in\R.
\end{equation}
As a consequence, for very negative values of $t$, we get
\begin{equation}
\label{eq:stima_livelli}
|\Sigma_t|\,=\,
e^{-t}\!\int_{\Sigma_t}|\na w|^{p-1}  \frac{e^w}{|\na w|^{p-1}}d\sigma\,\leq\,e^{-t} \!\int_{\Sigma_t}|\na w|^{p-1}  d\sigma\, \, \max_{\Sigma_t} \! \left(\frac{e^w}{|\na w|^{p-1}}\right)   \leq\,
K_1 \, e^{\frac{2t}{3-p}},
\end{equation}
for some positive constant $K_1>0$.
Concerning the mean curvature of the level sets, recalling the formula $\HHH|\na w|^3=|\na w|^2\De w-\na\na w(\na w,\na w)$ and the fact that $\De_p w=|\na w|^p$, we compute
\begin{align}
\label{expacca}
\HHH\,&=\,|\na w|-(p-1)\frac{\na\na w(\na w,\na w)}{|\na w|^3}\,=\,|\na w|-(p-1)\frac{\pa|\na w|}{\pa w} \, = \, 
\nonumber
\\
&=\,2\left(\frac{3-p}{p-1}\right)^{\!\!\frac{p-1}{3-p}} \!\!e^{-\frac{t}{3-p}}-2\left(\frac{3-p}{p-1}\right)^{\!\!\frac{p-1}{3-p}-1}\!\!\!\! C  \, e^{\frac{2(2-p)t}{(3-p)(p-1)}}+o\left(e^{\frac{2(2-p)t}{(3-p)(p-1)}}\right) 
\end{align}
Summarizing~\eqref{eq:r_near_pole}, \eqref{eq:naw_near_pole} and~\eqref{expacca}, we obtain 
\begin{align*}
|\na w|\,&=\,(3-p) \, \frac{1}{r}+o_1(1)\,=\,(3-p) \, \left(\frac{3-p}{p-1}\right)^{\frac{p-1}{3-p}} \!\!e^{-\frac{t}{3-p}}+o_1(1)\,,
\\
\HHH\,&=\,\frac{2}{r}+o(1)\,=\,2 \, \left(\frac{3-p}{p-1}\right)^{\frac{p-1}{3-p}} \!\! e^{-\frac{t}{3-p}}+o(1)\,.
\end{align*}
In estimating the remainder terms, we have made essential use of the condition $1<p<2$. 
As observed in Remark~\ref{rem:ssl2}, this estimate can be extended to the borderline case $p=2$, provided that the Green's function $u$ is chosen so that the constant $C$ in the expansion~\eqref{eq:u_expansion} vanishes. At the same time, it is evident from these computations that an analogous result is, in general, not valid for $p>2$.
Now, we recall the expression for $\mmp_\Lambda$ 
\begin{equation}
\mmp_\Lambda (x, t) \, = \int_{-\infty}^t e^{\lambda(\tau)} \cdot \big(4\pi - \Lambda|\Sigma_\tau|\big)  \, d\tau
\, - \,\,  e^{\lambda(t)}  \cdot \int_{\Sigma_t} |\nabla w| \,  \big(  \HHH - \mu(t)|\nabla w|  \big) \, d\sigma \, ,
\end{equation}
and we employ the above estimates, together with Corollary~\ref{cor:at} and Theorem~\ref{thm:selection_lambda} to conclude that, 
\[
\big| \mmp_\Lambda(x,t) \big|
 \, \leq \,  K_2 \, e^{\frac t{3-p}} \, ,
\]
for some positive constant $K_2>0$ and very negative values of $t$. In particular, we have that 
\begin{equation}
\label{massa_nulla_polo}
\lim_{t\to-\infty}\mmp_\Lambda(x,t)\,=\, 0\,.
\end{equation}
As an application of De L'H\^opital rule, we then have
\begin{align*}
&\lim_{t\to-\infty}\frac{\mmp_\Lambda(x,t)}{|\Omega_t|}\,=\,\lim_{t\to-\infty}\frac{\frac{d}{dt}\mmp_\Lambda(x,t)}{\frac{d}{dt}|\Omega_t|}
\\
&=\,\lim_{t\to-\infty}\frac{e^{\lambda(t)}\int_{\Sigma_t}\left[  2 \left|\frac{\na^{\Sigma_t}|\na w|}{|\na w|}\right|^2\!\!+\,|\mathring\hhh|^2\,+\,(\RRR-2\Lambda)
\,+\, 2 \,\frac{5-p}{p-1} \,\big(({\HHH}/2)- \alpha |\na w|\big)^2 \right] d\sigma}{2 \int_{\Sigma_t} ({1}/{|\na w|}) \, d\sigma}\,,
\end{align*}
where we have employed formula~\eqref{dermp}, together with the observation that the small level sets of $w$ in a neighborhood of the pole are spherical. In order to analyze the behavior of the right-hand side of the expression above, we first note that 
\[
\lim_{t \to -\infty}\frac{e^{\lambda(t) }|\Sigma_t|}{\int_{\Sigma_t}( 1/|\na w| ) \, d\sigma} \, =  \, \frac{1}{8\pi} \,.
\]
To compute this limit, we took advantage of~\eqref{eq:naw_near_pole} and the precise choice of the integration constant~\eqref{eq:kappa} for the structural coefficient $\lambda$ in  Theorem~\ref{thm:selection_lambda}. From this observation, it follows easily that 
\[
\lim_{t \to -\infty}\frac{e^{\lambda(t) }    \int_{\Sigma_t}( \RRR - 2 \Lambda ) \, d\sigma
}{2 \,\, \int_{\Sigma_t}( 1/|\na w| ) \, d\sigma} \, =  \, \frac{\RRR (x) - 2 \Lambda}{16\pi} \,.
\]
To complete the proof, it remains to verify that all remaining terms are negligible as $t \to -\infty$. First, note that Corollary~\ref{cor:at} yields $\alpha(t)=1/(3-p)+o(e^{t/(3-p)})$, provided $1<p<2$. 
Using this information to compare the expansions~\eqref{eq:naw_near_pole} and~\eqref{expacca}, we get
\[
\frac{\HHH}{2}-\alpha|\na w|\,=\,
o(1)
\]
as $t\to-\infty$. Hence, the last term in L'Hôpital's quotient is infinitesimal as \(t \to -\infty\).
We now turn our attention to the analysis of the quantity $|\mathring{\hhh}|$. To this end,  we follow the approach developed in~\cite{AMO}. Namely, we fix parameters $0 < \delta < \mathrm{inj}(x)/2$ and $\varepsilon < 1/\sqrt{3}$, and we consider the sets
\[
U_\alpha\,=\,\{p\in B(x,\delta)\setminus\{x\}\,:\,|y_1|>\ep r \hbox{ and }|\na w|(p)\neq 0\}\,.
\]
with $\alpha=1,2,3$. We then work in $U_1$ (the same procedure works for $U_2$ and $U_3$) using the frame $\{\na w/|\na w|,X_2,X_3\}$, where $X_2=-\pa_2 w\pa_1+\pa_1 w\pa_2$, $X_3=-\pa_3 w\pa_1+\pa_1 w\pa_3$. It is readily seen that $X_2,X_3$ are tangent to the level sets $\Sigma_t$, and we compute
\[
g^\Sigma_{ij}\,=\,g(X_i,X_j)\,=\,(3-p)^2\frac{1}{r^2}\left[\frac{y_i y_j}{r^2}+\frac{(y_1)^2}{r^2}\delta_{ij}+o(r)\right]\,.
\]
The matrix with entries given by the quantities in the square brackets (ignoring the error term) is a rank-one perturbation of
$\frac{(y_1)^2}{r^2}\delta_{ij}$, which 
in turn is 
a scalar multiple of the identity matrix. Moreover, since we are working in the set $U_1$, we have $y_1/r > \varepsilon$. It follows that the determinant of this matrix is bounded from below by a positive constant (depending on $\varepsilon$). As a consequence, we obtain the estimate $g_\Sigma^{ij} = O(r^2)$.
Recalling that $1<p<2$ in our assumptions, we also compute
\[
\pa_\alpha w\,=\,(3-p)\frac{y_\alpha}{r^2}+o(1)\,,\quad \hbox{and } \quad \pa_{\alpha}\pa_{\beta}w\,=\,(3-p)\frac{1}{r^2}\left[\delta_{\alpha\beta}-2\frac{y_\alpha y_\beta}{r^2}+o(r)\right]
\]
Consequently
\[
\na\na w(X_i,X_j)\,=\,(3-p)^2\frac{1}{r^4}\left[\frac{y_i y_j}{r^2}+\frac{(y_1)^2}{r^2}\delta_{ij}+o(r)\right]
\]
and thus
\[
\hhh_{ij}=\frac{\na\na w(X_i,X_j)}{|\na w|}\,=\,(3-p)^2\frac{1}{r^3}\left[\frac{y_i y_j}{r^2}+\frac{(y_1)^2}{r^2}\delta_{ij}+o(r)\right]\,.
\]
Recalling that $\HHH/2=1/r+o(1)$, we obtain
\[
\mathring{\hhh}_{ij}\,=\,\hhh_{ij}-\frac{\HHH}{2}g^\Sigma_{ij}\,=\,o(r^{-2})\,.
\]
Since we have already shown $g_\Sigma^{ij}= \mathcal{O}(r^2)$, we conclude that $|\mathring{\hhh}|^2=o(1)$ as $r\to 0$. This shows that the second term in L'Hôpital's quotient is infinitesimal as \(t \to -\infty\). It only remains to show that $|\na^{\Sigma_t}|\na w||/|\na w|$ is negligible as $t\to-\infty$.
We compute
\begin{align*}
\na_\alpha^\Sigma|\na w|\,&=\,\na_\alpha|\na w|-\langle\na|\na w|,\nu\rangle \nu_\alpha\,=\,\pa_\alpha|\na w|+\frac{\HHH-|\na w|}{p-1} \, \pa_\alpha w \, = \, 
\\
&=\,\left[-(3-p)\frac{y_\alpha}{r^3}+o(r^{-1})\right]+\Big(\frac{1}{r}+o(1) \Big)\left[(3-p)\frac{y_\alpha}{r^2}+o(1)\right]\,=\,o(r^{-1}) \, 
\end{align*}
As a consequence
\[
\frac{\left|\na^\Sigma|\na w|\right|}{|\na w|}\,=\,\frac{o(\frac{1}{r})}{(3-p)\frac{1}{r}+o(1)} \, = \, o(1) \, , \quad \hbox{as} \,\, r \to 0 \, .
\]
This concludes the proof, as also the first term in L'Hôpital's quotient is now vanishing, when \(t \to -\infty\).
In particular, we have proven that 
\begin{equation*}
\mmp_\Lambda(x,t)\, = \,\frac{\RRR(x)-2\Lambda}{12} \,r^3(t) \,\, (1 + o(1))  \, , 
\end{equation*}
as $|\Omega_t|$ behaves like $(4\pi/3) \, r^3(t)$, as $t \to - \infty$.
\end{proof}

\subsection{Global existence 
and asymptotic behavior of the structural coefficients.}
\label{sub:global}

In this last subsection, we provide global  existence for the structural coefficients $\alpha$, $\lambda$ and $\mu$ that have been defined or selected in the previous subsections. 

\begin{theorem}[Global existence for the structural coefficients]
\label{thm:global}
For $\Lambda \in \R$ and $1 < p < 3$, let 
$\alpha = \alpha_\Lambda^{\!(p)}$ be 
the structural coefficient defined in formula~\eqref{eq:alpha_t}. Then the ancient solutions $\mu = \mu_\Lambda^{\!(p)}$ and $\lambda = \lambda_\Lambda^{\!(p)}$ to system~\eqref{structural}, selected in Theorem~\ref{thm:selection_mu} and Theorem~\ref{thm:selection_lambda}, respectively, are globally defined in time. 
\end{theorem}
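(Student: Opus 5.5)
The plan is to reduce everything to the Riccati equation~\eqref{eq:ODE_mu}. Since~\eqref{eq:lam} gives $\dot\lambda$ as a continuous function of $\alpha$ and $\mu$, the coefficient $\lambda$ is obtained by a single integration and lives on the same interval as $\mu$. The case $\Lambda=0$ is immediate, because there $\alpha\equiv\mu\equiv 1/(3-p)$. It therefore suffices to show that the ancient solution $\mu$ of Theorem~\ref{thm:selection_mu} does not blow up in finite time.

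\emph{Lower bound.} Since $p<3$, the coefficient of $\mu^2$ is positive, so a finite-time singularity can only occur with $\mu\to+\infty$. On the other hand, at $\mu=0$ the right-hand side of~\eqref{eq:ODE_mu} equals $\frac{5-p}{p-1}\alpha^2\ge 0$. Because $\mu\to 1/(3-p)>0$ as $t\to-\infty$, we get $\mu\ge 0$ for all times. So we only need an upper bound.

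\emph{Linearization.} I would set
\[
\mu=-\frac{p-1}{3-p}\,\frac{\dot y}{y}
\]
for a positive function $y$. Then~\eqref{eq:ODE_mu} becomes a linear second-order ODE for $y$, and blow-up of $\mu$ corresponds exactly to a zero of $y$. Next I would pass from $t$ to the model radial variable, using $\dot r=\alpha r$ from~\eqref{eq:dotr}, so that $\frac{d}{dt}=\alpha r\frac{d}{dr}$. The identity $u_\Lambda(r)=(p-1)A(r)\,r^{-\frac{3-p}{p-1}}/\sqrt{1-\frac\Lambda3 r^2}$, which is just~\eqref{eq:alpha_r}, lets us eliminate $\alpha$ in favour of $u_\Lambda$ and $u_\Lambda'$. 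After factoring out a suitable power of $r$ and the already known factor $u_\Lambda$, I expect the equation to become a hypergeometric equation in $z=\frac{\Lambda}{3}r^2$. Its parameters should be the $a_p,b_p,c_p$ of~\eqref{p-parameters}, which would also explain the Gamma functions in $K_p$. The selection condition $\mu\to1/(3-p)$, together with the uniqueness part of Theorem~\ref{thm:selection_mu}, should single out the solution regular at $z=0$, namely a constant multiple of ${}_2F_1(a_p,b_p;c_p;z)$ times the explicit prefactor. The other independent solution gives $\mu$ tending to the wrong equilibrium. The range of $z$ is $(0,1)$ for $\Lambda>0$, with $z=1$ at the equator $r=R_\Lambda$, and $(-\infty,0)$ for $\Lambda<0$.

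\emph{Positivity of $y$ (the main obstacle).} Global existence now amounts to showing that this hypergeometric function, and the prefactor, never vanish on the relevant range. For $\Lambda<0$ I would use Euler's integral representation, or a Pfaff transformation $z\mapsto z/(z-1)$ that sends $(-\infty,0)$ into $(0,1)$, to obtain positivity for the relevant signs of the parameters. For $\Lambda>0$ the series has positive coefficients if the parameters have suitable signs, which would give positivity on $(0,1)$ directly. In any case the value at $z=1$ is given by Gauss' formula $\Gamma(c)\Gamma(c-a-b)/(\Gamma(c-a)\Gamma(c-b))$, which is positive and finite, so the solution extends through $t\to+\infty$. If the signs of $a_p,b_p$ are unfavourable for some $p$, the fallback is a Sturm-type argument: compare the linear equation for $y$ with the explicit model quantities, noting that $y$ has no zero near $z=0$, and rule out a first zero using the convexity-type information coming from $\mu\ge0$, i.e.\ $\dot y\le 0$. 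Checking the precise parameter ranges for all $1<p<3$ is the step I expect to demand the most care. The $\Lambda<0$ case, where $\alpha$ has a finite nonzero limit as $t\to+\infty$, should follow by the same transformation.
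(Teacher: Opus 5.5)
Your strategy is correct, but it follows a different route from the paper's. The paper splits by the sign of $\Lambda$. For $\Lambda<0$ it uses a barrier argument: first $0<\alpha<\frac1{3-p}$, then $0<\mu<\frac1{3-p}$ for all $t$. For $\Lambda>0$ it linearizes through the pair $\Phi=\mu e^\lambda/\alpha^2$, $\Psi=e^\lambda/\alpha$, which solve a first-order $2\times 2$ system with rational coefficients in $r$; this system is solved via ${}_2F_1(a_p,b_p;c_p;\cdot)$, and positivity of $\Phi$ and $\Psi$ gives global existence.

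Your $y$ is the paper's $\Psi$ in disguise. Integrating~\eqref{eq:lam} with $\int\alpha\,dt=\log r$ gives $e^\lambda=\mathrm{const}\cdot r^{\frac{5-p}{p-1}}e^{-\frac{t}{p-1}}\,y$, so $y$ is a constant multiple of $\Psi\sqrt{1-\frac\Lambda3 r^2}\,r^{-\frac{2}{p-1}}$.

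When you actually carry out your computation, using $\dot r=\alpha r$ and~\eqref{eq:ODE_alpha}, $\alpha$ drops out entirely, so there is no factor of $u_\Lambda$ to remove. With $z=\frac\Lambda3 r^2$ and $\theta=z\frac{d}{dz}$ one gets
\[
\theta^2 y+\Big(\frac{4-p}{p-1}-\frac{z}{2(1-z)}\Big)\theta y+\frac{(3-p)(5-p)}{4(p-1)^2}\,y=0 .
\]
Its exponents at $z=0$ are $-\frac{3-p}{2(p-1)}$, which gives $\mu\to\frac1{3-p}$, and $-\frac{5-p}{2(p-1)}$, which gives $\mu\to\frac{5-p}{(3-p)^2}$. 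The selected solution is therefore $y=|z|^{-\frac{3-p}{2(p-1)}}\,{}_2F_1(a_p+\frac12,b_p+\frac12;c_p;z)$. Note the parameters are shifted by $\frac12$ relative to your guess of $(a_p,b_p;c_p)$.

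Because $-\frac12<b_p+\frac12<0$, this series has negative coefficients beyond order zero, so your ``positive coefficients'' argument fails as stated. Positivity still follows from standard transformations, so no Sturm-type fallback is needed:
\begin{itemize}
\item For $0<z<1$, Euler's transformation rewrites the function as $(1-z)^{1/2}\,{}_2F_1(a_p+1,b_p+1;c_p;z)$, whose parameters are all positive. Incidentally, this shows $\Psi=\frac{r}{8\pi}\,{}_2F_1(a_p+1,b_p+1;c_p;\frac\Lambda3r^2)$, which simplifies the paper's positivity argument.
\item For $z<0$, Pfaff's transformation gives $(1-z)^{-a_p-\frac12}\,{}_2F_1(a_p+\frac12,a_p+1;c_p;\frac{z}{z-1})$, again with positive parameters.
\end{itemize}

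What your approach buys: a single positivity statement instead of two, one mechanism for both signs of $\Lambda$, and a global closed form for $\Lambda<0$ (Remark~\ref{rem:Y_negativo} notes that the paper's series only works near the pole). What the paper's approach buys: $\Phi$ and $\Psi$ are directly $\mu e^\lambda$ and $e^\lambda$ up to powers of $\alpha$. These are exactly the quantities in the monotone functional whose asymptotics as $t\to+\infty$ and $p\to1^+$ are needed later. The barrier argument for $\Lambda<0$ also yields the bounds $0<\mu<\frac1{3-p}$ essentially for free.
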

The strategy we adopt to establish the above result is different depending on the sign of $\Lambda$. In the case $\Lambda = 0$, as already discussed in the introduction to this section, one can derive explicit formulas for all three structural coefficients. These closed-form expressions immediately yield global-in-time existence and, in addition, allow the derivation of more explicit monotonicity formulas. When $\Lambda < 0$, global existence will instead be established by means of a barrier argument, which is by now standard in the qualitative study of ordinary differential equations. This approach, however, cannot be straightforwardly extended to the case $\Lambda > 0$. Indeed, for values of $p$ close to $1$, the barriers degenerate and provide effective control only in neighborhoods of $-\infty$ and $+\infty$. To address this problem, we will show that it is possible to obtain a semi-explicit representation of the solution by performing a suitable change of variables and employing convergent power series expansions related to the definition of hypergeometric functions. Although this method is computationally more involved, its main advantage is that it makes it possible to characterize the asymptotic behavior of the structural coefficients as $t \to +\infty$. This information will be essential for deriving the explicit expression of the total mass, leading to Definition~\ref{def:polmass}. We finally remark that, in principle, the same strategy could also be applied in the analysis of the case $\Lambda < 0$. However, as explained in Remark~\ref{rem:Y_negativo}, the corresponding power series representation is only valid in a neighborhood of the pole and therefore does not provide the same global information.

\begin{proof}[Proof of Theorem~\ref{thm:global} - Case $\Lambda=0$]
As already observed, in the discussion leading to formula~\eqref{astd}, we have that $\alpha = \alpha_0^{\!(p)} \equiv 1/(3-p)$, in this case. With this coice of $\alpha$, the only ancient solution to~\eqref{eq:ODE_mu}, obeying the requirement in Therorem~\ref{thm:selection_mu} is the equilibrium solution $\mu_0^{\!(p)} \equiv 1/(3-p)$, which is clarly defined for all times. Having fixed $\alpha$ and $\mu$ as above, the only solution $\lambda$ to~\ref{eq:lam} that obeys the prescription of Theorem~\ref{thm:selection_lambda} -- and thus allows for a small sphere limit theorem -- is 
\[
\lambda(t) \, =\, \lambda_0^{\!(p)} (t)  \,=\,\frac{t}{3-p}+\left(\frac{p-1}{3-p}\right)\log\left(\frac{p-1}{3-p}\right)-\log\left[8\pi(3-p)\right] \, ,
\]
which is globally defined as well.
\end{proof}

\medskip

\begin{proof}[Proof of Theorem~\ref{thm:global} - Case $\Lambda<0$]
We start by showing that \(0 < \alpha_\Lambda^{\!(p)} = \alpha < 1/(3-p)\). Positivity of \(\alpha\) follows directly from its definition. To prove the upper bound, 
we recall that \(R_\Lambda = +\infty\) in this parameter regime. Thus, applying de l’Hôpital’s rule, we obtain
\[
\lim_{r\to +\infty} \frac{u_\Lambda^{\!(p)}(r)}{r^{-\frac{2}{p-1}}} 
= 
\lim_{r\to +\infty}\frac{\displaystyle \int_r^{+\infty}\frac{d\rho}{\rho^{\frac{2}{p-1}}\sqrt{1-\frac{\Lambda}{3}\rho^2}}}{r^{-\frac{2}{p-1}}}
= 
\lim_{r\to +\infty}\frac{p-1}{2}\frac{r}{\sqrt{1-\frac{\Lambda}{3}r^2}}
= 
\frac{p-1}{2}\sqrt{\frac{3}{|\Lambda|}}\,,
\]
which in turn implies that \(\lim_{t\to+\infty}\alpha(t)=1/2 < 1/(3-p)\), since
\[
\alpha (t)\,= \,
\frac{1}{p-1} \,\, \frac{\sqrt{1-\frac{\Lambda}{3}r^2(t)} }{r(t)} \,\,\, \frac{u(r(t))}{r(t)^{-\frac{2}{p-1}}} \, .
\]
Differentiating the above expression with the help of~\eqref{eq:dotr}, one gets 
\begin{equation}
\label{eq:ODE_alpha}
\dot{\alpha} \,  =  \, - \left( \frac{1}{p-1} \right) \alpha + \left[\left(\frac{3-p}{p-1}\right) -\left(\frac{ ({\Lambda}/{3})\,r(t)^2}{1-(\Lambda/3) \, r(t)^2}\right)\right] \alpha^2\,.
\end{equation}
We then observe that the right-hand side of~\eqref{eq:ODE_alpha} is positive when $\alpha\geq 1/(3-p)$. In particular, if $\alpha$ ever reaches the value $\alpha=1/(3-p)$, then it would be increasing from then on, in contradiction with the fact that $\lim_{t\to+\infty}\alpha(t)<1/(3-p)$.
Let us denote by $\mu_\pm$ the functions defined in~\eqref{eq:mu_plus_minus}, which represent the values at which $\dot\mu(t)=0$. As already noted in the proof of Theorem~\ref{thm:selection_mu}, any solution $\mu$ of~\eqref{eq:ODE_mu} is strictly decreasing when $\mu_-<\mu<\mu_+$, and strictly increasing outside this interval. The upper bound for $\alpha$ 
obtained above allows us, as a first consequence, to infer that the functions $\mu_+(t)$ and $\mu_-(t)$ are real-valued and distinct for every $t\in\R$. Indeed, a straightforward, albeit long, algebraic computation shows that, using the condition $\alpha < 1/(3-p)$, the radicand in \eqref{eq:mu_plus_minus} remains strictly positive for all admissible values of the parameters. Now, by virtue of the ODE~\eqref{eq:ODE_mu} 
satisfied by $\mu$, we observe that
whenever $\mu (t)=1/(3-p)$ one has 
\[
\dot\mu (t) \,=\,\left(\frac{5-p}{3-p}\right)\alpha(t)\!\left(\!\alpha(t)-\frac{1}{3-p}\right)\,<\,0\,,
\]
since $0<\alpha (t) < 1/(3-p)$ for all $t \in \R$. In particular, this implies that $\mu_-(t)<1/(3-p)$ for all $t$. 
Therefore, we can repeat the proof of Theorem~\ref{thm:selection_mu} with the choice $\sigma^{(+)} (t) \equiv 1/(3-p)$ -- which is now admissible -- to deduce that the solution $\mu$ must remain below $1/(3-p)$ for $t$ sufficiently close to $-\infty$. Since $\dot \mu<0$ when $\mu=1/(3-p)$, we then conclude that $\mu<1/(3-p)$ for all $t$. On the other hand, from equation~\eqref{eq:ODE_mu} it is immediate that, when $\mu=0$, one has $\dot\mu=[(5-p)/(p-1)]\alpha^2>0$. Hence, the solution $\mu$ cannot cross below $0$. We thus obtain $0<\mu(t)<1/(3-p)$ for all $t$, which guarantees that $\mu$ is globally defined. The same conclusion applies to $\lambda$, by integrating~\eqref{eq:lam}.
\end{proof}

\medskip

\begin{proof}[Proof of Theorem~\ref{thm:global} - Case $\Lambda>0$] To establish global existence in this case, we are going to exhibit some global solution to our system of ODE~\eqref{structural} and verify that they obey the desired prescription, as $t \to -\infty$. As a first step, we reformulate~\eqref{structural}, introducing the new unknowns
\begin{align}
\label{def:phipsi}
\Phi (r)\,=\,&\frac{\mu (w^{(p)}_\Lambda(r))  \, \exp(\lambda(w^{(p)}_\Lambda(r)))}{\a^2(w^{(p)}_\Lambda(r))} \,=\,\frac{\mu \, e^\lambda}{\a^2}_{\left|{t}=w^{(p)}_\Lambda(r)\right.} \, , \\
\Psi(r)  \,=\,&\frac{ \exp(\lambda(w^{(p)}_\Lambda(r)))}{\a(w^{(p)}_\Lambda(r))} \,=\,\frac{ e^\lambda}{\a}_{\left|{t}=w^{(p)}_\Lambda(r)\right.}\,,
\end{align}
where $0<r<R_\Lambda$. Using repeatedly the differential identities~\eqref{eq:dotrinv} and~\eqref{eq:ODE_alpha}, it is not hard to check that $\lambda$ and $\mu$ solve~\eqref{structural} with a positive $\alpha$ if and only if $\Phi$ and $\Psi$ are positive solutions to the new ODE system
\begin{equation}
\label{eq:ODE_Phi_Psi_app}
\begin{dcases}
\frac{d\Phi}{dr}\,=\,2\left[\frac{(\Lambda/3) \, r^2}{1-(\Lambda/3) \, r^2}-\left(\frac{3-p}{p-1}\right)\right]\frac{1}{r}\, \Phi \, + \,  \left(\frac{5-p}{p-1}\right)\frac{1}{r}\, \Psi \, ,
\\
\frac{d\Psi}{dr}\,=\,- \left(\frac{3-p}{p-1}\right)\frac{1}{r}\, \Phi \, + \, \left[\frac{(\Lambda/3) \, r^2}{1- (\Lambda/3)\, r^2}+\left(\frac{2}{p-1}\right)\right]\frac{1}{r} \, \Psi \, .
\end{dcases}
\end{equation}
Compared to the previous formulation, the new one offers two key advantages: the resulting system is linear in the unknowns, and every coefficient is an explicit rational function of the variable. The trade-off is that the system is no longer semi-decoupled. For the main objective of the theorem, this trade-off is benign. Linearity alone is enough to guarantee global existence, and the comparatively simple coefficient structure will later let us extract sharp information on the behavior of solutions as $r \to R_\Lambda$, or, equivalently, as $t \to +\infty$, once we translate back to the original unknowns and variable.

To identify solutions of system~\eqref{eq:ODE_Phi_Psi_app} that are consistent with the choice of structural coefficients $\alpha$, $\mu$, and $\lambda$ made in the preceding subsections, we impose the following asymptotic conditions in the limit $r \to 0$:
\begin{equation}
\label{limphipsizero}
\lim_{r\to 0^+}\frac{\Phi(r)}{r}\,=\,\frac{1}{8\pi}\,\qquad \hbox{and} \qquad
\lim_{r\to 0^+}\frac{\Psi(r)}{r}\,=\,\frac{1}{8\pi}\,.
\end{equation}
These conditions are, in turn, dictated by the asymptotic expansions of the selected coefficients in the regime $t \to -\infty$. 

In Proposition~\ref{pro:Phi_Psi_solutions} of Appendix~\ref{app:ODE}, we provide a detailed proof that the only positive solutions of system~\eqref{eq:ODE_Phi_Psi_app} satisfying the asymptotic conditions~\eqref{limphipsizero} are given by
\begin{equation}
\label{eq:Phi_Psi_app}
\begin{aligned}
\Phi(r)\,&=\,\frac{1}{8\pi}\, \frac{r}{1-(\Lambda/{3}) \, r^2}\,\, \Upsilon\!\bigl(\tfrac{\Lambda}{3} r^2\bigr)\,,
\\
\Psi(r)\,&=\,\frac{1}{8\pi} \, \frac{r}{1-(\Lambda/{3}) \,r^2} \, \left[\Upsilon\!\bigl(\tfrac{\Lambda}{3}r^2\bigr) \, +2 \!\left(\frac{p-1}{5-p}\right)({\Lambda}/{3}) \, r^2\,\, \dot\Upsilon\!\bigl(\tfrac{\Lambda}{3}r^2\bigr)\right]\,  \, .
\end{aligned}
\end{equation}
The function $\Upsilon$ that appears in these expressions can be expressed as a power series and coincides with the hypergeometric function with parameters $a_p$, $b_p$, and $c_p$, that is,
\[
\Upsilon(x)\,=\,{}_2F_1\bigl(a_p,b_p,c_p;x\bigr)\, = \, \sum_{k=0}^{+\infty}\frac{(a)_k (b)_k}{(c)_k}\frac{x^k}{k!}\,,\quad x\in(0,1)\,,
\]
where
\begin{align}
\label{p-parameters}
a_p\,&=\,\frac{3-p+\sqrt{4+12(p-1)-3(p-1)^2}}{4(p-1)}\,, \nonumber
\\
b_p\,&=\,\frac{3-p-\sqrt{4+12(p-1)-3(p-1)^2}}{4(p-1)}\,,
\\ \nonumber
c_p \,& = \, \frac{p}{p-1}\,.
\end{align}
We recall that, for $h \in \R$ and $k \in \mathbb{N}$, the notation $(h)_k$ denotes the so‑called Pochhammer symbol, defined by
\[
(h)_k\,=\,
\begin{dcases}
1 & \text{se }k=0 \, ,
\\
h(h+1)(h+2)\cdots (h+k-1)& \text{se }k>0 \, ,
\end{dcases}
\]
whereas the notation $\dot\Upsilon(x)$ simply denotes the first derivative of the function $\Upsilon$ with respect to the variable $x$.

Without entering into the detailed proof -- for which we refer to Proposition~\ref{pro:Phi_Psi_solutions} mentioned above -- we provide here a brief synopsis of the main ideas and the essential components of the argument. First of all, the fact that the functions $\Phi$ and $\Psi$ defined in~\eqref{eq:Phi_Psi_app} indeed solve the system~\eqref{eq:ODE_Phi_Psi_app} is a consequence of the following general property of hypergeometric functions: every hypergeometric function ${}_2F_1(a,b,c;x)$ with parameters $a$, $b$, and $c$ satisfies the second‑order differential equation
\[
x(1-x) \, \ddot y+[c-(a+b+1)x] \, \dot y-ab \, y\,=\,0\,.
\]
Since $a_p+b_p=(3-p)/(2(p-1))$ and $a_p b_p=-(5-p)/(4(p-1))$, we have that, in particular, $\Upsilon$ satisfies
\begin{equation*}
x (1-x) \,\ddot\Upsilon(x)\,=\,\left[-\frac{p}{p-1}+\frac{p+1}{2(p-1)}x\right]{\dot\Upsilon(x)} -\frac{5-p}{4(p-1)}{\Upsilon(x)}\,.
\end{equation*}
This identity can be employed to compute the first derivatives of $\Phi$ and $\Psi$ and to verify, by means of a lengthy yet straightforward calculation, that the ODE system is indeed satisfied.  
In order to confirm that the asymptotic initial conditions at $r=0$ are also fulfilled, one differentiates the corresponding power series, which yields the identity
\begin{equation*}
\dot\Upsilon(x)\,=\,-\frac{5-p}{4p}\,{}_2F_1(a_p+1,b_p+1,c_p+1;x)\,.
\end{equation*}
Recalling that any hypergeometric function evaluated at $x=0$ equals $1$ -- a fact that follows immediately from its power-series representation -- one concludes that conditions~\eqref{limphipsizero} are satisfied. The positivity of $\Phi$ and $\Psi$ follows from the fact -- established in Subsection~\ref{app:Upsilon} -- that
\[
\Upsilon(x)>0\,,\quad \dot\Upsilon(x)<0\,,\quad
\ddot\Upsilon(x)<0\,, \quad x\in(0,1) \, .
\]
These properties imply directly that $\Phi$ is strictly positive and that the mapping
\[
(0,1) \,\ni\, x \,\longmapsto\, \Upsilon(x) \,+\, 2 \left(\frac{p-1}{5-p}\right) x \,\dot\Upsilon(x)\,,
\]
of which $\Psi$ is a positive multiple, is strictly decreasing on $(0,1)$. Noting that
\[
\Upsilon(1)\,+\, 2\!\left(\frac{p-1}{5-p}\right)\dot\Upsilon(1)=0\,,
\]
one concludes that $\Psi(r)>0$ for all $0<r<R_\Lambda$. Having all these elements at our disposal, we can finally invert relations~\eqref{def:phipsi}, thereby constructing two solutions $\lambda$ and $\mu$ of system~\eqref{structural}, defined for all times and exhibiting the prescribed asymptotic expansions as $t \to -\infty$. This ensures that these globally defined solutions are indeed the maximal extensions fo the ones constructed in Theorem~\ref{thm:selection_mu} and Theorem~\ref{thm:selection_lambda}.
\end{proof}

\section{Further properties of the structural coefficients in the case $\Lambda>0$}
\label{sec:further}

In this section we focus on the case $\Lambda>0$ and we carry out a more detailed study of the structural coefficients $\alpha$, $\lambda$, and $\mu$ selected in Theorem~\ref{thm:global}. In Subsection~\ref{sub:pifissato} we analyze their asymptotic behavior as $t\to+\infty$ with $p$ fixed. Combining Theorem~\ref{thm:expansions_mu_lambda} with the analysis of Section~\ref{sec:selection} for $t\to-\infty$, we obtain the summarizing Corollary~\ref{cor:behaviour_with_t}.
 These results are crucial for Section~\ref{sec:polar_PMT}, especially for the heuristics behind Definition~\ref{def:polmass} and for the proof of Theorem~\ref{thm:pos_pass_pol}, which corresponds to Theorem~\ref{thm:pos_pass_pol_Intro} in the Introduction and is one of the main results of the manuscript. 

In Subsection~\ref{sub:tifissato} we then study the behavior of the structural coefficients as $p\to1^+$ with $t$ fixed. Although these results are not used directly in the proofs of Theorems~\ref{thm:pos_pass_pol_Intro} and~\ref{thm:pos_pass_total_Intro}, they have significant conceptual value: they support the heuristic observations of Subsection~\ref{sub:pto1}, they pave the way for a further systematic study of the elusive geometric limit of our theory as $p\to1^+$, where well-posedness of the Dirichlet problem and monotonicity formulas seem to break down.

\subsection{
Asymptotics for a fixed $p$.
\label{sub:pifissato}
}

We start with the asymptotic behavior of the structural coefficients $\mu$ and $\lambda$ as $t \to +\infty$. We focus on the case $\Lambda>0$, since it is mainly based on the possibility of representing the selected solutions by power series (or equivalently through hypergeometric functions), as detailed in Appendix~\ref{app:ODE}. 

\begin{theorem} 
\label{thm:expansions_mu_lambda}
For $\Lambda >0 $ and $1 < p < 3$, let 
$\alpha = \alpha_\Lambda^{\!(p)}$ be 
the structural coefficient defined in formula~\eqref{eq:alpha_t} and let 
$\mu = \mu_\Lambda^{\!(p)}$ and $\lambda = \lambda_\Lambda^{\!(p)}$ be the globally defined solutions to system~\eqref{structural} obtained in Theorem~\ref{thm:global}, extending the ancient solution selected in Theorem~\ref{thm:selection_mu} and Theorem~\ref{thm:selection_lambda}, respectively. Then, it holds
\begin{align*}
\lim_{t \to +\infty} e^{\frac{t}{p-1}}e^{\lambda(t)} \, &= \, \frac{R_\Lambda^{\frac{2}{p-1}}}{8 \pi (p-1)}\, \frac{ \Gamma(\tfrac12) \, \Gamma(c_p)}{\Gamma(a_p+1) \, \Gamma(b_p+1)} \,,\\
\lim_{t \to +\infty} e^{\frac{t}{p-1}}\mu(t)\, &= \, \frac{R_\Lambda^{\frac{3-p}{p-1}}}{2(p-1)}\, \, \frac{\Gamma(a_p+1)\, \Gamma(b_p+1)}{\Gamma(a_p+\frac{3}{2}) \, \Gamma(b_p+\frac{3}{2})} \,, 
\end{align*}
where $a_p,b_p$ and $c_p$ are defined in~\eqref{p-parameters} and $\Gamma$ is Euler's Gamma function.
\end{theorem}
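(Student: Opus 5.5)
The plan is to work entirely in the $r$-variable through the explicit representation~\eqref{eq:Phi_Psi_app}. Inverting~\eqref{def:phipsi} gives
\[
e^{\lambda}=\alpha\,\Psi\big|_{r=r_\Lambda(t)},\qquad \mu=\frac{\alpha\,\Phi}{\Psi}\Big|_{r=r_\Lambda(t)} .
\]
Since $t\to+\infty$ corresponds to $r\to R_\Lambda^-$, everything reduces to three ingredients: the asymptotics of $e^{t/(p-1)}\alpha(t)$ as $r\to R_\Lambda$, and the boundary behaviour of $\Phi$ and $\Psi$ at $x=\frac{\Lambda}{3}r^2\to1$.

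For the first ingredient, use~\eqref{eq:alpha_t}, which gives exactly
\[
e^{\frac{t}{p-1}}\alpha(t)=\frac{1}{p-1}\sqrt{1-\tfrac{\Lambda}{3}r^2}\;r^{\frac{3-p}{p-1}} .
\]
This is explicit, with no need for the behaviour of $u_\Lambda$ near $R_\Lambda$. Writing $s=1-x$, it equals $\frac{1}{p-1}R_\Lambda^{\frac{3-p}{p-1}}\sqrt{s}\,(1+o(1))$.

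For $e^\lambda$ we then need $\Psi\sim C_\Psi\, s^{-1/2}$. Since $\Psi$ carries the factor $r/(1-x)$, this amounts to showing
\[
\Upsilon(x)+2\tfrac{p-1}{5-p}\,x\,\dot\Upsilon(x)\sim C\, s^{1/2}.
\]
Here I use the standard connection formula for ${}_2F_1$ at $x=1$. We have $c_p-a_p-b_p=\frac{p}{p-1}-\frac{3-p}{2(p-1)}=\frac{3p-3}{2(p-1)}=\frac32$, so
\[
\Upsilon(x)=A_0\,F(\ldots;s)+B_0\,s^{3/2}F(\ldots;s),\qquad A_0=\frac{\Gamma(c)\Gamma(3/2)}{\Gamma(c-a)\Gamma(c-b)} .
\]
Similarly, $\dot\Upsilon=-\frac{5-p}{4p}\,{}_2F_1(a+1,b+1,c+1;x)$ has $c'-a'-b'=\tfrac12$, hence
\[
\dot\Upsilon=-\tfrac{5-p}{4p}\Big[\tfrac{\Gamma(c+1)\Gamma(1/2)}{\Gamma(c-a)\Gamma(c-b)}+\tfrac{\Gamma(c+1)\Gamma(-1/2)}{\Gamma(a+1)\Gamma(b+1)}\,s^{1/2}+O(s)\Big].
\]
The constant terms must cancel in the combination defining $\Psi$, which is consistent with the identity $\Upsilon(1)+2\frac{p-1}{5-p}\dot\Upsilon(1)=0$ already noted. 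The $s^{1/2}$ coefficient is then
\[
-2\tfrac{p-1}{5-p}\cdot\tfrac{5-p}{4p}\cdot\tfrac{\Gamma(c+1)\Gamma(-1/2)}{\Gamma(a+1)\Gamma(b+1)}=\tfrac{(p-1)}{2p}\cdot 2\sqrt\pi\,\tfrac{\Gamma(c+1)}{\Gamma(a+1)\Gamma(b+1)} .
\]
Using $\Gamma(c+1)=c\,\Gamma(c)=\frac{p}{p-1}\Gamma(c)$, this simplifies to $\sqrt\pi\,\Gamma(c)/(\Gamma(a+1)\Gamma(b+1))$. Multiplying by $\frac{R_\Lambda}{8\pi}$ (from $r/(1-x)\cdot s=r$) and by the $\alpha$-factor should produce exactly the first claimed limit, with $\Gamma(\tfrac12)=\sqrt\pi$ and $R_\Lambda\cdot R_\Lambda^{\frac{3-p}{p-1}}=R_\Lambda^{\frac{2}{p-1}}$. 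To be safe about the $O(s)$ correction from the $x$ versus $1$ factor multiplying $\dot\Upsilon$, I will expand $x\dot\Upsilon=\dot\Upsilon-s\dot\Upsilon$; the extra term is $O(s)$ and therefore harmless.

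For $\mu$, I compute $\Phi/\Psi=\Upsilon/[\Upsilon+2\frac{p-1}{5-p}x\dot\Upsilon]\sim \Upsilon(1)/(C s^{1/2})$, where $\Upsilon(1)=\Gamma(c)\Gamma(3/2)/(\Gamma(c-a)\Gamma(c-b))$ by Gauss's formula. Multiplying by $e^{t/(p-1)}\alpha\sim\frac{1}{p-1}R_\Lambda^{\frac{3-p}{p-1}}s^{1/2}$ cancels the $s^{1/2}$ and leaves
\[
\frac{R_\Lambda^{\frac{3-p}{p-1}}}{p-1}\cdot\frac{\Gamma(3/2)}{\sqrt\pi}\cdot\frac{\Gamma(a+1)\Gamma(b+1)}{\Gamma(c-a)\Gamma(c-b)} .
\]
Finally, $c-a=b+\frac32$ and $c-b=a+\frac32$ (since $c-a-b=\frac32$), and $\Gamma(3/2)/\sqrt\pi=\frac12$, which gives precisely the second formula.

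The main obstacle is justifying the connection formulas in this parameter range: the gamma factors must be finite and nonzero, and the applicability at integer or half-integer coincidences must be checked. For instance, $b_p<0$ could make $b_p+1$ or $b_p+\frac32$ a nonpositive integer for special $p$. I would check that $-\frac32<b_p<0$ on $1<p<3$ (e.g.\ $b_p\to -\tfrac12$-ish as $p\to 1$ via $a_pb_p=-\frac{5-p}{4(p-1)}$; this needs actual verification), and handle any exceptional $p$ by continuity in $p$. The sign and non-vanishing facts from Subsection~\ref{app:Upsilon} guarantee that the leading coefficient is indeed nonzero.
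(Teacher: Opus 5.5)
Your proposal is correct and follows essentially the paper's route: you write $e^{\lambda}=\alpha\Psi$ and $\mu=\alpha\Phi/\Psi$ at $r=r_\Lambda^{(p)}(t)$ and get the $x\to1$ behaviour of $\Phi$ and $\Psi$ from Gauss's formula and the ${}_2F_1$ connection formula, which is exactly Proposition~\ref{pro:behaviour_near_one}; the one streamlining is that you read off $e^{t/(p-1)}\alpha(t)=\frac{1}{p-1}\sqrt{1-\frac{\Lambda}{3}r^2}\,r^{\frac{3-p}{p-1}}$ exactly from~\eqref{eq:alpha_t}, where the paper instead expands $u_\Lambda^{(p)}$ near $R_\Lambda$. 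Your non-degeneracy worry is settled by the bound $-1<b_p<-\tfrac12$ from Subsection~\ref{app:Upsilon} (note that $b_p\to-1$, not $-\tfrac12$, as $p\to1^+$), so $\Gamma(b_p+1)$ and $\Gamma(b_p+\tfrac32)$ are finite and positive for every $1<p<3$ and no continuity argument in $p$ is needed.
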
 
\begin{proof}
First of all, we observe that the Taylor expansion at $r = R_\Lambda$ of the function $u_\Lambda^{\!(p)}$, defined in~\eqref{eq:up_dS}, is given by
\[
u_\Lambda^{\!(p)} (r)\, = \, R_{\Lambda}^{-\frac{3-p}{p-1}} \, \sqrt{1- (\Lambda/3) r^2} \,\, (1 + o(1)) \, , \quad \hbox{as} \,\, r \to  R_\Lambda \, .
\]
It follows that the function $A_\Lambda^{\!(p)}$ defined in~\eqref{eq:alpha_r} satisfies the expansion
\[
A_\Lambda^{\!(p)} (r) \, = \, \frac{1}{p-1} \,  \left(1- (\Lambda/3) r^2 \right) \,\, (1 + o(1)) \, , \quad \hbox{as} \,\, r \to  R_\Lambda \, .
\]
Combining this information with the expansions given by Proposition~\ref{pro:behaviour_near_one} in Appendix~\ref{app:ODE}, one gets 
\begin{align*}
\left( A_\Lambda^{\!(p)} \,\, \Psi  \right) (r)\,  \,& = \, \frac{R_\Lambda}{8{\pi (p-1)}}\,\, \frac{\Gamma(\tfrac{1}{2})\, \Gamma(c_p)}{\Gamma(a_p+1)\, \Gamma(b_p+1)} \, \sqrt{1- (\Lambda/3) r^2} \,\, (1 + o(1)) \, , \quad \hbox{as} \,\, r \to  R_\Lambda 
\\
\left(A_\Lambda^{\!(p)}\! \, \frac{\Phi}{\Psi} \right)  (r)\, & = \, \frac{1}{2(p-1)}\, \, \frac{\Gamma(a_p+1)\, \Gamma(b_p+1)}{\Gamma(a_p+\frac{3}{2}) \, \Gamma(b_p+\frac{3}{2})} \, \sqrt{1- (\Lambda/3) r^2} \,\, (1 + o(1)) \, , \quad \hbox{as} \,\, r \to  R_\Lambda
\end{align*}
Now, combining the expansion of $u_\Lambda^{\!(p)}$ with the definition~\eqref{eq:rp_dS} of $r_\Lambda^{\!(p)}$, we get 
\[
R_{\Lambda}^{\frac{3-p}{p-1}} \, e^{-\frac{t}{p-1}} \,  = \,R_{\Lambda}^{\frac{3-p}{p-1}} \, u_\Lambda^{\!(p)} ( r_\Lambda^{\!(p)} (t) ) \, =   \, \sqrt{1- (\Lambda/3) \big(r_\Lambda^{\!(p)} (t)\big)^2} \,\, (1 + o(1))\, , \quad \hbox{as} \,\,\,  t \to + \infty \, .
\]
This fact can be employed to deduce that 
\begin{align*}
 e^{\lambda(t)} \, &= \, \left( A_\Lambda^{\!(p)} \,\, \Psi  \right) \big(r_\Lambda^{\!(p)} (t) \big)   = \,\frac{R_\Lambda^{\frac{2}{p-1}}}{8 \pi (p-1)}\, \frac{ \Gamma(\tfrac12) \, \Gamma(c_p)}{\Gamma(a_p+1) \, \Gamma(b_p+1)} \, e^{-\frac{t}{p-1}} \,(1 +o(1)) \,,\\
 \mu(t)\, &= \, \left( A_\Lambda^{\!(p)} \,\, \frac{\Phi}{\Psi}  \right) \big(r_\Lambda^{\!(p)} (t) \big)   = \frac{R_\Lambda^{\frac{3-p}{p-1}}}{2(p-1)}\, \, \frac{\Gamma(a_p+1)\, \Gamma(b_p+1)}{\Gamma(a_p+\frac{3}{2}) \, \Gamma(b_p+\frac{3}{2})} \, e^{-\frac{t}{p-1}} \,(1 +o(1)) \,, 
\end{align*}
as $t \to +\infty$. The thesis follows at once. 
\end{proof}
For the sake of reference, we state the following corollary, in which the asymptotic expansions for the chosen coefficients are collected.
\begin{corollary}[Asymptotic behavior of the structural coefficients]
\label{cor:behaviour_with_t}
For $\Lambda >0 $ and $1 < p < 3$, let 
$\alpha = \alpha_\Lambda^{\!(p)}$ be 
the structural coefficient defined in formula~\eqref{eq:alpha_t} and let 
$\mu = \mu_\Lambda^{\!(p)}$ and $\lambda = \lambda_\Lambda^{\!(p)}$ be the globally defined solutions to system~\eqref{structural} obtained in Theorem~\ref{thm:global}, extending the ancient solution selected in Theorem~\ref{thm:selection_mu} and Theorem~\ref{thm:selection_lambda}, respectively. Then, it holds
\begin{align*}
e^{\lambda(t)}&=\frac{1}{8\pi(3-p)}\left(\frac{p-1}{3-p}\right)^{\!\frac{p-1}{3-p}} \!\! e^{\frac{t}{3-p}}(1+o(1))\,,\quad\hbox{as } t\to-\infty\,,
\\
e^{\lambda(t)}  &= \, \frac{R_\Lambda^{\frac{2}{p-1}}}{8 \pi (p-1)}\, \frac{ \Gamma(\tfrac12) \, \Gamma(c_p)}{\Gamma(a_p+1) \, \Gamma(b_p+1)} \,\,\, e^{-\frac{t}{p-1}} \,(1 +o(1)) \,,\quad\hbox{as } t\to+\infty\,,
\end{align*}
and 
\begin{align*}
\mu(t)&= \, \frac{1}{3-p} \, \,(1+o(1))\,,\quad\hbox{as } t\to-\infty\,,
\\
 \mu(t) &= \, \frac{R_\Lambda^{\frac{3-p}{p-1}}}{2(p-1)}\, \, \frac{\Gamma(a_p+1)\, \Gamma(b_p+1)}{\Gamma(a_p+\frac{3}{2}) \, \Gamma(b_p+\frac{3}{2})} \,\,\, e^{-\frac{t}{p-1}} \,(1 +o(1)) \,,\quad\hbox{as } t\to+\infty\,,
\end{align*}
where $a_p,b_p$ and $c_p$ are defined in~\eqref{p-parameters} and $\Gamma$ is Euler's Gamma function.
\end{corollary}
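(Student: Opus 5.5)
The corollary is a matter of bookkeeping: collect the asymptotics already established at the two ends of the time axis.

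For $t\to+\infty$, both statements are exactly Theorem~\ref{thm:expansions_mu_lambda}. Writing its limits as $e^{\frac{t}{p-1}}e^{\lambda(t)}\to L_\lambda$ and $e^{\frac{t}{p-1}}\mu(t)\to L_\mu$ with $L_\lambda,L_\mu>0$ gives $e^{\lambda(t)}=L_\lambda e^{-\frac{t}{p-1}}(1+o(1))$ and $\mu(t)=L_\mu e^{-\frac{t}{p-1}}(1+o(1))$. Positivity of the constants is what allows the conversion into multiplicative form. It holds because $a_p+1$ and $b_p+1$ are positive for $1<p<3$; indeed $b_p>-1$ follows from an elementary check of \eqref{p-parameters}. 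The same positivity is also implicit in the positivity of $\Phi$ and $\Psi$ established in Theorem~\ref{thm:global}.

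For $t\to-\infty$, I first use that the globally defined solutions of Theorem~\ref{thm:global} extend the ancient solutions selected in Theorems~\ref{thm:selection_mu} and \ref{thm:selection_lambda}. This extension is part of the proof of the case $\Lambda>0$ there.

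\begin{itemize}
\item For $\mu$: Theorem~\ref{thm:selection_mu} gives $\mu(t)\to 1/(3-p)$, which is precisely $\mu=\frac{1}{3-p}(1+o(1))$. Lemma~\ref{lem:mudecay} even gives the rate $\mathcal{O}(e^{t/2})$.
\item For $\lambda$: Theorem~\ref{thm:selection_lambda} gives $\lambda(t)=\frac{t}{3-p}+\kappa+o(1)$ with $\kappa$ as in \eqref{eq:kappa}. Exponentiating, $e^{\lambda(t)}=e^{\kappa}e^{\frac{t}{3-p}}e^{o(1)}=e^{\kappa}e^{\frac{t}{3-p}}(1+o(1))$, and $e^\kappa=\frac{1}{8\pi(3-p)}\left(\frac{p-1}{3-p}\right)^{\frac{p-1}{3-p}}$.
\end{itemize}

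The only point needing care is that the global solution for $\Lambda>0$, built from the hypergeometric representation \eqref{eq:Phi_Psi_app}, really carries this particular integration constant $\kappa$. As a cross-check, I would use $e^{\lambda}=\alpha\Psi$ together with $\Psi(r)\sim r/(8\pi)$ from \eqref{limphipsizero}, $\alpha\to 1/(3-p)$, and $r_\Lambda^{(p)}(t)=\left(\frac{p-1}{3-p}\right)^{\frac{p-1}{3-p}}e^{\frac{t}{3-p}}(1+o(1))$. Multiplying these reproduces exactly $e^{\kappa}e^{\frac{t}{3-p}}(1+o(1))$.

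There is no real obstacle. The statement follows once the cited results are recalled and these normalizations are checked for consistency.
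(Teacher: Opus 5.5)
Your proposal is correct and takes the same approach as the paper, which obtains this corollary by collecting Theorem~\ref{thm:expansions_mu_lambda} for $t\to+\infty$ and Theorems~\ref{thm:selection_mu}, \ref{thm:selection_lambda} (with the choice~\eqref{eq:kappa} of $\kappa$) for $t\to-\infty$. Your cross-check via $e^{\lambda}=\alpha\Psi$, $\Psi(r)/r\to 1/(8\pi)$ and the small-$r$ asymptotics of $r_\Lambda^{(p)}$ is also how the proof of Theorem~\ref{thm:global} identifies the hypergeometric solution with the selected ancient one, so the normalization of $\kappa$ is indeed consistent.
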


\subsection{Asymptotics for a fixed $t$.}
\label{sub:tifissato}

We now analyze the structural coefficients in the limit as $p \to 1^+$. A key role is played by the pseudo-radial functions implicitly defined by formula~\eqref{eq:rp_dS}; we begin with them.
\begin{lemma}
\label{le:r1}
Let $\Lambda>0$.
The function $r_\Lambda^{\!(p)}(t)$, defined by~\eqref{eq:rp_dS}, converges pointwise to the function
\[
r_\Lambda^{\!(1)}(t)\,=\,
\begin{dcases}
e^{t/2} & \hbox{if $t< 2 \log (R_\Lambda)$}
\\
R_\Lambda & \hbox{if $t \geq 2 \log (R_\Lambda)$}
\end{dcases}
\]
as $p\to 1^+$.
\end{lemma}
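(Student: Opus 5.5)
We work with the defining relation of $r_\Lambda^{(p)}(t)$, namely $u_\Lambda^{(p)}(r_\Lambda^{(p)}(t))=e^{-t/(p-1)}$. Taking the $(p-1)$-th root turns it into
\[
\Big(u_\Lambda^{(p)}(r)\Big)^{p-1} \,=\, e^{-t}, \qquad r=r_\Lambda^{(p)}(t).
\]
The plan is to show that the functions $F_p(r):=(u_\Lambda^{(p)}(r))^{p-1}$ converge pointwise on $(0,R_\Lambda)$ to $F_1(r)=r^{-2}$. Then we invert, using monotonicity of $F_p$ in $r$.

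\textbf{Step 1: the limit of $F_p$.} Set $q=1/(p-1)\to+\infty$, so that
\[
u_\Lambda^{(p)}(r)=\int_r^{R_\Lambda}\rho^{-2q}\big(1-\tfrac{\Lambda}{3}\rho^2\big)^{-1/2}d\rho .
\]
This is a Laplace-type integral dominated by the endpoint $\rho=r$.

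For the upper bound, on $[r,R_\Lambda]$ we have $\rho^{-2q}\le r^{-2q+2}\rho^{-2}$. Hence $u\le r^{-2q+2}\int_r^{R_\Lambda}\rho^{-2}(1-\frac\Lambda3\rho^2)^{-1/2}d\rho = C(r)\, r^{-2q+2}$, with $C(r)$ finite and independent of $q$. Taking the power $1/q$ gives $\limsup F_p(r)\le r^{-2}$.

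For the lower bound, restrict the integral to $[r,r+\delta]$ and use $(1-\frac\Lambda3\rho^2)^{-1/2}\ge1$. This gives $u\ge \delta (r+\delta)^{-2q}$, so $\liminf F_p(r)\ge (r+\delta)^{-2}$. Letting $\delta\to0$ yields $F_p(r)\to r^{-2}$, locally uniformly on $(0,R_\Lambda)$ since the bounds are monotone in $r$.

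Near $R_\Lambda$ we also need the behaviour as $r\to R_\Lambda$: $F_p(r)\to0$ as $r\to R_\Lambda$ for each fixed $p$. We also have the uniform lower bound $F_p(r)\ge \inf F_p$ on $[0,r]$.

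\textbf{Step 2: inversion for $t<2\log R_\Lambda$.} Fix $t$ with $e^{t/2}<R_\Lambda$, set $r_0=e^{t/2}$, and take small $\varepsilon>0$. Then $F_1(r_0-\varepsilon)>e^{-t}>F_1(r_0+\varepsilon)$ with strict inequalities. By pointwise convergence the same strict inequalities hold for $F_p$ when $p$ is close to $1$. Since $F_p$ is strictly decreasing in $r$, this forces $r_\Lambda^{(p)}(t)\in(r_0-\varepsilon,r_0+\varepsilon)$.

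\textbf{Step 3: the case $t\ge 2\log R_\Lambda$.} Here $e^{-t}\le R_\Lambda^{-2}$. For any $\varepsilon>0$ we have $F_1(R_\Lambda-\varepsilon)=(R_\Lambda-\varepsilon)^{-2}>R_\Lambda^{-2}\ge e^{-t}$. Hence $F_p(R_\Lambda-\varepsilon)>e^{-t}$ for $p$ near $1$, and monotonicity gives $r_\Lambda^{(p)}(t)>R_\Lambda-\varepsilon$. Combined with $r_\Lambda^{(p)}<R_\Lambda$, this yields convergence to $R_\Lambda$.

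The only delicate point is Step 1. The integrand is singular at $R_\Lambda$ through $(1-\frac\Lambda3\rho^2)^{-1/2}$, but this singularity is integrable and independent of $q$, so it only affects the constant $C(r)$. That constant is killed by the $1/q$-th root. With this observation the whole argument reduces to the elementary $L^q\to L^\infty$ limit.
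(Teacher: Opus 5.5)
Your proof is correct, but it is organized differently from the paper's.

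The paper never studies the forward map. It sandwiches $r_\Lambda^{(p)}(t)$ directly in the defining identity:

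- Dropping $\sqrt{1-\frac{\Lambda}{3}\rho^2}\le 1$ and integrating $\rho^{-2/(p-1)}$ exactly over all of $[r_\Lambda^{(p)}(t),R_\Lambda]$ gives the explicit bound $r_\Lambda^{(p)}(t)\ge R_\Lambda\bigl[1+\frac{3-p}{p-1}R_\Lambda^{-1}(R_\Lambda^2e^{-t})^{1/(p-1)}\bigr]^{-(p-1)/(3-p)}$. Its limit already exhibits the dichotomy at $t=2\log R_\Lambda$, according to whether $(R_\Lambda^2e^{-t})^{1/(p-1)}$ blows up or stays bounded.
- Pulling out $\rho^{-2/(p-1)}\le (r_\Lambda^{(p)}(t))^{-2/(p-1)}$ and using $\int_r^{R_\Lambda}(1-\frac{\Lambda}{3}\rho^2)^{-1/2}d\rho\le\frac{\pi}{2}R_\Lambda$ gives $(r_\Lambda^{(p)}(t))^2\le(\frac{\pi}{2}R_\Lambda)^{p-1}e^t$.

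You instead first prove $(u_\Lambda^{(p)}(r))^{p-1}\to r^{-2}$, i.e.\ $w_\Lambda^{(p)}(r)\to 2\log r$ on $(0,R_\Lambda)$, by an $L^q\to L^\infty$ (Laplace-type) argument. Your upper bound is essentially the paper's. Your lower bound localizes to $[r,r+\delta]$ instead of integrating over the whole interval. You then invert using the strict monotonicity of $r\mapsto (u_\Lambda^{(p)}(r))^{p-1}$.

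The paper's route yields explicit quantitative bounds on $r_\Lambda^{(p)}(t)$, with rates in $p-1$. Yours is softer, but it isolates the natural statement that the model $p$-IMCF converges to the model IMCF $2\log r$. It also gets the saturation at $R_\Lambda$ for $t\ge 2\log R_\Lambda$ for free from $r_\Lambda^{(p)}<R_\Lambda$ plus monotonicity, without analysing the limit of an explicit bound in two regimes.

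Minor points:

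- The identity $(u_\Lambda^{(p)})^{p-1}=e^{-t}$ comes from raising to the power $p-1$, not from taking the $(p-1)$-th root.
- Several remarks are never used and can be deleted: that $F_p(r)\to0$ as $r\to R_\Lambda$, that $F_p(r)\ge\inf F_p$ on $[0,r]$ (which is vacuous), and the claim of locally uniform convergence.
- Steps 2 and 3 only need convergence at the fixed points $r_0\pm\varepsilon$ and $R_\Lambda-\varepsilon$, with $\varepsilon$ small enough that these lie in $(0,R_\Lambda)$.
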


\begin{proof}
By definition of $r_\Lambda^{(p)}$, we have $r_\Lambda^{(p)}(t)\in(0,R_\Lambda)$ and
\[
e^{-\frac{t}{p-1}}\,=\,u_\Lambda^{(p)}(r_\Lambda^{(p)}(t))\,=\,\int_{r_\Lambda^{(p)}(t)}^{R_\Lambda}\frac{1}{\rho^{\frac{2}{p-1}}\sqrt{1-\frac{\Lambda}{3}\rho^2}}\,d\rho
\,.
\]
In particular, using the fact that $\sqrt{1-\Lambda\rho^2/3}\leq 1$, we compute
\[
e^{-\frac{t}{p-1}}\geq
\left[\frac{\rho^{-\frac{2}{p-1}+1}}{-\frac{2}{p-1}+1}\right]^{R_\Lambda}_{r_\Lambda^{(p)}(t)}\,=\,\frac{p-1}{3-p}\left({r_\Lambda^{(p)}(t)}^{-\frac{3-p}{p-1}}-R_\Lambda^{-\frac{3-p}{p-1}}\right)\,,
\]
which can be rewritten as
\[
r_\Lambda^{(p)}(t)\geq R_\Lambda \left[1+\frac{3-p}{p-1}\frac{1}{R_\Lambda}\left(R_\Lambda^2e^{-t}\right)^{\frac{1}{p-1}}\right]^{-\frac{p-1}{3-p}}\,.
\]
Taking the limit as $p\to 1$ of the right hand side of the above inequality, we deduce that
\[
\liminf_{p\to 1^+} r_\Lambda^{(p)}(t)\geq
\begin{dcases}
e^{\frac{t}{2}} & \hbox{if }t<2\log R_\Lambda\,,
\\
R_\Lambda              & \hbox{if }t\geq 2\log R_\Lambda\,.
\end{dcases}
\]
Since $r_\Lambda^{(p)}(t)\in(0,R_\Lambda)$, it follows that $r_\Lambda^{(p)}(t)\to R_\Lambda$ for every $t\geq 2\log R_\Lambda$. Concerning the opposite inequality, we have the following estimate
\begin{align*}
e^{-\frac{t}{p-1}}\,&=\,\int_{r_\Lambda^{(p)}(t)}^{R_\Lambda}\frac{1}{\rho^{\frac{2}{p-1}}\sqrt{1-\frac{\Lambda}{3}\rho^2}}d\rho
\\
&\leq\,r_\Lambda^{(p)}(t)^{-\frac{2}{p-1}}\int_{r_\Lambda^{(p)}(t)}^{R_\Lambda}\frac{1}{\sqrt{1-\frac{\Lambda}{3}\rho^2}}d\rho
\\
&=\,r_\Lambda^{(p)}(t)^{-\frac{2}{p-1}}R_\Lambda\left[\frac{\pi}{2}-\arcsin\left(r^{(p)}_\Lambda(t)/R_\Lambda\right)\right]
\\
&\leq\,\frac{\pi}{2}R_\Lambda\,r_\Lambda^{(p)}(t)^{-\frac{2}{p-1}}\,.
\end{align*}
As a consequence, we have
\[
r_\Lambda^{(p)}(t)^2\leq \left(\frac{\pi}{2}R_\Lambda\right)^{p-1}e^{t}\,.
\]
Taking the limit as $p\to 1$, we have $\limsup_{p\to 1}r_p(t)\leq e^{t/2}$. This concludes the proof.
\end{proof}
We now turn to the analysis of the structural coefficient $\alpha$. Considering that in the model case it should represent a proportionality factor between a half of the mean curvature and the gradient of the $p$-IMCF, and assuming that as $p \to 1^+$ the $p$-IMCF converges to a form of IMCF, it is not difficult to conjecture that $\alpha$ must converge to $1/2$. The next lemma confirms this heuristic.
\begin{lemma}
Let $\Lambda >0$. The structural coefficient $\alpha_\Lambda^{\!(p)}$, defined in formula~\eqref{eq:alpha_t}, satisfies the following pointwise convergence for $t \neq 2 \log(R_\Lambda)$
\[
\alpha_\Lambda^{\!(p)}(t)\,\longrightarrow \,
\begin{dcases}
\frac{1}{2} & \hbox{if $t< 2 \log (R_\Lambda)$}
\\
0 & \hbox{if $t > 2 \log (R_\Lambda)$}
\end{dcases}
\]
as $p\to 1^+$.
\end{lemma}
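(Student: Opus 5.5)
Set $r_p = r_\Lambda^{(p)}(t)$. The plan is to rewrite $\alpha$ in a form where each factor has a computable limit as $p\to1^+$ and then apply Lemma~\ref{le:r1}. By \eqref{eq:alpha_r},
\[
\alpha_\Lambda^{(p)}(t) \,=\, \frac{\sqrt{1-\tfrac{\Lambda}{3}r_p^2}}{p-1}\, r_p^{\frac{3-p}{p-1}}\, u_\Lambda^{(p)}(r_p)\,,
\qquad
u_\Lambda^{(p)}(r_p)\,=\,\int_{r_p}^{R_\Lambda}\frac{d\rho}{\rho^{\frac{2}{p-1}}\sqrt{1-\frac{\Lambda}{3}\rho^2}}\,.
\]
We substitute $\rho = r_p s$ with $s\in[1,R_\Lambda/r_p]$. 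This gives
\[
\alpha_\Lambda^{(p)}(t) \,=\, \frac{\sqrt{1-\tfrac{\Lambda}{3}r_p^2}}{p-1}\int_1^{R_\Lambda/r_p} \frac{s^{-\frac{2}{p-1}}}{\sqrt{1-\tfrac{\Lambda}{3}r_p^2 s^2}}\,ds\,.
\]
The factor $s^{-2/(p-1)}$ concentrates near $s=1$, and its total mass is $\int_1^\infty s^{-2/(p-1)}\,ds=\frac{p-1}{3-p}$.

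\emph{Case $t<2\log R_\Lambda$.} By Lemma~\ref{le:r1}, $r_p\to e^{t/2}<R_\Lambda$, so for $p$ close to $1$ we have $r_p\le \bar r<R_\Lambda$ for some fixed $\bar r$. We split the $s$-integral at $s=1+\delta$, with $\delta$ chosen so that $(1+\delta)\bar r<R_\Lambda$.
\begin{itemize}
\item On $[1,1+\delta]$ the weight $(1-\tfrac{\Lambda}{3}r_p^2s^2)^{-1/2}$ is continuous and uniformly bounded, and it converges to $(1-\tfrac{\Lambda}{3}r_p^2)^{-1/2}$ as $s\to1$, uniformly in $p$.
\item On $[1+\delta,R_\Lambda/r_p]$ we use $s^{-2/(p-1)}\le (1+\delta)^{-2/(p-1)+2}s^{-2}$ together with integrability of $(1-\tfrac{\Lambda}{3}r_p^2s^2)^{-1/2}$ up to $R_\Lambda/r_p$ (the arcsine integral already used in Lemma~\ref{le:r1}). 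This tail contribution is $O\big((1+\delta)^{-2/(p-1)}\big)$, which is $o(p-1)$.
\end{itemize}
Hence $u_\Lambda^{(p)}(r_p)\,r_p^{\frac{3-p}{p-1}} = \frac{p-1}{3-p}\,(1-\tfrac{\Lambda}{3}r_p^2)^{-1/2}(1+o(1))$. Substituting, $\alpha\to \lim_{p\to1^+}\frac{1}{3-p}=\frac12$.

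\emph{Case $t>2\log R_\Lambda$.} Here $r_p\to R_\Lambda$, so the prefactor $\sqrt{1-\tfrac{\Lambda}{3}r_p^2}$ tends to $0$. The integral itself may blow up, so a rate is needed. Directly from the formula above, $\alpha\le \frac{1}{p-1}\,e^{-t/(p-1)}\,r_p^{\frac{3-p}{p-1}}\sqrt{1-\tfrac{\Lambda}{3}r_p^2}$. Since $r_p<R_\Lambda$, this gives
\[
\alpha \,\le\, \frac{1}{p-1}\Big(R_\Lambda^2e^{-t}\Big)^{\frac{1}{p-1}} R_\Lambda^{-1}\,.
\]
Because $R_\Lambda^2e^{-t}<1$, the right-hand side decays exponentially in $1/(p-1)$, which beats the $1/(p-1)$ prefactor, so $\alpha\to0$. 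This is the clean route: it needs only $u(r_p)=e^{-t/(p-1)}$ and the sign of $\alpha$.

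\textbf{Main obstacle.} The only delicate point is the first case: making the Laplace-type concentration uniform in $p$ while $r_p$ moves. If the splitting above becomes awkward, a cleaner option is the recursion \eqref{eq:phi_recursive}, which writes $A=(1-\tfrac{\Lambda}{3}r^2)\,q\,\phi_q$ with $q=1/(p-1)\to\infty$. Iterating it shows $q\,\phi_q(r)\to(1-\tfrac{\Lambda}{3}r^2)^{-1}\cdot\tfrac12$ locally uniformly for $r<R_\Lambda$, since $(2q+1)\phi_{q+1}\approx 1+2q\phi_q\tfrac{\Lambda}{3}r^2$ forces the fixed point $\phi\cdot 2q(1-\tfrac{\Lambda}{3}r^2)\approx1$. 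Composing with Lemma~\ref{le:r1} then finishes the first case.
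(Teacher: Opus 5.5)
Your proof is correct. The case $t>2\log R_\Lambda$ is exactly the paper's argument: you bound $r_p^{\frac{3-p}{p-1}}$ by $R_\Lambda^{\frac{3-p}{p-1}}$ and use $u(r_p)=e^{-t/(p-1)}$. For $t<2\log R_\Lambda$, however, you take a different route.

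\textbf{The paper's route.} It first reduces, via Lemma~\ref{le:r1}, to showing $A_\Lambda^{(p)}(r)\to\frac12$ at each fixed $r\in(0,R_\Lambda)$. It then proves two one-sided bounds by explicit comparison.
\begin{itemize}
\item The lower bound $A_\Lambda^{(p)}(r)\geq \frac{1}{3-p}\bigl[1-(r/R_\Lambda)^{\frac{3-p}{p-1}}\bigr]$ comes from the monotonicity of $\sqrt{1-\frac{\Lambda}{3}\rho^2}$.
\item The upper bound splits the integral at $s_k(p)=R_\Lambda/\sqrt{k(p-1)+1}$. Below $s_k(p)$ it uses that $\rho\mapsto\sqrt{1-\frac{\Lambda}{3}\rho^2}\,\rho^{1/(k(p-1))}$ is increasing, which absorbs the weight into a slightly worse power. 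Above $s_k(p)$ it controls the tail by the arcsine integral. This gives $\limsup\leq k/(2k-1)$, and then $k\to\infty$.
\end{itemize}

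\textbf{Your route.} You rescale $\rho=r_ps$ and run a Laplace-type concentration argument at $s=1$. You split at a fixed $1+\delta$ and use uniform continuity of the weight near $s=1$.

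\textbf{What each buys.} Your argument gives both bounds at once, together with the sharper asymptotic $\alpha_\Lambda^{(p)}(t)=\frac{1}{3-p}+o(1)$. It also handles the moving point $r_p$ directly. The paper's reduction to fixed $r$ implicitly needs its bounds to be locally uniform in $r$; they are, but this is not stated. The paper's route, in exchange, uses only explicit inequalities with no $\varepsilon$--$\delta$ continuity step. Its lower bound also holds for every $1<p<3$, not just asymptotically.

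\textbf{Points to make explicit when writing it up.}
\begin{itemize}
\item The tail estimate carries a factor $R_\Lambda/r_p$ from the arcsine integral, so you need $r_p$ bounded away from $0$; Lemma~\ref{le:r1} supplies this.
\item The comparison $s^{-2/(p-1)}\leq(1+\delta)^{-2/(p-1)+2}s^{-2}$ requires $p\leq 2$, which is harmless as $p\to1^+$.
\item The uniform-continuity step needs the usual sub-split at some $1+\delta_\eta$.
\item Your fallback via~\eqref{eq:phi_recursive} is, as written, only a fixed-point heuristic. It would need a contraction argument plus boundedness of $q\phi_q$ on a unit interval of $q$ to prove convergence as the continuous parameter $q\to\infty$. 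Your main argument does not need it.
\end{itemize}
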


\begin{proof}
Recall from~\eqref{eq:alpha_t} that
\[
\alpha^{\!(p)}_\Lambda (t)\,= \,
\frac{e^{-\frac{t}{p-1}}}{p-1} \,\, \sqrt{1-\frac{\Lambda}{3}r_\Lambda^2}\,\,\, r_\Lambda^{\frac{3-p}{p-1}} \, , \quad t \in (-\infty,+ \infty)
\]
We start by discussing the limit of $\alpha_\Lambda^{(p)}(t)$ in the case $t>2\log R_\Lambda$. Since $r_\Lambda^{(p)}(t)\leq R_\Lambda$, we have
\[
\alpha_\Lambda^{(p)}(t)\,\leq\,\frac{1}{p-1}(R_\Lambda)^{\frac{3-p}{p-1}}e^{-\frac{t}{p-1}}\,=\,\frac{1}{(p-1)R_\Lambda}e^{-\frac{t-2\log R_\Lambda}{p-1}}\,.
\]
For $t>2\log R_\Lambda$, the limit on the right-hand side as $p\to 1$
 is zero, as desired. It remains to show that $\lim_{p\to 1^+}\alpha_\Lambda^{(p)}(t)=1/2$ for all $t<2\log R_\Lambda$. To this end, in view of Lemma~\ref{le:r1}, it is enough to prove that the function
\[
A^{\!(p)}_\Lambda(r) \,= \, 
\frac{1}{p-1} \,\, \sqrt{1-\frac{\Lambda}{3}r^2}\,\,\, r^{\frac{3-p}{p-1}}\,\int_{r}^{R_\Lambda}\frac{1}{\rho^{\frac{2}{p-1}}\sqrt{1-\frac{\Lambda}{3}\rho^2}}\,d\rho
\]
satisfies
$\lim_{p\to 1}A_\Lambda^{(p)}(r)=1/2$ for all $r\in(0,R_\Lambda)$. On the one hand, since $\sqrt{1-\Lambda r^2/3}\geq\sqrt{1-\Lambda\rho^2/3}$ for all $r\leq\rho< R_\Lambda$, we have
\[
A_\Lambda^{(p)}(r)\,\geq\,
\frac{1}{p-1}r^{\frac{3-p}{p-1}}\int_r^{R_\Lambda}{\rho^{-\frac{2}{p-1}}}d\rho\,=\,
\frac{1}{3-p}\left[1-\left(\frac{r}{R_\Lambda}\right)^{\frac{3-p}{p-1}}\right].
\]
Since $0\leq r<R_\Lambda$, we then get
\begin{equation}
\label{liminf}
\liminf_{p\to 1}A_\Lambda^{(p)}(r)\,\geq\,\frac12\,.
\end{equation}
We now prove that 
\begin{equation}\label{limsup}
\limsup_{p\to 1}A_\Lambda^{(p)}(r)\,\leq\,\frac12\,.
\end{equation}
To do this, we fix an integer $k\geq2$ and consider the function
\[
[0,1)\ni \rho\longmapsto\sqrt{1-\Lambda\rho^2/3}\,\rho^{\frac1{k(p-1)}}.
\]
This function is strictly increasing up to the value $s_k(p):=R_\Lambda/\sqrt{k(p-1)+1}$,
which is such that $s_k(p)\to R_\Lambda^-$, as $p\to 1^+$. We then estimate  
$A_\Lambda^{(p)}(r)$ from above as 
\begin{equation*}
A_\Lambda^{(p)}(r)
\,\leq\,
\underbrace{
\frac{1}{p-1}\frac{r^{\frac{3-p}{p-1}}}{r^{\frac1{k(p-1)}}}
\int_r^{s_k(p)}
\rho^{-\frac2{p-1}+\frac1{k(p-1)}}
}
_{(I)}
\,+\,
\underbrace{
\frac{r^{\frac{3-p}{p-1}}}{p-1}\sqrt{1-\frac{\Lambda}{3}r^2}
\int_{s_k(p)}^{R_\Lambda}\frac{\rho^{-\frac{2}{p-1}}}{\sqrt{1-\frac{\Lambda}{3}\rho^2}}d\rho
}
_{(II)}.
\end{equation*}
It turns out that
\[
(I)\,=\,
-\frac k{3k-kp-1}\left(r^{\frac{3k-kp-1}{k(p-1)}}
\big(s_k(p)\big)^{\frac{-3k+kp+1}{k(p-1)}}
-1\right)
\ \longrightarrow\ \frac k{2k-1},
\qquad\mbox{as\ }p\to1,
\]
because 
\[
r^{\frac{3k-kp-1}{k(p-1)}}
\ \longrightarrow\ 0,
\qquad\quad
\big(s_k(p)\big)^{\frac{-3k+kp+1}{k(p-1)}}
\ \longrightarrow\
{\rm e}^{k(k-\frac12)}.
\]
At the same time,
\begin{align*}
(II)\,&\leq \,
\frac{r^{\frac{3-p}{p-1}}}{p-1}\sqrt{1-\frac{\Lambda}{3}r^2}\,
\big(s_k(p)\big)^{-\frac{2}{p-1}}
\int_{s_k(p)}^{R_\Lambda}\frac 1{\sqrt{1-\frac{\Lambda}{3}\rho^2}}{\rm d}\rho\\
\,&= \,
\frac{r^{\frac{3-p}{p-1}}}{p-1}\sqrt{1-\frac{\Lambda}{3}r^2}\,
\big(s_k(p)\big)^{-\frac{2}{p-1}}
R_\Lambda \left[\frac\pi2-\arcsin\left(\frac{s_k(p)}{R_\Lambda}\right)\right]
\,\longrightarrow\ 0,
\qquad\mbox{as}\quad p\to1,
\end{align*}
because
\[
\frac{r^{\frac{3-p}{p-1}}}{p-1}\,\longrightarrow\,0,
\qquad
\big(s_k(p)\big)^{-\frac{2}{p-1}}
\,\longrightarrow\,{\rm e}^k,
\qquad
\left[\frac\pi2-\arcsin\big(s_k(p)\big)\right]
\,\longrightarrow\,0.
\]
All in all, we have that
\[
\limsup_{p\to 1} A_\Lambda^{(p)}(r)
\,\leq \,
\frac k{2k-1}.
\]
The fact that this inequality holds for every integer $k\geq 2$
yields the desired \eqref{limsup}
and in turn concludes the proof of the lemma.
\end{proof}

With the next two lemmas we analyze the asymptotic behavior, as $p \to 1^+$, of the structural coefficients $\lambda$ and $\mu$, focusing already on the expressions that appear in the monotonicity formulas, namely $e^\lambda$ and $\mu e^\lambda$, so as to facilitate the considerations of Subsection~\ref{sub:pto1} and to make the formal convergence to the Hawking mass immediately evident.
\begin{lemma}
\label{le:eallalambda}
Let $\Lambda>0$. Let $\lambda_\Lambda^{\!(p)}$ be the ancient solution to system~\eqref{structural}, selected in Theorem~\ref{thm:selection_lambda}. The function $e^{\lambda_\Lambda^{\!(p)} (t)}$ satisfies the following pointwise convergence for $t \neq 2 \log(R_\Lambda)$
\[
e^{\lambda_\Lambda^{(p)} \!(t)}\,\longrightarrow \,
\begin{dcases}
\frac{e^{t/2}}{16 \pi} & \hbox{if $t< 2 \log (R_\Lambda)$}
\\
0 & \hbox{if $t > 2 \log (R_\Lambda)$}
\end{dcases}
\]
as $p\to 1^+$.
\end{lemma}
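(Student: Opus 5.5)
The plan is to use the explicit representation of $e^\lambda$ obtained in the proof of Theorem~\ref{thm:global} for $\Lambda>0$. From~\eqref{def:phipsi} we have $e^{\lambda(t)} = (\alpha\,\Psi)(r_\Lambda^{\!(p)}(t)) = (A_\Lambda^{\!(p)}\Psi)(r_\Lambda^{\!(p)}(t))$, with $\Psi$ given by~\eqref{eq:Phi_Psi_app}. Writing $x=\tfrac{\Lambda}{3}r^2$, this reads
\[
e^{\lambda(t)} \,=\, \frac{A_\Lambda^{\!(p)}(r)}{8\pi}\,\frac{r}{1-x}\Big[\Upsilon_p(x)+2\Big(\tfrac{p-1}{5-p}\Big)x\,\dot\Upsilon_p(x)\Big]_{r=r_\Lambda^{(p)}(t)} .
\]
The argument then splits according to whether $t$ lies below or above $2\log R_\Lambda$.

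\emph{Case $t<2\log R_\Lambda$.} By Lemma~\ref{le:r1}, $r_\Lambda^{\!(p)}(t)\to e^{t/2}<R_\Lambda$, so $x$ stays in a compact subset $[0,x_0]$ of $[0,1)$ for $p$ close to $1$. I will check that $A_\Lambda^{\!(p)}\to 1/2$ locally uniformly in $r\in(0,R_\Lambda)$; the bounds in the previous lemma are uniform on compacts, so this holds. The key step is the behaviour of $\Upsilon_p={}_2F_1(a_p,b_p,c_p;\cdot)$ as $p\to1^+$. From~\eqref{p-parameters}, $a_p\sim \tfrac{1}{p-1}$, $b_p\to -\tfrac{5-1}{4\cdot 2}\cdot\!$ (explicitly $b_p=-(5-p)/(4(p-1)a_p)\to -1/2$), and $c_p\sim \tfrac1{p-1}$. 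Hence $(a_p)_k/(c_p)_k\to1$ termwise and $\Upsilon_p(x)\to{}_1F_0(-\tfrac12;;x)=\sqrt{1-x}$. To pass to the limit I will use dominated convergence on the series: $|(a_p)_k/(c_p)_k|$ is bounded by a constant (since $a_p<c_p$ for $p$ near $1$, which I will verify), and $|(b_p)_k|/k!$ is uniformly summable against $x_0^k$. The same argument applied to $\dot\Upsilon_p=-\tfrac{5-p}{4p}\,{}_2F_1(a_p+1,b_p+1,c_p+1;x)$ shows that $\dot\Upsilon_p$ is bounded on $[0,x_0]$, so the term $2\tfrac{p-1}{5-p}x\dot\Upsilon_p$ vanishes in the limit. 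Altogether,
\[
e^{\lambda(t)}\to \frac{1}{16\pi}\,\frac{e^{t/2}}{1-x}\sqrt{1-x},
\]
which is off from the claim by a factor $1/\sqrt{1-x}$. I therefore expect to need $A_\Lambda^{\!(p)}\to\tfrac12\sqrt{1-x}$ rather than $1/2$; I will recheck the definition. In~\eqref{eq:alpha_r} the factor $\sqrt{1-\frac{\Lambda}{3}r^2}$ multiplies $u_\Lambda$, and the previous lemma asserts the limit $1/2$. Since $\Phi,\Psi$ came from the proof's normalisation, I will recompute carefully whether $\Upsilon_p\to\sqrt{1-x}$ or $1-x$, e.g. by checking $b_p$ more precisely via the ODE limit. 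This bookkeeping is the main obstacle. The cleanest cross-check is to pass to the limit directly in the hypergeometric ODE: dividing by $1/(p-1)$ gives $-\dot\Upsilon+\tfrac12 x\dot\Upsilon\cdot\!$ plus lower-order terms, i.e. $(1-\tfrac x2)\dot\Upsilon\approx-\tfrac{1}{\,\cdot\,}\Upsilon$, which pins down the limit profile and lets me reconcile the factor so that the product is exactly $e^{t/2}/(16\pi)$.

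\emph{Case $t>2\log R_\Lambda$.} Here I would avoid the series and instead bound $e^\lambda$ using $\dot\lambda\le \tfrac{5-p}{p-1}\alpha-\tfrac1{p-1}$, which holds because $\mu>0$ (this follows from $\Phi,\Psi>0$). Fix $2\log R_\Lambda<t_1<t$. For $\tau\ge t_1$, the estimate in the previous lemma gives $\alpha(\tau)\le \frac{1}{(p-1)R_\Lambda}e^{-(t_1-2\log R_\Lambda)/(p-1)}$, which is exponentially small. Hence
\[
\lambda(t)\le\lambda(t_1)-\frac{t-t_1}{2(p-1)}
\]
for $p$ close to $1$. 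It remains to show that $\lambda(t_1)$ stays bounded above uniformly in $p$. For this I will apply the same differential inequality from some $t_0<2\log R_\Lambda$, where $\lambda(t_0)$ converges by the first case, using the crude bound $\alpha\le 1/(p-1)$, available from $A_\Lambda^{(p)}(r)\le\frac{1}{p-1}\cdot$(bounded quantity). That bound is too crude, since it gives growth of order $4/(p-1)^2$. Instead I will take $t_1$ close to $2\log R_\Lambda$ and use the monotone bound $\alpha\le\tfrac12+o(1)$ on $[t_0,t_1']$ with $t_1'<2\log R_\Lambda$, together with $\alpha\le 1/(p-1)\cdot e^{-(\tau-2\log R_\Lambda)/(p-1)}/R_\Lambda$ beyond. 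Integrating the latter over $\tau\ge2\log R_\Lambda$ gives a contribution $\tfrac{5-p}{p-1}\cdot\tfrac{1}{R_\Lambda}$, which is the dangerous term. It is still beaten by the $-\frac{t-t_1}{p-1}$ decay once $t-2\log R_\Lambda$ exceeds a fixed multiple; for $t$ closer to the threshold I will shrink the gaps accordingly. This yields $e^{\lambda(t)}\to0$.
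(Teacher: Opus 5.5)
Your first case follows the paper's route: $e^{\lambda}=\alpha\,\Psi(r_\Lambda^{(p)}(t))$, then $\alpha\to\frac12$, $r_\Lambda^{(p)}(t)\to e^{t/2}$, and a termwise limit of the hypergeometric series. Your domination argument is fine. The limit of $b_p$, however, is wrong. The formula you write, $b_p=-(5-p)/(4(p-1)a_p)$, is correct, but since $(p-1)a_p\to 1$ it gives $b_p\to-1$, not $-\frac12$ (indeed $b_p=-1+\frac34(p-1)+\mathcal O((p-1)^2)$). Consequently $(b_p)_k$ contains the vanishing factor $b_p+1$ for every $k\geq 2$, and $\Upsilon_p(x)\to{}_1F_0(-1;;x)=1-x$ uniformly on $[0,x_0]$. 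This cancels exactly the prefactor $1/(1-x)$ in $\Psi$, so $\Psi_p(r)\to r/(8\pi)$ (Proposition~\ref{pro:behaviour_near_one_2}). The limit is then $\frac12\cdot\frac{e^{t/2}}{8\pi}$, with no factor left to reconcile. As written, this half of your proof ends at an incorrect limit with the decisive step left open. The remedy you contemplate, replacing the limit of $A_\Lambda^{(p)}$ by $\frac12\sqrt{1-x}$, would contradict the preceding lemma, which is correct. Your ODE cross-check, done correctly (multiply the hypergeometric equation by $p-1$ and let $p\to1$), gives $(1-x)\dot\Upsilon+\Upsilon=0$, i.e.\ again $\Upsilon\to 1-x$.

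Your second case takes a different route from the paper. The paper bounds $e^{\lambda}=\alpha\Psi\le\frac{1}{(p-1)R_\Lambda}e^{-(t-2\log R_\Lambda)/(p-1)}\,\Psi\sqrt{1-\frac\Lambda3 r^2}$ and invokes the endpoint asymptotics of Proposition~\ref{pro:behaviour_near_one}. Integrating $\dot\lambda\le\frac{5-p}{p-1}\alpha-\frac{1}{p-1}$ is viable, but your estimates do not close.

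Integrating $\alpha\le\frac{1}{(p-1)R_\Lambda}e^{-(\tau-2\log R_\Lambda)/(p-1)}$ over $[2\log R_\Lambda,+\infty)$ costs $\frac{5-p}{(p-1)R_\Lambda}$. This term is of order $1/(p-1)$ and does not depend on $t_1,t_1'$. It is beaten by $-(t-2\log R_\Lambda)/(p-1)$ only when $t-2\log R_\Lambda$ exceeds roughly $4/R_\Lambda$, so ``shrinking the gaps'' cannot handle $t$ near the threshold. Moreover, on $[t_1',2\log R_\Lambda]$ you have no usable bound. The exponential bound is exponentially large there, and $\alpha\to\frac12$ is only available on compact subsets of $(-\infty,2\log R_\Lambda)$.

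The missing ingredient is \eqref{eq:dotr}, which gives $\int_{t_0}^t\alpha\,d\tau=\log\big(r_\Lambda^{(p)}(t)/r_\Lambda^{(p)}(t_0)\big)\le\log\big(R_\Lambda/r_\Lambda^{(p)}(t_0)\big)$. Hence, for $t_0<2\log R_\Lambda<t$,
\[
\lambda(t)\,\le\,\lambda(t_0)+\frac{1}{p-1}\Big[(5-p)\log\frac{R_\Lambda}{r_\Lambda^{(p)}(t_0)}-(t-t_0)\Big] .
\]
By Lemma~\ref{le:r1} the bracket tends to $(2\log R_\Lambda-t_0)-(t-2\log R_\Lambda)$, which is negative once $t_0>4\log R_\Lambda-t$. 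Meanwhile $\lambda(t_0)$ converges by the corrected first case, so $\lambda(t)\to-\infty$.

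With this fix your route has an advantage over the paper's. It never evaluates $\Psi_p\sqrt{1-\frac\Lambda3 r^2}$ at the moving point $r_\Lambda^{(p)}(t)\to R_\Lambda$. That evaluation needs a uniformity in $p$ of the limit in Proposition~\ref{pro:behaviour_near_one}, which the paper's argument uses without spelling out.
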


\begin{proof}
In the proof of Theorem~\ref{thm:global} it has been shown that $e^{\lambda_\Lambda^{(p)}}$ is given by
\[
e^{\lambda_\Lambda^{(p)}(t)}\,=\,\alpha_\Lambda^{(p)}(t)\Psi_p\left(r_\Lambda^{(p)}(t)\right)\,.
\]
We recall from Proposition~\ref{pro:behaviour_near_one_2} that
\[
\lim_{p\to 1^+}\Psi_p(r)= \frac{r}{8\pi}\,,\quad r\in[0,R_\Lambda)\,.
\]
As a consequence, recalling that $\alpha_\Lambda^{(p)}(t)\to 1/2$ for all $t<2\log R_\Lambda$ and $r_\Lambda^{(p)}(t)\to e^{t/2}$ for all $t<2\log R_\Lambda$, we conclude
\[
\lim_{p\to 1^+}e^{\lambda_{\Lambda}^{(p)}(t)}\,=\,\frac{e^{t/2}}{16\pi}\,.
\]
Concerning the values $t>2\log R_\Lambda$, notice that in this case we have $r_\Lambda^{(p)}(t)\to R_\Lambda$. In particular, from~\eqref{eq:behaviour_rp_to_one_psi} we deduce that 
\[
\lim_{p\to 1^+}\Psi_p(r_\Lambda^{(p)}(t))\sqrt{1-\frac{\Lambda}{3}(r_\Lambda^{(p)}(t))^2}=0\,.
\]
Since $r_\Lambda^{(p)}(t)\leq R_\Lambda$, we thus find
\begin{align*}
e^{\lambda_\Lambda^{(p)}(t)}\,&=\,\alpha^{\!(p)}_\Lambda (t)\Psi_p(r_\Lambda^{(p)}(t))\,= \,
\frac{e^{-\frac{t}{p-1}}}{p-1} \,\, \sqrt{1-\frac{\Lambda}{3}(r_\Lambda^{(p)}(t))^2}\,\,\,(r_\Lambda^{(p)}(t))^{\frac{3-p}{p-1}}\Psi_p(r_\Lambda^{(p)}(t))
\\
&\leq\,\frac{e^{-\frac{t-2\log R_\Lambda}{p-1}}}{p-1}\frac{1}{R_\Lambda}\left[\Psi_p(r_\Lambda^{(p)}(t))\sqrt{1-\frac{\Lambda}{3}(r_\Lambda^{(p)}(t))^2}\right]\,.
\end{align*}
As shown above, the quantity inside the square brackets goes to zero. Furthermore, the quantity outside the square brackets decays exponentially as $p\to 1$.
Since $e^{\lambda_\Lambda^{(p)}(t)}> 0$, this proves that $e^{\lambda_\Lambda^{(p)}(t)}\to 0$ as $p\to 1$ for all $t>2\log R_\Lambda$.
\end{proof}

\begin{lemma}
\label{mupi}
Let $\Lambda>0$. Let $\mu_\Lambda^{\!(p)}$ and $\lambda_\Lambda^{\!(p)}$ be the ancient solutions to system~\eqref{structural}, selected in Theorem~\ref{thm:selection_mu} and Theorem~\ref{thm:selection_lambda}, respectively.
The function $\mu_\Lambda^{\!(p)}(t)$ satisfies the following pointwise convergence for $t \neq 2 \log(R_\Lambda)$
\[
\mu_\Lambda^{\!(p)}(t)e^{\lambda_\Lambda^{\!(p)}(t)}\,\longrightarrow \,
\begin{dcases}
\frac{e^{t/2}}{32\pi} & \hbox{if $t< 2 \log (R_\Lambda)$}
\\
0 & \hbox{if $t > 2 \log (R_\Lambda)$}
\end{dcases}
\]
as $p\to 1^+$.
\end{lemma}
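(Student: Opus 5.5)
The plan is to mirror the proof of Lemma~\ref{le:eallalambda}, using the representation of $\mu e^\lambda$ in terms of the linearized unknowns from the proof of Theorem~\ref{thm:global}. By the definition~\eqref{def:phipsi}, $\Phi = \mu e^\lambda/\alpha^2$, so that
\[
\mu_\Lambda^{(p)}(t)\, e^{\lambda_\Lambda^{(p)}(t)} \,=\, \big(\alpha_\Lambda^{(p)}(t)\big)^2\, \Phi_p\big(r_\Lambda^{(p)}(t)\big)\,,
\]
where $\Phi_p$ is the explicit hypergeometric expression in~\eqref{eq:Phi_Psi_app}. The proof then reduces to three ingredients: the pointwise limits of $r_\Lambda^{(p)}$ (Lemma~\ref{le:r1}), the pointwise limits of $\alpha_\Lambda^{(p)}$ (the lemma preceding Lemma~\ref{le:eallalambda}), and the behaviour of $\Phi_p$ as $p\to1^+$, which I expect to be contained in Proposition~\ref{pro:behaviour_near_one_2} of the Appendix, alongside the statement used for $\Psi_p$. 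I expect that proposition to give $\Phi_p(r)\to r/(8\pi)$ locally uniformly on $[0,R_\Lambda)$. It should also give a boundary-type control, namely that $\Phi_p(r)\,(1-\tfrac{\Lambda}{3}r^2)^{\beta}$ stays bounded, or tends to zero, near $R_\Lambda$ for a suitable power $\beta$.

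\emph{Case $t<2\log R_\Lambda$.} Here $r_\Lambda^{(p)}(t)\to e^{t/2}<R_\Lambda$, so the argument of $\Phi_p$ stays in a compact subset of $[0,R_\Lambda)$ for $p$ close to $1$. Local uniform convergence then gives $\Phi_p(r_\Lambda^{(p)}(t))\to e^{t/2}/(8\pi)$. Since $\alpha_\Lambda^{(p)}(t)^2\to 1/4$, the product tends to $e^{t/2}/(32\pi)$.

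If local uniform convergence of $\Phi_p$ is not directly available, I would derive it from the power series $\Upsilon_p(x)={}_2F_1(a_p,b_p,c_p;x)$. As $p\to1^+$ one has $c_p\sim 1/(p-1)$, $a_p\sim 1/(p-1)$ and $b_p\to -1/2$, since $a_pb_p\sim -1/(p-1)$. Hence each coefficient $(a_p)_k(b_p)_k/(c_p)_k$ with $k\ge1$ stays bounded by $C^k(1/2)_k/k!$-type terms, and in fact tends to $(b)_k$-weighted limits of order $O(1)\cdot$(ratio $a_p/c_p\to1$). This would give $\Upsilon_p(x)\to(1-x)^{1/2}$ uniformly on compact subsets of $[0,1)$. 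Then $\Phi_p(r)\to \frac{r}{8\pi}\,\frac{\sqrt{1-x}}{1-x}$ with $x=\tfrac{\Lambda}{3}r^2$.

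This conflicts with the expected limit $r/(8\pi)$. A cleaner route that avoids it is to write $\mu e^\lambda = \big(\alpha\,\Phi/\Psi\big)\cdot\big(\alpha\Psi\big) = \big(\alpha\,\Phi/\Psi\big)\, e^\lambda$ and use Lemma~\ref{le:eallalambda} for the factor $e^\lambda$. It then suffices to show $\Phi_p/\Psi_p\to1$ on compacts of $[0,R_\Lambda)$. From~\eqref{eq:Phi_Psi_app},
\[
\frac{\Psi_p}{\Phi_p}\,=\,1+2\Big(\frac{p-1}{5-p}\Big)\,x\,\frac{\dot\Upsilon_p}{\Upsilon_p}(x)\,.
\]
Using $\dot\Upsilon_p=-\frac{5-p}{4p}\,{}_2F_1(a_p+1,b_p+1,c_p+1;x)$ and uniform bounds on both hypergeometric series on compacts of $[0,1)$, with $\Upsilon_p$ bounded below there because it is positive and decreasing, the correction term is $O(p-1)$. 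This proves the claim for $t<2\log R_\Lambda$.

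\emph{Case $t>2\log R_\Lambda$.} I would reuse the exponential smallness from the proof of Lemma~\ref{le:eallalambda}. Since $\mu>0$ (because $\Phi>0$), it suffices to bound $\mu e^\lambda=\alpha\,(\Phi_p/\Psi_p)\,e^\lambda$ from above. The factor $\alpha\le \frac{1}{(p-1)R_\Lambda}e^{-(t-2\log R_\Lambda)/(p-1)}$ decays exponentially, while $e^\lambda\to0$. The remaining factor $\Phi_p/\Psi_p$ may blow up near $r=R_\Lambda$, because $\Psi$ vanishes at $x=1$. Its growth must therefore be controlled in terms of $1-\tfrac{\Lambda}{3}r^2$, which by the expansion in the proof of Theorem~\ref{thm:expansions_mu_lambda} is of order $e^{-t/(p-1)}R_\Lambda^{\frac{3-p}{p-1}}$ evaluated at $r=r_\Lambda^{(p)}(t)$.

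\textbf{Main obstacle.} I expect the hard step to be obtaining bounds on $\Upsilon_p$ and $\dot\Upsilon_p$ near $x=1$ that are uniform in $p$. The natural route is the Gauss evaluations underlying Proposition~\ref{pro:behaviour_near_one}, which are already invoked there with the $\Gamma$-constants. These should give at most polynomial growth of $\Phi_p/\Psi_p$ in $(1-x)^{-1/2}$, which can then be beaten by the exponential factor $e^{-(t-2\log R_\Lambda)/(p-1)}$.
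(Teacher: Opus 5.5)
The genuine gap is the case $t>2\log R_\Lambda$. You leave it open, and the route you sketch is much harder than necessary and not sound as stated.

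Writing $\mu e^{\lambda}=\alpha\,(\Phi_p/\Psi_p)\,e^{\lambda}$ forces you to divide by $\Psi_p$, whose bracket $\Upsilon_p+2\frac{p-1}{5-p}x\dot\Upsilon_p$ vanishes at $x=1$. The connection formulas behind Proposition~\ref{pro:behaviour_near_one} describe this vanishing only for fixed $p$, with $p$-dependent constants and remainders, so they give no bound uniform in $p$.

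Your bookkeeping of scales is also off. At $r=r_\Lambda^{(p)}(t)$, the identity $e^{-t/(p-1)}=u_\Lambda^{(p)}(r)$ gives $(1-\tfrac{\Lambda}{3}r^2)^{-1/2}\geq R_\Lambda\,e^{(t-2\log R_\Lambda)/(p-1)}$. So even a blow-up of $\Phi_p/\Psi_p$ like $(1-x)^{-1/2}$ is exponentially large in $1/(p-1)$, and it consumes the entire factor $e^{-(t-2\log R_\Lambda)/(p-1)}$ in your bound for $\alpha$. Hence ``polynomial growth beaten by the exponential'' is not automatic. In addition, it is $\sqrt{1-\frac{\Lambda}{3}r^2}$, not $1-\frac{\Lambda}{3}r^2$, that behaves like $R_\Lambda^{\frac{3-p}{p-1}}e^{-\frac{t}{p-1}}$, and the expansion you quote holds for fixed $p$ as $t\to+\infty$, not for fixed $t$ as $p\to1^+$.

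The missing observation is that no division is needed. Keep your first identity $\mu e^{\lambda}=\alpha^2\,\Phi_p(r_\Lambda^{(p)}(t))$ and note that $\alpha^2$ carries a full factor $1-\frac{\Lambda}{3}r^2$, which cancels the pole of $\Phi_p$. With $r=r_\Lambda^{(p)}(t)$,
\[
\mu e^{\lambda}
=\frac{e^{-\frac{2t}{p-1}}}{(p-1)^2}\,r^{\frac{2(3-p)}{p-1}}\Bigl(1-\tfrac{\Lambda}{3}r^2\Bigr)\Phi_p(r)
=\frac{e^{-\frac{2t}{p-1}}}{8\pi(p-1)^2}\,r^{\frac{5-p}{p-1}}\,\Upsilon_p\bigl(\tfrac{\Lambda}{3}r^2\bigr)
\leq\frac{e^{-\frac{2(t-2\log R_\Lambda)}{p-1}}}{8\pi(p-1)^2R_\Lambda}\,,
\]
since $0<\Upsilon_p\leq\Upsilon_p(0)=1$ and $r<R_\Lambda$. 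The right-hand side tends to $0$ for $t>2\log R_\Lambda$. This is essentially the paper's argument: it controls the same bracket $(1-\frac{\Lambda}{3}r^2)\Phi_p$ through \eqref{eq:behaviour_rp_to_one_phi}, but the crude bound $\Upsilon_p\leq 1$ already suffices.

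In the case $t<2\log R_\Lambda$, your doubt about Proposition~\ref{pro:behaviour_near_one_2} comes from an arithmetic slip. From $a_p\sim 1/(p-1)$ and $a_pb_p=-\frac{5-p}{4(p-1)}$ one gets $b_p\to-1$; the value $-1/2$ is $b_3$. Your series argument therefore gives $\Upsilon_p(x)\to 1-x$ locally uniformly on $[0,1)$, hence $\Phi_p(r)\to r/(8\pi)$, and your first route, which is exactly the paper's, goes through.

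The detour via $\mu=\alpha\,\Phi_p/\Psi_p$ and Lemma~\ref{le:eallalambda} is valid but avoids nothing. That lemma rests on the same limit: if $\Upsilon_p\to\sqrt{1-x}$ were true, $e^{\lambda}$ would not tend to $e^{t/2}/(16\pi)$ either. Note also that ``positive and decreasing'' only yields the $p$-dependent lower bound $\Upsilon_p(x_0)$. A lower bound uniform in $p$ again requires $\Upsilon_p(x_0)\to 1-x_0>0$.
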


\begin{proof}
Recall from the proof of Theorem~\ref{thm:global} that
\[
\mu_\Lambda^{(p)}(t)e^{\lambda_\Lambda^{\!(p)}(t)}\,=\,\left(\alpha_\Lambda^{(p)}(t)\right)^2\Phi_p(r_\Lambda^{(p)}(t))\,,
\]
and from Proposition~\ref{pro:behaviour_near_one_2} that
\[
\lim_{p\to 1^+}\Phi_p(r)\,=\,\frac{r}{8\pi}\quad r\in[0,R_\Lambda)\,.
\]
For $t<2\log R_\Lambda$, the result now follows immediately from the previously discussed behaviours of $\alpha_\Lambda^{(p)}$ and $\Psi_p$ as $p\to 1$. Concerning $t>2\log R_\Lambda$, again we observe that $r_\Lambda^{(p)}(t)\to R_\Lambda$ as $p\to 1$, hence from~\eqref{eq:behaviour_rp_to_one_phi}, we deduce
\[
\lim_{p\to 1^+}\Phi_p(r_\Lambda^{(p)}(t))\left(1-\frac{\Lambda}{3}(r_\Lambda^{(p)}(t))^2\right)=0\,.
\]
Consequently, since $r_\Lambda^{(p)}(t)<R_\Lambda$, we estimate
\begin{align*}
\mu_\Lambda^{(p)}(t)e^{\lambda_\Lambda^{\!(p)}(t)}\,&=\,\left(\alpha_\Lambda^{(p)}(t)\right)^2\Phi_p(r_\Lambda^{(p)}(t))\,= \,
\frac{e^{-\frac{2t}{p-1}}}{(p-1)^2} \left(1-\frac{\Lambda}{3}(r_\Lambda^{(p)}(t))^2\right)\left(r_\Lambda^{(p)}(t)\right)^{2\frac{3-p}{p-1}}\Phi_p(r_\Lambda^{(p)}(t))
\\
&\leq\,\frac{e^{-2\frac{t-2\log R_\Lambda}{p-1}}}{(p-1)^2}\frac{1}{R^2_\Lambda}\left[\Phi_p(r_\Lambda^{(p)}(t))\left(1-\frac{\Lambda}{3}(r_\Lambda^{(p)}(t))^2\right)\right]\,.
\end{align*}
As shown above, the quantity inside the square brackets goes to zero. Furthermore, the quantity outside the square brackets decays exponentially as $p\to 1$.
Since $\mu_\Lambda^{(p)}(t)> 0$ and $e^{\lambda_\Lambda^{(p)}(t)}>0$, this proves that $\mu_\Lambda^{(p)}(t)e^{\lambda_\Lambda^{(p)}(t)}\to 0$ as $p\to 1$ for all $t>2\log R_\Lambda$.
\end{proof}

\section{A Positive Mass Theorem for the Polarized  $p$-harmonic Mass}
\label{sec:polar_PMT}

In this section we establish one of the principal results of the present work, namely a positive mass theorem for the Polarized $p$-harmonic Mass. Before introducing its definition and formulating the main statement, it is convenient to specify the geometric and analytic framework that will be assumed from now on.

\begin{enumerate}
    \item $(M,g)$ is a three-dimensional compact Riemannian manifold with smooth, compact boundary $\partial M$.
    \smallskip
    \item The cosmological constant $\Lambda$ is assumed to be strictly positive and provides a lower bound for the scalar curvature $\RRR$ of $(M,g)$:
    \[
    \RRR \,\geq\, 2\Lambda \,>\, 0 \, .
    \]
    \item For any point $x \in M \setminus \partial M$ and any exponent $1 < p < 3$, we consider the $p$-Green's function of the $p$-Laplacian with zero Dirichlet boundary condition and pole at $x$, that is, a distributional solution of the boundary value problem
    \begin{equation}
    \label{pb_p_Green}
    \begin{dcases}
    \Delta_p u \,=\, - 4\pi \delta_x & \text{in } M\,,
    \\
    \quad u \,=\, 0 & \text{on } \partial M.
    \end{dcases}
    \end{equation}
    We refer the reader to Appendix~\ref{app:pharmonic}, where we summarize the key properties of $p$-harmonic functions used in this work and specify the functional setting in which the above problem is formulated, guaranteeing existence and uniqueness of $u$. When it is necessary to highlight the dependence of the solution on the pole $x$, on the exponent $p$, or on both, we will use the notations $u_x$, $u^{(p)}$, and $u_x^{(p)}$, respectively.
    \smallskip
    \item Whenever convenient for the analysis, we shall systematically perform the change of variable
    \[
    w \,=\, - (p-1)\log u \,,
    \]
    so that the new unknown $w$ satisfies
    \begin{equation}
    \label{eq:p-imcf}
    \begin{dcases}
    \mathrm{div}\big(e^{-w}|\nabla w|^{p-2}\nabla w\big) \,=\, 4\pi (p-1)^{p-1} \delta_x & \text{in } M\,,
    \\
    \quad w(y)\longrightarrow +\infty & \text{as } y\to\partial M.
    \end{dcases}
    \end{equation}
    In analogy with the notation adopted for the $p$-Green's functions, when we need to highlight the dependence of $w$ on the pole $x$, on the exponent $p$, or on both, we will write $w_x$, $w^{(p)}$, or $w_x^{(p)}$, respectively.
    \smallskip
    \item Concerning the structural coefficients that will appear in the forthcoming monotonicity formulas, once $\Lambda > 0$ and $1 < p < 3$ are fixed, we shall systematically adopt the functions
    \[
  t \longmapsto \mu(t)  
    \qquad \text{and} \qquad 
    t \longmapsto \lambda(t)
 \]
    selected in the previous section, namely the globally defined solutions of system~\eqref{structural} obtained in Theorem~\ref{thm:global}. In particular, the asymptotic expansions stated in Corollary~\ref{cor:behaviour_with_t} will be in force.
\end{enumerate}

\begin{definition}[Polarized $p$-harmonic Mass]
\label{def:polmass}
Let $(M,g)$ be a compact three-dimensional Riemannian manifold with smooth boundary $\pa M$. For $1<p<3$, the {\em Polarized $p$-harmonic Mass} of $(M,g)$, with pole at $x \in M\setminus \pa M$, is given by
\begin{align}
\label{eq:polarized_total_mass}
\mmp_\Lambda(M,g,x)\,= \int_{-\infty}^{+\infty} \!\!\!\!\!e^{\lambda(\tau)}  \, \big(4\pi - \Lambda \, {\rm Per} (\Omega_\tau)\big)&  \, d\tau 
\, 
- \,\frac{R_\Lambda^{\frac{2}{p-1}}}{8 \pi}\,
\frac{\Gamma (\frac12) \, \Gamma(c_p)}{\Gamma(a_p+1)\Gamma(b_p+1)}
\, \int_{\pa M}\!\!\!|\na u|\,\HHH\,d\sigma 
\nonumber
\\
&
+ \, \frac{R_\Lambda^{\frac{5-p}{p-1}}}{16\pi}\, 
\frac{\Gamma(\frac12) \, \Gamma(c_p)}{\Gamma(a_p+\frac{3}{2})\Gamma(b_p+\frac{3}{2})} 
\, \int_{\pa M} \!\!\! |\na u|^2 d\sigma \, 
\end{align}
where $u=u_x^{(p)}$ is the $p$-Green's function with pole at $x$ and null Dirichlet boundary conditions, the set $\Omega_\tau = \Omega_{x, \tau}^{(p)}$ is given by 
\[
\Omega_\tau = \left\{ u > e^{-\tau/(p-1)} \right\} \, = \, \big\{ w < \tau \big\}
\]
the coefficients $a_p, b_p$ and $c_p$ are the ones defined in~\eqref{p-parameters}, and $R_\Lambda=\sqrt{3/\Lambda}$.
\end{definition}

Although in a somewhat indirect manner, one can verify that on the hemisphere equipped with the model metric
\[
g_\Lambda \,=\, \frac{dr\otimes dr}{1-\frac{\Lambda}{3}r^2}+r^2g_{\mathbb{S}^2}
\]
the Polarized $p$-harmonic Mass, computed with respect to the north pole $N$ (which, in the coordinate representation adopted here, corresponds to the locus $r=0$), vanishes:
\[
\mmp_\Lambda(\SSS^3_{+},g_\Lambda,N) \, = \, 0 \, . 
\]
Indeed, this follows from the fact that  $\frac d{dt}\mmp_\Lambda (x,t)=0$ on the model metric (cf. the proof of Theorem~\ref{thm:pos_pass_pol})).
In Theorem~\ref{thm:pos_pass_pol} below, we will show that if the scalar curvature of the manifold is bounded from below by $2\Lambda$, then the Polarized $p$-harmonic Mass is always nonnegative, and it vanishes if and only if the underlying manifold is the model hemisphere and the chosen pole coincides with the north pole.
Before we do so, let us prove, with the following proposition, that the Polarized $p$-harmonic Mass is always finite.

\begin{proposition}
\label{pro:finite_total_polarized_mass}
Let $(M,g)$ be a compact three-dimensional Riemannian manifold with smooth boundary $\pa M$.
For every $1<p<3$ and every pole $x\in M\setminus\pa M$, 
the Polarized $p$-harmonic Mass $\mmp_\Lambda(M,g,x)$ with pole at $x$ is finite.
\end{proposition}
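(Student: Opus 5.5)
The plan is to show that each of the three terms in~\eqref{eq:polarized_total_mass} is finite. The two boundary terms are harmless. Since $\pa M$ is smooth and compact and $u=u_x^{(p)}$ vanishes on $\pa M$, the boundary regularity recalled in Appendix~\ref{app:pharmonic} gives that $u$ is $\mathscr C^{1,\beta}$ up to $\pa M$ in a collar neighborhood, away from the pole. Hence $|\na u|$ is bounded on $\pa M$. The mean curvature $\HHH$ of $\pa M$ is smooth and bounded, so both $\int_{\pa M}|\na u|\,\HHH\,d\sigma$ and $\int_{\pa M}|\na u|^2d\sigma$ are finite. The only genuine issue is therefore the bulk integral
\[
\int_{-\infty}^{+\infty} e^{\lambda(\tau)}\big(4\pi-\Lambda\,{\rm Per}(\Omega_\tau)\big)\,d\tau .
\]

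For the bulk integral I first bound the perimeters uniformly. Since $\Omega_\tau\subset M$ is a sublevel set of $w$ and $M$ is compact, ${\rm Per}(\Omega_\tau)$ does not exceed the area of $\Sigma_\tau$ for a.e.\ $\tau$. I plan to bound $|\Sigma_\tau|$ uniformly as follows. For very negative $\tau$, estimate~\eqref{eq:stima_livelli} from the proof of Theorem~\ref{thm:ssl} gives $|\Sigma_\tau|\leq K_1e^{2\tau/(3-p)}\to0$. For large $\tau$, the level sets $\{u=e^{-\tau/(p-1)}\}$ approach $\pa M$. There, by the Hopf lemma for $p$-harmonic functions, $|\na u|\geq c>0$ in a collar, so the level sets are smooth graphs over $\pa M$ and their areas converge to $|\pa M|$. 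In the intermediate compact range of $\tau$, the coarea formula together with the identity~\eqref{int_costante_3} and H\"older's inequality should give a bound on ${\rm Per}(\Omega_\tau)$ for almost every $\tau$. Should an a.e.\ bound on a compact interval be delicate, it suffices that $\tau\mapsto{\rm Per}(\Omega_\tau)$ is integrable there. This follows from the coarea formula, since $\int_a^b{\rm Per}(\Omega_\tau)\,d\tau\leq\int_{\{a<w<b\}}|\na w|\,d\mu$, and the right-hand side is finite because $w$ is locally Lipschitz away from the pole and from $\pa M$.

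With $0\leq{\rm Per}(\Omega_\tau)\leq C$ on the two tails, the integrand is bounded by $C'e^{\lambda(\tau)}$ there. Corollary~\ref{cor:behaviour_with_t} gives $e^{\lambda(\tau)}=\mathcal O(e^{\tau/(3-p)})$ as $\tau\to-\infty$ and $e^{\lambda(\tau)}=\mathcal O(e^{-\tau/(p-1)})$ as $\tau\to+\infty$. Both tails are therefore exponentially integrable. On the middle compact interval, $e^\lambda$ is continuous, hence bounded, and the perimeter is integrable, so this piece is finite as well.

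The main obstacle is the behaviour near $\pa M$ as $\tau\to+\infty$. There one needs the uniform area bound for $\Sigma_\tau$, which rests on the nondegeneracy $|\na u|>0$ near the boundary. I would invoke the boundary Hopf lemma and $\mathscr C^{1,\beta}$ regularity from Appendix~\ref{app:pharmonic}. Even without nondegeneracy, a crude alternative is available: cover the collar by the coarea formula and use the exponential decay of $e^{\lambda}$ against a subexponential growth of the perimeter. Either route should suffice.
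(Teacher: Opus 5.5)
Your argument is correct, but it is organized differently from the paper's. You split the $\tau$-line into three regions and control ${\rm Per}(\Omega_\tau)$ locally:
\begin{itemize}
\item near the pole, via the small-sphere asymptotics \eqref{eq:stima_livelli};
\item near $\pa M$, via the Hopf lemma and $\mathscr C^{1,\beta}$ boundary regularity, which make the large level sets uniformly $\mathscr C^1$ graphs over $\pa M$;
\item on a compact middle interval, via the coarea formula and the Lipschitz bound on $w$ over the compact set $\{a\le w\le b\}\subset M\setminus(\pa M\cup\{x\})$.
\end{itemize}
The exponential decay of $e^{\lambda}$ from Corollary~\ref{cor:behaviour_with_t} then handles the two tails.

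The paper instead uses coarea to rewrite the bulk term as $4\pi\int e^{\lambda}\,d\tau-\Lambda\int_M|\na w|\,e^{\lambda(w)}\,d\mu$. It then bounds the second integral in one stroke, by H\"older's inequality combined with the conserved flux $e^{-t}\int_{\Sigma_t}|\na w|^{p-1}d\sigma=4\pi(p-1)^{p-1}$. This gives \eqref{stima_bulk} with no boundary nondegeneracy argument and no local analysis of the level sets.

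What the paper's route buys is that this bound depends only on $p$, $\Lambda$, $|M|$ and the coefficient $\lambda$, not on the pole (Remark~\ref{rem:unibulk}). That uniformity is exactly what Proposition~\ref{prop_PB} uses to show that the mass blows up as $x\to\pa M$. Your constants all depend on $x$ and degenerate as the pole approaches $\pa M$: the collar width where Hopf gives $|\na u|\ge c$, the neighbourhood where \eqref{eq:stima_livelli} holds, and the Lipschitz bound on the middle region. So your proof settles finiteness for each fixed pole, but it would not supply that later estimate.

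Two minor remarks.
\begin{itemize}
\item Your suggestion that H\"older and \eqref{int_costante_3} give an a.e.\ pointwise bound on ${\rm Per}(\Omega_\tau)$ is not justified as stated. Your coarea fallback makes it unnecessary.
\item Your ``crude alternative'' near $\pa M$ can be made precise and simple. For large $w$, $e^{\lambda(w)}\le C e^{-w/(p-1)}=C\,u$. Hence $|\na w|\,e^{\lambda(w)}\le C(p-1)|\na u|$, which is bounded by boundary regularity alone, so the Hopf lemma is not actually needed.
\end{itemize}
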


\begin{proof}
Showing that $\mmp(M,g,x)$ is finite amounts to proving that the first integral in \eqref{eq:polarized_total_mass} -- the so‑called {\em bulk term} -- is finite, since the remaining two terms are manifestly finite. By the Coarea Formula, this integral can be rewritten as
\[
\int_{-\infty}^{+\infty} \!\!\!e^{\lambda(\tau)}\bigl(4\pi - \Lambda \, {\rm Per}(\Omega_\tau)\bigr)  \, d\tau 
\,=\, 4\pi\int_{-\infty}^{+\infty} \!\!e^{\lambda(\tau)}\,d\tau \;-\; \Lambda\int_M |\nabla w|\,e^{\lambda(w)}\,d\mu \, ,
\]
where $w$ is the solution to~\eqref{eq:p-imcf}. We first observe that 
the integral $\int_{-\infty}^{+\infty}
e^{\lambda(\tau)}\,d\tau$ 
is finite. Indeed, the function $\lambda$ given by Theorem~\ref{thm:global} is globally defined and continuous, and its integrability follows directly from the asymptotic expansions stated in Corollary~\ref{cor:behaviour_with_t}, 
namely
\[
e^{\lambda(\tau)} = \mathcal{O}\bigl(e^{\frac{\tau}{3-p}}\bigr)\quad \text{as }\tau\to -\infty \quad \hbox{and}
\quad
e^{\lambda(\tau)} = \mathcal{O}\bigl(e^{-\frac{\tau}{p-1}}\bigr)\quad \text{as }\tau\to +\infty.
\]
To treat the second summand, we claim that 
\begin{equation}
\label{stima_bulk}
\int_M \! |\na w|\, e^{\lambda(w)}d\mu
\,\leq\,
\left[4\pi \, (p-1)^{p-1} 
|M|^{p-1}\int_{-\infty}^{\infty}
\!\!\!\! \,e^{p\lambda(\tau) + \tau} \, d\tau
\right]^{\frac1p}.
\end{equation}
The thesis is then derived from the claim together with the observation that the function $\tau \mapsto e^{p\lambda(\tau)+ \tau}$ is also integrable, as 
\[
e^{p\lambda(\tau) + \tau} = \mathcal{O}\bigl(e^{\frac{3 \tau}{3-p}}\bigr)\quad \text{as }\tau\to -\infty \quad \hbox{and}
\quad
e^{p\lambda(\tau) + \tau} = \mathcal{O}\bigl(e^{-\frac{\tau}{p-1}}\bigr)\quad \text{as }\tau\to +\infty.
\]
The claimed estimate~\eqref{stima_bulk} follows from the following chain of inequalities:
\begin{align*}
\int_M|\na w| \, e^{\lambda(w)}d\mu\, \, 
&\,\leq\,
\left[\int_M|\na w|^p \, {e^{p \lambda(w)}} \,  d\mu\right]^{\!\frac1p}  |M|^{\frac{p-1}p}\\
&\,=\,
\left[\int_{-\infty}^{+\infty}
\!\!\!\!\!{e^{p\lambda(\tau) +\tau}} \left( e^{-\tau} \!
\int_{\Sigma_\tau}
|\na w|^{p-1}d\sigma \right)  d\tau \, 
\right]^{\!\frac1p}
|M|^{\frac{p-1}p}\\
&\,=\,
\left[4\pi \, (p-1)^{p-1}  \!\! \int_{-\infty}^{+\infty}
\!\!\!\!{e^{p\lambda(\tau) + \tau}} \, 
d\tau \, 
\right]^{\frac1p}
|M|^{\frac{p-1}p},
\end{align*}
Note that
in the first passage we have used H\"older inequality, in the second  identity we have applied the Coarea Formula, whereas the last passage comes from the identity
\[
e^{-t}\int_{\Sigma_t}|\na w|^{p-1}d\sigma\,=\,4\pi \,(p-1)^{p-1} \, ,
\]
holding for every $t \in \R$. 
\end{proof} 

\begin{remark}
\label{rem:unibulk}
For future convenience, note that the right hand side of~\eqref{stima_bulk} is independent on the pole, providing a uniform upper bound.
\end{remark}

\begin{theorem}[Positive Mass Theorem for the Polarized $p$-harmonic Mass]
\label{thm:pos_pass_pol}
Let $(M,g)$ be a compact three-dimensional Riemannian manifold with smooth connected boundary $\pa M$,
whose scalar curvature $\RRR$ satisfies 
\[
\RRR \, \geq \, 2 \Lambda \, , 
\]
for some $\Lambda>0$. Assume that $H_2(M, \pa M; \Z) = \{0\}$. 
Then, for every $1<p<3$ and every $x\in M\setminus\pa M$, the Polarized $p$-harmonic Mass with pole at $x$ satisfies
\[
\mmp_{\Lambda}(M,g,x)\geq 0\,.
\]
Furthermore, if $\mmp_{\Lambda}(M,g,x)=0$, then $(M,g)$ is isometric to a round hemisphere with constant sectional curvatures equal to $\Lambda/3$ and $x$ coincides with the north pole. 
\end{theorem}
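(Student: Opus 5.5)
The plan is to apply the monotonicity of Theorem~\ref{GMF}, transferred to the punctured domain $M\setminus\{x\}$, and to compare the two ends of the level-set flow of $w=w_x^{(p)}$. Fix $1<p<3$ and $x\in M\setminus\pa M$, let $u$ solve~\eqref{pb_p_Green}, and consider $t\mapsto \mmp_\Lambda(x,t)$ as in~\eqref{mp_gen}, with lower integration limit $-\infty$ and structural coefficients from Theorem~\ref{thm:global}. Since $\pa M$ is connected and $H_2(M,\pa M;\Z)=\{0\}$, the level sets $\Sigma_t$ are connected (Remark~\ref{connessione}). Applying Theorem~\ref{GMF} on the bands $\{\varepsilon<u<1/\varepsilon\}$ shows that $t\mapsto\mmp_\Lambda(x,t)$ is nondecreasing on regular values.

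The two endpoint limits are then as follows.

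\emph{Limit at $-\infty$.} By~\eqref{massa_nulla_polo}, $\mmp_\Lambda(x,t)\to 0$. The proof of~\eqref{massa_nulla_polo} in Theorem~\ref{thm:ssl} was written for $p\le 2$, but only the crude bound $|\mmp_\Lambda|\le K e^{t/(3-p)}$ is needed. This bound follows from the leading-order expansions~\eqref{eq:u_expansion} (which hold for all $1<p<3$), together with Corollary~\ref{cor:behaviour_with_t} and Lemma~\ref{lem:mudecay}. I would re-check this for $2<p<3$ using Remark~\ref{rem:extra_p}.

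\emph{Limit at $+\infty$.} Here I expect $\lim_{t\to+\infty}\mmp_\Lambda(x,t)=\mmp_\Lambda(M,g,x)$ as given in Definition~\ref{def:polmass}. On $\Sigma_t$ one has $|\na w|=(p-1)|\na u|/u=(p-1)e^{t/(p-1)}|\na u|$. The boundary term therefore reads
\[
-e^{\lambda(t)}(p-1)e^{\frac{t}{p-1}}\int_{\Sigma_t}|\na u|\,\HHH\,d\sigma\;+\;e^{\lambda(t)}\mu(t)(p-1)^2e^{\frac{2t}{p-1}}\int_{\Sigma_t}|\na u|^2d\sigma .
\]
By Theorem~\ref{thm:expansions_mu_lambda}, $e^{\frac{t}{p-1}}e^{\lambda}$ and $e^{\frac{t}{p-1}}\mu$ have finite limits, and their product yields exactly the constants in~\eqref{eq:polarized_total_mass}, with the identity $\Gamma(c_p)/(\Gamma(a_p+1)\Gamma(b_p+1))\cdot\Gamma(a_p+1)\Gamma(b_p+1)/(\Gamma(a_p+\tfrac32)\Gamma(b_p+\tfrac32))$ collapsing as required. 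Since $\pa M$ is smooth and $u=0$ there, $u$ is $\mathscr C^{1,\beta}$ up to the boundary with $|\na u|>0$ near $\pa M$ by the Hopf lemma (Appendix~\ref{app:pharmonic}). Hence $\Sigma_t\to\pa M$ smoothly and the integrals converge to their boundary values. The bulk integral converges by Proposition~\ref{pro:finite_total_polarized_mass}. Combining monotonicity with the two limits gives $\mmp_\Lambda(M,g,x)\ge 0$.

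For rigidity, vanishing of the mass forces $\mmp_\Lambda(x,\cdot)\equiv 0$, so the integrand in~\eqref{eq:m_Lambda_monotonicity} vanishes for a.e.\ $t$. This yields four facts:
\begin{enumerate}
\item $\RRR\equiv 2\Lambda$;
\item $\mathring{\hhh}\equiv 0$;
\item $|\na w|$ is constant on each level set;
\item $\HHH=2\alpha(t)|\na w|$, with each $\Sigma_t$ a sphere ($\chi=2$).
\end{enumerate}
The main obstacle is to first exclude critical points. I would use the argument of~\cite{AMMO}: the vanishing of the derivative on regular values, together with the real-analyticity of $|\na w|$ as a function of $w$, shows that $|\na w|$ is bounded away from zero on every $\{w<t\}$, since it is a function of $t$ alone that cannot vanish.

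With no critical points, $M\setminus\{x\}$ splits as $(-\infty,+\infty)\times\Sigma$ with metric $dw^2/|\na w|^2(w)+g_w$, and $\partial_w g_w=(2\HHH/|\na w|)\,g_w/2$ is umbilic. So $g$ is a warped product $ds^2+\rho(s)^2 g_{\SSS^2}$, smooth at the pole. Constant scalar curvature $2\Lambda$ and smoothness at $s=0$ force $\rho=R_\Lambda\sin(s/R_\Lambda)$, with $\pa M$ the minimal equator. This identifies $(M,g)$ with the hemisphere of sectional curvature $\Lambda/3$ and $x$ with its north pole.

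Two points need care here. The first is matching to the Dirichlet condition: $u$ must then coincide with $u_\Lambda^{(p)}$, which pins the boundary at $r=R_\Lambda$ rather than at an interior sphere. This uses the relation $\HHH=2\alpha|\na w|$ with the model $\alpha$. The second is that $\Sigma=\SSS^2$ follows from the small level sets being spheres near the pole.
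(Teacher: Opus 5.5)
Your nonnegativity argument is the paper's. It uses monotonicity on the bands $\{S<w<T\}$, the vanishing of $\mmp_\Lambda(x,t)$ as $t\to-\infty$, and the identification of $\lim_{t\to+\infty}\mmp_\Lambda(x,t)$ with~\eqref{eq:polarized_total_mass} through Theorem~\ref{thm:expansions_mu_lambda}. Your observation that only the crude bound $|\mmp_\Lambda(x,t)|\le Ke^{t/(3-p)}$ is needed near the pole is correct, so the restriction $p\le 2$ in Theorem~\ref{thm:ssl} is harmless. It is a point the paper passes over.

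The gap is in the rigidity, exactly at the step you call the main obstacle, and your reason for excluding critical points does not hold up. First, $w$ is only $\mathscr C^{1,\beta}$ near its critical points and $g$ is merely smooth, so there is no real-analyticity to invoke. Second, the rigidity facts you list only give $|\na w|=f(w)$ on the critical-point-free region $\{w<T_0\}$, where $T_0$ is the least critical value. Being a function of $w$ alone does not prevent $f(t)\to 0$ as $t\uparrow T_0$; a whole critical level set is not excluded a priori. These facts should also be extracted from~\eqref{dermp} on $\{w<T_0\}$, where $w$ is a genuine coordinate, rather than for a.e.\ $t$ globally.

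The missing ingredient is the $p$-Laplace equation itself. On $\{w<T_0\}$ one has $\Delta_p w=|\na w|^{p-1}\big(\HHH+(p-1)\,\partial_w|\na w|\big)$. Combining $\Delta_p w=|\na w|^p$ with $\HHH=2\alpha|\na w|$ gives the linear ODE $\partial_w|\na w|=\frac{1-2\alpha}{p-1}|\na w|$. Since $\alpha$ is continuous on all of $\R$, the solution $|\na w|(w_0)\exp\big(\int_{w_0}^{w}\frac{1-2\alpha(\tau)}{p-1}\,d\tau\big)$ stays positive up to $T_0$. By the strong maximum principle every point of $\{w=T_0\}$ is a limit of points of $\{w<T_0\}$, so continuity of $\na w$ contradicts criticality at level $T_0$; hence $T_0=+\infty$.

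With $\dot r=\alpha r$ and normalization at the pole, this solution is $|\na w|=(p-1)e^{w/(p-1)}r(w)^{-2/(p-1)}$. From it the paper gets $dw/|\na w|=dr/\sqrt{1-\frac\Lambda3 r^2}$, and together with $g_{ij}\propto r(w)^2$ this gives $g=g_\Lambda$ on $0<r<R_\Lambda$ directly, without the scalar-curvature ODE.

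Your warped-product route via $\RRR\equiv 2\Lambda$ does yield $\rho=R_\Lambda\sin(s/R_\Lambda)$. As you note, though, it leaves the position of $\pa M$ open: caps larger or smaller than a hemisphere are not yet excluded. To close it, observe that $\HHH=2\alpha|\na w|$ reads $\frac{d}{ds}\log\rho=\frac{d}{ds}\log r_\Lambda(w(s))$. After normalizing at the pole this gives $\rho=r_\Lambda(w)<R_\Lambda$ in the interior, with $\rho\to R_\Lambda$ exactly when $w\to+\infty$, which places $\pa M$ on the equator.
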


\begin{proof}
Let $u$ be a solution to~\eqref{pb_p_Green} and let $w$ be the corresponding solution to~\eqref{eq:p-imcf}. According to formula~\eqref{mp_gen}, we set, for every $t \in \R$
\begin{equation}
\mmp_\Lambda (x,t) \, = \int_{-\infty}^t \!\!\!\! e^{\lambda(\tau)} \big(4\pi - \Lambda|\Sigma_\tau|\big)  \, d\tau
\, - \,\,  e^{\lambda(t)} \!\int_{\Sigma_t} \!\! |\nabla w| \,  \big(  \HHH - \mu(t)|\nabla w|  \big) \, d\sigma \,,
\end{equation}
where $\Sigma_\tau = \{ w=\tau \}$, whereas $\lambda= \lambda_\Lambda^{\!(p)}$ and $\mu = \mu_\Lambda^{\!(p)}$ are the structural coefficients selected in Theorem~\ref{thm:global}. We now note that, for $S<0$ chosen sufficiently large and negative and $T>0$ chosen sufficiently large and positive, the level sets $\Sigma_S$ and $\Sigma_T$ are regular. This follows directly from the asymptotic expansion of $u$ near the pole $x$ (see Theorem~\ref{fund_growth}) and from the Hopf Lemma for $p$-harmonic functions~\cite{Tolksdorf1983}, applied to $u$ along the boundary $\pa M$. By repeatedly applying Theorem~\ref{GMF}, for instance to the domains in an exhaustion of the manifold given by Riemannian bands with regular boundary of the form $\{S < w < T\}$, we obtain
\[
0 \, = \, \lim_{S \to -\infty} \mmp_\Lambda (x,S) \, \leq \, \lim_{T \to +\infty} \mmp_\Lambda (x,T) \, .
\]
Indeed, as already observed in~\eqref{massa_nulla_polo}, the quantity $\mmp_\Lambda(x,S)$ tends to zero as $S \to -\infty$. We now claim that
\[
\lim_{T \to +\infty} \mmp_\Lambda (x,T) \, = \, \mmp_\Lambda (M,g,x) \, .
\]
For this purpose, we observe that on the level $\Sigma_T$ the identities hold
\[
|\na w|\,=\,(p-1)\frac{|\na u|}{u} \, = \,(p-1)|\na u|\, e^{\frac{T}{p-1}}\,.
\]
Combining these facts with the asymptotic expansions provided by Corollary~\ref{cor:behaviour_with_t}, we get
\begin{align*}
e^{\lambda(T)} |\na w|\,  \HHH \,  &= \, \frac{R_\Lambda^{\frac{2}{p-1}}}{8 \pi}\, \frac{ \Gamma(\tfrac12) \, \Gamma(c_p)}{\Gamma(a_p+1) \, \Gamma(b_p+1)} \,\,(1 +o(1)) \,,\quad\hbox{as } T\to+\infty\,, \\
 e^{\lambda(T)} \mu(T) \,|\na w|^2 &= \, \frac{R_\Lambda^{\frac{5-p}{p-1}}}{16 \pi}\, \, \frac{\Gamma(\tfrac12) \, \Gamma(c_p)}{\Gamma(a_p+\frac{3}{2}) \, \Gamma(b_p+\frac{3}{2})} \, \,(1 +o(1)) \,,\quad\hbox{as } T\to+\infty\,,
\end{align*}
from which our claim easily follows.
We have thus shown that the Polarized $p$-harmonic Mass is always nonnegative. It remains to prove the rigidity statement.
For this purpose, assume that $\mmp_\Lambda(M,g,x)=0$ and observe, in view of Theorem~\ref{GMF}, that for every regular value $t \in \R$ we have $\mmp_\Lambda(x,t)=0$. As already remarked, all values below a certain threshold are regular. Denote by $T_0$ the minimum of the critical values of the function $w$, namely
\[
T_0 \, = \, \min\{ \, t \in \R \,  : \, |\na w|(y) = 0\,, \,  \hbox{for some} \, y \in \Sigma_t\} \, > -\infty \, ,
\]
and consider the open set $
\Omega_{T_0} = \{ w < T_0 \}$.
Since the function $t \mapsto \mmp_\Lambda(x,t)$ vanishes identically for $t<T_0$, its derivative also vanishes identically on the same half-line. From formula~\eqref{dermp} we then infer that $|\nabla w|$ is constant along the level sets of $w$ and hence can be regarded as a function of the coordinate $w$ alone. Furthermore, on each level set $\Sigma_t$ with $t<T_0$ the following equations are satisfied
\begin{equation}
\label{eq:rig}
|\mathring\hhh|\,=\,0\,,\qquad
\RRR=2\Lambda\,,
\qquad
\frac{\HHH}{2}\,=\,\alpha|\na w|\,.
\end{equation}
Using $w$ as a coordinate on the open set
$\Omega_{T_0}$ and noting that $\Omega_{T_0}$ is in fact diffeomorphic to the cylinder $(-\infty, T_0) \times \SSS^2$ -- since all level sets of $w$ are mutually diffeomorphic in this region and the level sets near the pole are spherical -- we can express the metric in the form
\begin{equation}
\label{eq:expr_g}
g\,=\,
\frac{dw\otimes dw}{|\nabla w|^2} +   \, g_{ij}(w,\vartheta) \, d\vartheta^i\otimes d\vartheta^j \, ,
\end{equation}
where $\{\vartheta^1, \vartheta^2\}$ denote local coordinates on $\SSS^2$. Exploiting the first and third equations in~\eqref{eq:rig}, we derive the relations
\[
\alpha|\nabla w|\, g_{ij}=\hhh_{ij}=\frac{\nabla_{\!i}\nabla_{\!j} \,w}{|\nabla w|}\,=\,-\frac{\Gamma^w_{ij}}{|\nabla w|}\,=\,
\frac{|\nabla w|}{2}\frac{\partial g_{ij} }{\partial w} \, .
\]
Consequently, the coefficients $g_{ij}$ of the metric satisfy on $\Omega_{T_0}$ the following first-order system of partial differential equations:
\begin{equation}
\label{gij0}
\frac{\partial g_{ij} }{\partial w}\,=\,2\alpha g_{ij}\,=\,
2\left[\frac{\partial}{\partial w}\log\big(r(w)\big)\right]g_{ij} \, ,
\end{equation}
where, in the second equality, we have invoked equation~\eqref{eq:dotr} and adopted the shorthand notation
$r(\cdot)$ for $r_\Lambda^{\!(p)} (\cdot)$. Since this system is completely decoupled, each equation can be integrated as an ordinary differential equation by treating the dependence on the angular variables $\vartheta$ as a fixed parameter. Fixing an arbitrary $w_0 \in (-\infty, T_0)$, we obtain
\[
g_{ij}(w,\vartheta)\,=\,
\frac{r^2(w)}{r^2(w_0)} \, g_{ij}(w_0,\vartheta) \, , \qquad \text{for } \, w \in (-\infty, T_0) \, .
\]
Next, employing again the third equation in~\eqref{eq:rig}, together with the identity 
\[
\nabla\nabla w (\nabla w, \nabla w) = |\nabla w|^3 \, \partial_w |\nabla w| \, ,
\]
we can express the $p$-Laplacian of $w$ as 
\[
\Delta_p w\,=\,
(p-1)|\nabla w|^{p-1}\frac{\partial |\nabla w|}{\partial w}+2\alpha|\nabla w|^p.
\]
Since on $\Omega_{T_0}$ equation~\eqref{eq:p-imcf} reduces to $\Delta_p w=|\nabla w|^p$, it follows that $|\nabla w|$ satisfies the ordinary differential equation
\[
\frac{\partial |\nabla w|}{\partial w}
\,=\,\left(\frac{1-2\alpha}{p-1}\right)|\nabla w| \, ,
\]
on the interval $(-\infty, T_0)$.
With the help of~\eqref{eq:dotr}, this can be rewritten as
\[
\frac{\pa}{\pa w}\log(|\na w|)\,=\,
\frac1{p-1}-\left(\frac2{p-1}\right)
\frac{\pa}{\pa w}\log\big(r(w)\big).
\]
Fixing an arbitrary $w_0 \in (-\infty, T_0)$, we thus obtain
\[
|\na w|(w)\,=\,
\left[
\frac{|\na w|(w_0)}{e^{\frac{w_0}{p-1}} \, \big(r(w_0)\big)^{\!-\frac2{p-1}}}
\right] \, 
e^{\frac w{p-1}} \, 
\big(r(w)\big)^{\!-\frac2{p-1}} \, , \qquad \text{for } \, w \in (-\infty, T_0) \, .
\]
To simplify this expression, we exploit the arbitrariness of $w_0$ and observe that the term in square brackets converges to $(p-1)$ as $w_0 \to -\infty$. This can be verified by means of the second identity in~\eqref{cose_modello}, together with the fact that $|\nabla w(w_0)|/|\nabla w_\Lambda(r(w_0))|\to1$ as $w_0\to-\infty$. Consequently, 
\[
|\nabla w|(w)\,=\,
(p-1) \, 
e^{\frac w{p-1}} \, 
\big(r(w)\big)^{\!-\frac2{p-1}} \, , \qquad \text{for } \, w \in (-\infty, T_0) \, .
\]
From this formula, which provides an explicit representation of $|\nabla w|$ in the region $\Omega_{T_0}$, it follows that the function $|\nabla w|$  is uniformly positive therein. Hence $T_0=+ \infty$, that is, all level sets of $w$ are regular, and our entire argument extends to the whole manifold.

Recalling the definition of the structural coefficient $\alpha$ in~\eqref{eq:alpha_t} and using relation~\eqref{eq:dotr}, which in the present setting yields $dr = \alpha r \, dw$, we obtain 
\begin{equation*}
\frac{dw}{|\nabla w|} \, = \, \frac{dw}{(p-1)\, e^{\frac{w}{p-1}} \, (r(w))^{\!-\frac{2}{p-1}}} \, = \,\frac{\alpha r \, dw}{\sqrt{1 - \frac{\Lambda}{3}r^2(w)}} \, = \, \frac{ dr}{\sqrt{1 - \frac{\Lambda}{3}r^2}} \, .
\end{equation*}
This identity shows that the coordinate $r= r(w(y))$ is asymptotic, as $y \to x$, to the distance function from the pole $x$. Since the metric is smooth at the pole, we infer that 
\[
\lim_{w_0 \to - \infty}\frac{g_{ij}(w_0,\vartheta)}{r^2(w_0)}
=g_{ij}^{\mathbb S^2}(\vartheta). 
\]
Thus we can refine expression~\eqref{eq:expr_g} and finally obtain 
\[
g
\,=\,
\frac{dr\otimes dr}{1-\frac\Lambda3r^2}
+
r^2g_{\mathbb S^2},
\]
namely, the standard spherical metric with constant sectional curvatures equal to $\Lambda/3$ on a hemisphere, with $r=0$ at the north pole and $r=R_\Lambda$ at the equator. Since $w\to+\infty$ as $r\to R_\Lambda$, we also conclude that the boundary of $M$ coincides with the equator of this hemisphere.
\end{proof}

\section{A Positive Mass Theorem for  the $p$-harmonic Total Mass}
\label{sec:blowup}

Continuing to work within the framework established in the preceding section and, in particular, building upon the notion of Polarized $p$-harmonic Mass introduced in Definition~\ref{def:polmass}, we now introduce the concept of $p$-harmonic Total Mass.
\begin{definition}[$p$-harmonic Total Mass]
\label{def:pmass}
Let $(M,g)$ be a compact three-dimensional Riemannian manifold with smooth boundary $\pa M$. For $1<p<3$, the {\em $p$-harmonic Total Mass} of $(M,g)$ is given by
\begin{equation}
\label{def:total_mass}
\mmp_\Lambda(M,g)\,=\,\inf_{x\in{M \setminus \pa M}} \mmp_\Lambda(M,g,x)\,, 
\end{equation}
where $\mmp(M,g,x)$ is the Polarized $p$-harmonic Mass with pole at $x \in M\setminus \pa M$, introduced in Definition~\ref{def:polmass}.
\end{definition}
For this new concept as well, we can establish a positive mass theorem, with the corresponding rigidity, and more precisely:
\begin{theorem}[Positive Mass Theorem for the $p$-harmonic Total Mass]
\label{thm:pos_pass_total}
Let $(M,g)$ be a compact three-dimensional Riemannian manifold with smooth connected minimal boundary $\pa M$,
whose scalar curvature $\RRR$ satisfies 
\[
\RRR \, \geq \, 2 \Lambda \, , 
\]
for some $\Lambda>0$. Assume that $H_2(M, \pa M; \Z) = \{0\}$. 
Then, for every $1<p<3$, the $p$-harmonic Total mass of $(M,g)$ satisfies
\[
\mmp_{\Lambda} (M,g) \, \geq  \, 0\,.
\]
Furthermore, if $\mmp_{\Lambda}(M,g)=0$, then $(M,g)$ is isometric to a round hemisphere with constant sectional curvatures equal to $\Lambda/3$. 
\end{theorem}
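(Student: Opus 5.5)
The inequality $\mmp_\Lambda(M,g)\ge 0$ follows at once from Theorem~\ref{thm:pos_pass_pol}, since each $\mmp_\Lambda(M,g,x)$ is nonnegative. The substance of the statement is rigidity. My plan is to show that the infimum in Definition~\ref{def:pmass} is attained at some interior point $x_0$. Then $\mmp_\Lambda(M,g)=0$ gives $\mmp_\Lambda(M,g,x_0)=0$, and the rigidity part of Theorem~\ref{thm:pos_pass_pol} shows that $(M,g)$ is the round hemisphere of sectional curvature $\Lambda/3$.

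The first step uses the minimality of $\pa M$. Since $\HHH\equiv 0$ on $\pa M$, the middle term of~\eqref{eq:polarized_total_mass} drops out, and
\[
\mmp_\Lambda(M,g,x)=4\pi\!\int_{-\infty}^{+\infty}\!\! e^{\lambda}\,d\tau-\Lambda\!\int_M |\na w_x|\,e^{\lambda(w_x)}\,d\mu+ c_p\!\int_{\pa M}|\na u_x|^2\,d\sigma ,
\]
where $c_p>0$ depends only on $p$ and $\Lambda$. The first term does not depend on $x$. By Remark~\ref{rem:unibulk}, the second term is bounded below by a constant independent of the pole. Hence
\[
\mmp_\Lambda(M,g,x)\ \ge\ -C + c_p\int_{\pa M}|\na u_x|^2\,d\sigma ,
\]
with $C$ uniform in $x$.

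The second step shows that the boundary energy blows up as the pole approaches $\pa M$. Set $d=\mathrm{dist}(x,\pa M)\to 0$ and blow up at scale $d$. The rescaled metrics $d^{-2}g$ converge smoothly to the flat half-space $\R^3_+$, and the pole sits at unit distance from the boundary. The $p$-Green's function scales as $\tilde u(y)=d^{\frac{3-p}{p-1}}u(x+dy)$. By uniqueness and the $\mathscr C^{1,\beta}$ estimates up to the boundary recalled in Appendix~\ref{app:pharmonic}, $\tilde u$ converges to the Dirichlet $p$-Green's function of the half-space. That limit has a nonzero normal derivative on $\{y_3=0\}$ by the Hopf lemma. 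Undoing the scaling gives $|\na u|\sim d^{-\frac{2}{p-1}}$ on a boundary region of area $\sim d^2$. Therefore
\[
\int_{\pa M}|\na u_x|^2\,d\sigma\ \gtrsim\ d^{\,2-\frac{4}{p-1}}\longrightarrow+\infty \qquad (1<p<3),
\]
and so $\mmp_\Lambda(M,g,x)\to+\infty$ as $x\to\pa M$. Consequently any minimizing sequence stays in a compact set $\{\mathrm{dist}(\cdot,\pa M)\ge\delta\}$.

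The third step establishes continuity of $x\mapsto\mmp_\Lambda(M,g,x)$ on the interior. As $x_k\to x$, the Green's functions $u_{x_k}$ converge to $u_x$ in $\mathscr C^1$ near $\pa M$, which handles the boundary term. They also converge in $\mathscr C^1_{loc}(M\setminus\{x\})$, with uniform control near the poles from the asymptotics of Theorem~\ref{fund_growth}. Combined with the integrability of $e^{\lambda}$ and $e^{p\lambda+\tau}$ used in Proposition~\ref{pro:finite_total_polarized_mass}, dominated convergence then gives convergence of the bulk term. With continuity on a compact set, a minimizer $x_0$ exists, and the rigidity follows as described at the start.

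I expect the blow-up analysis of the second step to be the main obstacle. It requires convergence estimates for Green's functions that are uniform up to the boundary as the pole degenerates toward $\pa M$, together with a quantitative nondegeneracy of the half-space Green's function on its boundary.
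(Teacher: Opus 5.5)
Your proposal is correct and follows essentially the same route as the paper. Nonnegativity comes from Theorem~\ref{thm:pos_pass_pol}, and rigidity comes from showing the infimum is attained. That uses continuity of $x\mapsto\mmp_\Lambda(M,g,x)$, via the $\mathscr C^{1,\beta}$ dependence of $u_x$ on the pole, and properness. For properness, minimality of $\pa M$ removes the $\HHH$-term, Remark~\ref{rem:unibulk} bounds the bulk term uniformly in the pole, and a blow-up at scale $\mathrm{dist}(x,\pa M)$ combined with the Hopf lemma forces $\int_{\pa M}|\na u_x|^2\,d\sigma\to+\infty$.

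The only real difference is in that blow-up step. The paper argues by contradiction and never identifies the limit as the half-space Green's function; doing so ``by uniqueness'' would need an extra decay-at-infinity argument that you have not supplied. The paper only needs the limit to be nontrivial, which it gets from the uniform pole asymptotics of Theorem~\ref{fund_growth} and a Harnack chain. That is also all your argument actually requires.
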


The most substantial and conceptually significant aspect of this theorem resides in the rigidity statement, since the non-negativity of the $p$-harmonic mass is an immediate consequence of Definition~\ref{def:pmass} together with Theorem~\ref{thm:pos_pass_pol}. Our actual goal is to establish -- in Corollary~\ref{cor:minimum} below -- that the infimum in~\eqref{def:total_mass} is indeed attained, at one or more poles. At that point, the rigidity conclusion follows from the vanishing of the Polarized $p$-harmonic Mass centered at one of the optimal poles, in conjunction with the corresponding rigidity statement already proved in Theorem~\ref{thm:pos_pass_pol}.

In order to show that the infimum in~\eqref{def:total_mass} is in fact a minimum, we will prove that the Polarized $p$-harmonic Mass Function, i.e., the Polarized $p$-harmonic Mass, regarded as a function of the pole,
\begin{equation}
\label{eq:poletomass}
M \setminus \pa M \ni x \,\longmapsto\, \mmp_\Lambda(M,g,x),
\end{equation}
is continuous and proper. Continuity will be established in Proposition~\ref{pole_continuity}, whereas properness will be derived in Proposition~\ref{prop_PB}, under the convenient assumption that the boundary is minimal.

\begin{remark}
We use the minimal boundary assumption mainly to simplify the exposition. With some additional work this assumption can be removed, but we postpone this to future developments of the notion of $p$-harmonic mass, when its necessity will be conceptually clearer. In particular, removing the minimal boundary assumption will be essential for studying a localized version of the present theory, leading to a notion of $p$-harmonic quasi-local mass defined on any sufficiently regular open subset of a manifold with scalar curvature bounded from below and satisfying Bartnik’s axioms from his seminal paper~\cite{Bartnik_quasilocal} (see also the recent survey~\cite{McC_2024}).
\end{remark}

To prove that the Polarized $p$-harmonic Mass depends continuously on the pole~\eqref{eq:poletomass}, we next establish the $(1,\beta)$-Hölder continuity of the $p$-Green's function with respect to the pole. While this type of estimate is essentially classical, there appears to be neither an explicit statement nor a complete proof available in the literature, so we provide both in the following theorem.
\begin{theorem}
\label{C1-dependece_Green}
Let $(M,g)$ be a compact three-dimensional Riemannian manifold with smooth boundary $\pa M$ and let $1<p<3$.
Given $x\in M\setminus\pa M$ and a sequence of points $\{x_j\}_{j \in \N} \subset M\setminus \pa M$, let $u_x$ and $\{u_{x_j}\}_{j \in \N}$ be the $p$-Green's functions with pole at $x$ and $x_j$'s, respectively, satisfying null Dirichlet boundary conditions, as prescribed in~\eqref{pb_p_Green}. For every compact set $K\subset M\setminus\{x\}$, we have that
\[
x_j \xrightarrow[j \to +\infty]{} x \, \quad    \Rightarrow   \quad u_{x_j}\xrightarrow[j \to+\infty]{} u_x,
\qquad\mbox{in}\quad{\mathscr C}^{1,\beta}(K),
\]
for some $0<\beta<1$. 
\end{theorem}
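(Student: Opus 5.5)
Our plan is the standard compactness-plus-uniqueness argument. Fix a compact $K\subset M\setminus\{x\}$ and choose $\delta>0$ with $\overline{B_{2\delta}(x)}\cap K=\emptyset$ and $B_{2\delta}(x)\Subset M\setminus\pa M$. For $j$ large we have $x_j\in B_{\delta/2}(x)$, so every pole lies in $B_{\delta/2}(x)$, and each $u_{x_j}$ is $p$-harmonic on $M\setminus \overline{B_{\delta/2}(x)}$ with zero boundary values on $\pa M$. We will obtain uniform bounds, extract a convergent subsequence, identify its limit as $u_x$ using the uniqueness recalled in Appendix~\ref{app:pharmonic}, and conclude that the full sequence converges.

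\textbf{Step 1: uniform $L^\infty$ bounds away from the poles.} We first show that $\sup_{M\setminus B_\delta(x)} u_{x_j}\le C$ with $C$ independent of $j$. To do this we use the normalization $\int_{\{u=s\}}|\na u|^{p-1}d\sigma=4\pi$ together with the two-sided comparison with the model Green's function near the pole (Theorem~\ref{fund_growth}). The latter gives $u_{x_j}\approx \frac{p-1}{3-p}\,d(\cdot,x_j)^{-\frac{3-p}{p-1}}$, and we need this with constants that are uniform as the pole varies in $B_{\delta/2}(x)$.

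For the upper bound we argue as follows. The function $\min(u_{x_j},s)$ is admissible for the capacity problem of the condenser $(\{u_{x_j}\ge s\},M)$. This yields $s^{p-1}\,\mathrm{Cap}_p(\{u_{x_j}\ge s\})=4\pi$. Since $\{u_{x_j}\ge s\}$ is connected and contains $x_j$, if it reached $\pa B_\delta(x)$ it would contain a continuum of diameter at least $\delta/2$. The capacity of such a continuum relative to $M$ is bounded below by a positive constant $c(\delta,M)$. Hence $s\le (4\pi/c)^{1/(p-1)}$, which is uniform in $j$. Uniform lower bounds on a fixed compact subset of the interior follow from the Harnack inequality. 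We expect this step to be the main obstacle: the constants must not depend on the pole, and a blow-up or comparison argument must be carried out carefully in the Dirichlet setting.

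\textbf{Step 2: $\mathscr C^{1,\beta'}$ bounds and compactness.} With $u_{x_j}$ bounded on $M\setminus B_\delta(x)$, we apply the interior $\mathscr C^{1,\beta'}$ estimates of DiBenedetto and Tolksdorf, together with Lieberman's boundary $\mathscr C^{1,\beta'}$ estimates up to the smooth boundary $\pa M$ where $u=0$. These give a uniform bound $\|u_{x_j}\|_{\mathscr C^{1,\beta'}(M\setminus B_{3\delta/2}(x))}\le C$. By Arzelà–Ascoli, any subsequence has a further subsequence converging in $\mathscr C^{1,\beta}$ for every $\beta<\beta'$. A diagonal argument with $\delta\to0$ then gives convergence in $\mathscr C^{1,\beta}_{\rm loc}(M\setminus\{x\})$ to some $v\ge0$.

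\textbf{Step 3: identification of the limit.} The limit $v$ is $p$-harmonic in $M\setminus\{x\}$ and vanishes on $\pa M$. It remains to show that $\Delta_p v=-4\pi\delta_x$ in the sense of distributions.

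Take a test function $\varphi\in \mathscr C^\infty_c(M\setminus\pa M)$ that is constant, equal to $\varphi(x)$, on $B_{\delta}(x)$. Then
\[
\int |\na u_{x_j}|^{p-2}\langle\na u_{x_j},\na\varphi\rangle \,d\mu=4\pi\varphi(x_j)=4\pi\varphi(x).
\]
This integral is supported away from $B_\delta(x)$, so we may pass to the limit using the $\mathscr C^1$ convergence. A general $\varphi$ is handled in two parts. On one side we use the uniform $L^{q}$ bounds on $|\na u_{x_j}|^{p-1}$ for $q<3/2$, which follow from the capacity and level-set estimates of Step 1 via the coarea formula. On the other side we use the continuity of $\varphi$ at $x$.

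Once the distributional equation is established, uniqueness of the $p$-Green's function in the functional setting of Appendix~\ref{app:pharmonic} (Kichenassamy–Véron) gives $v=u_x$. Because the limit does not depend on the subsequence, the whole sequence converges, $u_{x_j}\to u_x$ in $\mathscr C^{1,\beta}(K)$.
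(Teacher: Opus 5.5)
Your overall architecture (uniform bounds away from the poles, $\mathscr C^{1,\beta}$ compactness up to $\pa M$, identification of the limit, subsequence principle) matches the paper's. However, Step~1 as written fails for $1<p\le 2$. You claim that a continuum of diameter at least $\delta/2$ has relative $p$-capacity bounded below by some $c(\delta,M)>0$. This is a Loewner-type estimate, and in dimension $3$ it holds only for $p>2=n-1$. For $p\le 2$ a segment has zero $p$-capacity, since it has finite $\mathcal H^{3-p}$-measure. So thin tubular neighbourhoods of a segment of length $\delta/2$ are connected, have diameter at least $\delta/2$, and have arbitrarily small capacity. Knowing that $\{u_{x_j}\ge s\}$ is connected and joins $x_j$ to $\pa B_\delta(x)$ therefore gives no lower bound on $\mathrm{Cap}_p(\{u_{x_j}\ge s\})$, and nothing in your argument rules out a thin region of large values reaching $\pa B_\delta(x)$. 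Steps~2 and~3 rest on this $L^\infty$ bound, so the proof is incomplete on $1<p\le 2$. You mention the need for a pole-uniform version of Theorem~\ref{fund_growth}, but you never actually use it.

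Either of two repairs works.

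\emph{The paper's route.} The radius $r_\eta$ in Theorem~\ref{fund_growth} depends only on $p$ and the ellipticity constants of the chart, so it can be chosen uniformly for all poles near $x$. This gives $u_{x_j}\le(\frac{p-1}{3-p}+\eta)\,r_\eta^{-\frac{3-p}{p-1}}$ on $\pa B(x_j,r_\eta)$. The comparison principle on $M\setminus B(x_j,r_\eta)$, where $u_{x_j}$ is $p$-harmonic and vanishes on $\pa M$, then propagates this bound.

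\emph{Keeping your capacity identity.} Retain $s^{p-1}\mathrm{Cap}_p(\{u_{x_j}\ge s\})=4\pi$, but first use the maximum principle to place $\sup_{M\setminus B_\delta(x)}u_{x_j}=s$ at a point $y\in\pa B_\delta(x)$. Then apply the Harnack inequality (Theorem~\ref{thm_Har}) on $B(y,\delta/4)$, which avoids the pole, to get $B(y,\delta/8)\subset\{u_{x_j}\ge s/C_H\}$. A ball of fixed radius does have capacity bounded below uniformly, which bounds $s$.

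Once Step~1 is fixed, your Step~3 is a legitimate alternative to the paper's identification. The uniform weak-$L^{3/2}$ bound on $|\na u_{x_j}|^{p-1}$ follows from the capacity identity, the Sobolev (isocapacitary) inequality and the coarea formula, not from the continuum bound. It lets you pass to the limit in the distributional equation, after which you quote uniqueness of the Dirichlet $p$-Green's function. The paper instead transfers the uniform pole asymptotics to the limit $v$ and squeezes $v$ between $(1\pm C\eta)\,u_x$ by the comparison principle, then lets $\eta\to0$. That route needs neither gradient integrability nor an external uniqueness theorem, since it effectively proves uniqueness directly.
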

\begin{remark}
Note that the compact set $K$ in the statement may include (parts of) the boundary of $M$. The restriction to dimension $3$ is only for uniformity of exposition, since it is clear from the proof that the result holds in any dimension $n$ and for all $1<p<n$.
\end{remark}
\begin{proof}
As a first step, we prove that the sequence $\{ u_{x_j}\}_{j \in \mathbb{N}}$ admits a subsequence that converges, with respect to the $\mathscr{C}^1$-topology on the compact subsets of $M\setminus \{ x\}$, to a $p$-harmonic function $v$, that is vanishing at $\pa M$. 
Let $(y^1,\hdots,y^3)$ be coordinates centered at $x$ and,
for every $n$ larger than some suitable threshold $n_0 >0$, consider the following exhaustion of $ M \setminus \{x\}$  by compact sets
\[
K_n\,:=\,M
\setminus\left\{ \sqrt{(y^1)^2+ (y^2)^2+(y^3)^2}<\frac1{n} \right\}.
\]
Theorem~\ref{fund_growth}, coupled with the Comparison Principle for $p$-harmonic functions
(see, e.g.~ \cite[Theorem 3.5.1]{Puc_Ser_book}),
implies that for every $n$ there exists $M_n$ such that
\[
\|u_{x_j}\|_{L^{\infty}(K_n)}
\,\leq\, M_n.
\]
Hence, by Theorem~\ref{thm_est_int_estest}-\emph{(ii)},
we deduce that
\[
\|u_{x_j}\|_{{\mathscr C}^{1,\beta}(K_n)}
\,\leq\, C_n,
\]
for some constants  $C_n>0$ and $0<\beta<1$ (possibly depending on $K_n$ and $M_n$, but independent on $j$). Ascoli-Arzel\`a Theorem thus  yields the existence of a subsequence $\{u_j^{(n_0)}\}_{j\geq n_0}\subseteq\{u_{x_j}\}_{j\in\mathbb N}$ such that
\[
u_j^{(n_0)}\longrightarrow v^{(n_0)},\quad\mbox{as }j\to +\infty,\quad\mbox{in }\ {\mathscr C}^1(K_{n_0}).
\]
Inductively, for every $n>n_0$, there exists
$\{u_j^{(n)}\}_{j\geq n}\subseteq\{u_j^{(n-1)}\}_{j\geq n-1}$ such that
\[
u_j^{(n)}\longrightarrow v^{(n)},\quad\mbox{as }j\to +\infty,\quad\mbox{in }\ {\mathscr C}^1(K_{n}).
\]
Note that $v^{(m)}=v^{(n)}$ on $K_n$, for every $m \geq n$. In particular, the function $v$ defined in $M\setminus\{x\}$ as
\[
v(y)\,=\,v^{(n)}(y),
\qquad\mbox{if}\quad y\in K_n,
\]
is well-defined.
Also, the sequence
$\{u_n^{(n)}\}_{n\geq n_0}
$
converges to $v$ on each compact set $K\subset M\setminus\{x\}$. So far we have proven that the sequence $\{ u_{x_j}\}_{j \in \mathbb{N}}$ sub-converges to the function $v$ in $\mathscr{C}^1_{loc}(M\setminus \{x\})$. In particular, we have that $v=0$ on $\pa M$. To see that $v$ is actually $p$-harmonic in $M\setminus\{x\}$ we recall that for every test function $\varphi \in \mathscr{C}^\infty_0(M\setminus \{x\}) $ the functions $u_{x_j}$'s satisfy the identities 
\[
\int_M \left\langle |\na u_{x_j}|^{p-2}  \na u_{x_j}  \, \big| \, \na \varphi \right\rangle \, d\mu_g \, = \, 4 \pi \varphi(x_j) \, .
\]
Using the $\mathscr{C}^1_{loc}(M\setminus \{x\})$ convergence of the subsequence, it is straightforward to deduce that 
\[
\int_M \left\langle |\na v|^{p-2}  \na v  \, \big| \,  \na \varphi \right\rangle \, d\mu_g \, = \, 0 \, ,
\]
as $\varphi(x_j) = 0$ for large enough $j \in \mathbb{N}$.

In the remaining part of the proof we are going to show that indeed $v=u_x$. As a first step, we show that $v$ has the same asymptotics as $u_x$ at the pole $x$. To see this, we invoke Theorem~\ref{fund_growth} and observe that for every $\eta>0$ there exist $r_\eta>0$ and $j_\eta \in \mathbb{N}$ such that, if $0<|y-x_j|\leq r_{\eta}$, then 
\[
\Big(\frac{p-1}{3-p}-\eta\Big)|y-x_j|^{-\frac{3-p}{p-1}}
\,\leq\,
u_{x_j}(y)
\,\leq\,
\Big(\frac{p-1}{3-p}+\eta\Big)|y-x_j|^{-\frac{3-p}{p-1}},
\quad\mbox{for every}
\,\,
 j\geq j_{\eta}.
\]
This follows from that fact that the choice of the radius in Theroem~\ref{fund_growth} ultimately depends on the coefficients of the equation; hence, it is clear that one can chose the same radius, provided the poles $x_j$ are close enough to $x$. Next, we cover the punctured ball where $0<|y-x|\leq r_{\eta}$ by means of the sequence of compact annuli
\[
A_k \, = \, \big\{ y \in M \, : \,  2^{-k-1} r_\eta \leq |y-x| \leq 2^{-k} r_\eta \big\} \, , \quad k \in \mathbb{N} \, .
\]
Combining the $\mathscr{C}^1$-convergence on the compact subset of $M \setminus \{x\}$ with the uniform growth described above, we deduce that for every $\eta>0$ and every $k\in \mathbb{N}$, the function $v$ satisfies
\[
\Big(\frac{p-1}{3-p}-\eta\Big)|y-x|^{-\frac{3-p}{p-1}}
\,\leq\,
v(y)
\,\leq\,
\Big(\frac{p-1}{3-p}+\eta\Big)|y-x|^{-\frac{3-p}{p-1}} \,,  \quad y \in A_k \, .
\]
Since this fact holds on every annulus $A_k$, $k \in \mathbb{N}$, the same estimate is valid in the whole punctured ball of radius $r_\eta$.
As $u_x$ obeys the very same growth condition, it is easy to see that, for every $\eta>0$, it holds
\[
\left( 1- \frac{2\eta}{\frac{p-1}{3-p}+\eta}\right) u_x(y) \leq \, v(y) \, \leq \, \left( 1+ \frac{2\eta}{\frac{p-1}{3-p}-\eta}\right) u_x(y) \, ,
\]
provided $0<|y-x| \leq r_\eta$.
Applying the Maximum Principle in $\Omega_\eta:=\{y\in M:|y-x|\geq r_\eta/2\}$, and using the fact that $v=u_x=0$ on $\pa M$, we deduce that
the same estimate holds throughout $\Omega_\eta$.
Since $r_\eta\to0$ as $\eta\to0$,
we conclude that
$v=u_x$ in $M\setminus\{x\}$.
\end{proof}
We are now ready to establish, with the following proposition, the continuity of the Polarized $p$-harmonic Mass with respect to the pole.
\begin{proposition}
\label{pole_continuity}
Let $(M,g)$ be a compact three-dimensional Riemannian manifold with smooth boundary $\pa M$, and let $1<p<3$.
Then the mapping  
$$
M\setminus\pa M\ni x \longmapsto \mmp_\Lambda(M,g,x) \, ,
$$ 
where $\mmp_\Lambda(M,g,x)$ denotes the Polarized $p$-harmonic Mass with pole at $x$ as defined in~\eqref{eq:polarized_total_mass}, is a continuous function.
\end{proposition}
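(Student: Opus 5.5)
The plan is to show that each of the three terms in \eqref{eq:polarized_total_mass} depends continuously on the pole. Fix $x\in M\setminus\pa M$ and a sequence $x_j\to x$, and write $u_j=u_{x_j}$, $w_j=-(p-1)\log u_j$. The two boundary terms are immediate. By Theorem~\ref{C1-dependece_Green}, applied with a compact set $K$ that is a collar neighbourhood of $\pa M$ avoiding $x$ (and hence all $x_j$ for large $j$), we get $u_j\to u_x$ in $\mathscr C^{1,\beta}(K)$. Therefore $|\na u_j|\to|\na u_x|$ uniformly on $\pa M$. Since $\HHH$ of $\pa M$ is a fixed smooth function, both $\int_{\pa M}|\na u_j|\,\HHH\,d\sigma$ and $\int_{\pa M}|\na u_j|^2\,d\sigma$ converge to the corresponding quantities for $u_x$.

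The bulk term is the main point. As in Proposition~\ref{pro:finite_total_polarized_mass}, I would rewrite it through the Coarea Formula as
\[
4\pi\int_{-\infty}^{+\infty}e^{\lambda(\tau)}\,d\tau\;-\;\Lambda\int_M |\na w_j|\,e^{\lambda(w_j)}\,d\mu .
\]
The first summand does not depend on the pole. So it suffices to prove
\[
\int_M |\na w_j|\,e^{\lambda(w_j)}\,d\mu\;\longrightarrow\;\int_M |\na w_x|\,e^{\lambda(w_x)}\,d\mu .
\]
Set $F(s)=e^{\lambda(-(p-1)\log s)}\,(p-1)/s$ for $s>0$, so that the integrand equals $F(u_j)|\na u_j|$. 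The map $F$ is continuous on $(0,\infty)$. By Theorem~\ref{C1-dependece_Green}, together with the strict positivity of $u_x$ in the interior, the integrands converge pointwise (indeed locally uniformly) on $M\setminus(\{x\}\cup\pa M)$. The difficulty is to dominate them near the pole and near the boundary. I would do this by equi-integrability, splitting $M$ into three regions.

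Near the pole, on $B_\delta(x)$, I would use Hölder exactly as in the proof of \eqref{stima_bulk}, but restricted to $B_\delta(x)$:
\[
\int_{B_\delta(x)} |\na w_j|\,e^{\lambda(w_j)}\,d\mu\le \Big[4\pi(p-1)^{p-1}\!\int_{-\infty}^{+\infty}\! e^{p\lambda(\tau)+\tau}\,d\tau\Big]^{1/p}|B_\delta(x)|^{\frac{p-1}{p}} .
\]
The constant in brackets is pole-independent by Remark~\ref{rem:unibulk}, so the whole right-hand side is uniformly small as $\delta\to0$. Near the boundary, on the collar $\{d(\cdot,\pa M)<\delta\}$, the same Hölder bound with $|B_\delta(x)|$ replaced by the collar volume gives uniform smallness as well. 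On the remaining compact set $K_\delta$ the integrands converge uniformly: $u_j\to u_x$ in $\mathscr C^1(K_\delta)$, $u_x$ is bounded away from $0$ and $\infty$ on $K_\delta$, and $F$ is uniformly continuous on the relevant compact range. An $\ep/3$ argument then gives convergence of the bulk term, and with it the continuity of $x\mapsto\mmp_\Lambda(M,g,x)$.

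The step I expect to be the main obstacle is justifying the localized Hölder bound. The chain in the proof of \eqref{stima_bulk} bounds $\int_A |\na w|^p e^{p\lambda(w)}\,d\mu$ by the same integral over all of $M$. That global integral is computed exactly through the identity $e^{-t}\int_{\Sigma_t}|\na w|^{p-1}d\sigma=4\pi(p-1)^{p-1}$, so it is bounded independently of the pole. Hence only the factor $|A|^{(p-1)/p}$ has to be small. This seems to work directly, but I need to check two things. First, that the identity holds for every regular $t$ and every pole; it does, since it follows from the divergence theorem applied to \eqref{eq:p-imcf}. Second, that critical values cause no trouble in the Coarea computation; they do not, because it is an integral identity valid for Lipschitz $w$. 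Together these give the uniform integrability needed.
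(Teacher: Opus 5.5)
Your proof is correct, and it differs from the paper's in how the two singular regions are handled.

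\textbf{The paper's route.} It excises only a small ball around the pole.
\begin{itemize}
\item It uses the pointwise limit $|\na w_x|e^{\lambda(w_x)}\to 1/(8\pi)$ as $y\to x$ (from \eqref{eq:naw_near_pole} and Corollary~\ref{cor:behaviour_with_t}) to make $\int_{B(x,r_\ep)}$ small.
\item It asserts the same smallness on $B(x_j,r_\ep)$ for all large $j$.
\item It then passes to the limit on $M\setminus B(x,r_\ep/2)$ all the way up to $\pa M$. This tacitly uses that your $F$ extends continuously to $s=0$, because $e^{\lambda(t)}e^{t/(p-1)}$ has a finite limit as $t\to+\infty$.
\end{itemize}

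\textbf{Your route.} You localize the H\"older estimate \eqref{stima_bulk}. Since $\int_M|\na w_j|^p e^{p\lambda(w_j)}\,d\mu=4\pi(p-1)^{p-1}\int_{\R} e^{p\lambda(\tau)+\tau}\,d\tau$ is independent of the pole, the integrands are uniformly integrable. So any set of small volume contributes uniformly little: this covers both the ball $B_\delta(x)$ containing all late poles and the boundary collar. You then only need convergence on a compact set where $u_x$ is pinched between positive constants.

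\textbf{What each buys.} Your argument is softer and more robust. It uses only the integrability of $e^{p\lambda(\tau)+\tau}$, not the pointwise asymptotics of $\lambda$ at $\pm\infty$. It also justifies the paper's uniform-in-$j$ smallness on $B(x_j,r_\ep)$, which the paper states without proof. That step would otherwise need a gradient bound for $u_{x_j}$ near $x_j$ uniform in $j$, for instance from rescaled interior estimates on annuli, since Theorem~\ref{fund_growth} controls only $u$ itself.

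\textbf{One point to tighten.} Your remark that critical values cause no trouble ``because the Coarea Formula holds for Lipschitz $w$'' is not quite the right justification. The Coarea computation needs the flux identity at almost every level, not just at regular values. You can get it by testing the weak equation against $\psi_h(u)$, where $\psi_h$ is piecewise linear from $0$ to $1$ across $[s,s+h]$, and letting $h\to0$. This gives $\int_{\{u=s\}}|\na u|^{p-1}\,d\mathcal{H}^2=4\pi$ for a.e.\ $s$, which is exactly what the computation requires.
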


\begin{proof}
Fix $x\in M\setminus\pa M$ and let $\{x_j\}_{j \in \N}$ be a sequence in $M$ converging to $x$. We want to prove that 
\[
\mmp_\Lambda(M,g,x_j)\longrightarrow \mmp_\Lambda(M, g,x),\qquad\mbox{as}
\quad j\to +\infty.
\]
Looking at the expression expression \eqref{eq:polarized_total_mass}, let us first observe that the term $\int_{-\infty}^{+\infty}e^{\lambda(\tau)}d\tau$ is independent of the pole $x$. Thus, since
\[
\int_{-\infty}^{+\infty} \!\!\!\! |\Sigma_{x,\tau}| \, e^{\lambda(\tau)} \, d\tau
\,=\,
\int_M|\na w_x|e^{\lambda(w_x)} \, d\mu \, ,
\]
proving continuity is equivalent to proving that
\begin{equation}
\label{cont_1}
\int_{\pa M}|\na u_{x_j}|^2 d\sigma
\,\longrightarrow\,
\int_{\pa M}|\na u_x|^2 d\sigma\,,
\quad 
\int_{\pa M}|\na u_{x_j}|\,\HHH\,d\sigma
\,\longrightarrow\,
\int_{\pa M}|\na u_x|\,\HHH\,d\sigma,
\qquad\mbox{as}\quad j\to +\infty,
\end{equation}
and that
\begin{equation}
\label{cont_2}
\int_M|\na w_{x_j}|e^{\lambda(w_{x_j})} d\mu
\,\longrightarrow\,
\int_M|\na w_x|e^{\lambda(w_x)} d\mu \, ,
\qquad\mbox{as}\quad j\to + \infty.
\end{equation}
Theorem \ref{C1-dependece_Green} gives in particular that
\[
u_{x_j}\longrightarrow u_x,
\qquad\mbox{in}\quad{\mathscr C}^{1,\beta}(\pa M),
\]
which directly implies \eqref{cont_1}. 
Observe that Theorem \ref{C1-dependece_Green}
also implies that
\[
w_{x_j}\longrightarrow w_x,
\qquad\mbox{in}\quad{\mathscr C}^{1,\beta}(K),
\]
for every compact set $K\subset M\setminus\{x\}$, as it can be readily checked.
Now, from \eqref{eq:naw_near_pole} and Corollary \ref{cor:behaviour_with_t}, we get
\[
|\na w_x(y)| \, e^{\lambda(w_x(y))}
\longrightarrow\frac1{8\pi},
\qquad\mbox{as}\quad
y\to x.
\]
Therefore, for every $\ep>0$ arbitrarily small, there exist
$r_{\ep}>0$ and $j_\ep\in\N$ such that
\[
\int_{B(x,r_\ep)}|\na w_x|e^{\lambda(w_x)} d\mu
\,\leq\,\ep,
\qquad\int_{B(x_j,r_\ep)}|\na w_{x_j}|e^{\lambda(w_{x_j})} d\mu
\,\leq\,\ep,
\qquad\mbox{for every}\quad j\geq j_\ep.
\]
Hence,
\begin{multline*}
\left|
\int_M|\na w_{x_j}|e^{\lambda(w_{x_j})} d\mu
\,-\,
\int_M|\na w_x|e^{\lambda(w_x)} d\mu\right|\\
\leq
2\ep
+
\int_{M\setminus B(x,r_\ep/2)}
\left| \, 
|\na w_{x_j}|e^{\lambda(w_{x_j})}\,-\,
|\na w_x|e^{\lambda(w_x)}
\right| \, d\mu.
\end{multline*}
Finally, the last term on the right-hand side of the above inequality converges to $0$ as $j\to+\infty$, again by Theorem~\ref{C1-dependece_Green}. Passing to the limit as $\ep\to0$ then concludes the proof of \eqref{cont_2} and hence of the proposition.
\end{proof}
We are now in the position to establish that the Polarized $p$-harmonic Mass is proper as a function of its pole.
\begin{proposition}
\label{prop_PB}
Let $(M,g)$ be a compact three-dimensional Riemannian manifold with smooth minimal boundary $\pa M$.
For every $1<p<3$ and every $x\in M\setminus\pa M$, 
let $\mmp_\Lambda(M,g,x)$ be the Polarized $p$-harmonic Mass with pole at $x$, according to Definition~\ref{def:polmass}.
Then,
\[
\liminf_{x\to\pa M}
\mmp_\Lambda(M,g,x)
\,=\,
+\infty.
\]
\end{proposition}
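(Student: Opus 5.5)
Since $\pa M$ is minimal, $\HHH\equiv 0$ on $\pa M$ and the middle term in~\eqref{eq:polarized_total_mass} drops out. By Proposition~\ref{pro:finite_total_polarized_mass} and Remark~\ref{rem:unibulk}, the bulk term is bounded from below uniformly in the pole:
\[
\int_{-\infty}^{+\infty}\!\!e^{\lambda(\tau)}\big(4\pi-\Lambda\,{\rm Per}(\Omega_\tau)\big)d\tau\,\geq\,-\Lambda\Big[4\pi(p-1)^{p-1}|M|^{p-1}\!\!\int_{-\infty}^{+\infty}\!\!e^{p\lambda(\tau)+\tau}d\tau\Big]^{\frac1p}.
\]
The right-hand side does not depend on $x$. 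Since $K_p>0$, the proposition therefore reduces to showing that
\[
\int_{\pa M}|\na u_x|^2\,d\sigma\longrightarrow+\infty\qquad\hbox{as } x\to\pa M .
\]

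The mechanism is that the flux of $u_x$ through $\pa M$ equals $4\pi$ for every pole, and it concentrates on a boundary patch of size comparable to $d=d(x)={\rm dist}(x,\pa M)$. Indeed, testing $\Delta_p u_x=-4\pi\delta_x$ against $1$ and using $u_x=0$ on $\pa M$ gives $\int_{\pa M}|\na u_x|^{p-1}d\sigma=4\pi$. Let $y_x\in\pa M$ be a nearest point to $x$. The key intermediate claim is the following: there exist $R>0$, $c>0$ and $d_0>0$ such that
\[
\int_{\pa M\cap B(y_x,Rd)}|\na u_x|^{p-1}d\sigma\,\geq\,c\qquad\hbox{whenever } d(x)<d_0 .
\]
Given this, H\"older's inequality with exponents $2/(p-1)$ and $2/(3-p)$, together with $|\pa M\cap B(y_x,Rd)|\leq CR^2d^2$, yields
\[
\int_{\pa M}|\na u_x|^2d\sigma\,\geq\, c^{\frac{2}{p-1}}\,(CR^2d^2)^{-\frac{3-p}{p-1}}\longrightarrow+\infty \qquad \hbox{as } d\to0 ,
\]
because $3-p>0$.

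To prove the claim I would use a blow-up argument. Work in Fermi-type coordinates at $y_x$ and rescale lengths by $d$, setting
\[
U_d(z)=d^{\frac{3-p}{p-1}}\,u_x(\exp_{y_x}(dz)).
\]
The $p$-Laplace equation is invariant under this scaling, and the normalization of the Dirac mass is preserved, so $U_d$ solves $\Delta_{p,g_d}U_d=-4\pi\delta_{z_d}$ on the rescaled domain. Here $|z_d|=1$, the metrics $g_d$ converge to the Euclidean metric in $\mathscr C^2_{loc}$, and the boundaries flatten to $\{z_3=0\}$. The steps are as follows.
\begin{enumerate}
\item Obtain uniform $L^\infty$ bounds for $U_d$ on compact sets away from the pole. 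I would use the near-pole growth of Theorem~\ref{fund_growth}, whose radius depends only on the coefficients, together with comparison with explicit barriers built from the Euclidean half-space $p$-Green's function.
\item Apply the boundary $\mathscr C^{1,\beta}$ estimates of Theorem~\ref{thm_est_int_estest} (Lieberman-type) up to the flat boundary, which give compactness.
\item Identify the limit, exactly as in the proof of Theorem~\ref{C1-dependece_Green}, as the $p$-Green's function $U$ of the half-space with pole at $e_3$. This uses the growth matching at the pole and the maximum principle, now on the unbounded half-space, where decay at infinity comes from the barriers.
\item Use the $\mathscr C^1$ convergence on $\overline{B_R}\cap\{z_3\geq0\}$ to get
\[
\int_{\pa M\cap B(y_x,Rd)}|\na u_x|^{p-1}d\sigma\longrightarrow\int_{\{z_3=0\}\cap B_R}|\na U|^{p-1}\,dz' ,
\]
where the scaling exponents cancel exactly. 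The right-hand side is positive by the Hopf lemma~\cite{Tolksdorf1983} applied to $U>0$, which gives the claim with any fixed $R\geq1$.
\end{enumerate}
A compactness-by-contradiction argument over sequences $x_j\to\pa M$ then yields the uniform statement.

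I expect the main obstacle to be the uniform bounds in step 1 and the identification of the limit in step 3 on the non-compact limiting domain. Since the rescaled domains exhaust a half-space, I need barriers that simultaneously control $U_d$ at large $|z|$ and near the flattening boundary, uniformly in $d$. My first attempt would be to compare $u_x$ with a multiple of the Euclidean $p$-Green's function of a slightly tilted half-space contained in $M$ near $y_x$, corrected by a small perturbation to absorb the curvature of $g$.
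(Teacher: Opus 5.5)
Your proof is correct, but it is organized differently from the paper's; the reduction to $\int_{\pa M}|\na u_x|^2\,d\sigma\to+\infty$ is the same as the paper's.

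\textbf{How the paper argues.} It proceeds by contradiction directly on the $L^2$ boundary energy. Along $x_\ep=\exp_{x_0}(\ep\nu)$ with bounded energy, the rescaled functions $v_\ep=\ep^{\frac{3-p}{p-1}}u_{x_\ep}$ (for $g_\ep=\ep^{-2}g$) have boundary energy at most $K\ep^{2\frac{3-p}{p-1}}\to0$. A $\mathscr C^1$ limit on a fixed half-ball $B_R^+$ with $R<1/2$, which stays away from the pole, is then $p$-harmonic with $v=|\na v|=0$ on the flat part. This contradicts Hopf once one knows $v(1/4,0,0)>0$.

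\textbf{How your route differs.} You pass through the scale-invariant local flux $\int_{\pa M\cap B(y_x,Rd)}|\na u_x|^{p-1}d\sigma$ and then apply H\"older. This makes the argument quantitative, with the explicit rate $d^{-2\frac{3-p}{p-1}}$; the paper's contradiction yields the same rate implicitly. It also handles an arbitrary approach $x\to\pa M$ with varying nearest points, whereas the paper fixes $x_0$ and moves along a normal geodesic.

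\textbf{The step you flag as the main obstacle is unnecessary.} You never need the limit on the whole half-space, nor its identification with the half-space Green's function, nor any barriers.
\begin{itemize}
\item Uniform $L^\infty$ bounds away from the pole follow from the comparison principle alone. Since $u_x=0$ on $\pa M$, and $U_d\le M_0$ on a small sphere around $z_d$ by Theorem~\ref{fund_growth} (whose radius is uniform in rescaled Fermi coordinates), the bound holds on the complement of that sphere.
\item Positivity of any subsequential limit near the flat boundary follows from the uniform lower bound near the pole. Combine it with a Harnack chain (Theorem~\ref{thm_Har}), which is what the paper does, or with the strong maximum principle on a connected bounded region such as a large half-ball minus a small ball around the pole.
\end{itemize}
With these two observations, your compactness-by-contradiction over sequences $x_j\to\pa M$ closes without barriers, and the uniform flux lower bound follows.

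\textbf{One slip.} For $R\geq1$, the set $\overline{B_R}\cap\{z_3\geq0\}$ contains the pole $e_3$, where $\mathscr C^1$ convergence fails. You only need convergence in a neighbourhood of the flat part, say $\overline{B_R}\cap\{0\le z_3\le 1/2\}$, and any fixed $R>0$ suffices.
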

\begin{proof}
Fix a point $x_0 \in \partial M$,
and let $\nu$ be the unit normal vector to $\partial M$ at $x_0$, pointing toward the interior of $M$.
It is evident that establishing the desired statement is equivalent to proving that
\begin{equation}
\label{BU_ep}
\liminf_{\ep\to0}
\mmp_\Lambda(M, g,x_\ep)
\,=\,
+\infty,
\qquad
x_\ep:=
{\rm exp}_{x_0}(\ep\nu).
\end{equation}
We claim that
\begin{equation}
\label{eq_claim}
\liminf_{\ep\to0}
\int_{\pa M}|\na u_{x_\ep}|^2d\sigma
\,=\,
+\infty.
\end{equation}
Before proving the claim, let first note that it implies \eqref{BU_ep}. Indeed, by the formula~\eqref{eq:polarized_total_mass} for the Polarized $p$-harmonic Mass, the minimality of $\pa M$ and the lower bound for the bulk term~\eqref{stima_bulk} -- together with the subsequent Remark~\ref{rem:unibulk} --
we have that
\begin{align*}
\mmp(M,g,x) 
\,\geq\,& \,\,\, 4\pi\int_{-\infty}^{\infty}\!\!\!\!
e^{\lambda(\tau)}d\tau \, - \, \Lambda \left[4\pi \, (p-1)^{p-1} 
|M|^{p-1}\int_{-\infty}^{\infty}
\!\!\!\! \,e^{p\lambda(\tau) + \tau} \, d\tau
\right]^{\frac1p} \\
&
+ \, \frac{R_\Lambda^{\frac{5-p}{p-1}}}{16\pi}\, 
\frac{\Gamma(\frac12) \, \Gamma(c_p)}{\Gamma(a_p+\frac{3}{2})\Gamma(b_p+\frac{3}{2})} 
\, \int_{\pa M} \!\!\! |\na u_x|^2 d\sigma \, 
\end{align*}
As both integrals 
\[\int_{-\infty}^{\infty}\!\!\!\!
e^{\lambda(\tau)}d\tau \, \quad \hbox{and} \quad  \int_{-\infty}^{\infty}
\!\!\!\! \,e^{p\lambda(\tau) + \tau} \, d\tau
\]
are finite, it is clear that the claim implies thesis. To establish claim~\eqref{eq_claim} we argue by contradiction, assuming that the infimum limit is finite. Then, there exists a constant $K>0$ such that
\begin{equation}
\label{eq:L2boud}
\int_{\pa M}|\na u_{x_\ep}|^2d\sigma
\,\leq\,
K,
\end{equation}
provided $\ep$ is chosen along a minimizing sequence. To avoid overburdening the notation with the double indexing that would result from considering the sequence of poles $\{x_{\ep_k}\}_{k \in \N}$, we shall, for the sake of simplicity, assume that claim~\eqref{eq_claim} is violated along an entire sufficiently small segment of the curve $\ep \mapsto x_\ep$. Consequently, the upper bound~\eqref{eq:L2boud} is taken to hold for all sufficiently small values of $\ep$.
It will then be apparent to the reader that the subsequent argument can be straightforwardly adapted to the case in which~\eqref{eq:L2boud} is valid only along a sequence.
To obtain a contradiction, we proceed by means of a blow-up analysis. For each sufficiently small $\ep > 0$, we consider the following rescaling of the metric and of the associated functions:
\[
g_\ep
\,:=\,
\frac1{\ep^2}g,
\qquad\qquad\quad
v_\ep\,:=\,
\ep^{\frac{3-p}{p-1}}u_{x_\ep}.
\]
It can be readily checked that
\[
d_{g_\ep}(x_\ep,x_0)\,=\,1.
\]
Moreover,
$v_{\ep}$ is a $p$-Green's function for the metric $g_\ep$ with pole at $x_{\ep}$ vanishing on $\pa M$. Namely,
\[
\int_M|\na v_\ep|^{p-2}\langle\na v_\ep\big|\na\varphi\rangle d\mu_\ep\,=\,4\pi\varphi(x_\ep),
\quad\mbox{for every}\  \varphi\in\mathscr C_0^{\infty}(M),
\quad
\mbox{and}
\quad
v_\ep\,=\,0
\quad\mbox{on}\ \pa M,
\]
where the norm and the scalar product is meant to be computed with respect to $g_\ep$.
The contradiction assumption then translates into
\begin{equation}
\label{grad_ep}
\int_{\pa M}|\na v_{\ep}|^2d\sigma
\,\leq\,
K\ep^{2\left(\frac{3-p}{p-1}\right)},
\qquad\mbox{for every small}\ \ep>0.
\end{equation}
By extracting a subsequence from $\{v_{\ep}\}_{\ep>0}$ and by a suitable change of variable, we are going to obtain a
\emph{nontrivial} $p$-harmonic function $v$ defined in a upper-half ball $B_R^+(0)$ of $\R^3$, vanishing at the flat portion 
of the boundary. 
The contradiction assumption further implies that the normal derivative of $v$ also vanishes there. 
The contradiction stems from the fact that, due to the Strong Maximum Principle for $p$-harmonic functions (see, e.g.~\cite{Tolksdorf1983}), 
$v>0$ in $B_R^+(0)$, which in turn gives $\pa v/\pa\nu>0$ on the flat part of the boundary, by the Hopf Lemma.

Using Fermi coordinates around the
geodesic 
$\gamma(t):={\rm exp}_{x_0}(t\nu)$, the metric $g$ can be written as
\[
g\,=\,
dr\otimes dr
+
g_{ij}(r,\vartheta)d\vartheta^i\otimes d\vartheta^j,
\]
where $r$ is the $g$-distance from $\pa M$
and
$\{\vartheta^1,\vartheta^2\}$ are $g_{|_{\pa M}}$-normal coordinates on $\pa M$ centered around $x_0$.
We recall that, by virtue of this choice,
we have that
\[
g_{ij}(r,\vartheta)
\,=\,
\delta_{ij}+\eta_{ij}(r,\vartheta),
\qquad\mbox{with}
\quad 
\eta_{ij}(r,\vartheta)\,=\,
\mathcal O_1(\sqrt{r^2+|\vartheta|^2}).
\]
Observe that, by construction,
\[
r(x_\ep)\,=\,\ep,
\qquad\qquad
\vartheta^j(x_\ep)\,=\,0,
\quad j=1,2.
\]
To fix ideas, we can suppose w.l.o.g. that
$(r,\vartheta^1,\vartheta^2)\in[0,2]\times[-1,1]\times[-1,1]$.
Using the rescaled coordinates
\[
s\,:=\,
\frac r\ep,
\qquad\qquad
\varphi^j\,:=\,\frac{\vartheta^j}\ep,
\quad j=1,2,
\]
the metric $g_\ep$ rewrites as
\begin{equation}
\label{metrica_g_ep}
g_\ep\,=\,
ds\otimes ds
+\Big[\delta_{ij}+\eta_{ij}(\ep s,\ep\varphi)\Big]
d\varphi^i\otimes d\varphi^j.
\end{equation}
In particular,  there exist $0<\alpha<A$ such that
\[
\alpha|\xi|^2
\,\leq\, 
\delta_{ij}+\eta_{ij}(\ep s,\ep\varphi)
\xi^i\xi^j
\,\leq\,
 A|\xi|^2,
 \qquad
 (s,\varphi)\in
 [0,2]\times[-1,1]
 \times[-1,1]=:Q,\quad\xi\in\R^3.
\]
This implies that, in turn, the structural constants of the PDE of interest can be chosen uniformly in 
the compact region $Q$.
Also, note that
\[
\big(s,\varphi^1,\varphi^2\big) (x_\ep)
\,=\,
(1,0,0)\,=\,:=q
\qquad
\mbox{and}
\qquad
\big(s,\varphi^1,\varphi^2\big) (x_0)
\,=\,
(0,0,0).
\]
Hence, in the $(s,\varphi)$-coordinates, all the functions $v_\ep$ have the same pole, at $(1,0,0)$.
Theorem~\ref{fund_growth} implies that,
fixed $\eta_0>0$, there exists $1/2>\rho_0(\eta_0,n,p,\alpha,A)>0$ such that, for every
$\rho_0/2
\leq
\rho=\sqrt{|s-1|^2+|\varphi|^2}
\leq
\rho_0,
$
it holds
\begin{equation}
\label{from_growth}
0
\,<\,
L_0
\,\leq\,
\left(\frac{3-p}{p-1}-\eta_0\right)
\rho^{-\frac{3-p}{p-1}}
\,\leq\,
v_\ep(s,\varphi)
\,\leq\,
\left(\frac{3-p}{p-1}+\eta_0\right)
\rho^{-\frac{3-p}{p-1}}
\,\leq\,
M_0.
\end{equation}
Exploiting the lower bound and the so called Harnack chain, we are going to prove that
the functions $v_\ep$ are uniformly bounded away from $0$ at $(1/4,0,0)$. Indeed, 
setting $\sigma_0:=\min\{\rho_0/4,1/12\}$ and 
\[
B^{(0)}\,:=\,
\left\{(s,\varphi):
\sqrt{\left|s-
\left(1-3\rho_0/4\right)
\right|^2+|\varphi|^2}<\sigma_0
\right\},
\]
from \eqref{from_growth}
we get in particular that
${v_\ep}_{|_{B^{(0)}}}
\geq L_0>0$.
Also, there exists a 
finite number $N_0\in\mathbb N$ of balls $B^{(1)},\hdots,B^{(N_0)}$ of radius $\sigma_0$, pairwise intersecting each other and connecting 
$B^{(0)}$
to $(1/4,0,0)$.
Hence, applying Theorem \ref{thm_Har} on the ball $B^{(1)}$ gives
\[
L_0\,\leq\,
\inf_{B^{(0)}}v_\ep
\,\leq\,
\sup_{B^{(1)}}v_\ep
\,\leq\,
C_H\inf_{B^{(1)}}v_\ep.
\]
In turn, applying applying Theorem \ref{thm_Har} $(N_0-1)$ more times yields
\begin{equation}
\label{uniform_bd_below}
v_\ep(1/4,0,0)\geq L_0/(C_H^{N_0})>0.
\end{equation}
Now, the upper bound in \eqref{from_growth} coupled with the Comparison Principle
for $p$-harmonic functions (see, e.g. ~\cite[Theorem 3.5.1]{Puc_Ser_book})
implies that the functions $v_\ep$
are uniformly bounded in the complement of suitably small coordinate balls
centered at $x_\ep$, namely
\[
v_\ep\,\leq\,M_0,
\qquad\mbox{in}\quad
M\setminus\left\{(r,\vartheta):
\sqrt{\left|r-\ep
\right|^2+
|\vartheta|^2}<\ep\rho_0
\right\}.
\]
In turn, Theorem \ref{thm_est_int_estest}-\emph{(ii)} implies
that both the functions $v_\ep$ and their gradients are uniformly bounded, up to the boundary. More precisely, we have that
\[
\|v_\ep\|_{\mathscr C^1(\overline{B_R^+})}
\,\leq\,
C,
\qquad\mbox{for every }\ep>0,
\]
where $1/4<R<(1-\rho_0)/2$ and
$B_R^+=\{s^2+|\varphi|^2<R^2,\,s>0\}$.
By Ascoli-Arzel\`a Theorem we can then extract from $\{v_\ep\}_{\ep>0}$ a subsequence
converging on $\overline{B_R^+}$
to some function $v$, with respect to the $\mathscr C^1$-norm.
Noting that the metric in  \eqref{metrica_g_ep}
converges to $g_{\R^3}=ds\otimes ds+\delta_{ij}d\varphi^i\otimes d\varphi^j$, as $\ep\to0$, and using \eqref{grad_ep},
we have that the limiting function $v$ satisfies
\[
\Delta_p^{\R^3}v\,=\,0\quad\mbox{in}\quad B_R^+,
\qquad\quad
v=0=|\na v|
\quad\mbox{on}\quad\pa B_R^+\cap\pa M.
\]
At the same time, from \eqref{uniform_bd_below} we get that $v(1/4,0,0)>0$: this fact gives the desired contradiction, arguing as explained above.   
\end{proof}

A straightforward application of Proposition \ref{pole_continuity} 
and of Proposition \ref{prop_PB} yields the following result. 

\begin{corollary}
\label{cor:minimum}
Let $(M,g)$ be a compact three-dimensional Riemannian manifold with smooth minimal boundary $\pa M$. For every $1<p<3$, the Polarized $p$-harmonic Mass Function~\eqref{eq:poletomass} attains its minimum at some interior point. In other words, if $\mmp_\Lambda(M,g)$ is the $p$-harmonic Total Mass introduced in Definition~\ref{def:polmass}, then
\[
\mmp_\Lambda(M,g) \, = \, \mmp_\Lambda(M,g,x_0) 
\]
for some $x_0\in M\setminus \pa M$.
\end{corollary}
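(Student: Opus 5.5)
The plan is the standard topological argument: a continuous function on $M\setminus\pa M$ that blows up at the boundary attains its infimum. Set $f(x)=\mmp_\Lambda(M,g,x)$ for $x\in M\setminus\pa M$. By Proposition~\ref{pro:finite_total_polarized_mass}, $f$ is real valued. By Theorem~\ref{thm:pos_pass_pol}, or at any rate by the lower bound~\eqref{stima_bulk} together with Remark~\ref{rem:unibulk} and the minimality of $\pa M$, it is bounded from below uniformly in the pole. Hence $m:=\inf f$ is a finite real number.

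Fix any interior point $y$ and set $C=f(y)+1>m$. By Proposition~\ref{prop_PB}, $\liminf_{x\to\pa M}f(x)=+\infty$. Together with the compactness of $\pa M$, this gives a $\delta>0$ such that $f>C$ on the collar $\{x\in M\setminus\pa M: d(x,\pa M)<\delta\}$. The step I would check most carefully is this passage from the pointwise statement along normal geodesics to a uniform collar. I would argue by contradiction: if the collar failed to exist, there would be a sequence $x_k\to\pa M$ with $f(x_k)\le C$. Passing to a subsequence with $x_k\to x_0\in\pa M$ contradicts Proposition~\ref{prop_PB}, which is stated as a liminf over all approaches $x\to\pa M$. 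Its proof via blow-up around normal segments adapts to arbitrary sequences, as the authors indicate there.

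Consequently every minimizing sequence $\{x_k\}$ with $f(x_k)\to m<C$ eventually lies in the compact set $K_\delta=\{x\in M: d(x,\pa M)\ge\delta\}$. This set is nonempty, since it contains $y$ once $\delta$ is taken smaller than $d(y,\pa M)$. Extract a subsequence $x_k\to x_0\in K_\delta\subset M\setminus\pa M$. By the continuity of $f$ in Proposition~\ref{pole_continuity}, $f(x_0)=\lim f(x_k)=m$. So the infimum in Definition~\ref{def:pmass} is attained at the interior point $x_0$, which gives $\mmp_\Lambda(M,g)=\mmp_\Lambda(M,g,x_0)$.
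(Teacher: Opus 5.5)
Your proposal is correct and follows the paper's route: the paper also obtains this corollary directly from the continuity of the mass function (Proposition~\ref{pole_continuity}) and its blow-up at the boundary (Proposition~\ref{prop_PB}), and your collar-plus-compactness argument just writes that out. One simplification: you do not need the separate a priori lower bound, which through Theorem~\ref{thm:pos_pass_pol} would anyway require hypotheses absent from the corollary, because continuity on the compact set $K_\delta$ already forces the infimum to be finite and attained.
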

It would be very interesting to understand under which additional geometric hypotheses -- beyond the connectedness of the boundary of \(M\) -- one can obtain a precise characterization of the locus of optimal poles, namely the set of points at which the Polarized $p$-harmonic Mass Function attains its minimum. From an intuitive standpoint, such points should lie sufficiently far from \(\partial M\), which might suggest the existence of a relationship with the cut locus of the boundary, \(\mathrm{Cut}(\partial M)\).  
Another possibility, suggested by the case of the static metrics, is that the locus of optimal poles may be related to the set of points at which a lapse-type function attains its maximum; in this context, such a function can be identified with the first eigenfunction of the Laplace operator subject to homogeneous Dirichlet boundary conditions. 

\section{Positive Mass Theorem and Penrose Inequality for the $1$-harmonic Mass}
\label{sec:harm_1}

In this section, we prove two fundamental results concerning the 
$1$-harmonic Mass,
namely Theorem~\ref{thm_eich} and Theorem~\ref{thm:RPI-1harm} from the introduction (see Subsection~\ref{sub:1harm}). For the reader’s convenience, we restate these theorems below, as Theorem~\ref{thm_eich_7} and Theorem~\ref{thm:RPI-1harm_7}, respectively.

An important tool in the proofs of the aforementioned theorems will be the IMCF, either emanating from a pole or from a black-hole type horizon,  depending on the geometric setting. A key requirement will be to ensure that the level sets generated by the flow remain connected throughout the entire evolution. This is guaranteed by the following topological lemma. 

\begin{lemma}
\label{lem:topol}
Let $(M,g)$ be a compact, three-dimensional Riemannian manifold, with compact minimal boundary $\partial M$. Suppose that there is a connected component of $\partial M$, that we denote by $\pa M^+$, that is simply connected and unstable.
Finally, suppose that there are no closed minimal surfaces in $M\setminus \pa M$. Then $M$ is diffeomorphic to $\mathbb{S}^3$ minus some balls.
\end{lemma}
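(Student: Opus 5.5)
The plan is to reduce the lemma to the classical existence theory for area-minimizing surfaces in mean-convex $3$-manifolds (Meeks--Simon--Yau), and then to conclude with the Poincar\'e conjecture. By assumption every component of $\pa M$ is minimal. The interior has no closed minimal surfaces, so every closed embedded surface in $M\setminus\pa M$ that is not area-decreasingly compressible must degenerate under minimization onto $\pa M$. The instability of $\pa M^+$ is what lets us use it as a strict barrier without it absorbing the minimizing sequences.

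\textbf{Step 1 (barriers).} Let $\varphi>0$ be the first eigenfunction of the Jacobi operator of $\pa M^+$. Since the first eigenvalue is negative, pushing $\pa M^+$ slightly inward along $\varphi\nu$ produces a surface $\Sigma^+_\delta$ whose mean curvature has a strict sign. I will check that its mean curvature vector points away from $\pa M^+$; this sign bookkeeping is routine but must be verified. Each of the remaining boundary components is either strictly stable, degenerate stable, or unstable. In the unstable case I treat it as for $\pa M^+$. In the stable cases I use the component itself, minimal and hence weakly mean convex, as a barrier. Let $M'$ be the region bounded by $\Sigma^+_\delta$ and these surfaces. Then $M'$ is mean convex, and $M'$ is diffeomorphic to $M$.

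\textbf{Step 2 (topology of $M'$ via minimization).} In $M'$ I apply Meeks--Simon--Yau in two ways.
\begin{itemize}
\item First, if $M$ were reducible, minimizing area in the isotopy class of an essential embedded $2$-sphere would produce a closed minimal surface in the interior. This contradicts the hypothesis, since the barrier $\Sigma^+_\delta$ is strictly mean convex and the other components can only be touched if the limit coincides with them, which is excluded for an essential sphere. So $M$ is irreducible after capping.
\item Second, suppose some boundary component $S\ne\pa M^+$ had positive genus. I would minimize area in the isotopy class of a parallel copy of $\pa M^+$, or run a min--max/IMCF-type sweepout from $\Sigma^+_\delta$ toward $S$. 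This should yield either an interior minimal surface, which is excluded, or an isotopy of the sphere $\pa M^+$ onto a neighbourhood of $S$. The latter forces $S$ to be a sphere as well.
\end{itemize}
Hence all components of $\pa M$ are spheres. Moreover, the same argument applied to an incompressible torus or to a Heegaard-type surface shows that $\pi_1(M)$ carries no such surfaces, and in fact that $M$ is simply connected. For the simple connectivity I would lift to the universal cover, where the same minimization with the lifted barriers applies and an infinite cover would produce a non-compact minimizing limit surface with a compact quotient.

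\textbf{Step 3 (conclusion).} Cap each spherical boundary component with a $3$-ball. The result is a closed, simply connected $3$-manifold, which is therefore $\mathbb S^3$ by Perelman's resolution of the Poincar\'e conjecture. Removing the caps gives $M\cong\mathbb S^3$ minus some balls.

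I expect the main obstacle to be Step 2, specifically showing simple connectivity and that the non-$\pa M^+$ components are spheres. Minimization alone only rules out essential surfaces, so the genuinely hard point is extracting a contradiction from a nontrivial $\pi_1$ or a higher-genus boundary component purely from the absence of interior minimal surfaces. There I would first try the sweepout from the unstable sphere $\pa M^+$. Because $\pa M^+$ is unstable, min--max in the mean-convex region $M'$ must produce a closed minimal surface strictly in the interior unless the sweepout of $M'$ by spheres is trivial. That would exhibit $M'$ as swept out by spheres, which gives the topological conclusion directly.
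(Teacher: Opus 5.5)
Your Step 1 is fine: pushing $\partial M^+$ inward along its first Jacobi eigenfunction does give a strictly mean-convex barrier. Your Step 3 (capping and invoking Poincar\'e) is exactly how the paper concludes. The gap is Step 2, which carries the entire content of the lemma and, as written, establishes neither of its two claims.

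\textbf{Simple connectivity.} Ruling out essential spheres, incompressible tori or ``Heegaard-type'' surfaces cannot detect a finite fundamental group. A lens space minus a ball is irreducible after capping and contains no incompressible surface, yet it is not simply connected. Your universal-cover sentence does not say what is minimized or where the contradiction comes from, and it only speaks to infinite $\pi_1$. The missing idea, which is the paper's argument adapted from Huisken--Ilmanen, runs as follows.
\begin{itemize}
\item Since $\partial M^+$ is simply connected, its preimage in $\widetilde M$ consists of $|\pi_1(M)|$ disjoint copies of it, each an unstable minimal sphere.
\item If $\pi_1(M)\neq 1$, pick two of them, $\Sigma_1,\Sigma_2$, and minimize area among surfaces in $\widetilde M$ separating $\Sigma_1$ from $\Sigma_2$.
\item The minimizer is stable, so it contains neither $\Sigma_i$. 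This is where instability enters; your pushed-in barriers would serve equally well.
\item It contains no other boundary component, since such a component does not help separate.
\item It cannot touch $\partial\widetilde M$: use the maximum principle, or smooth a transverse corner to lower area.
\end{itemize}
Hence it lies in the interior and projects to a closed minimal surface in $M\setminus\partial M$, a contradiction. This treats finite and infinite $\pi_1$ at once.

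\textbf{Spherical boundary components.} Your proposed dichotomy fails. Minimizing in the isotopy class of a sphere can only produce spheres or projective planes, or collapse. An ``isotopy of the sphere $\partial M^+$ onto a neighbourhood of $S$'' is impossible when $S$ has positive genus, so the alternative you need is assumed rather than derived. No geometry is needed at this point. Once $M$ is simply connected, it is orientable and $H_1(M;\mathbb{Q})=0$. The half-lives-half-dies lemma says that the kernel of $H_1(\partial M;\mathbb{Q})\to H_1(M;\mathbb{Q})$ has dimension $\tfrac12\dim H_1(\partial M;\mathbb{Q})$. Here the kernel is everything, so $H_1(\partial M;\mathbb{Q})=0$ and every boundary component is a sphere.

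Your irreducibility bullet is then unnecessary, since Poincar\'e does the rest. The closing min--max heuristic should be dropped. Min--max in a mean-convex region need not produce any interior minimal surface; convex domains contain none. A sweepout by spheres would not account for several boundary components anyway.
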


\begin{remark}
\label{rmk:H_2}
We will be mostly interested in the case where $\partial M \setminus \partial M^+$ is either empty or connected; in the latter case, we will denote by $\partial M^-$ the other connected component of $\partial M$. Under this assumption, as a direct consequence of the previous lemma we obtain in particular that $H_2(M,\partial M^+;\mathbb{Z})$ is trivial, and likewise $H_2(M,\partial M^-;\mathbb{Z})$ is trivial whenever $\partial M^-$ is nonempty. It is well known that these conditions are sufficient to ensure that the leaves of the IMCF remain connected for all times.
\end{remark}

\begin{proof}
The proof of this lemma follows~\cite[Lemma~4.1-(ii)]{HI}, but we give some details on how to adapt the proof to the case at hand. The first step of the proof consists in showing that $M$ is simply connected.  To this end, let $\widetilde M$ be the universal covering of $M$. The restriction of this covering to $\pa M^+$ gives a covering of $\pa M^+$. Since $\pa M^+$ is simply connected (equivalently, $\pa M^+$ is diffeomorphic to a sphere), this covering is a disjoint union of spheres. Thus, if $M$ is not simply connected, there are at least two boundary components $\Sigma_1,\Sigma_2$ of $\widetilde M$ that project onto $\pa M^+$. Since $\pa M^+$ is an unstable minimal surface, so are $\Sigma_1,\Sigma_2$. By minimizing the area among surfaces that separate $\Sigma_1$ and $\Sigma_2$, we obtain a stable minimal surface $N$. 

No connected component of $N$ can be contained inside $\pa\widetilde M$ (it cannot be contained inside $\Sigma_1,\Sigma_2$ because they are unstable, and it cannot be contained inside any other boundary component because it would not contribute in separating $\Sigma_1,\Sigma_2$). In fact, no connected component of $N$ can even contain a portion of the boundaries: if it does, it must intersect them transversely otherwise by the maximum principle it would coincide with the whole boundary component; on the other hand, if it intersects transversely then the angle can be smoothed out to decrease the area. 
Therefore $N$ is entirely contained in the interior of $\widetilde M$. Projecting onto $M$, we obtain a minimal surface contained in the interior of $M$. Stability and area minimality force the surface to be smooth, hence we have produced a smooth minimal surface inside $M$, against our hypothesis. 
This proves that $M$ is simply connected. 

We then prove that the boundary components of $M$ are diffeomorphic to spheres. 
By the half lives half dies principle (see~\cite[Lemma~3.5]{Hatcher_3M}) and the exact sequence of the pair, we conclude that the kernel of the inclusion map $i_*:H_1(\pa M)\to H_1(M)$ has rank equal to one half the rank of $H_1(\pa M)$.
Since $M$ is simply connected, and thus $H_1(M)$
is trivial, the kernel of $i_*$ coincides with the whole $H_1(\pa M)$. Thus, we conclude that the rank of $H_1(\pa M)$ must be equal to zero, and this can only happen if $\pa M$ is a collection of spheres, as wished.

Since every boundary component is diffeomorphic to a sphere, we can fill all boundary components with standard balls, obtaining a compact simply connected manifold without boundary.
The proof of the Poincar\'e Conjecture then allows us to conclude that such a manifold must be diffeomorphic to $\mathbb{S}^3$.
\end{proof}

We are now in a position to proceed with the proof of the Positive Mass Theorem and the Riemannian Penrose Inequality for the $1$-harmonic Mass.

\begin{theorem}[Positive Mass Theorem for the Polarized $1$-harmonic Mass]
\label{thm_eich_7}
Let $(M,g)$ be a compact three-dimensional Riemannian manifold with smooth minimal boundary $\pa M$, whose scalar curvature satisfies
\[
\RRR \geq 2 \Lambda, 
\]
for some $\Lambda>0$.
Assume that $\pa M$ is simply connected and unstable and that there are no closed minimal surfaces in $M\setminus \pa M$. Then, for every $x \in M\setminus \pa M$, we have
\[
\mathfrak{m}_\Lambda^{(1)} (M, g, x) \, \geq \, 0,
\]
with equality if and only if $(M,g)$ is isometric to a round hemisphere of constant sectional curvatures equal to $\Lambda/3$.
\end{theorem}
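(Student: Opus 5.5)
The inequality should follow from an area-growth comparison along the IMCF with outer obstacle, without any monotonicity formula. Rigidity will come from combining the equality case with Geroch monotonicity of the Hawking mass~\eqref{mahw}.

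Let $w=w^{(1)}_x$ be the weak IMCF with outer obstacle $\pa M$ emanating from $x$, normalized as in~\eqref{eq:obs-imcf}. The first step is a perimeter bound for its sublevel sets $\Omega_\tau=\{w<\tau\}$:
\[
{\rm Per}(\Omega_\tau)\,\leq\,4\pi e^\tau\qquad\hbox{for all }\tau<T^*(x)\,.
\]
\begin{itemize}
\item For $\tau<T_*(x)$ this holds with equality, by the standard area growth before first contact (\cite[Remark 3.5-(iii)]{Xu}) and our normalization.
\item After contact, I would invoke the structure of the obstacle flow in \cite{Xu}. The portion of $\pa\Omega_\tau$ lying on the minimal obstacle does not expand, so $\tau\mapsto e^{-\tau}{\rm Per}(\Omega_\tau)$ is nonincreasing.
\end{itemize}
Plugging the bound into~\eqref{eq:1harmass} gives
\[
\mathfrak m^{(1)}_\Lambda(M,g,x)\,\geq\,\int_{-\infty}^{T^+_x}\frac{e^{\tau/2}}{16\pi}\bigl(4\pi-4\pi\Lambda e^{\tau}\bigr)\,d\tau\,=\,\frac{e^{T^+_x/2}}{2}\Bigl(1-\frac{\Lambda}{3}e^{T^+_x}\Bigr)\,=\,p_\Lambda\bigl(e^{T^+_x/2}\bigr)\,.
\]
This is nonnegative because $T^+_x\le T_\Lambda$ means $e^{T^+_x/2}\le R_\Lambda$. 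This step uses no topology, consistent with the remark after the statement.

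For rigidity, assume $\mathfrak m^{(1)}_\Lambda(M,g,x)=0$. Equality in the chain forces two things:
\begin{itemize}
\item $p_\Lambda(e^{T^+_x/2})=0$, that is, $T^+_x=T_\Lambda$, so $T^*(x)\geq T_\Lambda$;
\item ${\rm Per}(\Omega_\tau)=4\pi e^\tau$ for a.e. $\tau<T_\Lambda$.
\end{itemize}
Next I would show that $T_*(x)\geq T_\Lambda$, i.e. no contact with the obstacle occurs strictly before $T_\Lambda$. The intended argument is that a contact on a set of positive measure makes $e^{-\tau}{\rm Per}(\Omega_\tau)$ drop strictly, contradicting the equality above. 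The hypotheses enter here. Lemma~\ref{lem:topol} makes $M$ a ball, so the leaves stay connected spheres (Remark~\ref{rmk:H_2}), and Gauss--Bonnet makes the first term in~\eqref{eq:geroch} nonnegative. The absence of closed minimal surfaces in $M\setminus\pa M$ and the instability of $\pa M$ exclude interior jumps of the flow.

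With this in hand, the flow is a genuine IMCF up to time $T_\Lambda$ with $|\Sigma_t|=4\pi e^t$. Geroch--Huisken--Ilmanen monotonicity gives $0\le\mathfrak m_{\rm Haw}(\Sigma_t)$, nondecreasing. Writing $\mathfrak m_{\rm Haw}(\Sigma_t)=p_\Lambda(e^{t/2})-\sqrt{|\Sigma_t|/16\pi}\int_{\Sigma_t}\HHH^2/16\pi$, we get $\mathfrak m_{\rm Haw}(\Sigma_t)\le p_\Lambda(e^{t/2})\to0$ as $t\to T_\Lambda$. Hence the Hawking mass vanishes identically on $(-\infty,T_\Lambda)$, and every nonnegative term in~\eqref{eq:geroch} must vanish:
\begin{itemize}
\item $\RRR=2\Lambda$;
\item the leaves are umbilic ($\mathring{\rm h}=0$);
\item $\HHH$ is constant on each leaf;
\item the leaves are spheres.
\end{itemize}
Combined with the IMCF speed $1/\HHH$ and $\int_{\Sigma_t}\HHH^2=16\pi(1-\Lambda e^t/3)$, integrating exactly as in the proof of Theorem~\ref{thm:pos_pass_pol} yields $g=dr^2/(1-\frac\Lambda3r^2)+r^2g_{\SSS^2}$ with $r=e^{t/2}\in(0,R_\Lambda)$. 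Since $\Sigma_t$ approaches the obstacle as $t\to T_\Lambda$, $\pa M=\{r=R_\Lambda\}$ and $(M,g)$ is the round hemisphere. The converse direction is the explicit computation already carried out before the statement.

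I expect the main obstacle to be the weak-flow justification of two claims. The first is the perimeter comparison after contact, including the precise sense in which contact with the minimal obstacle strictly decreases $e^{-\tau}{\rm Per}(\Omega_\tau)$. The second is excluding early contact and jumps in the equality case, so that the leaves genuinely converge to $\pa M$ at time $T_\Lambda$. For both I would first try to adapt the weak variational formulation and the sub/supersolution comparison results of \cite{Xu}. The Huisken--Ilmanen regularity theory then gives the a.e. smoothness needed to read off the vanishing of the terms in~\eqref{eq:geroch}.
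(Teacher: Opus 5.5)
Your proof of the inequality and the first half of the rigidity argument are the paper's. The inequality comes from the bound ${\rm Per}(\Omega_\tau)\le 4\pi e^\tau$, strict after first contact, integrated to give $p_\Lambda(e^{T^+_x/2})\ge 0$. Rigidity then forces $T^+_x=T_\Lambda$ and $T_*(x)\ge T_\Lambda$, followed by Geroch monotonicity with connected leaves via Lemma~\ref{lem:topol}. Your limiting argument $\mathfrak m_{\rm Haw}(\Sigma_t)\le p_\Lambda(e^{t/2})\to 0$ is an equivalent repackaging of the paper's evaluation of the Hawking mass at $t=T_\Lambda$.

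\textbf{The gap.} It lies in the sentence ``Since $\Sigma_t$ approaches the obstacle as $t\to T_\Lambda$''. At that point you only have lower bounds $T_*(x),T^*(x)\ge T_\Lambda$, and the equality case alone cannot improve them. Indeed, whenever $T_*(x)\ge T_\Lambda$ one has $T^+_x=T_\Lambda$ and ${\rm Per}(\Omega_\tau)=4\pi e^\tau$ on $(-\infty,T_\Lambda)$. Hence $\mathfrak m^{(1)}_\Lambda(M,g,x)=\frac{R_\Lambda}{2}\bigl(1-\frac{\Lambda}{3}R_\Lambda^2\bigr)=0$ automatically. What your argument actually proves is that $\Omega_{T_\Lambda}$ is isometric to an open round hemisphere. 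A priori its equator could be a closed minimal sphere lying in $M\setminus\pa M$, with $M$ strictly larger. Ruling this out is exactly the job of the hypothesis that $M\setminus\pa M$ contains no closed minimal surfaces. Its role is not to exclude interior jumps, as you suggest; the regularity of the flow in the equality case is a matter for the vanishing of the Geroch terms. Likewise, the instability of $\pa M$ only enters through Lemma~\ref{lem:topol}.

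\textbf{The fix.} The missing step is short and uses what you already computed. From $\int_{\Sigma_t}\HHH^2\,d\sigma=16\pi\bigl(1-\frac{\Lambda}{3}e^t\bigr)\to 0$, the limiting leaf $\pa\Omega_{T_\Lambda}$ is minimal. The paper phrases this as $0\le \mathfrak m_{\rm Haw}(\pa\Omega_{T_\Lambda})=-\frac{R_\Lambda}{32\pi}\int_{\pa\Omega_{T_\Lambda}}\HHH^2\,d\sigma$, using $|\pa\Omega_{T_\Lambda}|=12\pi/\Lambda$. Being a closed minimal surface, $\pa\Omega_{T_\Lambda}$ must coincide with $\pa M$ by hypothesis. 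This gives $T_*(x)=T_\Lambda=T^*(x)$, and the leaves foliate $M\setminus\{x\}$. Only then does the local rigidity read off from the Geroch formula yield the global isometry with the round hemisphere.
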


\begin{proof}
We first recall from~\eqref{eq:1harmass} that the polarized $1$-harmonic Mass of $(M, g)$ with pole at $x \in M\setminus \pa M$ is given by    
\begin{equation*}
\mathfrak{m}_\Lambda^{(1)} (M, g, x) \, = \, \int_{-\infty}^{T^+_x}
\!\!\!\left(\frac{e^{\tau/2}}{16\pi}\right)  \, \big(4\pi - \Lambda \, {\rm Per} (\Omega_{\tau})\big)  \, d\tau  \, ,
\end{equation*}
where $T^+_x = \min \{ T_\Lambda ,T^*(x)\}$ and $\Omega_{\tau} = \Omega_{x,\tau}^{(1)} = \{ w < \tau \}$. Here $w = w_x$ is the IMCF with outer obstacle emanating from $x$, namely the weak solution of
\begin{equation}
    \label{eq:obs-imcf}
    \begin{dcases}
\mathrm{div}\left(e^{-w} \frac{\nabla w}{|\nabla w|}\right) \,=\, 4\pi  \delta_x & \text{in } M\,,
    \\
    \quad \frac{\nabla w}{|\nabla w|}(y)\longrightarrow \nu(y_0) & \text{as } y\to y_0 \in \partial M\, ,
    \end{dcases}
\end{equation}
where $\nu(y_0)$ is the outer unit normal at $y_0\in \partial M$ and $T^*(x)$ is the maximal contact time:
\begin{equation}
\label{richiamo_def_T^*}
T^*(x) \, =\, \max_{\pa M^+  }  \, w_x \, .
\end{equation}
Also recall that the first contact time is instead given by
\begin{equation}
\label{richiamo_def_T_*}
T_*(x) \, =\, \min_{\pa M^+ }  \, w_x \, .
\end{equation}
To prove the positivity of the $1$-harmonic Mass, we add and subtract the quantity $(\Lambda/4)\, e^{3\tau/2}$ inside the integral. Recalling 
that
an IMCF with obstacle subject to our chosen normalizations 
exhibits subexponential growth of the areas and, more precisely, satisfies
${\rm Per} (\Omega_t) \leq 4\pi e^t$, with strict inequality for $t >T_*(x)$ (see~\cite[Remark 3.5-(iii)]{Xu}), we arrive at
\begin{align*}
\mathfrak{m}_\Lambda^{(1)} (M, g, x) \,& =  \, 
\int_{-\infty}^{T^+_x} \!\frac{e^{\tau/2}  (1 - \Lambda \, e^\tau)}{4}    \, d\tau  \,  + \Lambda \int_{-\infty}^{T^+_x} \!\!\!\left(\frac{e^{\tau/2}}{16\pi}\right)  \, \big(4\pi e^\tau - \, {\rm Per} (\Omega_{\tau})\big)  \, d\tau
\, \\
&\geq \, 
\frac{e^{(T^+_x)/2}}{2} \, \left( 1 - 
\frac{\Lambda}3
e^{T^+_x}\right) \, \geq \, 0 \, ,
\end{align*}
where we performed a standard integration to treat the first summand. The last inequality follows form the fact that $T^+_x \leq T_\Lambda = \log (3/\Lambda)$, by definition. 

For the rigidity, observe that if $\mathfrak{m}_\Lambda^{(1)} (M, g, x) = 0$, then it necessarily follows from the last inequality that $T^+_x = T_\Lambda \leq T^*{(x)}$. Furthermore, in this situation one must also have $T_\Lambda \leq T_*(x)$; otherwise, the penultimate inequality would be strict. Now, let us restrict our attention to the interval $(-\infty, T_\Lambda]$ and note that, since $T_\Lambda$ does not exceed the first contact time $T_*$, the perimeters of the corresponding sublevel sets evolve according to the law
\[
\operatorname{Per}(\Omega_t) = 4\pi e^t,
\]
and, on this interval, the Hawking mass is nondecreasing. 
It should be emphasized that, in order to establish monotonicity, it is essential to invoke Lemma~\ref{lem:topol}, which can be employed to guarantee the connectedness of the IMCF level sets (see Remark~\ref{rmk:H_2}). This topological property, in turn, permits the application of the Gauss-Bonnet formula in determining the sign of the derivative of the Hawking mass along the evolution of the flow.
In particular, we obtain
\begin{align*}
0 \, &= \, \lim_{t \to -\infty}\mathfrak{m}_{\rm Haw} (\partial \Omega_{t})  \, \leq \, \mathfrak{m}_{\rm Haw} (\partial \Omega_{T_\Lambda}) \, =
\\ 
& =  \sqrt{\frac{|\partial \Omega_{T_\Lambda}|}{16 \pi}}  \, \left\{ 1 - \frac{\Lambda}{12 \pi} |\partial \Omega_{T_\Lambda}| - \frac{1}{16 \pi} \int_{\partial \Omega_{T_\Lambda}} \!\!\!\!\!\!\HHH^2 \, d \sigma \right\}
\\
& = \, - \, \frac{R_\Lambda}{32 \pi} \int_{\partial \Omega_{T_\Lambda} }\!\!\!\!\!\! \HHH^2 \, d \sigma \, ,
\end{align*}
as $|\partial \Omega_{T_\Lambda}| = \operatorname{Per}(\Omega_{T_\Lambda}) = 12\pi/\Lambda$. It follows that $\partial \Omega_{T_\Lambda}$ is a minimal surface and therefore, by our standing assumptions, must coincide with $\partial M$. Consequently, we deduce that if the 
$1$-harmonic Mass 
vanishes, then $T_* = T_\Lambda = T^*$; in other words, the level sets of the flow foliate the entire punctured manifold $M \setminus \{x\}$, and the Hawking mass remains identically zero along the evolution.
Employing the vanishing of the derivative of the Hawking mass along the IMCF~\eqref{eq:geroch}, in direct analogy with the argument used in the proof of Theorem~\ref{thm:pos_pass_pol}, one concludes that the manifold must be isometric to a hemisphere with constant sectional curvature equal to $3/\Lambda$.    \end{proof}
Having already addressed in Subsection~\ref{sub:1harm} the extension of the notion of Polarized $1$-harmonic Mass~\eqref{def:1harm} to the setting of time-symmetric initial data admitting a black-hole-type horizon, we now turn to the proof of the Riemannian Penrose Inequality. In accordance with the classical heuristic, this inequality asserts that, in the presence of a black hole, the total mass is bounded below by the mass of a corresponding static black hole solution (specifically, a Schwarzschild–de Sitter spacetime) whose horizon has the same area as the given one.

\begin{theorem}[Riemannian Penrose Inequality for the $1$-harmonic Mass]
\label{thm:RPI-1harm_7}
Let $(M,g)$ be a compact, three-dimensional Riemannian band, with scalar curvature satisfying 
\[
\RRR\geq 2 \Lambda, 
\]
for some $\Lambda >0$, and with compact minimal boundary $\partial M$ decomposing into two connected components
\[
\partial M \, = \, \partial M^- \sqcup \partial M^+,
\]
such that $|\partial M^-| < |\partial M^+|$. 
Assume that $\pa M^-$ is strictly stable, that $\pa M^+$ is simply connected and unstable and that there are no closed minimal surfaces in $M\setminus \pa M$. Then, the $1$-harmonic Mass of $(M,g)$ defined in~\eqref{def:1harm} satisfies
\begin{equation}
\mathfrak{m}_\Lambda^{(1)} (M,g) \, \geq \, \sqrt{\frac{|\partial M^-|}{16 \pi}} \left( 1 - \frac{\Lambda}{12 \pi} |\partial M^-|\right)  ,
\end{equation}
with equality if and only if $(M, g)$ is isometric to the Schwarzschild–de Sitter solution with mass parameter $m = \mathfrak{m}_\Lambda^{(1)} (M,g)$.
\end{theorem}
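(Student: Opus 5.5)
The plan mirrors the proof of Theorem~\ref{thm_eich_7}, with the IMCF now starting from the horizon $\pa M^-$ instead of a pole. Let $w$ be the weak solution of~\eqref{eq:ivp-obs-imcf}, normalized so that $w=T^-$ on $\pa M^-$. By Lemma~\ref{lem:topol} and Remark~\ref{rmk:H_2}, $H_2(M,\pa M^-;\Z)=\{0\}$, so all leaves of the flow are connected.

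\emph{Area comparison.} Since $\pa M^-$ is minimal and strictly stable, and there are no closed minimal surfaces in the interior, $\pa M^-$ is outer-minimizing. Hence the flow starts smoothly from it with no initial jump. Following~\cite[Remark 3.5-(iii)]{Xu}, I expect
\[
{\rm Per}(\Omega_\tau)\leq 4\pi e^{\tau}\quad\text{for } \tau\geq T^-,
\]
with equality up to the first contact time $T_*$ and strict inequality afterwards.

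\emph{Lower bound for the mass.} Set $m_0:=p_\Lambda(R^-)=\mathfrak m_{\rm Haw}(\pa M^-)$; the second equality holds because $\HHH=0$ on $\pa M^-$. Insert the area comparison into~\eqref{def:1harm}, adding and subtracting $(\Lambda/4)e^{3\tau/2}$ exactly as in Theorem~\ref{thm_eich_7}:
\[
\mathfrak m^{(1)}_\Lambda(M,g)\;\geq\; m_0+\int_{T^-}^{T^+}\frac{e^{\tau/2}(1-\Lambda e^{\tau})}{4}\,d\tau \;=\; m_0+p_\Lambda(R^+)-p_\Lambda(R^-)\;=\;p_\Lambda(R^+).
\]
The middle equality is the integration identity $\frac{d}{d\tau}p_\Lambda(e^{\tau/2})=\frac{e^{\tau/2}}{4}(1-\Lambda e^\tau)$. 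It remains to show $p_\Lambda(R^+)\geq m_0$. By Remark~\ref{rmk:stabledef}, $R^-<\sqrt{1/\Lambda}$, and $R^+\leq R^+_\Lambda(m_0)$ by definition of $R^+$. On $[R^-,R^+_\Lambda(m_0)]$ the cubic $p_\Lambda$ is at least $m_0$, so it suffices to check $R^+\geq R^-$. This holds because $T^*\geq T^-$, since $w\geq T^-$ everywhere. This proves the inequality.

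\emph{Rigidity.} Equality forces two things: ${\rm Per}(\Omega_\tau)=4\pi e^\tau$ on $(T^-,T^+)$, and $p_\Lambda(R^+)=m_0$, i.e.\ $R^+=R^+_\Lambda(m_0)$ (the root $R^+=R^-$ is excluded since $|\pa M^-|<|\pa M^+|$). From this I would deduce $T_*\geq T^+_\Lambda(m_0)$. Geroch monotonicity~\eqref{eq:geroch}, valid thanks to connected leaves, then gives
\[
m_0=\mathfrak m_{\rm Haw}(\pa M^-)\leq \mathfrak m_{\rm Haw}(\pa\Omega_{T^+})=p_\Lambda(R^+)-\frac{R^+}{32\pi}\int\HHH^2 .
\]
So $\pa\Omega_{T^+}$ is minimal, hence equals $\pa M^+$ by hypothesis, and the Hawking mass is constant along the flow. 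Vanishing of each term in~\eqref{eq:geroch} gives umbilic leaves, $\HHH$ constant on each leaf, $\RRR=2\Lambda$, and leaves that are round spheres. Integrating as in the proof of Theorem~\ref{thm:pos_pass_pol}, with $|\Sigma_\tau|=4\pi r^2$ and $\HHH^2=\frac{4}{r^2}\bigl(1-\frac{\Lambda}{3}r^2-\frac{2m_0}{r}\bigr)$ fixed by the constant Hawking mass, yields $g=g_{\Lambda,m_0}$.

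\emph{Main obstacle.} The hardest step is the weak-flow bookkeeping near the two horizons. Specifically: showing that the flow does not jump at $\pa M^-$ (this is where strict stability and the absence of interior minimal surfaces enter), and justifying Geroch monotonicity for the obstacle flow up to $T_*$. For the latter I would first localize the Huisken--Ilmanen argument to times before the first contact, where Xu's flow coincides with the free weak IMCF.
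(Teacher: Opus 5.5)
Your proposal is correct and follows the paper's proof essentially step by step. It uses the same splitting of the integrand, combined with Xu's bound ${\rm Per}(\Omega_\tau)\le 4\pi e^{\tau}$, to get $\mathfrak{m}^{(1)}_\Lambda(M,g)\ge p_\Lambda(R^+)\ge p_\Lambda(R^-)$, and the same rigidity chain: equal perimeters force $T^+_\Lambda(m_0)\le T_*$, Geroch monotonicity makes $\partial\Omega_{T^+}$ minimal and hence equal to $\partial M^+$, and the Hawking mass is then constant. The only minor differences are that the paper proves $|\partial M^-|<4\pi/\Lambda$ directly (second variation with $\phi\equiv 1$ plus Gauss--Bonnet) instead of quoting it, and that it is $|\partial M^-|<|\partial M^+|$ together with the absence of interior minimal surfaces, not strict stability, that makes $\partial M^-$ outer-minimizing and rules out instantaneous extinction of the flow.
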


\begin{proof} 
We first recall that the $1$-harmonic Mass of a Riemannian band is given by
\begin{equation}
\mathfrak{m}_\Lambda^{(1)} (M,g) \, := \,  \mathfrak{m}_{\rm Haw} (\partial M^-) \, + \, \int_{T^-}^{T^+} \left(\frac{e^{\tau/2}}{16\pi}\right)  \big(4\pi - \Lambda \, {\rm Per} (\Omega_{\tau})\big)  \, d\tau ,
\end{equation}
where $\Omega_\tau = \{ w< \tau \}$, $T^-= 2 \log R^-=\log (|\pa M^-|/4\pi)$ and $T^+= \min\{T_\Lambda^+(p_\Lambda(R^-)), T^*\}$. Here $w$ is the unique solution to problem~\eqref{eq:ivp-obs-imcf}, namely the IMCF emanating at time $T^-$ from the black-hole horizon $\pa M^-$, having the cosmological horizon $\pa M^+$ as an outer obstacle. Also recall that $p_\Lambda$ is the cubic polynomial 
\[
p_\Lambda(r) \, = \, \frac{r}{2}\left(1-\frac{\Lambda}{3}r^2 \right) \, ,
\]
moreover, for $0 \leq m \leq 1/(3 \sqrt \Lambda)$, we let $R_\Lambda^{\pm}(m)$ denote the two positive roots of the equation $p_\Lambda(r) = m$ and we set $T_\Lambda^\pm(m) = 2 \log (R_\Lambda^\pm(m))$.

Before proving the Penrose inequality~\eqref{eq:RPI_ABM}, we first substantiate the claim made in Remark~\ref{rmk:stabledef}, namely that the stability of $\pa M^-$ entails the area bound $|\pa M^-| \leq 4 \pi/\Lambda$, from which the well-posedness of the definition of $T^+$ follows. To see this, we observe that the strict stability of $\pa M^-$ as a minimal surface implies that the (one sided) second variation of the area functional is positive. With the help of the well known re-arrangements induced by the Gauss Equation, the latter condition reads
\[
\int_{\pa M^- } \!\!\! |\na \phi|^2 d \sigma > \frac12 \int_{\pa M^-} \!\!\!\left(\RRR + |\mathring{\rm h}|^2 + \frac32 {\HHH^2} \right) \phi^2 \, d\sigma \, - \frac12 \int_{\pa M^-} \!\!\!\!\! \RRR^{\pa M^-} \phi^2\, d\sigma \, , 
\]
for every nontrivial nonnegative function $\phi \in \mathscr{C}^\infty(M)$. Choosing $\phi \equiv 1$ in the above expression, one finds that $\pa M^-$ has positive Euler characteristic and hence it is spherical. Moreover, it holds
\[
4\pi = 2 \pi \chi(\pa M^-) \,  = \, \frac12 \int_{\pa M^-} \!\!\!\!\! \RRR^{\pa M^-} \, d\sigma \, > \,  
\frac12 \int_{\pa M^-} \!\!\!\!\! \RRR + |\mathring{\rm h}|^2  \, d\sigma \, \geq \, \Lambda \, |\pa M^-| \,,
\]
which implies at once
\begin{equation*}
0<R^- < \sqrt{{1}/{\Lambda}}    \qquad \hbox{and} \qquad  - \infty < T^- < - \log \Lambda \,  .
\end{equation*}
Consequently, both $R^+$ and $T^+$ are strictly larger than $R^-$ and $T^-$, respectively, since $p_\Lambda(R^-) < 1/(3\sqrt{\Lambda})$ and the inverse mean curvature flow under consideration~\eqref{eq:ivp-obs-imcf} 
cannot extinguish instantaneously.
This latter observation follows from the
fact that the strict inequality $|\pa M^-| <|\pa M^+|$ prevents the whole manifold $M$ from being the strictly outward minimizing hull of $\pa M^-$ (see~\cite[Theorem 3.12 and Theorem 3.13]{Xu}).
To verify the validity of inequality~\eqref{eq:RPI_ABM}, we first observe that the right-hand side of this inequality coincides with the Hawking mass of $\partial M^-$. Hence, using the definition of the 
$1$-harmonic Mass, 
we can write
\begin{align*}
\mathfrak{m}_\Lambda^{(1)}(M,g) \, - \, & \sqrt{\frac{|\partial M^-|}{16\pi}} \left( 1 - \frac{\Lambda}{12\pi} |\partial M^-| \right)
= \int_{T^-}^{T^+} \left(\frac{e^{\tau/2}}{16\pi}\right)\big(4\pi - \Lambda\,{\rm Per}(\Omega_\tau)\big)\,d\tau \\
=& \int_{T^-}^{T^+} \frac{e^{\tau/2}(1 - \Lambda e^\tau)}{4}\,d\tau
+ \Lambda \int_{T^-}^{T^+} \left(\frac{e^{\tau/2}}{16\pi}\right)\big(4\pi e^\tau - {\rm Per}(\Omega_\tau)\big)\,d\tau \,\geq
 \\
\geq &\,
\left[\frac{e^{\tau/2}}{2}\left(1 - \frac{\Lambda}{3}e^{\tau}\right)\right]_{\tau=T^-}^{\tau=T^+}
= p_\Lambda(R^+) - p_\Lambda(R^-)\, .
\end{align*}
Here, the inequality is a consequence of the subexponential growth of the areas of the level sets (see~\cite[Remark 3.5-(iii)]{Xu}). Indeed, by definition of $T^-$, we have
\[
{\rm Per}(\Omega_\tau) \leq {\rm Per}(\Omega_{T^-})\,e^{\tau - T^-} = 4\pi e^\tau \, ,
\]
with strict inequality for $\tau >T_*$ (see \eqref{richiamo_def_T^*}-\eqref{richiamo_def_T_*} for a definition of $T^*$ and $T_*$).
The nonnegativity of the rightmost term now follows from the definition of $T^+$ or, equivalently, from that of $R^+$. Specifically, setting $R^* = e^{T^*/2}$, we have that if 
{$R_\Lambda^+(p_\Lambda(R^-)) \leq R^*$, then $R^+ = R_\Lambda^+(p_\Lambda(R^-))$ 
}and consequently $p_\Lambda(R^+) = p_\Lambda(R^-)$. If instead $R_\Lambda^+(p_\Lambda(R^-)) > R^*$, then $R^+ = R^* \in (R^-, R_\Lambda^+(p_\Lambda(R^-)))$, which implies $p_\Lambda(R^+) > p_\Lambda(R^-)$.
The reader can refer to Figure~\ref{fig:p-lambda}, for an immediate check of these simple facts.

Regarding rigidity, observe that equality in~\eqref{eq:RPI_ABM} immediately implies that $T^+_\Lambda(p_\Lambda(R^-)) \leq T^*$, by the previous observation.
Consequently,
\begin{equation*}
0 \, = \int_{T^-}^{T^+_\Lambda(p_\Lambda(R^-))} \left(\frac{e^{\tau/2}}{16\pi}\right)\big(4\pi - \Lambda\,{\rm Per}(\Omega_\tau)\big)\,d\tau \, = \,   \Lambda \int_{T^-}^{T^+_\Lambda(p_\Lambda(R^-))} \left(\frac{e^{\tau/2}}{16\pi}\right)\big(4\pi e^\tau - {\rm Per}(\Omega_\tau)\big)\,d\tau\,.
\end{equation*}
Hence, it follows that ${\rm Per}(\Omega_\tau) = 4 \pi e^\tau$ for every $\tau \in [T^-,T^+_\Lambda(p_\Lambda(R^-)) ]$, from which we deduce
\[
T^+_\Lambda(p_\Lambda(R^-)) \leq T_* \,,
\]
using again~\cite[Remark 3.5-(iii)]{Xu}.
We now consider the IMCF with obstacle $\partial M^+$ emanating from $\partial M^-$ at time $T^-$, and we restrict our attention to the time interval $[T^-, T^+_\Lambda(p_\Lambda(R^-))]$. Since $T^+_\Lambda(p_\Lambda(R^-))$ does not exceed the first contact time $T_*$, the Hawking mass is nondecreasing on this interval. It should be emphasized that, in order to establish monotonicity, it is essential to invoke Lemma~\ref{lem:topol}, which can be employed to guarantee the connectedness of the IMCF level sets (see Remark~\ref{rmk:H_2}). This topological property, in turn, permits the application of the Gauss-Bonnet formula in determining the sign of the derivative of the Hawking mass along the evolution of the flow.
In particular, we obtain
\begin{align*}
 p_\Lambda (R^- ) \, &= \, \mathfrak{m}_{\rm Haw} (\partial M^-)  \, \leq \, \mathfrak{m}_{\rm Haw} (\partial \Omega_{T^+_\Lambda(p_\Lambda(R^-))}) \, 
\\ 
& = \, p_\Lambda(R^+_\Lambda(p_\Lambda(R^-))) \, - \, \frac{R^+_\Lambda(p_\Lambda(R^-))}{32 \pi} \int_{\partial \Omega_{T^+_\Lambda(p_\Lambda(R^-))}} \!\!\!\!\!\!\!\!\!\!\!\!\!\!\! \HHH^2 \, d \sigma \, .
\end{align*}
Since $R^+= R^+_\Lambda(p_\Lambda(R^-))$ and in turn $p_\Lambda (R^- ) = p_\Lambda(R^+_\Lambda(p_\Lambda(R^-)))$, it follows that $\partial \Omega_{T^+_\Lambda(p_\Lambda(R^-))}$ is a minimal surface. Moreover, as $\partial M^-$ is an outermost minimal surface in $M \setminus \partial M^+$, this surface must coincide with $\partial M^+$. Therefore, the IMCF under consideration starts from $\partial M^-$ at time $T^-$ and terminates at $\partial M^+$ at the standard existence time for the model flow, namely $T_* = T^+_\Lambda(p_\Lambda(R^-)) = T^*$. The constancy of the Hawking mass along the flow then implies, by standard rigidity arguments, that the manifold is isometric to the Schwarzschild-de Sitter solution with the corresponding mass.
\end{proof}

\appendix

\section{Power series solutions to the  structural ODE system }
\label{app:ODE}

In this appendix, we study several distinguished solutions of the linear ODE system
\begin{equation}
\label{eq:ODE_Phi_Psi_app_A}
\begin{dcases}
\displaystyle \frac{d\Phi}{dr}
= 2\left[\frac{(\Lambda/3)\, r^2}{1-(\Lambda/3)\, r^2} - \left(\frac{3-p}{p-1}\right)\right]\frac{1}{r}\, \Phi 
+ \left(\frac{5-p}{p-1}\right)\frac{1}{r}\, \Psi, 
\\[0.7em]
\displaystyle \frac{d\Psi}{dr}
= - \left(\frac{3-p}{p-1}\right)\frac{1}{r}\, \Phi 
+ \left[\frac{(\Lambda/3)\, r^2}{1-(\Lambda/3)\, r^2} + \left(\frac{2}{p-1}\right)\right]\frac{1}{r}\, \Psi,
\end{dcases}
\end{equation}
where $1<p<3$, $\Lambda \in \mathbb{R}$, and $r \in (0, R_\Lambda)$, with $R_\Lambda = \sqrt{3/\Lambda}$ if $\Lambda > 0$ and $R_\Lambda = +\infty$ if $\Lambda \leq 0$. As explained in the introduction to Subsection~\ref{sub:global}, this system is an alternative formulation of~\eqref{structural}, which is central for the analysis of the monotone quantities of Section~\ref{sec:MF}.

We mainly treat the case $\Lambda > 0$, which is most relevant in the present contribution. However, a similar analysis applies when $\Lambda < 0$, and we will indicate, when needed, how to adapt the arguments. The case $\Lambda = 0$ is elementary and serves as a model illustrating the basic mechanisms of the theory. The main purpose of this Appendix is to establish the following result.

\begin{proposition}
\label{pro:Phi_Psi_solutions}
For $1<p<3$ and $|x|<1$, let 
\[
\Upsilon(x)\,=\,{}_2F_1\left(a_p,b_p,c_p;x\right)\,,
\] 
be the hypergeometric function whose  parameters $a_p,b_p$ and $c_p$ are given by~\eqref{eq:parameters}. For $\Lambda\geq 0$ and $0<r<R_\Lambda$, let $\Phi$ and $\Psi$ be the functions defined by
\begin{equation}
\label{eq:Phi_Psi_app_A}
\begin{aligned}
\Phi(r)\,&=\,\frac{1}{8\pi}\, \frac{r}{1-(\Lambda/{3}) \, r^2}\,\, \Upsilon\!\bigl(\tfrac{\Lambda}{3} r^2\bigr)\,,
\\
\Psi(r)\,&=\,\frac{1}{8\pi} \, \frac{r}{1-(\Lambda/{3}) \,r^2} \, \left[\Upsilon\!\bigl(\tfrac{\Lambda}{3}r^2\bigr) \, +2 \!\left(\frac{p-1}{5-p}\right)({\Lambda}/{3}) \, r^2\,\, \dot\Upsilon\!\bigl(\tfrac{\Lambda}{3}r^2\bigr)\right]\,  \, .
\end{aligned}
\end{equation}
Then, $\Phi$ and $\Psi$ are the only positive solutions to~\eqref{eq:ODE_Phi_Psi_app_A} that satisfy
\begin{equation}
\label{limphipsizero_A}
\lim_{r\to 0^+}\frac{\Phi(r)}{r}\,=\,\frac{1}{8\pi}\,\qquad \hbox{and} \qquad
\lim_{r\to 0^+}\frac{\Psi(r)}{r}\,=\,\frac{1}{8\pi}\,.
\end{equation}
\end{proposition}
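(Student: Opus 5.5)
We split the proof into three parts: (i) the functions in~\eqref{eq:Phi_Psi_app_A} solve~\eqref{eq:ODE_Phi_Psi_app_A}, (ii) they satisfy~\eqref{limphipsizero_A} and are positive, (iii) no other positive solution has this behaviour at $r=0$.

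For (i), set $x=\tfrac{\Lambda}{3}r^2$, so that $r\,\frac{d}{dr}=2x\frac{d}{dx}$. Write $\Phi=\frac{r}{8\pi(1-x)}\,\Upsilon$ and $\Psi=\frac{r}{8\pi(1-x)}\,\Theta$, with $\Theta=\Upsilon+2\frac{p-1}{5-p}\,x\dot\Upsilon$. The prefactor contributes
\[
r\frac{d}{dr}\log\frac{r}{1-x}=1+\frac{2x}{1-x}.
\]
Then the first equation of~\eqref{eq:ODE_Phi_Psi_app_A} becomes
\[
2x\dot\Upsilon+\Upsilon+\frac{2x}{1-x}\Upsilon=\frac{2x}{1-x}\Upsilon-2\frac{3-p}{p-1}\Upsilon+\frac{5-p}{p-1}\Theta .
\]
This reduces to an identity once the definition of $\Theta$ is inserted, because $1+2\frac{3-p}{p-1}=\frac{5-p}{p-1}$. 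The second equation becomes a relation between $\Upsilon$, $\dot\Upsilon$ and $\ddot\Upsilon$. We will check that it coincides, up to a nonzero factor, with the hypergeometric equation
\[
x(1-x)\ddot\Upsilon=\Big[-\tfrac{p}{p-1}+\tfrac{p+1}{2(p-1)}x\Big]\dot\Upsilon-\tfrac{5-p}{4(p-1)}\Upsilon.
\]
The identities $a_p+b_p=\frac{3-p}{2(p-1)}$, $a_pb_p=-\frac{5-p}{4(p-1)}$ and $c_p=\frac{p}{p-1}$ are used here; verifying them is routine algebra from~\eqref{p-parameters}.

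For (ii), the initial behaviour is immediate from $\Upsilon(0)=1$ and the boundedness of $x\dot\Upsilon$ near $0$. For positivity we use the sign information $\Upsilon>0$, $\dot\Upsilon<0$, $\ddot\Upsilon<0$ on $(0,1)$, as outlined in the synopsis after Theorem~\ref{thm:global}. These signs follow from the series: $a_p>0>b_p>-1$ and $c_p>0$, so every coefficient $(a_p)_k(b_p)_k/(c_p)_k$ with $k\ge1$ is negative. Moreover $c_p-a_p-b_p=\frac{p+3}{2(p-1)}>0$, so Gauss's summation gives convergence at $x=1$ together with
\[
\Upsilon(1)=\frac{\Gamma(c_p)\Gamma(c_p-a_p-b_p)}{\Gamma(c_p-a_p)\Gamma(c_p-b_p)}>0 .
\]
Hence $\Upsilon$ is decreasing with positive limit, and $\Phi>0$. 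The map $x\mapsto\Theta(x)$ is decreasing, since $\dot\Theta=\frac{7-p}{5-p}\dot\Upsilon+2\frac{p-1}{5-p}x\ddot\Upsilon<0$. We then need $\Theta(1)\ge 0$, where by contiguous relations $\dot\Upsilon(1)$ is also a Gamma quotient; this gives $\Psi>0$ on $(0,R_\Lambda)$. The paper claims $\Theta(1)=0$ exactly. \emph{This Gamma-function identity is the step I expect to be most delicate}: it requires the relation $c_p-a_p-b_p-1$ versus the coefficient $2\frac{p-1}{5-p}$. If only $\Theta(1)\ge0$ can be established, that still suffices for positivity.

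For (iii), note that the system is linear with a regular singular point at $r=0$. Its indicial matrix is
\[
\begin{pmatrix}-2\frac{3-p}{p-1} & \frac{5-p}{p-1}\\[0.3em] -\frac{3-p}{p-1} & \frac{2}{p-1}\end{pmatrix},
\]
with trace $\frac{2p-4}{p-1}$ and determinant $\frac{(3-p)(p-1)}{(p-1)^2}=\frac{3-p}{p-1}$. Its eigenvalues are therefore $1$ and $\frac{3-p}{p-1}$, which checks against the expected $\Phi\sim r$. The second Frobenius solution behaves like $r^{(3-p)/(p-1)}$, or possibly like $r\log r$ when $p=2$. If $\frac{3-p}{p-1}>1$, that is $p<2$, the limits~\eqref{limphipsizero_A} leave the coefficient of this solution undetermined. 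In that case uniqueness must instead come from the requirement that the pair correspond to the selected $\mu$, $\lambda$: two solutions with the same limits would give two ancient solutions $\mu$ tending to $1/(3-p)$, contradicting Theorem~\ref{thm:selection_mu}. For $p>2$ the second solution is $o(r)$ and the same reduction applies.

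Concretely, any positive solution yields $\mu=\alpha^2\Phi/(\alpha\Psi)\cdot\alpha^{-1}$ (inverting~\eqref{def:phipsi}), and~\eqref{limphipsizero_A} forces $\mu\to1/(3-p)$ as $t\to-\infty$. Uniqueness in Theorem~\ref{thm:selection_mu} then pins down $\mu$, and $\lambda$ is fixed by the normalization of $\Psi/r$. This route avoids delicate Frobenius analysis, and it is the one we would use.
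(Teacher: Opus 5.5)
Your parts (i) and (ii) follow the paper's own verification. That is: reduce the system to the hypergeometric equation, read off \eqref{limphipsizero_A} from $\Upsilon(0)=1$, and obtain $\Psi>0$ from the monotonicity of $\Theta=\Upsilon+2\frac{p-1}{5-p}x\dot\Upsilon$ together with $\Theta(1)=0$. Your reductions in (i) are correct. However, the step you flag as delicate and leave open is exactly what positivity of $\Psi$ requires, so as written $\Psi>0$ is not established.

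That step is immediate once you use the correct value $c_p-a_p-b_p=\tfrac32$; you wrote $\frac{p+3}{2(p-1)}$. There is also a harmless slip in $\dot\Theta$, whose first coefficient is $\frac{p+3}{5-p}$, not $\frac{7-p}{5-p}$. Since $\dot\Upsilon=\frac{a_pb_p}{c_p}\,{}_2F_1(a_p+1,b_p+1,c_p+1;x)$ and $(c_p+1)-(a_p+1)-(b_p+1)=\tfrac12>0$, Gauss's formula together with $\Gamma(\tfrac32)=\tfrac12\Gamma(\tfrac12)$ gives
\[
\dot\Upsilon(1)=a_pb_p\,\frac{\Gamma(\frac12)\Gamma(c_p)}{\Gamma(a_p+\frac32)\Gamma(b_p+\frac32)}=2a_pb_p\,\Upsilon(1)=-\frac{5-p}{2(p-1)}\,\Upsilon(1).
\]
Hence $\Theta(1)=0$ exactly. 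By continuity of $\Theta$ up to $x=1$ (Abel) and strict monotonicity, $\Theta>0$ on $(0,1)$.

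On uniqueness, the paper's argument in Subsection~\ref{app:ODE_proof} does not address it at all, so your (iii) goes beyond the paper. However, your indicial computation is wrong. The determinant of your matrix is $\frac{(3-p)(1-p)}{(p-1)^2}=-\frac{3-p}{p-1}$, not $+\frac{3-p}{p-1}$; with your value, $1$ is not even a root of the characteristic polynomial. The exponents at $r=0$ are therefore $1$, with eigenvector $(1,1)$, and $-\frac{3-p}{p-1}<0$.

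Consequently every solution not proportional to the regular one blows up like $r^{-(3-p)/(p-1)}$, possibly with a logarithmic correction. Mere boundedness of $\Phi/r$ and $\Psi/r$ near $0$ already forces a multiple of the regular solution, and \eqref{limphipsizero_A} fixes the multiple. So uniqueness holds among all solutions, and positivity is not needed. Equivalently, $8\pi(1-x)\Phi/r$ solves the hypergeometric equation, whose exponents at $x=0$ are $0$ and $1-c_p=-\frac{1}{p-1}$.

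Your assertion that for $p<2$ the limits leave a free coefficient is false. If it were true, it would contradict the uniqueness in Theorem~\ref{thm:selection_mu} that you then invoke. Adding any multiple of an $o(r)$ solution would preserve the limits and positivity near $r=0$, and would produce a second ancient $\mu\to 1/(3-p)$.

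Your fallback argument is valid, with two provisos. It needs the correct inversion $e^{\lambda}=\alpha\Psi$, $\mu=\alpha\Phi/\Psi$; your formula collapses to $\Phi/\Psi\to 1$, not $1/(3-p)$. It also relies on the equivalence asserted in the proof of Theorem~\ref{thm:global}. Even so, it is a detour around a two-line local computation.
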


\begin{remark}
\label{rem:Y_negativo}
The above result can be easily extended to the case $\Lambda = 0$. The proof is indeed straightforward in this regime, as setting $\Lambda = 0$ into~\eqref{eq:Phi_Psi_app_A} yields $\Phi(r) = r/(8\pi) = \Psi(r)$. A straightforward verification then shows that this choice simultaneously satisfies the differential equation~\eqref{eq:ODE_Phi_Psi_app_A} as well as the  condition~\eqref{limphipsizero_A}.
With the same proof we can show that~\eqref{eq:Phi_Psi_app_A} also solves~\eqref{eq:ODE_Phi_Psi_app_A} when $\Lambda<0$. In this case, however, formula~\eqref{eq:Phi_Psi_app_A} is well defined only while the argument $(\Lambda/3)r^2$ of $\Upsilon$ is greater than $-1$, i.e.\ for $r\leq \sqrt{3/|\Lambda|}$. Nevertheless, the solution extends beyond $r=\sqrt{3/|\Lambda|}$. Indeed, Theorem~\ref{thm:global} proves that structural coefficients $\mu$ and $\lambda$ with the chosen asymptotics at $-\infty$ exist for all times, and thus so do the corresponding $\Phi=\mu e^\lambda/\alpha^2$ and $\Psi=e^\lambda/\alpha$. Note that condition~\eqref{limphipsizero_A} guarantees this correspondence.
\end{remark}

The second main result of this appendix concerns the behavior of the solutions $\Phi$ and $\Psi$, defined by~\eqref{eq:Phi_Psi_app_A}, as $r \to R_\Lambda$ and hence as $p \to 1$.

\begin{proposition}
\label{pro:behaviour_near_one}
For $1<p<3$ and $\Lambda>0$, let $\Phi$ and $\Psi$ be the functions defined by~\eqref{eq:Phi_Psi_app_A}. Then, it holds
\begin{align}
\label{eq:behaviour_rp_to_one_phi}
\lim_{r\to R_\Lambda}\Phi(r)\left(1-\frac{\Lambda}{3}r^2\right)\,&=\,\frac{R_\Lambda}{16{\pi}}\,\, \frac{\Gamma(\tfrac{1}{2})\, \Gamma(c_p)}{\Gamma(a_p+\frac{3}{2})\, \Gamma(b_p+\frac{3}{2})} \, =\, \frac{R_\Lambda}{16\pi}\,(p-1) \, \left(1+o(1)\right)\,,
\\
\label{eq:behaviour_rp_to_one_psi}
\lim_{r\to R_\Lambda}\Psi(r) \,\, \sqrt{1-\frac{\Lambda}{3}r^2}\,&=\,\frac{R_\Lambda}{8{\pi}}\,\, \frac{\Gamma(\tfrac12) \, \Gamma(c_p)}{\Gamma(a_p+1)\, \Gamma(b_p+1)}\,\, = \,\frac{3 R_\Lambda}{32\sqrt{\pi}}\,(p-1)^{\frac{3}{2}}  \left(1+o(1)\right)\,.
\end{align}
as $p \to 1^+$.
\end{proposition}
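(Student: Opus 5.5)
The plan is to read off both limits from the classical boundary behaviour of Gauss hypergeometric functions at $x=1$, applied to $x=\tfrac{\Lambda}{3}r^2\to1^-$, and then to extract the $p\to1^+$ rates from Stirling-type asymptotics of the Gamma factors.

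\emph{First limit.} Since $a_p+b_p=\tfrac{3-p}{2(p-1)}$ and $c_p=\tfrac{p}{p-1}$, we have $c_p-a_p-b_p=\tfrac32>0$. Moreover $c_p-a_p=b_p+\tfrac32$ and $c_p-b_p=a_p+\tfrac32$. Gauss's summation theorem therefore gives
\[
\Upsilon(1)=\frac{\Gamma(c_p)\,\Gamma(\tfrac32)}{\Gamma(a_p+\tfrac32)\,\Gamma(b_p+\tfrac32)},\qquad \Gamma(\tfrac32)=\tfrac12\Gamma(\tfrac12).
\]
Since $\Phi(r)\bigl(1-\tfrac{\Lambda}{3}r^2\bigr)=\tfrac{r}{8\pi}\Upsilon\bigl(\tfrac{\Lambda}{3}r^2\bigr)$ and $r\to R_\Lambda$, this yields~\eqref{eq:behaviour_rp_to_one_phi} immediately.

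\emph{Second limit.} Set $k=2\tfrac{p-1}{5-p}$ and $B(x)=\Upsilon(x)+k\,x\,\dot\Upsilon(x)$, so that $\Psi\sqrt{1-x}=\tfrac{r}{8\pi}\,B(x)/\sqrt{1-x}$. We already know $B(1)=0$, so we need the precise rate at which $B$ vanishes. The differentiation formula gives $\ddot\Upsilon=\tfrac{a_pb_p(a_p+1)(b_p+1)}{c_p(c_p+1)}\,{}_2F_1(a_p+2,b_p+2,c_p+2;x)$. This last function has $c-a-b=-\tfrac12$, so the standard connection formula yields
\[
{}_2F_1(a_p+2,b_p+2,c_p+2;x)\sim\frac{\Gamma(c_p+2)\,\Gamma(\tfrac12)}{\Gamma(a_p+2)\,\Gamma(b_p+2)}\,(1-x)^{-1/2}.
\]
On the other hand, $\dot\Upsilon$ remains bounded at $1$, because the corresponding $c-a-b$ equals $\tfrac12$. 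Hence in $B'=(1+k)\dot\Upsilon+kx\ddot\Upsilon$ the term $k\ddot\Upsilon$ dominates, and
\[
B(x)=-\int_x^1B'\,d\xi\sim-2k\,C\,(1-x)^{1/2},
\]
where $C$ is the coefficient of $(1-x)^{-1/2}$ in $\ddot\Upsilon$. Using $k\,a_pb_p=-\tfrac12$, $\Gamma(c_p+2)/(c_p(c_p+1))=\Gamma(c_p)$ and $(a_p+1)(b_p+1)/(\Gamma(a_p+2)\Gamma(b_p+2))=1/(\Gamma(a_p+1)\Gamma(b_p+1))$, I get
\[
B(x)\sim\frac{\Gamma(\tfrac12)\,\Gamma(c_p)}{\Gamma(a_p+1)\,\Gamma(b_p+1)}\,\sqrt{1-x},
\]
which is~\eqref{eq:behaviour_rp_to_one_psi}. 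The main point to check carefully is this asymptotic-integration step: one should justify the integration (e.g.\ by de l'H\^opital on $B(x)/\sqrt{1-x}$) and verify that the lower-order term $(1+k)\dot\Upsilon$ is indeed $o\bigl((1-x)^{-1/2}\bigr)$.

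\emph{Behaviour as $p\to1^+$.} Write $\varepsilon=p-1$. Expanding $\sqrt{4+12\varepsilon-3\varepsilon^2}=2+3\varepsilon-3\varepsilon^2+O(\varepsilon^3)$ gives
\[
a_p=\tfrac1\varepsilon+\tfrac12+O(\varepsilon),\qquad b_p=-1+\tfrac34\varepsilon+O(\varepsilon^2),\qquad c_p=\tfrac1\varepsilon+1.
\]
Then $\Gamma(b_p+\tfrac32)\to\Gamma(\tfrac12)$ and $\Gamma(b_p+1)\sim\tfrac{4}{3\varepsilon}$. The ratio asymptotics $\Gamma(z+\alpha)/\Gamma(z+\beta)\sim z^{\alpha-\beta}$ give $\Gamma(c_p)/\Gamma(a_p+\tfrac32)\sim\varepsilon$ and $\Gamma(c_p)/\Gamma(a_p+1)\sim\varepsilon^{1/2}$. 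Substituting, the first limit becomes $\tfrac{R_\Lambda}{16\pi}\,\varepsilon\,(1+o(1))$ and the second becomes $\tfrac{R_\Lambda}{8\pi}\sqrt\pi\cdot\tfrac34\,\varepsilon^{3/2}=\tfrac{3R_\Lambda}{32\sqrt\pi}\,\varepsilon^{3/2}$, as claimed. Here one only has to keep the $O(\varepsilon)$ corrections in $a_p$ inside the $o(1)$.
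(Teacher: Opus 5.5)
Your proposal is correct. The first limit and the $p\to1^+$ step agree in substance with the paper: Gauss summation, then Stirling. Your Gamma-ratio form $\Gamma(z+\alpha)/\Gamma(z+\beta)\sim z^{\alpha-\beta}$ is a tidier packaging. The $O(\varepsilon)$ shift in $a_p$ is harmless, since for $z=1/\varepsilon$ one has $z^{O(\varepsilon)}=e^{O(\varepsilon\log(1/\varepsilon))}\to1$.

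Where you genuinely differ is the second limit. The paper applies the full $x\mapsto 1-x$ connection formula to both ${}_2F_1(a_p,b_p,c_p;x)$ and ${}_2F_1(a_p+1,b_p+1,c_p+1;x)$. It then observes that the regular parts combine into an $\mathcal O(1-x)$ term, and reads the $\sqrt{1-x}$ coefficient off the singular part of the $\dot\Upsilon$-term. You instead reuse the cancellation $B(1)=0$, already obtained from Gauss's formula. You then need only the one-term singular asymptotics of ${}_2F_1(a_p+2,b_p+2,c_p+2;x)$, where $c-a-b=-\tfrac12$, and you integrate $B'\sim kC(1-x)^{-1/2}$ back from $x=1$. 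The identity $k\,a_pb_p=-\tfrac12$ then reproduces exactly the paper's constant.

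Your route needs less input and avoids all bookkeeping of the regular parts. Incidentally, the Gamma prefactor of those regular parts is misprinted in the paper's display: it should have $\Gamma(a_p+\tfrac32)\Gamma(b_p+\tfrac32)$ in the denominator. The price is that you get only the leading term. The paper's route yields the full expansion in half-integer powers of $1-x$, including the explicit $\mathcal O(1-\tfrac{\Lambda}{3}r^2)$ remainder, which is more than the statement requires.

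The justification you flag is routine. Apply de l'H\^opital to $B(x)/\sqrt{1-x}$; $\dot\Upsilon$ is continuous on $[0,1]$ because its series has $c-a-b=\tfrac12>0$, so $(1+k)\dot\Upsilon=o\bigl((1-x)^{-1/2}\bigr)$.
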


Finally, we discuss the behaviour of $\Phi$ and $\Psi$ as $p\to 1$.

\begin{proposition}
\label{pro:behaviour_near_one_2}
Let $\Phi$ and $\Psi$ be the functions defined by~\eqref{eq:Phi_Psi_app_A} and let $\Lambda>0$.
Then, for all $r\in[0,R_\Lambda)$ it holds
\[
\Phi(r)\,=\,\frac{r}{8\pi}+o(1)\,,\quad \Psi(r)\,=\, \frac{r}{8\pi}+o(1)
\]
as $p\to 1^+$.
\end{proposition}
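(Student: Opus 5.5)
Fix $r\in[0,R_\Lambda)$ and set $x=\tfrac{\Lambda}{3}r^2\in[0,1)$. At $r=0$ both functions vanish identically, so there is nothing to prove there. For $r>0$, the explicit formulas~\eqref{eq:Phi_Psi_app_A} reduce the statement to two one-variable limits at the fixed point $x<1$:
\[
\Upsilon(x)={}_2F_1(a_p,b_p,c_p;x)\longrightarrow 1-x,
\qquad
(p-1)\,x\,\dot\Upsilon(x)\longrightarrow 0,
\qquad \hbox{as } p\to1^+ .
\]
Indeed, granting these limits, $\Phi(r)=\tfrac{1}{8\pi}\tfrac{r}{1-x}\Upsilon(x)\to \tfrac{r}{8\pi}$. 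For $\Psi$, the bracket in its definition differs from $\Upsilon(x)$ by $2\tfrac{p-1}{5-p}\,x\dot\Upsilon(x)$, which tends to $0$, so the same limit follows.

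The first step is to expand the parameters~\eqref{p-parameters} in $\varepsilon=p-1$. Using $\sqrt{4+12\varepsilon-3\varepsilon^2}=2+3\varepsilon+O(\varepsilon^2)$, we get
\[
a_p=\tfrac1\varepsilon+\tfrac12+O(\varepsilon),\qquad
b_p=-1+O(\varepsilon),\qquad
c_p=\tfrac1\varepsilon+1 .
\]
Next we pass to the limit term by term in the series $\sum_k \frac{(a_p)_k(b_p)_k}{(c_p)_k}\frac{x^k}{k!}$. For each fixed $k$, the ratio $(a_p)_k/(c_p)_k=\prod_{j<k}\frac{a_p+j}{c_p+j}$ tends to $1$. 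Meanwhile $(b_p)_k\to(-1)_k$, which equals $1$ for $k=0$, $-1$ for $k=1$, and $0$ for $k\ge2$. So the termwise limit of the series is exactly $1-x$.

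The only real point is to justify the exchange of limit and sum, and for this I will use dominated convergence. Since $0<a_p<c_p$ for $p$ close to $1$, every factor $\frac{a_p+j}{c_p+j}$ lies in $(0,1)$, hence $0<(a_p)_k/(c_p)_k\le1$. Since $|b_p|\le 2$ for $p$ near $1$, we have
\[
\frac{|(b_p)_k|}{k!}\le \frac{2\prod_{j=1}^{k-1}(j+2)}{k!}\le C(k+1)^2 .
\]
The resulting majorant $C(k+1)^2x^k$ is summable because $x<1$. This gives $\Upsilon(x)\to 1-x$.

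For the derivative I will use the identity recalled in the proof of Theorem~\ref{thm:global},
\[
\dot\Upsilon(x)=-\tfrac{5-p}{4p}\,{}_2F_1(a_p+1,b_p+1,c_p+1;x).
\]
The shifted parameters still satisfy $0<a_p+1<c_p+1$ and $|b_p+1|\le 2$, so the same domination shows that ${}_2F_1(a_p+1,b_p+1,c_p+1;x)$ stays bounded uniformly in $p$ near $1$. Consequently $(p-1)x\dot\Upsilon(x)=O(p-1)$, which tends to $0$. This completes the argument. The main (mild) obstacle is the uniform bound on the Pochhammer ratios, which the ordering $a_p<c_p$ settles.
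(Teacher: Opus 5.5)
Your proof is correct and follows the paper's argument: you pass to the limit termwise in the hypergeometric series, using $a_p<c_p$ and $(b_p)_k\to(-1)_k$ (which kills every term with $k\ge 2$), and you bound ${}_2F_1(a_p+1,b_p+1,c_p+1;x)$ to dispose of the $\dot\Upsilon$ term in $\Psi$. Your explicit majorant $C(k+1)^2x^k$ supplies the uniform-in-$p$ domination needed to exchange the limit and the sum, a step the paper only sketches via the ratio test.
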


The appendix is organized as follows: Subsection~\ref{app:Upsilon} states algebraic-differential properties of the hypergeometric function $\Upsilon$, and Subsections~\ref{app:ODE_proof},~\ref{app:ASYM_proof} and~\ref{app:ASYM2_proof} contain the proofs of Propositions~\ref{pro:Phi_Psi_solutions},~\ref{pro:behaviour_near_one} and~\ref{pro:behaviour_near_one_2}, respectively.

\subsection{Properties of the function $\Upsilon$}
\label{app:Upsilon}

We discuss here the main features and properties of the function
\[
\Upsilon(x)\,=\,{}_2F_1\left(a_p,b_p,c_p;x\right)\,,\qquad |x|<1\,,
\]
where
\begin{align}
\label{eq:parameters}
a_p\,&=\,\frac{3-p+\sqrt{4+12(p-1)-3(p-1)^2}}{4(p-1)}\,, \nonumber
\\
b_p\,&=\,\frac{3-p-\sqrt{4+12(p-1)-3(p-1)^2}}{4(p-1)}\,,
\\
\nonumber
c_p \,& = \, \frac{p}{p-1}\,.
\end{align}
The function ${}_2F_1\left(a_p,b_p,c_p;x\right)$ is called the hypergeometric function of parameters $a_p,b_p$ and $c_p$, whereas $x$ is the variable.
We recall that ${}_2F_1$ is defined by 
\[
{}_2F_1(a,b,c;x)=\sum_{k=0}^{+\infty}\frac{(a)_k (b)_k}{(c)_k}\frac{x^k}{k!}\,,\quad |x|<1\,,
\]
where the notation $(h)_k$ is the Pochhammer symbol, defined as
\[
(h)_k\,=\,
\begin{dcases}
1 & \hbox{if }k=0
\\
h(h+1)(h+2)\cdots (h+k-1)& \hbox{if }k>0
\end{dcases}
\]
Concerning the parameters $a_p,b_p$ and $c_p$, one can directly check that $a_p$ is decreasing with $\lim_{p\to 1}a_p=+\infty$ and $a_3=1/2$, whereas $b_p$ is increasing with $\lim_{p\to 1}b_p=-1$ and $b_3=-1/2$. As a consequence, the following bounds hold for any $1<p<3$:
\begin{equation}
\label{eq:bound_abc}
a_p>\frac{1}{2}\,,\quad -1<b_p<-\frac{1}{2}\,,\quad c_p>\frac{3}{2}\,.
\end{equation}
From the power series definition of the hypergeometric function, we immediately get
\[
\Upsilon(0)=1\,.
\]
The series defining $\Upsilon$ also converges at $x=1$. By Gauss's summation formula, for any hypergeometric function ${}_2F_1(a,b,c;x)$ with $c>a+b$, it holds the following Gamma function representation in $x=1$
\[
{}_2F_1(a,b,c;1)=\frac{\Gamma(c)\Gamma(c-a-b)}{\Gamma(c-a)\Gamma(c-b)}\,.
\]
Since $c_p=a_p+b_p+3/2$, it follows that
\[
\Upsilon(1) \, = \, \frac{\Gamma(\frac{3}{2})\Gamma(c_p)}{\Gamma(a_p+\frac{3}{2})\Gamma(b_p+\frac{3}{2})} >0\,.
\]
For future convenience, we also point out that and $\Gamma(3/2)=\sqrt{\pi}/2$.

\medskip

Another important feature of hypergeometric function ${}_2F_1(a,b,c;x)$ is that they  satisfy a second order differential equation determined by their parameters
\[
x(1-x)\ddot y+[c-(a+b+1)x]\dot y-ab y\,=\,0\,.
\]
Since $a_p+b_p=(3-p)/(2(p-1))$ and $a_p b_p=-(5-p)/(4(p-1))$, we have that $\Upsilon$ satisfies
\begin{equation}
\label{eq:ODE_Upsilon}
x\ddot\Upsilon(x)\,=\,\left[-\frac{p}{p-1}+\frac{p+1}{2(p-1)}x\right]\frac{\dot\Upsilon(x)}{1-x}-\frac{5-p}{4(p-1)}\frac{\Upsilon(x)}{1-x}\,,
\end{equation}
where we have denoted by $\dot\Upsilon$ and $\ddot\Upsilon$ the first and second derivative of $\Upsilon$ with respect to $x$. 
Furthermore, from the definition of hypergeometric function we also deduce
\begin{equation}
\label{eq:Upsilon_derivative}
\dot\Upsilon(x)\,=\,-\frac{5-p}{4p}\,{}_2F_1(a_p+1,b_p+1,c_p+1;x)\,.
\end{equation}
Since every hypergeometric function computed in $x=0$ gives $1$, we deduce 
\[
\dot\Upsilon(0)\,=\,-\frac{5-p}{4p}\,.
\]
We can also compute $\dot\Upsilon(1)$, with the help of the Euler's Gamma functions , as we just did for $\Upsilon(1)$
\begin{equation}
\label{eq:Upsilondot_one}
\dot\Upsilon(1)\,=\,-\frac{5-p}{4(p-1)}\frac{\Gamma(\frac12) \,\Gamma(c_p)}{\Gamma(a_p+\frac{3}{2})\Gamma(b_p+\frac{3}{2})}\,=\,-\frac{5-p}{2(p-1)}\Upsilon(1)\,,
\end{equation}
where we have also used the identities $\Gamma(c_p + 1) = c_p \, \Gamma (c_p)$ and $\Gamma(\frac12) = \sqrt{\pi}$.
Notice also that the coefficients in the hypergeometric function in the right hand side of~\eqref{eq:Upsilon_derivative} are all positive thanks to~\eqref{eq:bound_abc}, hence $\dot\Upsilon <0$. It follows that $x \mapsto \Upsilon(x)$ is decreasing, and thus positive, since $\Upsilon(1) >0$. Differtiating~\eqref{eq:Upsilon_derivative}, we obtain
\[
\ddot\Upsilon(x)\,=\,-\frac{5-p}{4p}\,\frac{(a_p+1)(b_p+1)}{(c_p+1)}\,{}_2F_1(a_p+2,b_p+2,c_p+2;x)<0\,.
\]
Summarizing, we have proved that
\[
\Upsilon(x)>0\,,\quad \dot\Upsilon(x)<0\,,\quad
\ddot\Upsilon(x)<0\,,
\]
for all $x\in(0,1)$.

\subsection{Proof of Proposition~\ref{pro:Phi_Psi_solutions}}
\label{app:ODE_proof}

In this subection, we rigorously verify that the functions defined in~\eqref{eq:Phi_Psi_app_A} satisfy all the prescribed requirements.

\medskip

First, observe that the conditions in~\eqref{limphipsizero_A} concerning the behavior of $\Phi$ and $\Psi$ as $r \to 0^+$ are indeed satisfied, since $\Upsilon(0) = 1$ and $\dot\Upsilon(0) = -(5-p)/(4p)$, as established in Subsection~\ref{app:Upsilon}. 

\medskip

The positivity of $\Phi(r)$ for $r\in(0,1)$ is an immediate consequence of the positivity of $\Upsilon$, proved in Subsection~\ref{app:Upsilon}.
To prove the positivity of $\Psi$, we first recall that $\dot\Upsilon(x)$ and $\ddot\Upsilon(x)$ are both negative for all $x\in(0,1)$, hence
\[
\frac{d}{dx}\left(\Upsilon(x)+2\frac{p-1}{5-p}x\dot\Upsilon(x)\right)=\left(1+2\frac{p-1}{5-p}\right)\dot\Upsilon(x)+2\frac{p-1}{5-p}x\ddot\Upsilon(x)<0
\]
Since we know from~\eqref{eq:Upsilondot_one} that
\[
\Upsilon(1)+2\frac{p-1}{5-p}\dot\Upsilon(1)=0\,,
\]
we conclude that the quantity in square bracket in the definition of $\Psi$ is always nonnegative, hence $\Psi(r)\geq 0$ for all $r$, as wished.

\medskip

It only remains to prove that $\Phi,\Psi$ given by~\eqref{eq:Phi_Psi_app_A} indeed solve~\eqref{eq:ODE_Phi_Psi_app_A}.
To this end, it is convenient to consider the functions $F=\Phi/r$ and $G=\Psi/r$ and to see them as functions of $x=(\Lambda/3)r^2$. To prove that $\Phi$, $\Psi$ satisfy~\eqref{eq:ODE_Phi_Psi_app_A} is equivalent to prove that  
\begin{align*}
F(x)\,&=\,\frac{1}{8\pi}\frac{1}{1-x}\Upsilon(x)\,,
\\
G(x)\,&=\,\frac{1}{8\pi}\frac{1}{1-x}\left[\Upsilon(x)+2\,\frac{p-1}{5-p}\,x\,\dot\Upsilon(x)\right]\,.
\end{align*}
solve
\begin{equation}
\label{eq:ODE_F_G}
\begin{dcases}
\frac{dF}{dx}(x)\,=\,\left(\frac{1}{1-x}-\frac{5-p}{2(p-1)}\frac{1}{x}\right)F(x)+\frac{5-p}{2(p-1)}\frac{1}{x}G(x)
\\
\frac{dG}{dx}(x)\,=\,-\frac{3-p}{2(p-1)}\frac{1}{x}F(x)+\left(\frac{1}{2(1-x)}+\frac{3-p}{2(p-1)}\frac{1}{x}\right)G(x)
\end{dcases}
\end{equation}
To check the first prescription in~\eqref{eq:ODE_F_G}, we compute
\begin{align*}
\frac{dF(x)}{dx}\,&=\,\frac{1}{8\pi(1-x)}\left[\frac{1}{1-x}\Upsilon(x)+\dot\Upsilon(x)\right]
\\
&=\frac{1}{1-s}F(x)+\frac{5-p}{2(p-1)}\frac{1}{x}\left[G(x)-F(x)\right]\,.
\end{align*}
In the second passage we have used
\begin{equation}
\label{eq:formula_Upsilondot_FG}
\frac{1}{4\pi}\frac{p-1}{5-p}\frac{x}{1-x}\dot\Upsilon(x)\,=\,G(x)-F(x)\,.
\end{equation}
which in turn follows from the very definition of $F$ and $G$. It is now easily seen that the first equation in~\eqref{eq:ODE_F_G} is satisfied. To check the validity of the second equation in~\eqref{eq:ODE_F_G}, we start from~\eqref{eq:ODE_Upsilon} and use~\eqref{eq:formula_Upsilondot_FG} as well as the definition of $F$ and $G$ to compute
\begin{align*}
x\ddot\Upsilon(x)\,&=\,\left[-\frac{p}{p-1}+\frac{p+1}{2(p-1)}x\right]\frac{\dot\Upsilon(x)}{1-x}-\frac{5-p}{4(p-1)}\frac{\Upsilon(x)}{1-x}
\\
&=\,2\pi\frac{5-p}{p-1}\left[\frac{1}{(p-1)x}\left(-2p+(p+1)x\right)\left(G(x)-F(x)\right)-F(x)\right]
\\
&=\,2\pi\frac{5-p}{p-1}\left[-\frac{2p}{p-1}\frac{1-x}{x}\left(G(x)-F(x)\right)-G(x)\right]\,.
\end{align*}
This allows to write $\ddot\Upsilon$ in terms of $F$ and $G$. It follows that 
\begin{align*}
\frac{dG(x)}{dx}\,&=\,
\frac{G(x)}{1-x}+\frac{1}{8\pi}\frac{p+3}{5-p}\frac{1}{1-x}\dot\Upsilon(x)+\frac{1}{4\pi}\frac{p-1}{5-p}\frac{x}{1-x}\ddot\Upsilon(x)
\\
&=\,\frac{G(x)}{1-x}+\frac{p+3}{2(p-1)}\frac{1}{x}\left(G(x)-F(x)\right)+\frac{1}{2}\left[-\frac{2p}{p-1}\frac{1}{x}(G(x)-F(x))-\frac{G(x)}{1-x}\right]
\\
&=\,\frac{G(x)}{2(1-x)}+\frac{3-p}{2(p-1)}\frac{1}{x}\left(G(x)-F(x)\right)\,.
\end{align*}
Rearranging these terms one gets precisely the second equation in~\eqref{eq:ODE_F_G}.

\subsection{Proof of Proposition~\ref{pro:behaviour_near_one}}
\label{app:ASYM_proof}

We first check the validity of 
\begin{align*}
\lim_{r\to R_\Lambda}\Phi(r)\left(1-\frac{\Lambda}{3}r^2\right)\,&=\,\frac{R_\Lambda}{16{\pi}}\,\, \frac{\Gamma(\tfrac{1}{2})\, \Gamma(c_p)}{\Gamma(a_p+\frac{3}{2})\, \Gamma(b_p+\frac{3}{2})}\,, 
\\
\lim_{r\to R_\Lambda}\Psi(r) \,\, \sqrt{1-\frac{\Lambda}{3}r^2}\,&=\,\frac{R_\Lambda}{8{\pi}}\,\, \frac{\Gamma(\tfrac12) \, \Gamma(c_p)}{\Gamma(a_p+1)\, \Gamma(b_p+1)} \,,
\end{align*}
then we discuss the asyptotic behavior of the two limits, as $p \to 1^+$.

\medskip

The first limit follows immediately from the definition of $\Phi$, recalling that 
\begin{equation*}
\Upsilon(1)\,=\,\frac{\Gamma(\frac32) \, \Gamma(c_p)}{\Gamma(a_p+\frac{3}{2})\Gamma(b_p+\frac{3}{2})}\,,
\end{equation*}
as we have computed in Subsection~\ref{app:Upsilon}.

\medskip

Concerning the limit for $\Psi$, we recall that 
\[
\left( 1-(\Lambda/{3}) \,r^2  \right) \, \Psi(r)\,=\,\frac{r}{8\pi} \,  \left[\Upsilon\!\bigl(\tfrac{\Lambda}{3}r^2\bigr) \, +2 \!\left(\frac{p-1}{5-p}\right)({\Lambda}/{3}) \, r^2\,\, \dot\Upsilon\!\bigl(\tfrac{\Lambda}{3}r^2\bigr)\right]\,  \, .
\]
and we use again the substitution $x=(\Lambda/3)r^2$ to study the right hand side. We are going to exploit the following transformation formula for the hypergeometric functions:
\begin{multline*}
{}_2F_1(a,b,c;x)\,=\,\frac{\Gamma(c)\Gamma(c-a-b)}{\Gamma(c-a)\Gamma(c-b)} \, {}_2F_1(a,b,a+b-c+1;1-x)
\\
+\frac{\Gamma(c)\Gamma(a+b-c)}{\Gamma(a)\Gamma(b)}(1-x)^{c-a-b} \, {}_2F_1(c-a,c-b,c-a-b+1;1-x)\,.
\end{multline*}
Applying this transformation formula to the two hypergeometric functions that appear in
\[
\Upsilon(x)+2\frac{p-1}{5-p}x\dot\Upsilon(x)={}_2F_1(a_p,b_p,c_p;x)-\frac{p-1}{2p}x {}_2F_1(a_p+1,b_p+1,c_p+1;x) \, ,
\]
this latter quantity can be computed as
\begin{multline*}
\frac{1}{2}\frac{\Gamma(c_p)\Gamma(\frac{1}{2})}{\Gamma(a_p+1)\Gamma(b_p+1)}\left[{}_2F_1(a_p,b_p,-\frac{1}{2};1-x)-x\,{}_2F_1(a_p+1,b_p+1,\frac{1}{2};1-x)\right]
\\
+\frac{\Gamma(c_p)\Gamma(-\frac{3}{2})}{\Gamma(a_p)\Gamma(b_p)}\sqrt{1-x}\left[(1-x){}_2F_1(a_p+\frac{3}{2},b_p+\frac{3}{2},\frac{5}{2};1-x)\right.
\\
\left. -3\frac{p-1}{5-p}x\, {}_2F_1(a_p+\frac{3}{2},b_p+\frac{3}{2},\frac{3}{2};1-x)\right]\,.
\end{multline*}
Since ${}_2F_1(a,b,c;y)=1+\mathcal{O}(y)$ as $y\to 0$, we easily deduce that the leading term of the above quantity as $x\to 1$ is given by the last hypergeometric function. Using $\Gamma(-3/2)=(4/3)\Gamma(1/2)$, we  obtain
\[
\left[\Upsilon\!\bigl(\tfrac{\Lambda}{3}r^2\bigr) \, +2 \!\left(\frac{p-1}{5-p}\right)({\Lambda}/{3}) \, r^2\,\, \dot\Upsilon\!\bigl(\tfrac{\Lambda}{3}r^2\bigr)\right]=-4 \left(\frac{p-1}{5-p}\right)\frac{\Gamma(\tfrac12)\Gamma(c_p)}{\Gamma(a_p)\Gamma(b_p)}\sqrt{1-\frac{\Lambda}{3}r^2} \, +\, \mathcal{O}(1-\frac{\Lambda}{3}r^2)\,,
\]
as $r\to R_\Lambda$. Substituting in the expression for $\Psi$ and using the fact that
\[
\Gamma(a_p+1)\Gamma(b_p+1)\,=\,a_p b_p\Gamma(a_p)\Gamma(b_p)\,=\,-\frac{5-p}{4(p-1)}\Gamma(a_p)\Gamma(b_p)\,,
\]
we finally obtain the desired limit.

\medskip

In the second part of this subsection, we are going to discuss the asymptotic behavior of the limit that we have just computed, when the exponent $p$ approaches $1$. First of all, we compute the following expansions as $p\to 1^+$:
\begin{align*}
a_p\,&=\,\frac{1}{p-1}+\frac{1}{2}-\frac{3}{4}(p-1)+\frac{9}{8}(p-1)^2+\mathcal{O}\left((p-1)^3\right)\,,
\\
b_p\,&=\,-1+\frac{3}{4}(p-1)-\frac{9}{8}(p-1)^2+\mathcal{O}\left((p-1)^3\right)\,.
\end{align*}
We also have the following expansions for the Gamma function:
\begin{align*}
\Gamma(-1+x)\,&=\,-\frac{1}{x}+\mathcal{O}(1)\,,\quad\hbox{as }x\to 0^+\,,
\\
\Gamma(x)\,&=\,\sqrt{\frac{2\pi}{x}}e^{x\log x-x}\left(1+\mathcal{O}(x^{-1})\right)\,,\quad \hbox{as }x\to+\infty\,.
\end{align*}
As a consequence, when $p\to 1$, we get
\begin{align*}
\Gamma(a_p)\,&=\,
\sqrt{2\pi}\,e^{-\frac{\log (p-1)}{p-1}-\frac{1}{p-1}+\mathcal{O}((p-1)\log (p-1))}\left(1+\mathcal{O}(p-1)\right)
\\
\Gamma(b_p)\,&=\,-\frac{4}{3}\frac{1}{p-1}+\mathcal{O}(1)\,,
\\
\Gamma\left(c_p\right)\,&=\,
\sqrt{\frac{2\pi}{p-1}}\,e^{-\frac{\log (p-1)}{p-1}-\frac{1}{p-1}}\left(1+\mathcal{O}(p-1)\right)
\\
\Gamma\left(a_p+\frac{1}{2}\right)\,&=\,
\sqrt{\frac{2\pi}{p-1}}\,e^{-\frac{\log (p-1)}{p-1}-\frac{1}{p-1}+\mathcal{O}((p-1)\log (p-1))}\left(1+\mathcal{O}(p-1)\right)
\\
\Gamma\left(b_p+\frac{1}{2}\right)\,&=\,-2\sqrt{\pi}+\mathcal{O}(p-1)\,.
\end{align*}
With the help of the following algebraic identities
\begin{align*}
\Gamma(a_p+1) =a_p\Gamma(a_p) \,, \qquad &\Gamma(b_p+1)=b_p\Gamma(b_p)\,  \\
\Gamma(a_p+3/2 ) =(a_p+1/2)\Gamma(a_p+1/2)
\,, \qquad&
\Gamma(b_p+3/2)=(b_p+1/2)\Gamma(b_p+1/2) \, ,
\end{align*}
we finally get
\begin{align*}
\frac{\Gamma(\tfrac32)\,\Gamma(c_p)}{\Gamma(a_p+\tfrac32)\,\Gamma(b_p+\tfrac32)}
&\, = \, 
\frac{p-1}{2}
\, + \, \mathcal{O}((p-1)^{2}) \\
\frac{\Gamma(\tfrac12)\,\Gamma(c_p)}{\Gamma(a_p+1)\,\Gamma(b_p+1)}
&\, =\, 
\frac{3\sqrt{\pi}}{4}\,(p-1)^{3/2}
\, +\, \mathcal{O}((p-1)^{5/2})
\end{align*}
as $p\to 1^+$. The thesis follows at once. 

\subsection{Proof of Proposition~\ref{pro:behaviour_near_one_2}}\label{app:ASYM2_proof}

We first discuss the limit of the function $\Upsilon$.
One can check that $a_p<c_p$ for all $p$ and that $-1<b_p<-1/2$, hence, using the ratio test, we find out that the series
\[
{}_2F_1\left(a_p,b_p,c_p;x\right)\,=\,\sum_{k=0}^{+\infty}\frac{(a_p)_k (b_p)_k}{(c_p)_k}\frac{x^k}{k!}
\]
converges absolutely and uniformly in $[0,x_0]$, for all $x_0<1$. We can then pass to the limit term by term in the  series. The term for $k=0$ is equal to $1$, whereas the term for $k=1$ converges to $-x$, since
\[
\frac{a_p b_p}{c_p}=-\frac{5-p}{4p}\to -1\,.
\]
For $k\geq 2$, the product given by the Pochhammer symbol $(b_p)_k$ contains the factor $b_p+1$, which goes to zero as $p\to 1$. Since $a_p/c_p$ stays bounded as $p\to 1$ (in fact, $a_p$ and $c_p$ are both positive and we have already observed $a_p<c_p$), we conclude that every term with $k\geq 0$ converges to zero. We have thus proved that $\Upsilon(x)$ converges to $1-x$ as $p\to 1^+$, for all $x\in[0,1)$.

Similarly, one proves that
\[
\dot\Upsilon(x)\,=\,-\frac{5-p}{4p}\,{}_2F_1(a_p+1,b_p+1,c_p+1;x)
\]
has a finite limit for all $x\in[0,1)$.

Recalling the definition~\eqref{eq:Phi_Psi_app_A}, we conclude easily that $\Phi(r)$ and $\Psi(r)$ both converge to $r/8\pi$ as $p\to 1$, as wished.

\section{Regularity and qualitative properties of 
$p$-harmonic functions}
\label{app:pharmonic}

In this appendix we collect, for the reader’s convenience, some classical results concerning $p$-harmonic functions that are used throughout the paper. We begin by recalling the notion of weak solution for Dirichlet problems involving the $p$-Laplace operator on a Riemannian manifold, and we then list a few standard qualitative properties: Harnack-type inequalities, the local behavior of $p$-Green's functions near the pole, and ${\mathscr C}^{1,\beta}$ interior/boundary estimates. We also include the definition of a $p$-Inverse Mean Curvature Flow (in the sense used in the paper) and its relation with the $p$-Green's function.

\subsection{Weak formulation and basic regularity}

Let $(M,g)$ be a $n$-dimensional Riemannian manifold with smooth boundary $\pa M$, let $p>1$ and $\alpha>0$, and let $v\in\mathscr C^{1,\alpha}(\pa M)$. We say that a function
$u\in W_{loc}^{1,p}(M\setminus\pa M)$ is a solution to the Dirichlet problem  
\begin{equation*}
\begin{dcases}
\Delta_p u\,=\,0 & \hbox{in } M\setminus\pa M,
\\
\quad \, u \,  = \, v & \hbox{on } \pa M,
\end{dcases}
\end{equation*}
if it fulfills the weak formulation
\[
\int_M \left\langle |\na u|^{p-2}  \na u  \, \big| \,  \na \varphi \right\rangle \, d\mu_g \, = \, 0 \, ,
\]
for every $\varphi\in\mathscr C^{\infty}_c(M\setminus\pa M)$, and satisfies the boundary condition in the sense of traces.

By the interior regularity estimates \cite{DiBenedetto1,Tolksdorf1984}
and the boundary regularity estimates
\cite{Lieb88}, such a solution $u$ turns out to be $\mathscr C^{1,\beta}(M)$, for some $0<\beta<1$.
Some of the key results of \cite{DiBenedetto1,Tolksdorf1984,Lieb88}
are recalled below in a form tailored to our setting.

\subsection*{$p$-Green's functions and $p$-Inverse Mean Curvature Flow.}

Fix $x\in M\setminus\pa M$. A function
$u\in W_{loc}^{1,p}\big(M\setminus(\pa M\cup\{x\})\big)$ is a solution to the problem
\begin{equation*}
\begin{dcases}
\Delta_p u\,=\,-4\pi\delta_x & \hbox{in } M\setminus\pa M,
\\
\quad \, u \,  = \, v & \hbox{on } \pa M,
\end{dcases}
\end{equation*}
if it fulfills 
\[
\int_M \left\langle |\na u|^{p-2}  \na u  \, \big| \,  \na \varphi \right\rangle \, d\mu_g \, = \, 4\pi\varphi(x) \, ,
\]
for every $\varphi\in\mathscr C^{\infty}_c\big(M\setminus(\pa M\cup\{x\})\big)$,
and satisfies the boundary condition in the sense of traces. Regularity results analogous to the homogeneous case hold away from the pole $x$. In the special case $v\equiv 0$ we will refer to $u$ as the (Dirichlet) \emph{$p$-Green's function} with pole $x$, with the above normalization constant $4\pi$. This convention is compatible (up to the multiplicative factor $4\pi$) with the standard definition in the literature, where one prescribes vanishing boundary values and requires the distributional identity $-\mathrm{div}(|\na u|^{p-2}\na u)=\delta_x$ (see, e.g., \cite{Kura_1999} and the references therein).

In the paper we also consider a notion of $p$-Inverse Mean Curvature Flow encoded by a potential function $w$. We say that $w$ is a \emph{$p$-Inverse Mean Curvature Flow} (with pole $x$) if it solves, in the weak sense, the following boundary blow-up problem:
\begin{equation}
    \label{eq:p-imcf_app}
    \begin{dcases}
    \mathrm{div}\big(e^{-w}|\nabla w|^{p-2}\nabla w\big) \,=\, 4\pi (p-1)^{p-1} \delta_x & \text{in } M\,,
    \\
    \quad w(y)\longrightarrow +\infty & \text{as } y\to\partial M.
    \end{dcases}
\end{equation}
More precisely, $w\in W^{1,p}_{loc}(M\setminus\{x\})$ is required to satisfy
\[
\int_M \left\langle e^{-w}|\na w|^{p-2}\na w \, \big| \, \na\varphi \right\rangle d\mu_g
\,=\, 4\pi (p-1)^{p-1}\,\varphi(x),
\]
for every $\varphi\in\mathscr C^\infty_c(M\setminus\{x\})$, and the boundary condition in \eqref{eq:p-imcf_app} is understood as a boundary blow-up requirement. It is convenient to record that \eqref{eq:p-imcf_app} is equivalent, via the change of variables
\[
u \,:=\, e^{-\frac{w}{p-1}},
\]
to the $p$-Green's equation with zero boundary data in the above normalization. Indeed, on $M\setminus\{x\}$ one has the identity
\[
e^{-w}|\na w|^{p-2}\na w \,=\,-(p-1)^{p-1}|\na u|^{p-2}\na u,
\]
and therefore \eqref{eq:p-imcf_app} becomes
\[
\Delta_p u \,=\, -4\pi\,\delta_x
\qquad\text{in } M,
\]
while the boundary blow-up condition $w\to +\infty$ as $y\to \pa M$ corresponds to $u\to 0$ as $y\to\pa M$.

\subsection{Harnack inequality}

A first fundamental qualitative property of nonnegative $p$-harmonic functions is the Harnack inequality. The next statement is an application of~\cite[Theorem 1.1]{Tru_1967} in a Riemannian manifold setting.

\begin{theorem}
[Harnack Inequality for $p$-harmonic functions, \cite{Tru_1967}]
\label{thm_Har}
Let $(M,g)$ be a $n$-dimensional Riemannian manifold
and 
let $(\mathcal U,y^1,\hdots,y^n)$ be a coordinate
chart. let $0<\alpha< A$ be such that
\[
\alpha|\xi|^2
\,\leq\, g_{ij}(y)\xi^i\xi^j
\,\leq\,
 A|\xi|^2,
 \qquad y\in\mathcal U,\quad\xi\in\R^n.
\]
Then, for every
$1<p<+\infty$, every
$p$-harmonic function $u$ in $\mathcal U$, and every geodesic ball $B_{2\rho}\subset\mathcal U$, it holds
\[
\sup_{B_{\rho}} u
\,\leq\,
C
\inf_{B_{\rho}} u,
\]
for some
positive constant $C$
depending only on $n$, $p$, 
$\alpha$, and $A$.
\end{theorem}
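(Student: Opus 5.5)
The plan is to reduce the statement to the Euclidean Harnack inequality of Trudinger~\cite[Theorem 1.1]{Tru_1967} for quasilinear divergence-form equations with measurable coefficients. Throughout, $u$ is understood to be nonnegative, as is implicit in any Harnack inequality; if it is only nonnegative, one applies the argument to $u+\epsilon$ and lets $\epsilon\to0$.

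First I rewrite the $p$-Laplacian in the chart $(\mathcal U,y^1,\dots,y^n)$. Write $|g|=\det(g_{ij})$ for the determinant of the metric, to avoid a clash with the constant $A$. Then $u$ is $p$-harmonic if and only if it is a weak solution of
\[
\partial_i\mathcal A^i(y,Du)=0 ,\qquad \mathcal A^i(y,\xi)=\sqrt{|g|}\,g^{ij}(y)\,\big(g^{kl}(y)\xi_k\xi_l\big)^{\frac{p-2}{2}}\xi_j .
\]
Indeed, the weak formulation $\int\langle|\nabla u|^{p-2}\nabla u\,|\,\nabla\varphi\rangle\,d\mu_g=0$ becomes, in coordinates, $\int \mathcal A^i(y,Du)\,\partial_i\varphi\,dy=0$ for all test functions $\varphi$.

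Next I verify the structure conditions. The ellipticity bounds $\alpha|\xi|^2\le g_{ij}\xi^i\xi^j\le A|\xi|^2$ give $A^{-1}|\xi|^2\le g^{ij}\xi_i\xi_j\le \alpha^{-1}|\xi|^2$ and $\alpha^{n/2}\le\sqrt{|g|}\le A^{n/2}$. From these one obtains
\[
\mathcal A^i(y,\xi)\,\xi_i\ \ge\ c_0|\xi|^p,\qquad |\mathcal A(y,\xi)|\ \le\ c_1|\xi|^{p-1},
\]
with $c_0,c_1>0$ depending only on $n,p,\alpha,A$; for instance $c_0=\alpha^{n/2}\min\{A^{-p/2},\alpha^{-p/2}\}$. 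There are no lower-order terms, and measurability of the coefficients is all that Trudinger's theorem requires. Therefore, for every Euclidean ball $E_{3r}\subset\mathcal U$ (any fixed enlargement factor will do), Trudinger's theorem yields
\[
\sup_{E_r}u\le C_0\inf_{E_r}u ,
\]
with $C_0=C_0(n,p,c_0,c_1)$. The constant does not depend on $r$, because the structure is invariant under the dilation $y\mapsto ry$, which leaves $c_0$ and $c_1$ unchanged.

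The step I expect to be the main obstacle is passing from Euclidean coordinate balls to the geodesic balls $B_\rho$, $B_{2\rho}$ of the statement while keeping the constant uniform. I plan to do this by comparison of distances and a chaining argument. By the metric bounds, and working in the convex situation in which both geodesic and coordinate balls stay inside $\mathcal U$ (which is how the statement is applied later, e.g.\ in the proof of Proposition~\ref{prop_PB}), we have $\sqrt{\alpha}\,|y-z|\le d_g(y,z)\le\sqrt{A}\,|y-z|$ locally. Consequently
\[
B_\rho\subset E_{\rho/\sqrt\alpha}\quad\text{and}\quad E_{2\rho/\sqrt A}\subset B_{2\rho}.
\]
The covering ball $E_{\rho/\sqrt\alpha}$ may be much larger than the balls on which the Euclidean estimate is available. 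So I cover $B_\rho$ by $N$ Euclidean balls of radius $\delta\rho$, with $\delta=\delta(\alpha,A)$ small enough that the tripled balls still lie in $B_{2\rho}\subset\mathcal U$. Doubling of Lebesgue measure bounds $N$ in terms of $n,\alpha,A$ only. Since $B_\rho$ is connected, any two of these small balls can be joined by a chain of at most $N$ pairwise-overlapping balls. Iterating the Euclidean estimate along such a chain gives
\[
\sup_{B_\rho}u\le C_0^{\,N}\inf_{B_\rho}u ,
\]
so the statement holds with $C=C_0^{\,N}$, which depends only on $n,p,\alpha,A$.
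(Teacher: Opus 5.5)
Your proposal is correct and is essentially the paper's own justification, which consists only of the remark that the statement is Trudinger's Theorem~1.1 applied to the coordinate form of the $p$-Laplacian. Your explicit structure constants, the dilation-invariance remark and the chaining from Euclidean to geodesic balls fill in details the paper leaves to the citation; in the paper's only application, Proposition~\ref{prop_PB}, the balls are coordinate balls, so the chaining step is not even needed there.
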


\subsection{Asymptotics of the $p$-Green's functions near the pole}

We next recall a description of the singular behavior of a $p$-Green's function close to its pole. This provides the precise order of growth (up to an arbitrarily small multiplicative error) in local coordinates, under uniform ellipticity bounds for the metric coefficients.

\begin{theorem}
[\cite{Serrin_1965,MRS}]
\label{fund_growth}
Let $(M,g)$ be a $n$-dimensional Riemannian manifold, $(\mathcal U,y^1,\hdots,y^n)$ be a coordinate chart, and
let $0<\alpha< A$ be such that
\[
\alpha|\xi|^2
\,\leq\, g_{ij}(y)\xi^i\xi^j
\,\leq\,
 A|\xi|^2,
 \qquad y\in\mathcal U,\quad\xi\in\R^n.
\]
For $x \in\mathcal U$ and $1<p<n$,
let $u_x$ be a $p$-Green's function with pole at $x$.
Then, for every $\eta>0$, there exists $r_{\eta}(p,n,\alpha,A)>0$ such that
\[
\Big(\frac{p-1}{n-p}-\eta\Big)|y-x|^{-\frac{n-p}{p-1}}
\,\leq\,
u(y)
\,\leq\,
\Big(\frac{p-1}{n-p}+\eta\Big)|y-x|^{-\frac{n-p}{p-1}},
\qquad\mbox{for}
\quad
0<|y-x|\leq r_{\eta}.
\]
\end{theorem}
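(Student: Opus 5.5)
The plan is a blow-up argument at the pole. First I would get two-sided bounds of the correct order, then identify the rescaled limits via a Liouville-type classification, and finally obtain the uniform statement by a contradiction/compactness argument.

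I will assume that $|y-x|$ is measured in coordinates adapted to $x$, i.e. $g_{ij}(x)=\delta_{ij}$. I will also assume that the metric coefficients are continuous, with a modulus of continuity controlled together with $\alpha,A$. Without such control the sharp constant $\frac{p-1}{n-p}$ cannot depend on $\alpha,A$ alone, so I read the statement in this sense.

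\emph{Step 1 (order of growth).} Following Serrin, I first show $c_1|y-x|^{-\frac{n-p}{p-1}}\le u_x(y)\le c_2|y-x|^{-\frac{n-p}{p-1}}$ near $x$, with $c_1,c_2$ depending only on $n,p,\alpha,A$. The tools are the weak formulation with flux normalization $4\pi$, comparison with radial sub- and supersolutions on annuli, and the Harnack inequality (Theorem~\ref{thm_Har}) on dyadic annuli. Comparing $\max$ and $\min$ on spheres with the flux $\int |\nabla u|^{p-2}\partial_\nu u=4\pi$ through each sphere pins down the order.

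\emph{Step 2 (blow-up).} For $r\to0$ set $v_r(z)=r^{\frac{n-p}{p-1}}u_x(x+rz)$. Each $v_r$ is $p$-harmonic for the metric $g_r(z)=g_{ij}(x+rz)$, whose coefficients converge locally uniformly to $\delta_{ij}$. The bounds of Step 1 give locally uniform bounds on $\R^n\setminus\{0\}$, and interior $\mathscr C^{1,\beta}$ estimates then yield subsequential convergence in $\mathscr C^1_{loc}(\R^n\setminus\{0\})$. The limit $v$ is a positive Euclidean $p$-harmonic function on the punctured space, comparable to $|z|^{-\frac{n-p}{p-1}}$. Its flux through spheres equals the flux of $u_x$, because $\mathscr C^1$ convergence passes the flux to the limit.

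\emph{Step 3 (classification) --- the main obstacle.} I need that such a $v$ equals the fundamental solution $\frac{p-1}{n-p}|z|^{-\frac{n-p}{p-1}}$ (with the paper's $4\pi$ normalization in $n=3$). I would invoke the Kichenassamy--V\'eron isolated singularity theorem: $v-\gamma\,\mu_p$ is bounded near $0$, where $\mu_p$ is the fundamental solution, and $\gamma$ is fixed by the flux. The comparability with $|z|^{-\frac{n-p}{p-1}}$ at both $0$ and $\infty$, together with a scaling/maximum principle argument comparing $v$ with $(\gamma\pm\epsilon)\mu_p$ on large annuli, then forces $v=\gamma\mu_p$ exactly, with $\gamma=1$ by the flux normalization. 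The uniqueness of the limit gives convergence of the whole family, hence $u_x(y)\big/\big(\tfrac{p-1}{n-p}|y-x|^{-\frac{n-p}{p-1}}\big)\to1$.

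\emph{Step 4 (uniformity).} To get $r_\eta$ depending only on the structural data, I argue by contradiction. Take sequences of metrics and poles satisfying the bounds, and radii $r_k\to0$ where the two-sided estimate fails by more than $\eta$. Rescaling and applying Steps 2--3 uniformly, since all constants depend only on the structural data, produces a limit that must equal the fundamental solution, contradicting the assumed violation.
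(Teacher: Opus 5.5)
\textbf{The gap is in Step 3}, where you yourself locate the main difficulty. Write $a=\frac{n-p}{p-1}$ and let $\mu$ be the radial fundamental solution normalized to have flux $4\pi$.

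Kichenassamy--V\'eron gives you $v/\mu\to1$ at the origin. At infinity, however, you only have $c_1\mu\le v\le c_2\mu$. You know $v\le(1+\epsilon)\mu$ and $v\ge(1-\epsilon)\mu$ only on small inner spheres. So weak comparison with $(\gamma\pm\epsilon)\mu$ on annuli $\{\delta\le|z|\le R\}$ yields no more than
$\min\{1,\Lambda^-\}\,\mu\le v\le\max\{1,\Lambda^+\}\,\mu$,
where $\Lambda^+$ and $\Lambda^-$ are the upper and lower limits of $v/\mu$ as $|z|\to\infty$. Nothing in your sketch rules out $\Lambda^+>1$ or $\Lambda^-<1$. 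For $p=2$ this is where B\^ocher's theorem enters; for $p\neq2$, $p<n$, there is no Kelvin transform that moves infinity to a point.

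The missing idea is to use the flux as a rigidity constraint, not only to fix $\gamma$. The argument runs as follows.
\begin{itemize}
\item Let $S=\sup v/\mu\in[c_1,c_2]$ and pick $z_k$ with $v(z_k)/\mu(z_k)\to S$.
\item Since $R^a\mu(R\,\cdot)=\mu$, the rescalings $V_k=|z_k|^a\,v(|z_k|\,\cdot)$ stay in your class: positive, $p$-harmonic on $\R^n\setminus\{0\}$, between $c_1\mu$ and $c_2\mu$, with flux $4\pi$. They also satisfy $V_k\le S\mu$.
\item They subconverge in $\mathscr C^1_{loc}$ to some $V\le S\mu$ that touches $S\mu$ at a point of the unit sphere.
\item At any contact point $\nabla V=S\nabla\mu\neq0$. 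Nearby, $S\mu-V\ge0$ therefore solves a uniformly elliptic linear divergence-form equation and vanishes at a point, so it vanishes identically by the strong maximum principle.
\item Hence the contact set is open and closed, so $V\equiv S\mu$. Comparing fluxes gives $S^{p-1}=1$.
\item The same argument with $\inf v/\mu$ gives $v\equiv\mu$.
\end{itemize}
This route does not even need Kichenassamy--V\'eron.

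\textbf{Comparison with the paper.} There is no proof in the paper to compare with: the statement is simply quoted from \cite{Serrin_1965,MRS}, and your Step 1 is essentially Serrin's two-sided estimate.

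\textbf{Hypotheses and Step 4.} Your corrections to the hypotheses are right and necessary. A constant metric $\lambda^2\delta_{ij}$ already shifts the constant by $\lambda^{-a}$, and $\frac{p-1}{n-p}$ matches the $4\pi$ normalization only when $n=3$. You missed one further dependence: $r_\eta$ also depends on the global problem. For the Dirichlet $p$-Green's function of a Euclidean ball of radius $R$ in $\R^3$ one has $u=\frac{p-1}{3-p}(r^{-a}-R^{-a})$. The ratio to the model profile is then $1-(r/R)^a$, so $r_\eta\approx\eta^{1/a}R$. Consequently Step 1 holds only on a ball whose radius is controlled by $\operatorname{dist}(x,\pa M)$, and for the upper bound also by the global geometry. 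In Step 4 this radius must stay bounded below along your sequence. Otherwise the blow-up limit can be, for instance, a half-space Green's function, and no contradiction arises.

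\textbf{Regularity in Step 2.} The interior $\mathscr C^{1,\beta}$ estimates you invoke need $\mathscr C^1$ (or H\"older) control of $g_{ij}$, not merely a modulus of continuity. This is harmless for the smooth metrics used in the paper.
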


\subsection{Interior and boundary ${\mathscr C}^{1,\beta}$ estimates}

Finally, we record a convenient form of the interior and boundary ${\mathscr C}^{1,\beta}$ estimates from \cite{Tolksdorf1984} and \cite{Lieb88}, adapted to the $p$-Laplace operator on a Riemannian manifold and specialized to homogeneous boundary data in the boundary chart setting.

\begin{theorem}[Interior and boundary ${\mathscr C}^{1,\beta}$ estimates, \cite{Tolksdorf1984,Lieb88}]
\label{thm_est_int_estest}
Let $(M,g)$ be a Riemannian manifold with smooth boundary $\pa M$ and let $1<p<n$.
\begin{itemize}
\item[(i)]
Let $u$ be a $p$-harmonic function in an interior coordinate chart
$(\mathcal U,y^1,\hdots,y^n)$
and let 
$B_{2R}:=\big\{(y^1)^2+\hdots+(y^n)^2<4R^2\big\}$ be a coordinate ball of radius $2R>0$.
Then, there exists a positive constant $C=C\big(p,g_{ij},\pa_k g_{ij}, \|u\|_{L^{\infty}(B_{2R})}\big)$ and a constant $0<\beta<1$
such that
\[
\|u\|_{{\mathscr C}^{1,\beta}(B_R)}
\,\leq\,
C,
\]
where $B_R$ is a coordinate ball of radius $R$.
\smallskip
\item[(ii)]
Let $u$ be a $p$-harmonic function in a boundary coordinate chart
$(\mathcal U,y^1,\hdots,y^n)$
such that $y^n$ vanishes on $\pa M\cap\mathcal U$ and $y^n$ is positive in $(M\setminus\pa M)\cap\mathcal U$,
and let 
$B_{2R}^+:=\big\{(y^1)^2+\hdots+(y^n)^2<4R^2,\,y^n>0\big\}$ 
be a coordinate upper half ball of radius $2R>0$.
Also, assume that
\[
u=0\qquad\mbox{on}\quad\pa (B_{2R}^+)\cap\pa M.
\]
Then, there exists a positive constant $C=C\big(p,g_{ij},\pa_k g_{ij}, \|u\|_{L^{\infty}(B_{2R}^+)}\big)$ and a constant $0<\beta<1$
such that
\[
\|u\|_{{\mathscr C}^{1,\beta}\left(\,\overline{B_R^+}\,\right)}
\,\leq\,
C.
\]
\end{itemize}
\end{theorem}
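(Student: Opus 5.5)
The plan is to reduce both statements to the Euclidean theory of quasilinear degenerate elliptic equations in divergence form, for which \cite{Tolksdorf1984} (interior) and \cite{Lieb88} (boundary) provide exactly the required ${\mathscr C}^{1,\beta}$ bounds. In the chart $(\mathcal U,y^1,\hdots,y^n)$ the weak formulation of $\Delta_p u=0$ reads
\[
\int \mathcal A^i(y,Du)\,\pa_i\varphi\,dy\,=\,0\,,\qquad \mathcal A^i(y,\xi)\,=\,\sqrt{\det g(y)}\;g^{ij}(y)\,\xi_j\,\big(g^{kl}(y)\xi_k\xi_l\big)^{\frac{p-2}{2}}\,,
\]
so $u$ is a weak solution of the Euclidean equation $\pa_i\big(\mathcal A^i(y,Du)\big)=0$ on a coordinate ball, or half ball.

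The first step is to check that $\mathcal A$ satisfies the structure conditions of \cite{Tolksdorf1984,Lieb88}, uniformly on the chart. These are:
\[
\pa_{\xi_j}\mathcal A^i(y,\xi)\,\eta_i\eta_j\,\geq\,\lambda\,|\xi|^{p-2}|\eta|^2\,,\qquad
|\pa_{\xi}\mathcal A(y,\xi)|\,\leq\,\Lambda_0\,|\xi|^{p-2}\,,\qquad
|\mathcal A(y,\xi)-\mathcal A(y',\xi)|\,\leq\,\Lambda_0\,|y-y'|\,|\xi|^{p-1}\,.
\]
The ellipticity and growth bounds come from differentiating $\xi\mapsto |\xi|_{g}^{p-2}g^{-1}\xi$. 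This gives the matrix $|\xi|_g^{p-2}\big(g^{ij}+(p-2)\,g^{ik}\xi_k g^{jl}\xi_l/|\xi|_g^2\big)$, whose eigenvalues relative to $g^{-1}$ lie between $\min\{1,p-1\}$ and $\max\{1,p-1\}$. The uniform bounds $\alpha|\xi|^2\le g_{ij}\xi^i\xi^j\le A|\xi|^2$ then convert these into Euclidean constants $\lambda,\Lambda_0$ depending on $n,p,\alpha,A$. The Lipschitz dependence in $y$ follows from the bounds on $\pa_k g_{ij}$, since $\mathcal A$ is $(p-1)$-homogeneous in $\xi$ and smooth in $(g_{ij})$ on the compact set of admissible metrics. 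This computation is routine, but it must be done for both ranges $1<p<2$ and $p\geq2$. The singular range $p<2$ is where some care is needed, because $\pa_\xi\mathcal A$ blows up at $\xi=0$ and the estimates are only meant in the weighted sense above.

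For (i), I will apply the interior ${\mathscr C}^{1,\beta}$ theorem of \cite{Tolksdorf1984} (equivalently DiBenedetto's): a bounded weak solution on $B_{2R}$ satisfies $\|u\|_{{\mathscr C}^{1,\beta}(B_R)}\leq C$, where $\beta$ and $C$ depend only on the structure constants, on $R$ and on $\|u\|_{L^\infty(B_{2R})}$. Hence $C=C(p,g_{ij},\pa_kg_{ij},\|u\|_{L^\infty(B_{2R})})$.

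For (ii), the chart already flattens $\pa M$ to $\{y^n=0\}$ and $u=0$ there. I will invoke the local boundary version of \cite[Theorem~1]{Lieb88} on the domain $B_{2R}^+$, with zero Dirichlet data on the flat portion. The zero data are trivially ${\mathscr C}^{1,\gamma}$, and the flat boundary is smooth, so the theorem yields $\|u\|_{{\mathscr C}^{1,\beta}(\overline{B_R^+})}\leq C$ with the same dependence.

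I expect the main obstacle to be the localization in (ii). Lieberman's theorem is stated globally, for a domain with boundary data on its whole boundary, whereas here the data are known only on the flat part of $\pa B^+_{2R}$. I would first use the local form of Lieberman's estimates, which he proves via boundary Campanato iteration near a single boundary portion. Failing that, I would reduce to the global statement by flattening and then working on a smooth domain $D$ with $B_R^+\subset D\subset B_{2R}^+$ that shares the flat boundary portion, and I would handle the curved part by the interior estimate (i) applied at positive distance from $\{y^n=0\}$. The $L^\infty$ bound, which Lieberman requires a priori, is part of the hypotheses through $\|u\|_{L^\infty(B_{2R}^+)}$.
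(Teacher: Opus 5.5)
Your proposal takes essentially the same approach as the paper, which gives no independent proof and simply records the statement as the coordinate form of Tolksdorf's interior and Lieberman's boundary $\mathscr C^{1,\beta}$ estimates: you rewrite $\Delta_p u=0$ as a Euclidean divergence-form equation whose structure constants are controlled by $g_{ij}$ and $\pa_k g_{ij}$, then invoke the local half-ball form of Lieberman's estimate with zero data on the flat portion. One caveat: drop your fallback via a smooth domain $D\subset B^+_{2R}$, because it is circular. The curved part of $\pa D$ must meet $\{y^n=0\}$, and near those junction points the $\mathscr C^{1,\alpha}$ regularity of the data $u|_{\pa D}$ required by Lieberman's global theorem is exactly what you are trying to prove, so the local form of the estimate is the one to use.
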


\begin{ackn}
The authors would like to thank L.~Benatti, A.~Pisante, A.~Pluda, and M.~Pozzetta, for useful discussions and for their interest in this work. The authors are members of the Gruppo Nazionale per l’Analisi Matematica, la Probabilità e le loro Applicazioni (GNAMPA), which is part of the Istituto Nazionale di Alta Matematica (INdAM). V.~A. and L.~M. gratefully acknowledge the support by the MUR PRIN-2022JJ8KER grant “Contemporary perspectives on geometry and gravity”.
S.~B. has been partially funded by the GNAMPA project ``Flussi geometrici in spazi metrici: criteri di esistenza e applicazioni geometriche'', codice  CUP \#E5324001950001\#,
and by the PRIN Project 2022E9CF89 “Geometric Evolution
Problems and Shape Optimization (GEPSO)”, PNRR Italia Domani, financed by European
Union via the Program NextGenerationEU.
\end{ackn}

\bibliographystyle{plain}

\end{document}